\patchcmd{\part}{\normalfont}{\normalfont\Large}{}{}
\numberwithin{equation}{section}
\numberwithin{figure}{section}
\theoremstyle{plain}
\newtheorem{thm}{Theorem}[section]
\newtheorem*{thm*}{Theorem}
\theoremstyle{definition}
\newtheorem{defn}[thm]{\protect\definitionname}
\theoremstyle{plain}
\newtheorem{lem}[thm]{\protect\lemmaname}
\theoremstyle{remark}
\newtheorem{rem}[thm]{\protect\remarkname}
\theoremstyle{remark}
\newtheorem{claim}[thm]{\protect\claimname}
\theoremstyle{remark}
\newtheorem{ex}[thm]{\protect\examplename}
\newcommand{\remove}[1]{}
\providecommand{\claimname}{Claim}
\providecommand{\definitionname}{Definition}
\providecommand{\lemmaname}{Lemma}
\providecommand{\remarkname}{Remark}
\providecommand{\examplename}{Example}
\newcommand{\mc}[1]{\mathcal{#1}}
\newcommand{\mb}[1]{\mathbb{#1}}
\newcommand{\sub}{\subseteq}
\newcommand{\sm}{\setminus}
\newcommand{\ov}{\overline}
\newcommand{\eps}{\varepsilon}
\newcommand{\es}{\emptyset}
\newcommand{\ua}{\uparrow}
\newcommand{\aA}{\alpha}
\newcommand{\bB}{\beta}
\newcommand{\gG}{\gamma}
\newcommand{\kK}{\kappa}
\newcommand{\lL}{\lambda}
\newcommand{\tT}{\theta}
\newcommand{\sS}{\sigma}
\newcommand{\OO}{\Omega}
\newcommand{\TT}{\Theta}
\begin{document}
\title[Sharp hypercontractivity in symmetric groups]{Sharp hypercontractivity for symmetric groups and its applications}

\date{}

\author{Peter Keevash and Noam Lifshitz}

\address{Peter Keevash, Mathematical Institute, University of Oxford, UK.
Supported by ERC Advanced Grant 883810. \newline
Noam Lifshitz, Einstein Institute of Mathematics, Hebrew University, Jerusalem, Israel. Supported by the Israel Science Foundation (grant no.~1980/22).}

\begin{abstract}
A recently fertile strand of research in Group Theory is developing
non-abelian analogues of classical combinatorial results for arithmetic Cayley graphs,
describing properties such as growth, expansion, mixing, diameter, etc.
We consider these problems for the symmetric and alternating groups.
The case of normal Cayley graphs (those generated by unions of conjugacy classes)
has seen significant progress via character theory
(whereby Larsen and Shalev resolved several open problems),
but the general case still remains poorly understood.
In this paper we generalise the background assumption from being normal 
to being global (a pseudorandomness condition), replacing character bounds
by spectral estimates for convolution operators of global functions,
thus obtaining qualitative generalisations of several results on normal Cayley graphs.
Furthermore, our theory in the pseudorandom setting can be applied 
(via density increment arguments) to several results for general sets 
that are not too sparse, including analogues of 
Polynomial Freiman-Ruzsa,  Bogolyubov's lemma, Roth's theorem,  the Waring problem
and essentially sharp estimates for the diameter problem of Cayley graphs whose density is at least exponential in $-n$. Our main tool is a sharp new hypercontractive inequality for global functions on the symmetric group.   
\end{abstract}

\maketitle
\section{Introduction}

The diameter of a Cayley graph $\mathrm{Cay}(G,A)$ with $A=A^{-1}$ is the smallest $m$ for which $A^m=G$,
also known as the  \emph{covering number} $\mathrm{cn}(A)$. A prominent conjecture of Babai (see \cite{babai1992diameter})
states that the diameter of any finite connected $\mathrm{Cay}(G,A)$ is at most polylogarithmic in $|G|$.
This conjecture remains widely open, even when $G$ is the symmetric group $S_n$ or alternating group $A_n$,
for which the best known bound, due to Helfgott and Seress \cite{helfgott2014diameter}, is quasipolynomial in $\log |G|$.
The apparent difficulty of the problem has naturally led to interest in special cases, particularly that 
of \emph{normal} Cayley graphs, i.e.\ those where $A$ is normal, or even just a single conjugacy class. It turns out that even these special cases are highly non-trivial and have connections to prominent open problems in group theory related to word maps.   
A strong form of Babai's Conjecture for normal sets follows from a result of Liebeck and Shalev \cite{liebeck2001diameters},
and Larsen and Shalev~\cite{larsen2008characters} gave very precise estimates for $\mathrm{cn}(A)$
when $G=A_n$ and $A$ is a conjugacy class with not too many fixed points,
in particular resolving a conjecture of Rudvalis and Vishne.
The key tool developed by Larsen and Shalev was an improved bound for character values,
by which means they obtained several other striking results on conjugacy class Cayley graphs,
including a solution to a mixing time conjecture of Lulov and Pak, 
and a word map analogue of Waring's Problem.
More generally, these results may be considered part of a general program (largely open)
of extending a variety of classical problems on Cayley graphs to finite simple groups.
Many of these problems are discussed in the surveys 
\cite{breuillard2018expansion, helfgott2015growth, shalev2013some, tao2015expansion}.

Philosophically, the character theoretic approach is a manifestation of pseudorandomness,
which suggests analogies with the corresponding theory in the integers,
where there is a long and rich history of deep theorems 
on the additive pseudorandomness properties of multiplicative sets,
obtained via Fourier transforms and the Hardy-Littlewood circle method.
In the non-abelian setting, for a Cayley graph generated by the conjugacy class of $\sS$,
the character ratios $\chi(\sS)/\chi(1)$ are the eigenvalues of the Cayley adjacency operator,
so estimates thereof naturally apply to questions of expansion, mixing and growth.
For a general Cayley graph, one can still exploit some tools from representation theory,
given that general adjacency operators preserve the isotypic components,
but these components are no longer eigenspaces, so one can only hope that 
pseudorandomness is manifested through bounds on the spectral norm.
We will indeed establish such bounds, via a new hypercontractive inequality for $S_n$
that provides a sharp variant of a previous result  
of Filmus, Kindler, Lifshitz, and Minzer~\cite{filmus2020hypercontractivity}.
We will apply these bounds to extend several results on conjugacy class Cayley graphs
to the pseudorandom setting, and also prove several results for general sets
that are not too sparse, including analogues of
Polynomial Freiman-Ruzsa,  Bogolyubov's lemma, Roth's theorem, the Waring problem
and essentially sharp estimates for the diameter problem.

\subsection{Results for general Cayley graphs}

Now we will state our results on Cayley graphs of $A_n$
for general generating sets $A$ that are not too sparse.
We adopt the uniform measure $\mu(A) = |A|/|A_n|$.

\subsubsection*{Diameter}
The following bound on the covering number
applies to any set of density $e^{-O(n)}$ and is sharp up to a constant factor.

\begin{thm}\label{thm:diameter bound}
There exist absolute constants $c,C>0$ so that if $A\subseteq A_n$ 
with $A=A^{-1}$, $\mu(A) \ge e^{-cn^{1-\eps}}$ and $\eps>1/\log n$
then $\mathrm{cn}(A) \le 8\eps^{-1} + C\eps^{-2} \log_n^2(1/\mu(A))$. 
\end{thm}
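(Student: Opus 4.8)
Throughout write $\alpha=\mu(A)$, use the uniform measure on $A_n$, and normalise convolution accordingly, so that $1_A^{*m}(x)$ equals the (normalised) number of ways to write $x$ as a product of $m$ elements of $A$; in particular $x\in A^m$ iff $1_A^{*m}(x)>0$. Writing $g:=1_A-\alpha$ we have the exact identity $1_A^{*m}=\alpha^m+g^{*m}$ (all cross terms vanish because $g$ has mean zero and convolving a mean-zero function with a constant gives $0$), so it suffices to produce an $m$ of the stated order with $\|g^{*m}\|_\infty<\alpha^m$. Equivalently --- and this is how I would run it --- it suffices to show that $\mu(A^j)$ grows, reaching a fixed constant (say $1/2$) for some $j$ of order $C\eps^{-2}\log_n^2(1/\alpha)$ after an initial $O(\eps^{-1})$ steps, since once $\mu(X),\mu(Y)>1/2$ one has $XY=A_n$, and since $A=A^{-1}$ each $A^j$ is symmetric.

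The growth of $\mu(A^j)$ is controlled by hypercontractivity, but only when the relevant function is pseudorandom. Say $B\sub A_n$ is \emph{$r$-global} if for every pair of $r$-sets $I,J$ the relative density of $B$ inside $\{\pi:\pi(I)=J\}$ is not much larger than $\mu(B)$. The heart of the argument is a dichotomy applied to $B=A^t$ for an appropriate bounded $t$: either $B$ is $r$-global at a level $r=O(\eps^{-1})$ dictated by the sparsity $\alpha\ge e^{-cn^{1-\eps}}$, in which case the sharp hypercontractive inequality for $S_n$ --- the promised sharpening of \cite{filmus2020hypercontractivity} --- bounds the spectral norm of convolution by $1_B$ on $L^2_0$ and, feeding this into the identity above together with the fact that convolution improves globalness ("spreading"), yields a quantitative growth estimate for $\mu(B^{k})$ valid until the density reaches $1/2$; or else $B$ has relative density at least $\mu(B)^{1-\eps/4}$ on some $\{\pi:\pi(I)=J\}$, in which case passing to this restriction realises (a bounded power of) $B$ as, in effect, a subset of $A_{n-r}$ of strictly larger density, and we recurse on $n-r$. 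The recursion terminates because the density strictly increases at each level and is bounded above; tracking the number of levels against the per-step growth rate in the global case --- where the level $r=O(\eps^{-1})$ limits how fast the density can grow per multiplication --- is exactly the computation that produces the two terms $8\eps^{-1}$ and $C\eps^{-2}\log_n^2(1/\alpha)$.

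The step I expect to be the main obstacle is the density-increment half of the dichotomy: one must show that a failure of $r$-globalness can be parlayed into a genuine reduction to a smaller symmetric group with strictly larger density, with the dependence of the globalness level $r$ on $\eps$ clean enough that the recursion closes within the claimed $O(\eps^{-1}+\eps^{-2}\log_n^2(1/\alpha))$ multiplications; and this must be coupled with an honest application of the hypercontractive inequality \emph{in its sharp form} in the global case, since the non-sharp version of \cite{filmus2020hypercontractivity} loses factors that would degrade the exponents $\eps^{-1}$ and $\eps^{-2}$. A secondary technical point is the transfer between $A_n$ and $S_n$ (the hypercontractive inequality being stated for $S_n$) and between the group convolution structure and the product-space structure on which global hypercontractivity is naturally phrased; this is handled by standard coupling and coset arguments, but the parameters must be tracked carefully so that neither the sparsity threshold nor the final covering-number bound degrades.
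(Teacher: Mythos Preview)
Your dichotomy---global implies growth via hypercontractivity, non-global implies a density increment onto a coset of a point stabiliser $U_I$---is the right engine, and it is essentially how the paper's Bogolyubov lemma works: iterated density increment \emph{on $A$ itself} locates a global restriction $A_{I\to J}$ with $|I|=O(\eps^{-1}\log_n(1/\alpha))$, and then the mixing bound for global functions shows that a translate of this restriction, raised to the power $4/\eps$, fills $U_I$, giving $U_I\subseteq A^{8/\eps}$. But this is where your outline effectively stops, and it is not the end of the proof. Having $U_I$ inside a power of $A$ does \emph{not} yield $\mu(A^j)\ge 1/2$: the subgroup $U_I$ has density only $\sim n^{-|I|}$, and one must still reach all $\sim n^{|I|}$ cosets of $U_I$. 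Your recursion into $A_{n-r}$ does not address this---it only helps fill $U_I$, not escape it---so the argument as written never climbs back up to $A_n$.

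The paper closes this gap with a separate mechanism that you do not mention: a Schreier-graph diameter bound (controlling how quickly short words in $A$ move tuples) combined with an iterative ``eating'' step that conjugates $U_{I'}$ by words of length $O(|I|)$ in $A$ to land in a strictly larger $U_{I''}$ with $|I''|<|I'|$. Running this $|I|$ times shrinks $I$ to $\emptyset$ and gives $A_n=U_\emptyset\subseteq A^{8/\eps+O(|I|^2)}$; it is precisely this $|I|^2=O(\eps^{-2}\log_n^2(1/\alpha))$ that produces the second term in the bound, and nothing in your accounting explains where a square should appear. A secondary issue: you apply the density increment to $B=A^t$ and then to bounded powers of successive restrictions, so the exponent of $A$ compounds multiplicatively across recursion levels and overshoots the target; the paper avoids this by performing all density increments on $A$ directly (never on its powers) before taking a single product to pass into $U_I$.
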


To see sharpness, we consider $A$ 
obtained from $\bigcap_{i=1}^n \{ \sS \in A_n: \sS(i)=i \}$
by adding all $3$-cycles on consecutive numbers.
Clearly $\log_n(1/\mu(A)) \sim m$ for $m=o(n)$.
Furthermore, $A$ has covering number $\OO(m^2)$
by considering the cycle $(m,m-1,\ldots ,1)$, noting that
when multiplying elements of $A$ each new term
can add only $O(1)$ inversions $(i,j)$ with $i,j \le m$.

\subsubsection*{Polynomial Freiman-Ruzsa}
A subset $A$ of a group $G$ is called a $K$-approximate group 
if $\text{id} \in A =A^{-1}$  and $A^2 \subseteq XA$ for some $X \sub G$ with $|X|=K$.
The qualitative structure of approximate groups is quite well understood,
in the abelian setting by Freiman's Theorem \cite{freiman1999structure} 
(extended to abelian groups by Green and Ruzsa \cite{green2007freiman}),
and in general by Breuillard, Green and Tao \cite{breuillard2012structure}
(a qualitative answer to a conjecture of Helfgott and Lindenstrauss).
However, the quantitative aspects of this structure are poorly understood,
except that for finite groups of Lie type with bounded rank the conjecture
was proved by Eberhard, Murphy, Pyber and Szab\'o \cite{eberhard2021growth}
(see also \cite{pyber2016growth}).
The abelian case has considerable interest in its own right
and from the perspective of diverse applications
(ranging to Computer Science, see \cite{lovett2015exposition});
here the influential Polynomial Freiman-Ruzsa Conjecture (see \cite{PFR}) 
suggests that approximate groups can be covered by polynomially few translates
of a canonical approximate group (a convex progression of the correct size and dimension).

We establish such a covering result for $K$-approximate subgroups 
$A$ in $S_n$ with $\mu(A) = e^{-O(n^{1-\eps})}$:
they can be efficiently covered by cosets of a large subgroup, 
with parameters depending polynomially on $\mu(A)$ and $K$.

\begin{thm}\label{thm:approximate subgroups_intro}
There is $c>0$ so that if $A\sub S_n$ is a $K$-approximate group
with $\mu(A) \ge e^{-cn^{1-\eps}}$ and $\eps>1/\log n$
then there is a subgroup $H$ with $\mu(H) \ge \mu(A)^{5/\eps}$
such that $A \sub YH$ for some $Y \sub A$ with $|Y| \le K^6 |A|/|H|$.
\end{thm}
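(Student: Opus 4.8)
The plan is to derive the theorem from a Bogolyubov-type lemma together with a Ruzsa covering argument. The lemma I aim for is: if $\mu(B)\ge e^{-cn^{1-\eps}}$ and $\eps>1/\log n$, then $BB^{-1}$ contains a coset $\sS W$ of a subgroup $W\le S_n$ with $\mu(W)\ge\mu(B)^{5/\eps}$, where one may take $W$ to be a Young subgroup (the pointwise stabiliser of a small set of coordinates). Granting this, apply it with $B=A$: then $\sS W\sub AA^{-1}=A^2$, so $\sS\in A^2$, hence (using $A=A^{-1}$) $W=\sS^{-1}(\sS W)\sub A^2A^2=A^4$. Since $A$ is a $K$-approximate group, $A^5\sub X^4A$ with $|X|=K$, so $\mu(AW)\le\mu(A^5)\le K^4\mu(A)\le K^6\mu(A)$. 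As $AW$ is a union of at most $|AW|/|W|\le K^6|A|/|W|$ left cosets of $W$, each meeting $A$, we may choose representatives $Y\sub A$ with $A\sub YW$ and $|Y|\le K^6|A|/|W|$; so $H:=W$ works. This reduces matters to the Bogolyubov-type lemma.

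For the lemma, I would first reduce to the ``global'' case by a density increment with globalness parameter $r:=n^{\eps/5}$ (which is bounded away from $1$ precisely because $\eps>1/\log n$). As long as the indicator of $B$, restricted to the current coset of a pointwise-stabiliser subgroup, fails to be $r$-global, there is a small set $J$ of coordinates and a partial injection on it on which the relative density of $B$ is multiplied by at least $r^{|J|}$; restrict $B$ to the corresponding coset and iterate. Relative densities never exceed $1$, so if $s$ coordinates have been fixed at termination then $r^s\mu(B)\le1$, i.e.\ $s\le\log_r(1/\mu(B))=\tfrac{5}{\eps}\log_n(1/\mu(B))$. We are left with a coset $\sS_0W$, where $W$ is the pointwise stabiliser of a set of size $s$ (so $\mu(W)=1/(n)_s\ge n^{-s}\ge\mu(B)^{5/\eps}$, as needed), and the translate $B'$ of $B\cap\sS_0W$ into $W$ has $r$-global indicator in $W\cong S_{n-s}$; since $n-s\sim n$ and the relative density $\beta$ of $B'$ satisfies $\beta\ge\mu(B)$, the inputs to hypercontractivity inside $W$ are inherited. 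After translating, it suffices to find a coset of $W$ inside $B'(B')^{-1}$, computed in $W$.

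This global case is where the sharp hypercontractive inequality is used. In the group algebra of $W$, for a partition $\lL$ of $n-s$ put $N_\lL:=\widehat{\mathbf{1}_{B'}}(\lL)$, so the $\lL$-component of $\mathbf{1}_{B'}*\mathbf{1}_{(B')^{-1}}$ has Fourier coefficient $N_\lL N_\lL^{*}\succeq 0$, and the trivial component is the constant $\beta^2$. The spectral consequence of the sharp inequality bounds $\|N_\lL\|_{\mathrm{op}}$ for all nontrivial $\lL$, and also forces the components of largest operator norm to have bounded degree; feeding this, together with Parseval's identity $\sum_{\lL\ne\mathrm{triv}}d_\lL\,\mathrm{tr}(N_\lL N_\lL^{*})\le\beta$, into the isotypic expansion of $\mathbf{1}_{B'}*\mathbf{1}_{(B')^{-1}}$ (or a bounded convolution power of it) should show it is positive on a full coset of a Young subgroup $W'$ whose density is still $\ge\mu(B)^{5/\eps}$---using that $\rho_\lL$ is trivial on a large Young subgroup once $\lL$ has bounded degree and $N_\lL$ is essentially carried by the corresponding low-dimensional invariant subspace.

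The main obstacle is this last step. The crude positivity expansion only locates a coset of a large subgroup inside a convolution power $(B'(B')^{-1})^{\ell}$ with $\ell$ of order $\log_n(1/\beta)/\eps$, which is too large to feed into the Ruzsa step; so one must either show that when $\beta$ is polynomially small $B$ already lies in a single coset of $W$ (so no covering is needed and $Y$ is a single element), or else extract a genuine coset directly from the degree and operator-norm structure of the large spectral components. It is precisely here that the \emph{sharp} inequality of this paper, rather than the earlier bound of Filmus--Kindler--Lifshitz--Minzer, is needed, since a polynomial loss at this point would propagate into the exponent of $K$. The remaining ingredients---calibrating $r=n^{\eps/5}$ so that the increment terminates with $\mu(W)\ge\mu(B)^{5/\eps}$, verifying that hypercontractivity transfers from $S_n$ to $S_{n-s}$, and assembling the Ruzsa covering---are routine.
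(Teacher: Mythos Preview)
Your density-increment step with $r=n^{\eps/5}$ and the resulting bound $\mu(U_I)\ge\mu(A)^{5/\eps}$ are exactly what the paper does, and your identification of the obstacle is accurate: with the Bogolyubov lemma actually available in this paper (Theorem~\ref{thm:Bogolyubov_intro}), one only gets a subgroup inside $A^{O(1/\eps)}$, and feeding that into Ruzsa covering yields $|Y|\le K^{O(1/\eps)}|A|/|H|$, not $K^{6}$. Your proposed fixes (finding a genuine coset inside $B'(B')^{-1}$, or showing $B$ already lies in a single coset when $\beta$ is polynomially small) are not carried out, and in fact the paper does \emph{not} proceed this way at all.

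The paper sidesteps the Bogolyubov route entirely. After locating an $r$-global restriction $A_{I\to J}$ with $\mu(A_{I\to J})=\max_K\mu(A_{I\to K})\ge r^{|I|}\mu(A)$ and setting $H=U_I$, the key step is to prove directly that $\mu(A_{I\to J})\ge K^{-5}$. This uses the approximate-group hypothesis, not just $A=A^{-1}$: translate $A_{I\to J}$ into $U_I$ to get a global $B\sub A^2$, apply the growth lemma (Lemma~\ref{lem:growth of AB}) to get $\mu(B_{I\to I}^{2})\ge\mu(A_{I\to J})^{0.1}$, and then compare two expressions for $\mu(A)$. Letting $Y$ be a system of representatives for the nonempty fibres $A_{I\to K}$, maximality of $A_{I\to J}$ gives $\mu(A)\le|Y|\,\mu(A_{I\to J})\,\mu(H)$; on the other hand $YB^{2}\sub A^{5}\sub X^{4}A$ gives $|Y|\,\mu(A_{I\to J})^{0.1}\,\mu(H)\le K^{4}\mu(A)$. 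Dividing yields $\mu(A_{I\to J})^{0.9}\ge K^{-4}$. With this density bound, the Ruzsa covering is run not with a subgroup contained in a power of $A$, but with $A'=\{\sigma\in A:\sigma(I)=J\}$: for $Z\sub A$ maximal with $\{zA'\}_{z\in Z}$ disjoint one has $A\sub ZH$ (since $\sigma_2\sigma_1^{-1}\in U_I$ whenever $\sigma_1,\sigma_2\in A'$) and $|Z|\,|A'|\le|A^{2}|\le K|A|$, whence $|Z|\le K|A|/|A'|\le K^{6}|A|/|H|$. The point is that one never needs a subgroup inside $A^{O(1)}$; it is enough that the \emph{dense} fibre $A'$ satisfies $A'(A')^{-1}\sub H$.
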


\subsubsection*{Bogolyubov}
A key ingredient of Ruzsa's approach \cite{ruzsa1994generalized}
to Freiman-type theorems, known as Bogolyubov's Lemma, 
is finding structure in iterated sumsets 
(Bohr sets in $\mb{Z}$, or subspaces in $\mb{F}_q^n$).
Such a result in $S_n$ was an important ingredient for Helfgott and Seress
in their diameter result mentioned above: they showed
(see  \cite[Proposition 3.15]{helfgott2014diameter})
that if $A \sub A_n$ with $A=A^{-1}$, $\mu(A) \ge d^n$ and $d>1/2$
then $A^{8n^5}$ contains a large subgroup,
indeed a naturally embedded $A_{dn}$
of the form $U_I := \bigcap_{x \in I} \{ \sS \in A_n: \sS(x)=x\}$.
We obtain the following quantitative improvement of their result,
reducing the exponent of $A$ to a constant and moreover finding 
a polynomially large subgroup (which is much stronger than
the bounds in the usual arithmetic Bogolyubov).

\begin{thm}\label{thm:Bogolyubov_intro}
There exists $c>0$ so that if $A \sub A_n$ 
with $A=A^{-1}$,  $\mu(A) \ge e^{-cn^{1-1/M}}$ 
and $M < \log n$ is an integer
then $A^{8M}$ contains a subgroup $U_I$
with $\mu(U_I) \ge \mu(A)^{5M}$.
\end{thm}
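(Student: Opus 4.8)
The plan is to run the convolution proof of Bogolyubov's lemma inside $A_n$, using the sharp hypercontractive inequality to bound the error terms. Write $\mu=\mu(A)$. The elementary starting point is that for any symmetric $B=B^{-1}\sub A_m$, setting $g=1_B-\mu(B)$ for the balanced part, one has $1_B^{*k}=\mu(B)^k+g^{*k}$, since in the multinomial expansion any term mixing $g$ with the constant $\mu(B)$ is killed by $\mathbf 1*g=g*\mathbf 1=0$; moreover $\widehat{g^{*k}}(\chi)=\widehat g(\chi)^k$ for every irreducible $\chi$, and since convolution cannot raise the degree, $g^{*k}=g_{\le D}^{*k}+g_{>D}^{*k}$ for any threshold $D$ (the mixed terms lie in irreps of no degree at all). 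As $B^k=\{\sigma:1_B^{*k}(\sigma)>0\}$, it suffices to produce a point-stabiliser $U_I$ and a symmetric $B\sub A^2$ with $g^{*k}(\sigma)>-\mu(B)^k$ for all $\sigma\in U_I$, where $2k\le 8M$; this gives $U_I\sub B^k\sub A^{2k}\sub A^{8M}$.

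The low-degree part $g_{\le D}$ is the obstruction, exactly as the large spectrum is over $\mb F_q^n$: convolution by $1_B$ can fail to contract on the low-degree (long-first-row) isotypic components, corresponding to $B$ concentrating on a coset of a point-stabiliser. I would remove this by a density-increment/restriction step: if some restriction $A_{x\to y}=\{\sigma\in A_n:\sigma(x)=y\}$ has relative density noticeably above $\mu$, pass to the symmetric set $B:=A_{x\to y}^{-1}A_{x\to y}$, which lies in $U_{\{x\}}\cong A_{n-|x|}$, satisfies $B\sub A^2$ (here $A=A^{-1}$ is used), and has relative density at least that of $A_{x\to y}$, hence at least $\mu$ if $(x,y)$ is chosen densest. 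Carrying this out so as to leave no profitable further restriction, one reaches $B\sub A^2$ that is \emph{global} in some $A_m$, having conditioned on a set $I$ of coordinates; the relative density can only increase (and stays $\le 1$), so $|I|=O(\log_n(1/\mu))$, which already forces $\mu(U_I)=1/(n)_{|I|}\ge\mu^{5M}$ with room to spare. The point of globalness is a regularity decomposition: after conditioning, $g_{\le D}$ is, up to negligible error, a function of the coordinates in $I$ alone; since $B=B^{-1}$ it is then an even convolution power of a self-adjoint operator whose Fourier matrices live in irreps trivial on $U_I$, so for $\sigma\in U_I$ (where those irreps act as the identity) $g_{\le D}^{*k}(\sigma)$ is a non-negative combination of traces of positive semidefinite matrices --- it contributes $\ge 0$, just as the terms $\widehat{1_A}(\chi)^4\ge 0$ do in the abelian proof.

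For the high-degree part I would invoke the spectral estimate furnished by the sharp hypercontractive inequality: for the global set $B$, convolution by $1_B$ has operator norm $\le\lambda$ on the degree-$>D$ subspace, with $\lambda$ small relative to $\mu(B)$ in a way that improves as the globalness scale grows. Then $\|g_{>D}^{*k}\|_\infty\le\|g_{>D}^{*\lceil k/2\rceil}\|_2\,\|g_{>D}^{*\lfloor k/2\rfloor}\|_2\le\lambda^{k-2}\|g_{>D}\|_2^2\le\lambda^{k-2}\mu(B)$, so for $\sigma\in U_I$, combining with the previous paragraph, $1_B^{*k}(\sigma)\ge\mu(B)^k-\lambda^{k-2}\mu(B)$, which is $>0$ provided $\lambda<\mu(B)^{(k-1)/(k-2)}$. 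The sharp inequality is strong enough to give $\lambda\le\mu(B)^{1+\Omega(1/M)}$ once the threshold $D$ and the globalness scale are calibrated against the hypothesis $\mu\ge e^{-cn^{1-1/M}}$ with $c$ a sufficiently small absolute constant; then taking $k=4M$ makes the bound strictly positive on all of $U_I$, completing the argument.

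The crux --- and where the \emph{sharp} inequality, rather than the qualitative one of Filmus--Kindler--Lifshitz--Minzer, is essential --- is precisely this last calibration: at a density as small as $e^{-cn^{1-1/M}}$ one must still extract a hypercontractive gain $\lambda/\mu(B)\le\mu(B)^{\Omega(1/M)}$, and the weaker inequality is far too lossy for this (it would force $M$ to be bounded, i.e.\ a much stronger density hypothesis). A secondary obstacle is the regularity step: one must show the densest-restriction procedure can be arranged to terminate after conditioning on only $O(\log_n(1/\mu))$ coordinates while $B\sub A^2$ is preserved, and that what survives at degree $\le D$ is genuinely localised on the conditioned coordinates up to an error small enough for the positivity argument. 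All of this is driven by the hypercontractive input, which is exactly what lets one improve Helfgott--Seress's Proposition~3.15 (a linear-codimension subgroup, exponent $8n^5$, elementary methods) to a polynomially large $U_I$ inside $A^{8M}$.
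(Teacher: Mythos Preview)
Your overall architecture---density-increment to a global set inside some $U_I$, then a convolution/mixing estimate---matches the paper's. But there are two genuine gaps.

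First, your density-increment is not set up correctly. If you iterate $B\mapsto B_{x\to y}^{-1}B_{x\to y}$ step by step, each step \emph{squares} the containing power of $A$, so after $\ell$ steps you only get $B\sub A^{2^\ell}$, not $A^2$. If instead you restrict all at once and take $B=A_{I\to J}^{-1}A_{I\to J}$, then $B\sub A^2\cap U_I$ as you say, but there is no reason this product set inherits globalness from $A_{I\to J}$. The paper avoids both problems: Lemma~\ref{lem: the existence of a global restriction} finds in one shot an $r$-global restriction $A_{I\to J}$, and then sets $B:=A_{I\to J}\sigma^{-1}$ for any fixed $\sigma\in A_{I\to J}$. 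This $B$ is a translate of $A_{I\to J}$, hence is $r$-global with the same parameters, lies in $U_I$, and satisfies $B\sub A^2$. It is not symmetric, but the mixing argument does not need symmetry.

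Second, and more seriously, your low-degree positivity argument does not work. Globalness of $B$ does \emph{not} force $g_{\le D}$ to be (approximately) a function of the coordinates in $I$; globalness bounds $\|g^{=d}\|_2$ via the level-$d$ inequality, which is a very different statement. And even granting that, the claim that the relevant irreps act trivially on $U_I$ is false: no nontrivial $V_\lambda$ with $\lambda_1=n-d$, $1\le d\le D$, has $U_I$ in its kernel (already the standard representation $(n-1,1)$ restricts nontrivially to any point stabiliser). So you cannot conclude $\mathrm{tr}(\rho(\sigma)\widehat g(\rho)^{k})\ge 0$ for $\sigma\in U_I$.

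The paper's substitute for your positivity step is simply to bound \emph{every} nonzero degree. Once $B$ is $r$-global in $A_m\cong U_I$, the level-$d$ inequality (Theorem~\ref{thm:level-d for biglobal functions}) gives $\|f^{=d}\|_2\le (Cr^2 d^{-1}\log\|f\|_2)^{d/2}$ for $f=1_B/\mu(B)$, and the spectral bound (Theorem~\ref{thm:Eigenvalues of operators}) gives $\|T_f\|_{W_{=d}}\le (Cr^2 n^{-1}\log\|f\|_2)^{d/2}$. Summing $\|T_f\|_{W_{=d}}^{M-1}\|f^{=d}\|_2$ over all $d\ge 1$ shows $\|f^{*M}-1\|_2<\eps$ directly (Theorem~\ref{thm:mixing time}), whence $|B^M|>0.99|A_m|$ and $B^{2M}=A_m$. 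This is exactly the calibration you identify as the crux, and it handles the low degrees that you were trying to treat by positivity.
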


\subsubsection*{Waring}

The Waring problem, that became a theorem of Hilbert in 1909,
states that for any $k \in \mb{N}$ there is some $t \in \mb{N}$
such that any $n \in \mb{N}$ is a sum of at most $t$ 
natural numbers raised to their $k$th powers.
An analogous result of Shalev \cite{shalev2009word} 
in the non-abelian setting shows that for any sufficiently
large finite simple group $G$ every element of $G$
can be expressed as a product of three $k$th powers. In fact, Shalev's result applies not only to $k$th powers,
but also to the image $w(G)$ of any word map,
where for any non-trivial finite word $w$ in a free group $F$
we write $w(G)$ for the set of all possible elements of $G$
obtained by substituting elements of $G$ for the generators of $F$. 
Nikolov and Pyber \cite{np} proved a robust version of this result,
replacing $w(G)$ by a sufficiently dense subset of any set
of the form $w_1(G) \cap \dots \cap w_k(G)$, 
but their proof does not apply to $A_n$. We fill this gap. 

\begin{thm} \label{thm:waring}
For any non-trivial finite words $w_1,\dots,w_k$
there is $N \in \mb{N}$ so that if $n \ge N$
and $A$ has density at least $n^{-1/4}$
in $w_1(A_n) \cap \dots \cap w_k(A_n)$,
then $A^3=A_n$.
\end{thm}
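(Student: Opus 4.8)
The plan is to combine two ingredients: first, a structural/pseudorandomness statement saying that the word image $w_i(A_n)$ is \emph{global} (not concentrated on any small family of cosets of point stabilizers), and second, the spectral machinery for convolution operators of global functions that underlies Theorem~\ref{thm:Bogolyubov_intro} and Theorem~\ref{thm:diameter bound}. Concretely, I would first show that for a fixed non-trivial word $w$, the set $w(A_n)$ has density bounded below by an absolute constant (this is essentially a consequence of the Larsen--Shalev type estimates, or of the fact that $w(A_n)^N = A_n$ for bounded $N$, so $w(A_n)$ cannot be too sparse), and more importantly that $w(A_n)$ is ``spread'': for every set $I$ of coordinates with $|I|$ bounded and every prescribed behaviour on $I$, the conditional density of $w(A_n)$ is comparable to its global density. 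The point is that membership in $w(A_n)$ is determined by cycle type together with lower-order data, and cycle-type conditions are insensitive to the action on a bounded number of points.

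With globality in hand, the second step is to pass to the intersection $B := w_1(A_n)\cap\dots\cap w_k(A_n)$ and the dense subset $A\subseteq B$ with $\mu(A)\ge n^{-1/4}\mu(B)$, hence (using $\mu(B)\ge$ const from step one) $\mu(A)\ge n^{-1/4-o(1)} \ge e^{-cn^{1-\eps}}$ for any fixed $\eps<1$ once $n$ is large; in fact $\mu(A)$ is only polynomially small in $n$. Now I would invoke the Bogolyubov-type result (Theorem~\ref{thm:Bogolyubov_intro}, or rather the underlying symmetrized version applied to $AA^{-1}$) to find, inside a bounded power of $A$, a subgroup of the form $U_I$ with $|I|=r$ bounded and $\mu(U_I)\ge \mu(A)^{O(1)}$, which since $\mu(A)$ is only polynomially small forces $r = o(n)$, indeed $r \le n/2$ say. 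Actually, since we only need $A^3 = A_n$, a cleaner route is: use the sharp hypercontractivity to show that the convolution $1_A * 1_A$ (suitably normalized) is bounded below on all of $A_n$ except for an error supported on a global function of tiny norm, so that $1_A*1_A*1_A$ is everywhere positive; this is exactly the kind of ``three-fold product covers everything'' conclusion the spectral bounds are designed to yield when $\mu(A)$ is polynomially large.

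In more detail, I would write $1_A = \mu(A) + f$ with $f$ having mean zero, decompose $f$ into its ``local'' part (correlation with juntas on few coordinates) and its global part, and argue: (i) the local part is controlled because $A\subseteq B$ and $B$ is global, so $1_A$ has no large correlation with small-support functions beyond what $\mu(A)$ forces — this is where step one is used crucially; (ii) the global part has small spectral norm as a convolution operator, by the sharp hypercontractive inequality, so its threefold convolution has $L^\infty$ norm $o(\mu(A)^3)$. Combining, $1_A*1_A*1_A(\pi) \ge \mu(A)^3 - o(\mu(A)^3) > 0$ for every $\pi\in A_n$, i.e.\ $A^3=A_n$. The choice of exponent $n^{-1/4}$ (rather than, say, $n^{-1/2}$) comes from matching the power of $n$ lost in the globality estimate against the power gained from hypercontractivity; I would track constants only to the extent needed to see $n^{-1/4}$ suffices for large $n$.

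The main obstacle I anticipate is step one — establishing that $w(A_n)$ is genuinely global, i.e.\ that conditioning on the values of a permutation at a bounded set of points does not decrease the density of $w(A_n)$ by more than a constant factor. This is not purely a cycle-type statement: fixing $\pi(i)=j$ is a ``local'' constraint that interacts non-trivially with the (conjugation-invariant) structure of $w(A_n)$, and one must rule out that $w(A_n)$ secretly avoids, say, all permutations with a fixed point. I expect this requires either a direct probabilistic argument (a random substitution into $w$ yields, conditioned on local data, still a nearly-uniform cycle type), or a reduction using that $w(A_n)$ is closed under conjugation together with a transitivity argument on cosets of $U_I$; in either case some care is needed near the trivial word and for small $n$, which is why the theorem only claims $n\ge N$. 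Once globality is established, the rest is a relatively direct application of the paper's main spectral and Bogolyubov machinery.
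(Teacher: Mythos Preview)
Your proposal is on the right track and matches the paper's overall strategy, with a few corrections and one key lemma you are missing.

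First, the claim that $\mu(w(A_n))$ is bounded below by an absolute constant is too optimistic; the paper instead invokes the Larsen--Shalev bound $\mu(w_1(A_n)\cap\dots\cap w_k(A_n)) > n^{-29/9-\eps}$ for large $n$. This is only polynomial, but polynomial suffices for everything downstream, so this does not break your argument.

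For the globality step, which you correctly flag as the crux, the paper does not argue about $w(A_n)$ per se but proves a general structural lemma (Theorem~\ref{thm:normalglobal}): \emph{any} normal set $B\subseteq A_n$ with $\mu(B)>e^{-n^{0.01}}$ is $n^{0.01}$-global. Since word images and their intersections are conjugation-invariant, this applies to $B$. The proof first reduces an arbitrary restriction $B_{I\to J}$ to one where $J$ is a permutation of $I$ (by conjugating to swap target values, losing only a constant factor per coordinate), then decomposes $B$ into conjugacy classes and shows that a class with at most $f^i$ cycles of each length $i$ is $2f$-global, while classes violating this for large $f$ have negligible total measure. Your intuitions---``cycle-type conditions are insensitive to local data'' and ``closed under conjugation plus transitivity''---are exactly the right ingredients; what you are missing is this concrete quantitative packaging.

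Once $B$ is $n^{0.01}$-global, a subset $A$ of relative density $\ge n^{-1/4}$ is automatically $n^{1/7}$-global (immediately from the definition: $\mu(A_{I\to J})\le\mu(B_{I\to J})\le n^{0.02d}\mu(B)\le n^{0.02d+1/4}\mu(A)$). So your proposed decomposition of $1_A$ into ``local'' and ``global'' parts is unnecessary---$A$ simply has no significant local part. One then applies product mixing (Theorem~\ref{thm:product mixing}) directly to $(A,A,xA^{-1})$ for each $x\in A_n$, which is precisely your second route. Your Bogolyubov alternative, as you note yourself, only yields a bounded power rather than $3$ and should be discarded.
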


Larsen and Shalev \cite{larsen2008characters} and then Larsen, Shalev, and Tiep \cite{larsen2009word} improved on Shalev~\cite{shalev2009word} by showing that every element of a sufficiently large finite simple group $G$ is a product of two elements of $w(G)$. However, the robust analogue completely fails for their version. Indeed, one can easily find a large subset  $A\subseteq w(G)$ with $1\notin A^2$. 

\subsubsection*{Roth}
A classical theorem of Roth \cite{roth1953certain},
that inspired many later developments in Additive Combinatorics (see \cite{tao2006additive}),
shows that any positive density subset of $\mb{Z}$ contains a non-trivial three-term arithmetic progression (3AP).
There is a rich literature on the quantitative aspects of this question,
recently brought to a dramatic conclusion by Kelley and Meka \cite{kelley2023strong},
who showed that one can find a 3AP in any subset of $\{1,\dots,n\}$ 
of a certain quasi-polynomial density, thus matching a classical construction 
of Behrend \cite{behrend1946sets} that gives a lower bound of the same form.
For general groups, a qualitative analogue of Roth's theorem (and solutions of more general equations)
was established by Kr\'al', Serra and Vena \cite{kral2009}. Their `regularity type' bounds for the maximum density 
were improved by Sanders \cite{sanders2017solving} to doubly logarithmic bounds for the analogue of Roth's theorem,
where in a general group we say $(x,y,z)$ is a 3AP if $xz=y^2$, which is non-trivial if $x,y,z$ are not all equal.
We obtain the following improvement of the density upper bound in $S_n$  from $\log^{-c} n$ to $n^{-c\log n}$.

\begin{thm}\label{thm: Our variant of roth_intro}
There is an absolute constant $c>0$ so that 
any $A \sub S_n$ with no 3AP has $\mu(A) < n^{-c\log n}$.
\end{thm}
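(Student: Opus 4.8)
The plan is to count three-term progressions $(x,y,z)\in A^{3}$ with $xz=y^{2}$ by harmonic analysis on $G=S_{n}$, and to show that if $\mu(A)$ is above the stated threshold there must be a nontrivial one. Write $\alpha=\mu(A)$, $f=\mathbf 1_{A}$, and use the convolution $(g\ast h)(x)=\mathbb E_{z}[g(z)h(z^{-1}x)]$ with expectations over the uniform measure. For each $y$ the number of $(x,z)\in A^{2}$ with $xz=y^{2}$ is $|G|\,(f\ast f)(y^{2})$, so the number of 3APs in $A$ equals $|G|^{2}\,\mathbb E_{y}[f(y)(f\ast f)(y^{2})]$. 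Splitting $f=\alpha+f'$ with $\mathbb E[f']=0$, the cross terms in $f\ast f$ vanish and $f\ast f=\alpha^{2}+f'\ast f'$, so this count is $|G|^{2}(\alpha^{3}+\mathcal E)$ with $\mathcal E=\mathbb E_{y}[f(y)(f'\ast f')(y^{2})]$; after the substitution $u=x^{-1}y$ (so the progression is $(x,xu,uxu)$) one can split further, $\mathcal E=\mathbb E_{x,u}[f'(x)f'(xu)f'(uxu)]+\alpha\,\mathbb E_{v}[(f'\ast f')(v^{2})]$, a genuinely trilinear term plus a ``squaring'' term. If $A$ has no nontrivial 3AP then the count is exactly $|A|=\alpha|G|$ (the diagonal solutions $x=y=z$), which is negligible next to $\alpha^{3}|G|^{2}$ whenever $\alpha\ge n^{-c\log n}$; hence $|\mathcal E|\ge\tfrac12\alpha^{3}$, and it suffices to prove $|\mathcal E|<\tfrac12\alpha^{3}$ for such $A$.

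For a \emph{global} set $A$ I would estimate $\mathcal E$ using the sharp hypercontractive inequality and its consequences. The trilinear term is handled by Gowers--Cauchy--Schwarz, reducing it to the spectral norm $\|T_{f'}\|_{\mathrm{op}}=\max_{\rho\neq\mathbf 1}\|\widehat{f'}(\rho)\|_{\mathrm{op}}$ of the convolution operator $g\mapsto f'\ast g$ together with higher $L^{q}$-norms of $f'$, both of which the theory of global functions controls: globalness plus hypercontractivity forces $\|T_{f'}\|_{\mathrm{op}}$ and $\|f'\|_{q}/\|f'\|_{2}$ to be small. The squaring term is expanded via Frobenius--Schur: since every irreducible representation of $S_{n}$ is real, $\mathbb E_{v}[(f'\ast f')(v^{2})]=\sum_{\rho\neq\mathbf 1}\operatorname{Tr}(\widehat{f'}(\rho)^{2})$, which is again bounded by $\|T_{f'}\|_{\mathrm{op}}$ times a controlled quantity. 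Summing the resulting level-$d$ estimates over degrees $d=O(\log n)$ produces a bound on $|\mathcal E|$ that is quasi-polynomially (in $n$) smaller than $\alpha^{3}$, and this is exactly what fixes the threshold $n^{-c\log n}$.

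For general $A$ I would run a density increment. If $A$ is not global at the scale demanded above, then by definition there is a sub-cube $U=\{\sigma : \sigma(i_{1})=j_{1},\dots,\sigma(i_{t})=j_{t}\}$ with $t=O(1)$ on which $A$ has density at least $(1+\delta)\alpha$ (or $\alpha^{1-\delta}$); replacing $A$ by $A\cap U$ inside the copy of (a torsor of) $S_{n-t}$ that $U$ represents, and noting that 3AP-freeness passes to subsets, we strictly increase the relative density. Since density cannot exceed $1$, after at most $O(\log(1/\alpha))=o(n)$ steps we reach a global configuration, to which the previous paragraph applies and yields a nontrivial 3AP in $A$ --- a contradiction, which proves the bound.

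The main obstacle is the error estimate for global $A$: because the squaring map $v\mapsto v^{2}$ is very far from measure preserving on $S_{n}$ (its fibres can have size $e^{\Theta(n\log n)}$), $\mathcal E$ is not a plain convolution estimate, and one must combine the hypercontractive inequality with quantitative control of the Frobenius--Schur and square-root data and of $\|T_{f'}\|_{\mathrm{op}}$ over \emph{all} nontrivial irreducibles --- not just the low-degree ones --- to extract a gain of the right quasi-polynomial size. A secondary point is that the sub-cubes produced by the density increment are in general cosets (torsors) rather than genuine subgroups of $S_{n}$, and translation --- unlike conjugation --- does not preserve the relation $xz=y^{2}$; so one must either verify that the count identity and the notion of globalness transfer to this torsor setting, or arrange the increment to descend through honest subgroups $U_{I}=\{\sigma : \sigma(i)=i\ \text{for all}\ i\in I\}$.
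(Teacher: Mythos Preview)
Your proposal is a reasonable plan, and your Frobenius--Schur identity for the squaring term is correct, but as you yourself acknowledge, both key steps are left open: you do not actually bound the trilinear form $\mathbb E_{x,u}[f'(x)f'(xu)f'(uxu)]$ (a Gowers--Cauchy--Schwarz manoeuvre does not reduce it cleanly to $\|T_{f'}\|_{\mathrm{op}}$, since the form mixes left multiplication with conjugation), and you do not explain how to run the density increment when restriction lands in a torsor rather than a subgroup. These are not technical loose ends but the heart of the problem, and the paper does \emph{not} proceed by filling them in.

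The paper takes a different route that sidesteps both obstacles. It never analyzes the 3AP count harmonically. Instead it observes that 3AP-freeness forces $\mathsf{square}\colon y\mapsto y^{2}$ to be injective on $A$, so $\mathsf{square}(A)$ has the same size as $A$. The main work (Lemma~\ref{lem:roth}) is an iterated restriction procedure producing $B,C\subseteq A$ and a chain of tuples $I\to J\to K\to L$ (with $|I|<\sqrt n$) such that the three restrictions
\[
B_{I\to J,\,J\to K},\qquad B_{J\to K,\,K\to L},\qquad \mathsf{square}(C)_{I\to K,\,J\to L}
\]
are simultaneously $n^{1/8}$-global with density at least $e^{-n^{0.01}}$. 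The chain is engineered precisely so that the product of an element of the first set with one of the second lies in the umvirate carrying the third; then product mixing (Theorem~\ref{thm:product mixing}) directly yields $x,z\in B$ and $y\in C$ with $xz=y^{2}$. Thus the torsor difficulty is resolved not by descending to a subgroup but by aligning three cosets via the chain $I\to J\to K\to L$, and the squaring map is handled not analytically via Frobenius--Schur but combinatorially, using injectivity to transfer density and globalness from $C_{I\to J,\,J\to K,\,K\to L}$ to $\mathsf{square}(C)_{I\to K,\,J\to L}$.
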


\remove{
\begin{itemize}
\item character ratios (good for growth of conjugacy classes)
\item Various motivations for studying non-class functions
\item The quantity generalizing the character ratios for non class functions.
\item Approximate subgroups
\item Bogolyubov 
\item Roth for $S_n$
\item Diameter bounds
\item Quasirandomness and its central role in constructing expanders
\item The global mixing property 
\item mixing times for global sets
\item Future application: Character bounds in the symmetric group
\item Via hypercontractivity for global functions
\end{itemize}
}

\subsection{Globalness and Analysis in symmetric groups}

Next we will state our new results for Analysis in symmetric groups,
i.e.\ our spectral and hypercontractive inequalities 
under the pseudorandomness notion of globalness.

\subsubsection*{Globalness} 
We start with some basic notation and the definition of globalness.
We consider the space $L^2(S_n,\mu)$ of real-valued functions on $S_n$
(also identified with the group algebra $\mb{R}S_n$) under the uniform measure $\mu$ on $S_n$.
We write $L^2$ to emphasise the role of the inner product
$\langle f, g\rangle =\mb{E}_{\sS \in S_n} [f(\sigma)g(\sigma)]$,
but we will also consider other $L^p$ norms below.

Let $[n]_t$ be the set of $t$-tuples of distinct element of $[n]$.
For $I,J \in [n]_t$ we write $U_{I \to J} = \{ \sS \in S_n: \sS \circ I = J\}$ and $U_I = U_{I \to I}$.
Following Friedgut \cite{friedgut2008measure}, we call $U_{I\to J}$ a \emph{$t$-umvirate}.
Equivalently, $t$-umvirates are cosets of pointwise stabilisers of $t$-sets.
We write $f_{I\to J} \in L^2(U_{I \to J},\mu)$ for the restriction of $f$ to $U_{I\to J}$,
where $\mu$ also denotes uniform measure on $U_{I\to J}$.

\begin{defn} \label{def:global}
We say that $f$ is $(r,\gG)$-\emph{global} if for each $d$ and $I,J \in [n]_d$ 
we have $\|f_{I\to J}\|_{2} \le r^d \gG$.
We say  that $f$ is $r$-\emph{global} if it is $(r,\|f\|_2)$-global. 
We say that $A\subseteq S_n$ is $r$-global if its indicator function $1_A$ is $r$-global.  
\end{defn}

Roughly speaking, a function $f$ is global if it is not highly correlated with any $t$-umvirate;
for example, if $f=1_A$ is $r$-global then restricting to any $t$-umvirate
increases the density by a factor at most $r^{2t}$.
The philosophy of the results of our paper is that global sets 
exhibit similar growth properties to the conjugacy classes. 
Intuitively, this means that the $t$-umvirates are the only obstruction to growth 
inside large subsets of the symmetric group.    

An instructive example is the subgroup $A = S_{n/2}\times S_{n/2}$,
which is $4$-global, has density about $2^{-n}$, 
yet $\text{Cay}(A_n,A)$ is disconnected.
This indicates the difficulty of extending our results
to densities smaller than $2^{-n}$.

\subsubsection*{The level $d$ inequality}

To motivate our next idea we start by recalling its analogue in the Analysis of Boolean functions. 
Any function $f\colon\{-1,1\}^n\to \mathbb{R}$ has a Fourier expansion $f=\sum \hat{f}(S)\chi_S$, 
where $\chi_S(x)=\prod_{i\in S}x_i.$ The coarser orthogonal decomposition
$f=\sum_{d=0}^n f^{=d}$, where $f^{=d}:=\sum_{|S|=d}\hat{f}(S)\chi_S$,
is known as the {\em degree decomposition}, 
as each function $f^{=d}$ is a polynomial of degree $d$. 
 
 A fundamental result known as the \emph{level $d$ inequality} states that 
 for $A\subseteq \{-1,1\}^n$ of density $\frac{|A|}{2^n}=\aA$, 
 if $d\le 2\log(1/\aA)$ then  $\|1_A^{=d}\|_2^2\le \aA^2 (2ed^{-1}\log(1/\aA))^d$.
 (See \cite[Section 9.5]{o2014analysis} or the original papers of Kahn, Kalai, and Linial \cite{kahn1988influence} 
 and Benjamini, Kalai, and Schramm  \cite{benjamini1999noise}.)
 To interpret this result, note that $\|1_A\|_2^2 = \aA = \sum_d \|1_A^{=d}\|_2^2$ 
 so for small $\aA$ we see that the low degrees have low Fourier weight.
 
Following Ellis, Friedgut and Pilpel~\cite{ellis2011intersecting}, we introduce the corresponding degree decomposition for the symmetric group,
which makes an analogy between the dictators $f(x)=x_i$ on $\{-1,1\}^n$ 
and the $1$-umvirates (also called dictators) $x_{i \to j} := 1_{U_{i\to j}}$ on $S_n$.

\begin{defn} \label{def:degdec}
Let $V_{\le d} \le L^2(S_n)$ consider of all functions $f(\sS) = g((x_{i\to j}(\sigma))_{i,j})$,
where $g$ is a multivariate degree $d$ polynomial in $n^2$ variables $(x_{ij})$.
The \emph{degree} of $f \in  L^2(S_n)$ is the smallest $d$ such that $f\in V_{\le d}$.
We write $f^{\le d}$ for the orthogonal projection of $f$ on $V_{\le d}$.
The \emph{degree $d$ part} of $f$ is $f^{=d} := f^{\le d} - f^{\le d-1}$. 
\end{defn} 

Now we can state a simplified form of our level $d$ inequality for global functions of the symmetric group. 

\begin{thm}\label{thm:level-d for global functions_intro}
For some absolute constant $C>0$, 
if $A \sub S_n$ is $r$-global and $d\le \min(\tfrac{1}{8}\log(1/\mu(A)), 10^{-5}n)$ then 
$\|1_A^{=d} \|_2^2\le \mu(A)^2 \left( C r^4 d^{-1} \log (1/\mu(A)) \right)^d$.
\end{thm}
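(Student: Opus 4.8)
The plan is to deduce the level $d$ inequality from the sharp hypercontractive inequality for global functions (the main tool advertised in the abstract), mimicking the standard Boolean proof but tracking globalness parameters carefully. In the Boolean setting, the level $d$ inequality follows by combining the fact that $\|1_A^{=d}\|_2 = \|1_A^{=d}\|_2$ with a hypercontractive bound $\|f^{=d}\|_q \le (q-1)^{-d/2}$-type estimate run in reverse: one writes $\langle 1_A, 1_A^{=d} \rangle = \|1_A^{=d}\|_2^2$, applies H\"older to bound this by $\|1_A\|_{q'}\|1_A^{=d}\|_q$ where $q' = q/(q-1)$, uses $\|1_A\|_{q'} = \mu(A)^{1/q'}$ since $1_A$ is an indicator, and then uses hypercontractivity to bound $\|1_A^{=d}\|_q$ by $\rho^{-d}\|1_A^{=d}\|_2$ for a suitable noise parameter $\rho$; rearranging and optimising over $q$ yields the stated bound. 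So first I would state precisely the sharp hypercontractive inequality for $r$-global functions in the form I need: for a degree-$d$ global function $f$ (or more robustly, for $T_\rho f$ applied to a general global $f$), a bound of the shape $\|f^{=d}\|_q \le \left( C r^2 \sqrt{q}/\sqrt{d} \right)^{?}\cdots$ — the exact shape is what the paper's main hypercontractivity theorem supplies, and the $r^4$ rather than $r^2$ in the exponent base strongly suggests the inequality is applied in a form where the globalness enters quadratically (e.g. because one passes through an $L^4$ or two-function estimate, or because restricting to umvirates costs $r^{2d}$ twice).

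Concretely, the key steps in order: (1) Write $\|1_A^{=d}\|_2^2 = \langle 1_A, 1_A^{=d}\rangle \le \|1_A\|_{q'} \|1_A^{=d}\|_q$ with $q' = q/(q-1)$; since $1_A$ is $\{0,1\}$-valued, $\|1_A\|_{q'} = \mu(A)^{(q-1)/q}$. (2) Observe that $1_A^{=d}$ inherits globalness from $1_A$ up to constant factors — here one needs that the projection onto $V_{=d}$ does not destroy the global structure, which should follow from the fact that the degree decomposition commutes with the restriction-to-umvirate operators, or at worst costs a factor depending only on $d$; this is where I would invoke whatever "globalness of projections" lemma the paper establishes. (3) Apply the sharp hypercontractive inequality to $1_A^{=d}$ to get $\|1_A^{=d}\|_q \le \left(C' r^2 q / d\right)^{d/2} \|1_A^{=d}\|_2$ (the precise constant and power coming from the paper's theorem). (4) Combine: $\|1_A^{=d}\|_2^2 \le \mu(A)^{(q-1)/q} \left(C' r^2 q/d\right)^{d/2} \|1_A^{=d}\|_2$, hence $\|1_A^{=d}\|_2 \le \mu(A)^{(q-1)/q} \left(C' r^2 q/d\right)^{d/2}$, so $\|1_A^{=d}\|_2^2 \le \mu(A)^{2(q-1)/q} \left(C' r^2 q/d\right)^{d}$. (5) Optimise over $q$: the natural choice is $q \asymp \log(1/\mu(A))/d$ — wait, rather $q$ chosen so that $\mu(A)^{-2/q}$ (the cost of dropping from exponent $2(q-1)/q$ to $2$) is comparable to the polynomial-in-$q$ gain, i.e. $q \asymp \log(1/\mu(A))/d$ times a constant; substituting gives $\mu(A)^2 \cdot \left(C'' r^2 \log(1/\mu(A))/d^2 \cdot d\right)^d$... which after collecting is $\mu(A)^2 \left(C r^4 d^{-1}\log(1/\mu(A))\right)^d$, where the extra factor of $r^2$ in the base presumably arises because the honest hypercontractive statement for global functions has $r^4$ (not $r^2$) in the relevant place, or because Step 2 costs another $r^{2d}$.

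The constraints $d \le \tfrac18\log(1/\mu(A))$ and $d \le 10^{-5} n$ enter exactly at Step 5: the first ensures the optimal $q = \Theta(\log(1/\mu(A))/d)$ is at least some absolute constant bigger than $1$ (so that Hölder and hypercontractivity are non-degenerate and $\mu(A)^{(q-1)/q}$ is bounded below by a constant power of $\mu(A)$), and the second ensures we are in the regime where the symmetric-group hypercontractive inequality is valid with the clean $\ell^2$-type bound (the sharp inequality for $S_n$ degrades or changes form once $d$ is a positive fraction of $n$, since umvirates of co-bounded rank start behaving differently).

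I expect the main obstacle to be Step 2–3, namely applying the sharp hypercontractive inequality in exactly the right form. The subtlety is that hypercontractivity for global functions on $S_n$ is not a statement about a single fixed noise operator with a universal constant the way it is on the cube; the sharp inequality (the new result of this paper) presumably gives something like: if $f$ is $r$-global then $\|T_\rho f\|_q \le \|f\|_2$ provided $\rho \le c/(r^2\sqrt{q})$ or similar, and to extract a clean bound on $\|f^{=d}\|_q$ from this one must either (a) use that $T_\rho$ acts on $V_{=d}$ as multiplication by roughly $\rho^d$ (true for the natural "noise" here) and that $f^{=d}$ is itself global, or (b) bound $\|f^{=d}\|_q$ directly via an analogue of the Boolean identity $f^{=d} = $ (a combination of noised versions of $f$). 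Getting the globalness parameter of $f^{=d}$ under control, and making sure the power of $r$ one pays matches the claimed $r^4$, is the delicate bookkeeping; everything else is the routine Hölder-plus-optimisation dance. I would also need to double-check the edge case $d = 0$ (trivially $\|1_A^{=0}\|_2^2 = \mu(A)^2$, consistent with the bound reading $\mu(A)^2 \cdot 1$) and that the constant $C$ can be chosen absolute and independent of $r, n, d, \mu(A)$ throughout.
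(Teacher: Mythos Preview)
Your H\"older-plus-hypercontractivity strategy is the natural Boolean analogue and could in principle be made to work, but the obstacle you flag in Step~2 is more serious than you indicate. The degree projection does \emph{not} commute with umvirate restriction in any simple way (restriction sends $V_{=d}$ into $V_{=d}\oplus V_{=d-1}$, redistributing mass), and the paper proves no ``globalness of projections'' lemma; establishing that $1_A^{=d}$ is $(O(r),\gamma)$-global for a usable $\gamma$ would require a nontrivial inductive argument with derivative-like operators that decrease degree. Without such a lemma, Part~(2) of the paper's hypercontractive inequality (which demands globalness of its input) cannot be applied to $1_A^{=d}$, and your Step~(3) is unjustified as written.

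The paper takes a different and shorter route that sidesteps this issue entirely. It first proves a biglobal generalisation (Theorem~\ref{thm:level-d for biglobal functions}) and deduces the stated theorem by noting that an $r$-global indicator is $(r^2,\mu(A),\mu(A)^{1/2})$-biglobal (since $\mu(A_{I\to J})=\|(1_A)_{I\to J}\|_2^2\le r^{2|I|}\mu(A)$); this is the source of the $r^4=(r^2)^2$ in the final bound. For the biglobal version the paper does \emph{not} pass through $S_n$-hypercontractivity at all; instead it transfers the level $d$ inequality directly from the product space $[n]^n$, where the sharp inequality is already known (Keller--Lifshitz--Marcus, Theorem~\ref{thm:KLM2}). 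Concretely it forms the operator $T_d = \mathbb{E}_\sigma R_{\sigma^{-1}} T_C^* P_d T_C R_\sigma$ on $L^2(S_n)$, with $P_d$ the degree-$\le d$ projection on $L^2([n]^n)$ and $T_C$ the coupling operator, shows (i) that $\|T_d f\|_2$ is small for biglobal $f$ (the coupling preserves biglobalness, so one applies the $[n]^n$ result), and (ii) that $\langle T_d f,f\rangle \ge 3^{-d}\|f^{=d}\|_2^2$ (reusing the eigenvalue analysis from the hypercontractivity proof), and combines. Thus the level $d$ inequality and hypercontractivity on $S_n$ are developed in parallel from the same coupling machinery, rather than one being derived from the other.
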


A more refined version (see Theorem \ref{thm:level-d for biglobal functions} below),
which applies to `biglobal' general functions (not just sets), will be the key tool from Analysis
underpinning the applications listed in the previous section.
For comparison with the level $d$ inequality of Filmus, Kindler, Lifshitz and Minzer~\cite{filmus2020hypercontractivity},
the key point to note is the optimal power of $\log (1/\mu(A))$, which makes our inequality effective
for $\mu(A) > \exp -O(n)$, as opposed to $\mu(A) > \exp -O(n^c)$ for some $c>0$;
this is sharp (see Example \ref{ex:leveld}).
We achieve this improvement, which is the main new technical contribution of the paper,
by combining a variety of ideas from Representation Theory, Analysis and Algebraic Combinatorics.
 
\subsubsection*{Spectrum of global convolution operators}
 
As mentioned above, it is a major goal of character theory to estimate
the character ratios $\chi(\sS)/\chi(1)$, which can be interpreted as eigenvalues
of the adjacency operator $T_\sS$ corresponding to the random walk on $\mathrm{Cay}(S_n,\sigma^{S_n})$,
defined by $(T_\sS f)(\pi) = \mb{E}_{\tau \sim \sigma^{S_n}}[f(\tau \pi)]$. We will generalise $T_\sS$
to the convolution operator $T_f$ defined by $T_f(g) = f*g$, where for any $f,g \in L^2(S_n)$
we define their convolution by 
\[f*g(\sigma) = \mathbb{E}_{\tau \sim S_n}[f(\tau)g(\tau^{-1}\sigma)].\]
It is not hard to see that $T_f$ preserves each level $V_{=d}$;
in fact, it commutes with the right action of $S_n$, so preserves each isotypic component
of the representation theoretic decomposition (to be discussed below).
In the general setting, we replace character bounds by the following estimates on
\[ \| T_f \|_{V^{=d}} := \sup \{ \|T_f g\|_2: g \in V^{=d}, \|g\|_2=1 \}, \]
which is the $L^2 \to L^2$ norm of $T_f$ on $V_{=d}$.
(If $f(\sS)=f(\sS^{-1})$ is symmetric then $T_f$ is self-adjoint, 
so $\| T_f \|_{V^{=d}}$ is the spectral radius of $T_f$ restricted to $V^{=d}$.)
Again, we state a simplified form of the result
specialised to functions of the form $1_A/\mu(A)$.

\begin{thm}\label{thm:Eigenvalues of operators_intro} 
For some absolute constant $C>0$, 
if $f=1_A/\mu(A)$ for some $r$-global $A \sub S_n$
and $d\le \min\left(\frac{1}{8}\log(1/\mu(A)), n/10^5\right)$
then \[ \|T_f\|_{V_{=d}} \le \left( Cr^4 n^{-1} \log(1/\mu(A)) \right)^{d/2}.\] 
\end{thm}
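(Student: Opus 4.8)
The plan is to obtain the bound from the level-$d$ inequality (Theorem~\ref{thm:level-d for global functions_intro}) by combining it with the representation theory of $S_n$ and a lower bound on the dimensions of irreducibles at level $d$. Decompose $L^2(S_n)=\bigoplus_{\lambda\vdash n}V_\lambda$ into isotypic components of the right regular representation, and recall, via the equivalence between Definition~\ref{def:degdec} and the representation-theoretic picture (following Ellis, Friedgut and Pilpel~\cite{ellis2011intersecting}), that $V_{=d}=\bigoplus_{\lambda:\,n-\lambda_1=d}V_\lambda$. As noted in the excerpt, $T_f$ commutes with the right action of $S_n$, so it preserves each $V_\lambda$; and on $V_\lambda$, which is a sum of $\dim\lambda$ copies of the irreducible indexed by $\lambda$ with the right action permuting the copies, Schur's lemma forces $T_f|_{V_\lambda}$ to act, in a suitable orthonormal basis, as $\widehat f(\lambda)\otimes I_{\dim\lambda}$, where $\widehat f(\lambda)=\mb{E}_\sigma[f(\sigma)\rho_\lambda(\sigma)]$. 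Hence
\[
\|T_f\|_{V_{=d}}=\max_{\lambda:\,n-\lambda_1=d}\|\widehat f(\lambda)\|_{\mathrm{op}}.
\]

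Next I would bound each Fourier operator norm by the level-$d$ Fourier weight. Plancherel's identity $\|g\|_2^2=\sum_\lambda\dim\lambda\cdot\|\widehat g(\lambda)\|_{\mathrm{HS}}^2$, applied to $g=f^{=d}$ (whose transform is supported on $\{\lambda:n-\lambda_1=d\}$ and equals $\widehat f$ there), gives $\|\widehat f(\lambda)\|_{\mathrm{op}}^2\le\|\widehat f(\lambda)\|_{\mathrm{HS}}^2\le\|f^{=d}\|_2^2/\dim\lambda$ for every $\lambda$ at level $d$. Since $f=1_A/\mu(A)$ we have $f^{=d}=1_A^{=d}/\mu(A)$, and Theorem~\ref{thm:level-d for global functions_intro} (which applies because $d\le\min(\tfrac{1}{8}\log(1/\mu(A)),10^{-5}n)$) yields $\|f^{=d}\|_2^2=\|1_A^{=d}\|_2^2/\mu(A)^2\le\big(C_0r^4d^{-1}\log(1/\mu(A))\big)^d$ for an absolute constant $C_0$. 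It remains to lower bound $\dim\lambda$ at level $d$.

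For this I would use $\dim\lambda\ge\binom{n-d}{d}$ for every $\lambda\vdash n$ with $n-\lambda_1=d$ (valid whenever $d\le n/2$, hence in our range), proved by induction on $n$ using the branching rule $\dim\lambda=\sum_{\lambda^-}\dim\lambda^-$: when $d\ge1$ one removable corner lies in the first row, giving a partition of $n-1$ still at level $d$, and another lies in a lower row, giving a partition of $n-1$ at level $d-1$; the inductive hypothesis together with Pascal's identity then yields $\dim\lambda\ge\binom{n-1-d}{d}+\binom{n-d}{d-1}\ge\binom{n-d}{d}$. Since $\binom{n-d}{d}\ge\big((n-d)/d\big)^d\ge\big(n/(2d)\big)^d$, combining the three ingredients gives
\[
\|T_f\|_{V_{=d}}^2\le\frac{\big(C_0r^4d^{-1}\log(1/\mu(A))\big)^d}{\big(n/(2d)\big)^d}=\left(\frac{2C_0r^4\log(1/\mu(A))}{n}\right)^{d},
\]
and taking square roots proves the theorem with $C=2C_0$.

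Granting the sharp level-$d$ inequality, this argument is short: the only substantive points are (i) identifying $\|T_f\|_{V_{=d}}$ with a maximum of operator norms of Fourier coefficients and (ii) the dimension estimate $\dim\lambda\gtrsim(n/d)^d$ at level $d$, and it is exactly (ii) that upgrades the factor $d^{-1}\log(1/\mu(A))$ coming out of the level-$d$ inequality to the $n^{-1}\log(1/\mu(A))$ in the statement --- so the main thing to get right is this dimension bound with the correct power of $n$. I would also record that when $A=A^{-1}$ the operator $T_f$ is self-adjoint, so the estimate bounds the actual eigenvalues of the random-walk/convolution operator on each level $V_{=d}$, the form in which it is used in the applications.
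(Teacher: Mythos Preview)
Your argument is correct and follows essentially the same route as the paper: the paper deduces the bound from the level-$d$ inequality together with the inequality $\dim(V_\lambda)\,\|T_f\|_{V_\lambda}^2 \le \|f^{=\lambda}\|_2^2$ (cited as Lemma~\ref{lem:Other KLM} from \cite{keevash2022largest}) and the dimension estimate $\dim(V_\lambda)\ge (n/ed)^d$ (cited from \cite[Claim~1]{ellis2011intersecting}). You supply self-contained proofs of both ingredients---the Fourier/Plancherel bound and the branching-rule induction for $\dim\lambda\ge\binom{n-d}{d}$---but the structure of the argument is identical.
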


\subsubsection*{Hypercontractivity}
The main tool in the proof of the level $d$ inequality for functions on $\{-1,1\}^n$
is the classical hypercontractivity theorem of Bonami  (see \cite[Chapter 9]{o2014analysis}),
which has been hugely influential in the Analysis of Boolean functions
and its broader applications in Mathematics and Computer Science.
This states that the \emph{noise operator} $\mathrm{T}_{\rho}$ given by
$\mathrm{T}_{\rho} f = \sum_{d=0}^n \rho^{d}f^{=d}$ is a contraction 
as an operator from $L_p$ to $L_q$ for all $q>p\ge 1$ such that  $\rho \le \sqrt{\frac{p-1}{q-1}}$. 
We give the following analogue for global functions on the symmetric group. 

\begin{thm}
\label{thm:hypercontractivity for global functions in the symmetric group}
There is a family of self-adjoint operators $\{\mathrm{T}_{\rho}: \rho \in (0,1) \}$
on $L^2(S_n)$ with the following properties.
\begin{enumerate}
\item (Large degree $d$ eigenvalues) 
If $f \in V_{\le d}$ with $d \le 10^{-5} n$ then
\[
\left\langle \mathrm{T}_{\rho}f,f\right\rangle \ge (\rho/72)^d \|f\|_{2}^{2}.
\] 
\item If $f$ is $\left(r,\gamma\right)$-global, $q \ge 2$ and $\rho\le\frac{\log q}{16rq}$ then 
\[
\|\mathrm{T}_{\rho}f\|_{q}^{q}\le\gamma^{q-2}\|f\|_{2}^{2}.
\]
\item Each $\mathrm{T}_{\rho}$ commutes with the action of $S_{n}$ from both sides. 
\end{enumerate}
\end{thm}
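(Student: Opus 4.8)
The plan is to build the operator $\mathrm{T}_\rho$ by hand as a convolution operator with a suitably chosen measure, so that properties (1)--(3) can be verified directly. First I would note that property (3) essentially forces $\mathrm{T}_\rho$ to be convolution with a conjugation-invariant (class) function, so the natural candidate is $\mathrm{T}_\rho f = f * \nu_\rho$ for a probability distribution $\nu_\rho$ on $S_n$ that is invariant under conjugation; concretely one takes a ``noise'' distribution that is a mixture of random permutations supported on few moved points. A clean choice is: let $\nu_\rho$ be the law of $\sigma$ obtained by selecting a random subset $R \subseteq [n]$ where each point is included independently with some small probability $p = p(\rho)$, and then letting $\sigma$ be a uniformly random permutation of $R$ fixing $[n]\setminus R$. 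This is manifestly a class function, giving (3) for free, and $\mathrm{T}_\rho$ is self-adjoint since $\nu_\rho$ is inversion-invariant. The whole argument then reduces to choosing $p$ as a function of $\rho$ and verifying the two quantitative estimates.

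For property (1), the key point is that convolution with $\nu_\rho$ acts as a scalar $\lambda_d(\rho)$ on each irreducible block, and more coarsely multiplies $f^{=d}$ by an eigenvalue that is bounded below on the level-$d$ subspace $V_{=d}$. I would compute this eigenvalue using the identification of $V_{=d}$ with the span of the $d$-umvirate indicators and track how $\nu_\rho$ acts on $x_{i\to j}$: convolving with a permutation of a random $p$-subset multiplies a $d$-umvirate-type coordinate by roughly $p^d$ up to controlled corrections, which is why one wants $p \asymp \rho$. The restriction $d \le 10^{-5}n$ enters precisely here, to guarantee that the ``collision'' corrections (the events that $R$ interacts badly with the $d$ coordinates) are lower-order, and one absorbs all constants into the bound $(\rho/72)^d$. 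This is bookkeeping with binomial tails once the right convolution identity is set up.

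Property (2) is the hypercontractive heart of the matter, and this is where I expect the main obstacle. The plan is to deduce it from the refined level-$d$ inequality for biglobal functions (the promised Theorem~\ref{thm:level-d for biglobal functions}, which we are allowed to assume) together with the eigenvalue estimate from Theorem~\ref{thm:Eigenvalues of operators_intro}: expand $\mathrm{T}_\rho f = \sum_d \lambda_d f^{=d}$, and bound $\|\mathrm{T}_\rho f\|_q$ by first reducing the $L^q$ norm to an $L^2$ computation against a test function of bounded $L^{q'}$ norm, then splitting into the low-degree range $d \le D$ (where the level-$d$ weights of a global function are small, by the level-$d$ inequality, and $\lambda_d \le 1$) and the high-degree tail $d > D$ (where $\lambda_d \le \rho^d$ decays geometrically fast enough, using that $\lambda_d$ is comparable to $p^d \le (\text{const}\cdot\rho)^d$, to beat the crude bound $\|f^{=d}\|_2 \le \|f\|_2$). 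The threshold $D$ and the condition $\rho \le \frac{\log q}{16 r q}$ are tuned so that these two ranges balance, with the global parameter $\gamma$ supplying exactly the factor $\gamma^{q-2}$ via the level-$d$ estimates (which are stated in terms of $\gamma$, not $\|f\|_2$). The delicate part is that one needs the level-$d$ inequality in a form uniform in $d$ and strong enough that the sum $\sum_d (\text{weight}_d)^{q/2}$ converges with the right constant; matching the $16rq$ in the hypothesis to the $Cr^4$-type constants in Theorems~\ref{thm:level-d for global functions_intro} and \ref{thm:Eigenvalues of operators_intro} will require care, and may force us to interpolate between an $L^2 \to L^q$ bound on the low-degree part and an $\ell^1$-type bound on the eigenvalue tail rather than applying a single clean inequality.
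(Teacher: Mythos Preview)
Your construction of $\mathrm{T}_\rho$ as convolution with a conjugation-invariant measure is natural for securing part~(3), but it is quite different from what the paper does and it leaves you without the key tool for part~(2). The paper defines $\mathrm{T}_\rho$ by coupling $S_n$ with the product space $[n]^n$ (via a ``greedy/forgetful'' coupling $C$), forming $\tilde{\mathrm{T}}_\rho = T_C^* T_\rho^{[n]^n} T_C$, and then symmetrizing over the right $S_n$-action. With this design part~(2) is almost immediate: $T_C$ and $T_C^*$ are contractions in every $L^p$, globalness is preserved by $T_C$, and so the product-space hypercontractivity of Keller--Lifshitz--Marcus (Theorem~\ref{thm:KLM}) transfers verbatim. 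The genuinely hard part of the paper is part~(1), which requires showing that $T_C$ maps $V_{=d}(L^2(S_n))$ into $V_{\le d}(L^2([n]^n))$ with only an exponential-in-$d$ loss; this is where the Kneser-graph spectrum and the ``staying decomposition'' enter.

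Your plan for part~(2) has a real gap. You want to expand $\mathrm{T}_\rho f = \sum_d \lambda_d f^{=d}$ and control the $L^q$ norm using the level-$d$ inequality on the pieces. But the level-$d$ inequality only controls $\|f^{=d}\|_2$; passing from $L^2$ information on each $f^{=d}$ to an $L^q$ bound on the sum is exactly the content of hypercontractivity. Without a degree-wise estimate of the form $\|h\|_q \le C^d \|h\|_2$ for $h \in V_{=d}$ (which is essentially what you are trying to prove), the argument cannot close. Your dualization does not help either: the test function $g$ has $\|g\|_{q'}\le 1$ with $q'\le 2$, so $\|g^{=d}\|_2$ is uncontrolled. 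Finally, note that invoking Theorem~\ref{thm:level-d for biglobal functions} here is close to circular: its proof in the paper rests on the same eigenvalue estimate~\eqref{eq:4.1} that underlies part~(1), so you would be assuming the main technical ingredient rather than supplying it.
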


As for the level $d$ inequality, this gives essentially optimal estimates
and improves the previous hypercontractive inequality on $S_n$
by Filmus, Kindler, Lifshitz and Minzer~\cite{filmus2020hypercontractivity}.
While the main motivation of our paper is to extend the results of Larsen and Shalev 
from conjugacy classes to arbitrary sets, in a future paper Lifshitz and Marmor~\cite{lifshitz2023bounds} 
apply Theorem \ref{thm:hypercontractivity for global functions in the symmetric group} 
to give essentially sharp estimates of the $q$-norms of the characters of the symmetric group,
thus improving various results of Larsen and Shalev on mixing times of conjugacy classes. 

\subsection{Mixing}

Now we consider the mixing properties of the random walk corresponding to a global set $A \subset A_n$.
We consider the convolution operator $T_f$, where $f = 1_A/\mu(A)$.
The $(L_p,\epsilon)$-mixing time of the random walk in $\mathrm{Cay}(A_n,A)$ 
is the minimal $M$ for which $\|f^{*M}-1\|_p<\epsilon$,
where $f^{*M} = f * \cdots * f$ is the $M$-fold convolution 
and $1$ denotes the function identically equal to $1$.
To see that this is equivalent to the usual definition in terms of probability distributions,
we identify any non-negative $f$ with $\|f\|_1=1$ with the distribution $\nu_f(\sS) = |A_n|^{-1} f(\sS)$;
then \[ \nu_{f*g}(\sS) = |A_n|^{-1} (f*g)(\sS) = |A_n|^{-2} \sum_{\tau} f(\tau)g(\tau^{-1} \sS) 
= \mb{P}_{\tau \sim \nu_f, \tau' \sim \nu_g} \{ \tau \tau' = \sS \}.\] 
Much of the theory of mixing times (see \cite{wilmer2009markov} for an introduction)
concerns the $(L_1,1/4)$-mixing time (known simply as the mixing time)
and the relaxation time (the reciprocal of the spectral gap, equivalent to $L_2$-mixing).
Mixing times for other $L_p$ norms can be controlled up to a constant factor by $L_2$-mixing,
noting that $\| g * h \|_\infty \le \|g\|_2 \|h\|_2$ by Young's convolution inequality.

Larsen and Shalev~\cite{larsen2008characters} obtained several mixing results for $T_f$ as above
when $A \sub A_n$ is a conjugacy class; for example, when $\mu(A) > e^{-n^{1-1/M+\eps}}$ 
the $(L_2,n^{-\eps})$-mixing time is at most $M$  (see \cite[Theorem 6.1]{larsen2008characters}).
We obtain a similar result in the more general context of biglobal functions,
which is sharp for both $L_2$-mixing and $L_1$-mixing 
(see Section \ref{sec:Sharpness}).

\begin{thm}\label{thm:mixing time_intro}
For each $\eps>0$ there exists $c>0$ so that
if $M<\log n$ is an integer and $f\in L^2(A_n)$ is $r$-biglobal
with $\|f\|_1=1$ and $\|f\|_2 \le e^{cr^{-2}n^{1-1/M}}$,
then $\|f^{*M}-1\|_2 < \eps$. 
\end{thm}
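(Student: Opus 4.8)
The plan is to prove the mixing estimate by decomposing $f - 1$ according to the degree decomposition and controlling the convolution power $f^{*M}$ level by level, using the spectral bound of Theorem~\ref{thm:Eigenvalues of operators_intro} (or rather its biglobal refinement Theorem~\ref{thm:level-d for biglobal functions}) together with the fact that convolution by $f$ preserves each $V_{=d}$. Write $g = f - 1$, so $g = \sum_{d \ge 1} g^{=d}$ with $g^{=0} = 0$ since $\|f\|_1 = 1$; then $f^{*M} - 1 = \sum_{d \ge 1} (T_f^{M-1} g)^{=d}$, and because $T_f$ preserves levels, $\|f^{*M} - 1\|_2^2 = \sum_{d \ge 1} \|T_f^{M-1} g^{=d}\|_2^2 \le \sum_{d \ge 1} \|T_f\|_{V_{=d}}^{2(M-1)} \|g^{=d}\|_2^2$. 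The main task is therefore to show that each summand is small once $\|f\|_2 \le \exp(c r^{-2} n^{1-1/M})$.

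First I would split the sum at a threshold $d_0$ of order $n^{1-1/M}$ (up to constants depending on $r$ and $\eps$). For the low-degree range $1 \le d \le d_0$, I use Theorem~\ref{thm:Eigenvalues of operators_intro}, which gives $\|T_f\|_{V_{=d}} \le (C r^4 n^{-1} \log \|f\|_2^2)^{d/2}$; under the hypothesis $\log\|f\|_2^2 \le 2c r^{-2} n^{1-1/M}$ this is at most $(C' r^2 n^{-1/M})^{d/2}$, so raising to the power $2(M-1)$ and summing the (trivially bounded) weights $\|g^{=d}\|_2^2 \le \|f\|_2^2$ against the geometric factor $n^{-(M-1)/M \cdot d}$ — which beats $\log\|f\|_2^2 \le n^{o(1)}\cdot n^{1-1/M}$ raised to any bounded power for $c$ small — yields a contribution below $\eps^2/2$, choosing $c$ small in terms of $\eps$ and $C'$. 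Here I should double-check the edge case $M - 1 = 0$: for $M = 1$ the claim $\|f - 1\|_2 < \eps$ would need $\|f\|_2$ essentially bounded, which matches $\|f\|_2 \le e^{c r^{-2} n^{0}} = e^{c r^{-2}}$, a constant, so the level-$d$ inequality of Theorem~\ref{thm:level-d for global functions_intro} directly bounds $\sum_d \|g^{=d}\|_2^2$ by something small if $c$ is small; I would handle $M=1$ separately or absorb it into the argument. For the high-degree range $d > d_0$, I can no longer afford a clean spectral bound, so instead I bound the total weight $\sum_{d > d_0} \|g^{=d}\|_2^2 \le \|g\|_2^2 \le \|f\|_2^2$ crudely, and use the level $d$ inequality for biglobal functions (Theorem~\ref{thm:level-d for biglobal functions}) to show that in fact the weight of $g$ on levels up to $\tfrac18 \log\|f\|_2^2$ is negligible, while the trivial bound $\|T_f\|_{V_{=d}} \le \|T_f\|_{L^2 \to L^2} \le \|f\|_1 = 1$ (since $f \ge 0$) keeps $\|T_f^{M-1} g^{=d}\|_2 \le \|g^{=d}\|_2$, so one really only needs the Fourier weight of $g$ at levels $\ge d_0$ to be small.

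The hard part will be the high-degree tail: Theorem~\ref{thm:Eigenvalues of operators_intro} only controls $d \le \tfrac18\log\|f\|_2^2$, which for $M$ bounded is of order $n^{1-1/M}$ — exactly the threshold $d_0$ — so the ranges match up, but one must be careful that $d_0$ can be taken simultaneously below $n/10^5$ (fine, since $M < \log n$ forces $n^{1-1/M} = o(n)$) and below $\tfrac18\log\|f\|_2^2$ (this needs $c$ chosen so that $2cr^{-2} n^{1-1/M} \ge 8 d_0$, which fixes the relation between $c$ and the constant in $d_0$). For $d > \tfrac18 \log\|f\|_2^2$ I genuinely have no spectral bound beyond $1$, so the entire smallness must come from $\|g^{=d}\|_2^2$ being small there — but this is not automatic for a general $r$-biglobal $f$ of large $L^2$ norm. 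The resolution is that the statement only claims $L^2$-mixing, so I should instead argue that the contribution of very high levels to $\|g\|_2^2$ itself need not be controlled at all if one is more clever: in fact $g^{=d}$ for $d$ large can have large norm, so the correct move is to note $T_f^{M-1}$ acts on each such level and, iterating the biglobal level-$d$/spectral machinery, $T_f g$ is itself biglobal with a better parameter, so one can bootstrap. I expect the cleanest route, and the one I would pursue, is to apply Theorem~\ref{thm:Eigenvalues of operators_intro} not to $f$ directly on all levels but to observe $\|f^{*M}-1\|_2 = \|T_f(f^{*(M-1)} - 1)\|_2$ and induct on $M$, at each step peeling off one application of $T_f$, using the spectral bound on the bounded range of levels and the contraction property $\|T_f\|_{L^2\to L^2}\le 1$ on the rest; the induction closes because after one convolution the relevant $L^2$ norm drops. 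I would allocate most of the write-up to making this induction precise and verifying the constant bookkeeping between $c$, $\eps$, $r$, and the level threshold.
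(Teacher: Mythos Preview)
Your overall strategy (decompose $f^{*M}-1$ by level, split at a threshold $\ell$, bound each range separately) matches the paper's, but both halves of your argument have genuine gaps.

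\textbf{Low-degree range.} You propose to bound $\|g^{=d}\|_2^2$ trivially by $\|f\|_2^2$ and rely on the spectral factor $\|T_f\|_{V_{=d}}^{2(M-1)}$ alone. This cannot work: at $d=1$ the spectral factor is only polynomially small in $n$ (of order $n^{-(M-1)/M}$), whereas $\|f\|_2^2$ may be as large as $e^{2cr^{-2}n^{1-1/M}}$, which is stretched-exponential in $n$. No polynomial factor kills that. (Your parenthetical ``$\log\|f\|_2^2 \le n^{o(1)}\cdot n^{1-1/M}$ raised to any bounded power'' conflates $\log\|f\|_2$ with $\|f\|_2$.) The paper instead applies the level-$d$ inequality (Theorem~\ref{thm:level-d for biglobal functions}) to bound $\|P_{=d}f\|_2 \le (Cr^2\log\|f\|_2)^{d/2}$, and then the combined term
\[
\|T_f\|_{W_{=d}}^{M-1}\|P_{=d}f\|_2 \le (Cr^2 n^{-1}\log\|f\|_2)^{(M-1)d/2}(Cr^2\log\|f\|_2)^{d/2}
\]
equals $(Cr^2\log\|f\|_2)^{Md/2} n^{-(M-1)d/2}$; substituting $\log\|f\|_2 \le cr^{-2}n^{1-1/M}$ the powers of $n$ cancel exactly, leaving $(Cc)^{Md/2}$, which is small and summable once $c$ is small. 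You need both the spectral bound and the level-$d$ inequality here.

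\textbf{High-degree range.} You correctly identify that the trivial contraction $\|T_f\|_{L^2\to L^2}\le\|f\|_1=1$ gives nothing, since $\|P_{>\ell}f\|_2$ need not be small. The missing ingredient is the second statement of Theorem~\ref{thm:Eigenvalues of operators}: $\|T_f\|_{W_{>\ell}}\le\|f\|_2(e\ell/n)^{\ell/2}$, which comes from the dimension lower bound $\dim V_\lambda\ge(n/e\ell)^\ell$ for representations of level $>\ell$. With $\ell=\tfrac18\log\|f\|_2 \le \tfrac{c}{8}r^{-2}n^{1-1/M}=o(n)$, this gives
\[
\|T_f\|_{W_{>\ell}}^{M-1}\|P_{>\ell}f\|_2 \le \|f\|_2^M (e\ell/n)^{(M-1)\ell/2},
\]
and since $\log\|f\|_2=8\ell$ while $\log(n/e\ell)$ is large, the second factor dominates. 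Your proposed induction on $M$ does not sidestep this: even a single application of $T_f$ to the high-level part requires this dimension-based bound, and there is no bootstrap that makes $T_f g$ more biglobal on high levels without it.
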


Next we consider mixing properties related to product-free sets.
The largest product-free sets in $A_n$ were recently determined by  
Keevash, Lifshitz, and Minzer \cite{keevash2022largest}
(building on progress by Gowers \cite{gowers} and Eberhard \cite{eberhard}),
thus answering a question of Babai and S\'{o}s \cite{bs} from 1985.
For large $n$, the largest product-free sets in $A_n$ 
(proposed by Kedlaya \cite{kedlaya1}) take the form
 \[K_{x,I}:=\{\sigma\in A_n:\,\sigma(x)\in I, \sigma(I)\cap I = \es \}.\] 
Moreover, there is a stability result (see \cite[Theorem 1.5]{keevash2022largest})
that if $A \sub A_n$ is product-free with $\mu(A) > n^{-r}$ then there is some
$t$-umvirate with $t \le 4r$ in which $A$ has density at least $n^{t/4}\mu(A)$.
In contrapositive form, if $A$ is product-free and sufficiently global
then for large $n$ its density decays faster than any polynomial in $n$.
Here we will obtain the following much stronger estimate for the density. 

\begin{thm}\label{cor:stability result for A_n}
If $A\subseteq A_n$ is product-free and $100$-global
then $\mu(A) < e^{-cn^{1/3}}$ for some absolute constant $c>0$.
\end{thm}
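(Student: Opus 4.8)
The plan is to use the hypercontractivity machinery (Theorem \ref{thm:hypercontractivity for global functions in the symmetric group}, via the eigenvalue estimate Theorem \ref{thm:Eigenvalues of operators_intro}) to show that if $A$ is too dense then $A \cdot A \cdot A$ hits the identity, contradicting product-freeness. Write $f = 1_A/\mu(A)$, so $\|f\|_1 = 1$ and $\|f\|_2^2 = 1/\mu(A)$. Product-freeness of $A$ says exactly that $(f * f * f)(\text{id}) = 0$ — indeed $f*f*f$ counts (with weights) the ways to write an element as a product of three members of $A$, and $\text{id} \in A^3$ would mean $A$ is not product-free. (One subtlety: $A_n$ is the ambient group here, and convolution was defined on $S_n$; but $A$ is supported on $A_n$ and $A_n$ is a group, so $f * f * f$ is supported on $A_n$ and the definition restricts correctly — I would spell this out in a line.) So the strategy is to lower-bound $(f*f*f)(\text{id})$ and derive a contradiction unless $\mu(A)$ is tiny.

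The key step is a spectral decomposition. We have $(f*f*f)(\text{id}) = \langle f * f, \check f \rangle$ where $\check f(\sigma) = f(\sigma^{-1})$; more symmetrically, decompose $f = \sum_{d \ge 0} f^{=d}$ into levels. The level-$0$ part is the constant function $1$ (since $\|f\|_1=1$), contributing $1$ to $(f*f*f)(\text{id})$. The remaining contribution is $\sum_{d \ge 1} \langle T_f f^{=d}, \text{(something)} \rangle$ — more precisely, since $T_f$ preserves each $V_{=d}$ and convolution is associative, $(f*f*f)(\text{id}) = \sum_{d\ge 0} \langle T_f^2 f^{=d}, f^{=d}\rangle_{\text{appropriately}}$, which one bounds in absolute value by $\sum_{d \ge 1} \|T_f\|_{V_{=d}}^2 \|f^{=d}\|_2^2$ for the $d \ge 1$ terms (the $d=0$ term gives exactly $1$). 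Now I would split the sum at a threshold $d_0 = c_1 \log(1/\mu(A))$: for small $d$ (but $d \ge 1$), Theorem \ref{thm:Eigenvalues of operators_intro} gives $\|T_f\|_{V_{=d}}^2 \le (Cr^4 n^{-1}\log(1/\mu(A)))^d$, which is small provided $\log(1/\mu(A)) \le c n$ — here $r = 100$ so $Cr^4$ is an absolute constant; combined with the crude bound $\|f^{=d}\|_2^2 \le \|f\|_2^2 = 1/\mu(A)$ we get each such term at most $\mu(A)^{-1}(C' n^{-1}\log(1/\mu(A)))^d$, and we want to know this is much less than $1$, which forces exactly the regime $\log(1/\mu(A)) \lesssim$ something. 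The level-$d$ inequality (Theorem \ref{thm:level-d for global functions_intro}) gives the better bound $\|f^{=d}\|_2^2 \le (C r^4 d^{-1}\log(1/\mu(A)))^d$ (note $f^{=d} = 1_A^{=d}/\mu(A)$), so the $d$-th term is at most $(C'' n^{-1}\log(1/\mu(A)))^d (C r^4 d^{-1}\log(1/\mu(A)))^d$; summing a geometric-type series, the total over $1 \le d \le d_0$ is $o(1)$ as long as $\log(1/\mu(A)) = o(\sqrt n)$, say $\log(1/\mu(A)) \le \eta n^{1/2}$. For large $d > d_0 = \tfrac14 \log(1/\mu(A))$ we cannot use Theorem \ref{thm:Eigenvalues of operators_intro} directly (its hypothesis needs $d \le \tfrac18\log(1/\mu(A))$, and anyway the bound degrades), so instead I would use hypercontractivity more cleverly: apply $T_\rho$ with $\rho = \log q/(16 r q)$ and part (2) of Theorem \ref{thm:hypercontractivity for global functions in the symmetric group} to control the $q$-norm of $f$, together with part (1) which says $T_\rho$ barely shrinks each level up to $d \le 10^{-5}n$; combining these controls the tail $\sum_{d > d_0}\|f^{=d}\|_2^2$ by choosing $q$ appropriately (e.g. $q \asymp \log(1/\mu(A))$), showing the high-degree Fourier weight of $f$ is at most $\mu(A)^{-1}\cdot\mu(A)^{\Omega(1)}$ or similar, hence negligible. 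The operator norms $\|T_f\|_{V_{=d}} \le \|f\|_1 = 1$ always (Young), so the high-$d$ tail of $(f*f*f)(\text{id})$ is bounded by its Fourier $2$-weight, which is small.

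Putting the pieces together: $(f*f*f)(\text{id}) \ge 1 - (\text{small low-degree sum}) - (\text{small high-degree tail}) > 0$ whenever $\log(1/\mu(A)) \le c\, n^{1/3}$ — the exponent $1/3$ (rather than $1/2$) presumably arises because the high-degree tail argument via hypercontractivity, with $q$ chosen to beat $\mu(A)^{-1}$, is lossier than the low-degree argument; balancing the threshold $d_0$, the operator-norm decay rate $n^{-1}$, and the $q$-norm bound $\gamma^{q-2}$ against $\|f\|_2^2 = \mu(A)^{-1}$ yields $n^{1/3}$. Contrapositively, product-free and $100$-global forces $\mu(A) < e^{-cn^{1/3}}$. \textbf{The main obstacle} I anticipate is the high-degree tail: Theorem \ref{thm:Eigenvalues of operators_intro} only covers $d \le \tfrac18\log(1/\mu(A))$, so the range $\tfrac18\log(1/\mu(A)) < d \le 10^{-5}n$ must be handled purely through parts (1) and (2) of the hypercontractivity theorem, and optimizing the choice of $\rho$ (equivalently $q$) to make this tail genuinely negligible against the $\mu(A)^{-1}$ blow-up in $\|f\|_2^2$ is the delicate quantitative heart of the argument — and it is presumably exactly this step that caps the final bound at $n^{1/3}$ rather than the $n^{1/2}$ one might naively hope for from the $d=O(\log(1/\mu(A)))$ regime alone.
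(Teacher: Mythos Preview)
Your overall strategy---show $\langle f*f,f\rangle>0$ by a spectral/level decomposition---is exactly how the paper proceeds, but the paper packages this as Theorem~\ref{thm:product mixing} (product mixing for biglobal functions) and then deduces Theorem~\ref{cor:stability result for A_n} in one line by applying it with $f=g=h=1_A/\mu(A)$. You are essentially re-deriving that theorem in the special case $f=g=h$.

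However, you have a genuine error that propagates through your quantitative analysis. Product-freeness of $A$ means $A^2\cap A=\emptyset$, i.e.\ $\langle f*f,\,f\rangle=0$. It does \emph{not} mean $(f*f*f)(\mathrm{id})=0$: the latter is the condition $\mathrm{id}\notin A^3$, equivalently $A^2\cap A^{-1}=\emptyset$, which is different unless $A$ is symmetric. So the quantity you must lower-bound is $\langle f*f,f\rangle=\langle T_f f,f\rangle$, whose level-$d$ contribution is bounded by $\|T_f\|_{W_{=d}}\,\|P_df\|_2^2$ with a \emph{single} power of the operator norm, not $\|T_f\|_{W_{=d}}^2\,\|P_df\|_2^2$ as you wrote. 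With the correct exponent, combining Theorem~\ref{thm:Eigenvalues of operators} and Theorem~\ref{thm:level-d for biglobal functions} gives the $d$th term at most roughly
\[
\bigl(Cr^2 n^{-1}\log\|f\|_2\bigr)^{d/2}\bigl(Cr^2 d^{-1}\log\|f\|_2\bigr)^{d}
=\Bigl(\tfrac{C^3 r^6(\log\|f\|_2)^3}{nd^2}\Bigr)^{d/2},
\]
and already at $d=1$ this forces $(\log\|f\|_2)^3\lesssim n$, i.e.\ $\log(1/\mu(A))\lesssim n^{1/3}$. So the exponent $1/3$ comes straight from the low-degree (indeed $d=1$) term, not from balancing against the tail as you conjecture; your extra power of $\|T_f\|$ is precisely what made you expect $n^{1/2}$ from the main sum and then look elsewhere for the loss.

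Finally, the high-degree tail is simpler than you fear. One does not need to re-enter the hypercontractivity theorem directly: the second bound in Theorem~\ref{thm:Eigenvalues of operators}, namely $\|T_f\|_{W_{>\ell}}\le\|f\|_2(e\ell/n)^{\ell/2}$, applied with $\ell\asymp\log\|f\|_2$ already gives $|\langle T_f P_{>\ell}f,P_{>\ell}f\rangle|\le\|f\|_2^3(e\ell/n)^{\ell/2}$, which is negligible once $\log\|f\|_2\le cn^{1/3}$ and $n$ is large. This is exactly how the paper's proof of Theorem~\ref{thm:product mixing} handles it.
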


More generally, we consider the following product mixing property.

\begin{defn} \label{def:prodmix}
We say that a triple $(A,B,C)$ of subsets of a group $G$
is  \emph{$\eps$-mixing for products} 
if for independent uniformly random  $a,b\sim G$ we have
\[
\Pr[a\in A,b\in B,ab\in C] \in (1-\eps,1+\eps) \mu(A)\mu(B)\mu(C).
\]

We say that $G$ is \emph{$\eps$-mixing for $\mu$-dense products}
if $(A,B,C)$ is $\eps$-mixing for products 
for any $A,B,C \sub G$ of density at least $\mu$.

We say that $G$ is  \emph{$\eps$-mixing for $r$-global $\mu$-dense products}
if $(A,B,C)$ is $\eps$-mixing for products 
for any $r$-global $A,B,C \sub G$ of density at least $\mu$.
\end{defn}

Gowers \cite{gowers} proved that $A_n$ is $\eps$-mixing for $O_\eps(n^{-1/3})$-dense products,
which is sharp by an example of Eberhard \cite{eberhard}. The sharpness example has dictatorial structure,
so one might hope that globalness would give mixing for much sparser sets.
We show that this indeed holds. Moreover, the exponent of the stretched exponential
(curiously also $1/3$) is sharp (see Section \ref{sec:Sharpness}).

\begin{thm} \label{thm:The global mixing property_Intro} 
The alternating group $A_{n}$ is $0.01$-mixing 
for $100$-global $e^{-cn^{1/3}}$-dense products,
for some absolute constant $c>0$.
\end{thm}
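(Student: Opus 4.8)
\textbf{Proof plan for Theorem \ref{thm:The global mixing property_Intro}.}
The plan is to deduce the product-mixing statement from the spectral estimate in Theorem \ref{thm:Eigenvalues of operators_intro} by a standard Fourier-type expansion along the degree decomposition of Definition \ref{def:degdec}. Fix $r=100$-global sets $A,B,C\sub A_n$ of density at least $\mu = e^{-cn^{1/3}}$, and set $f=1_A/\mu(A)$, $g=1_B/\mu(B)$, $h=1_C/\mu(C)$, so each has mean $1$ and $\|f\|_2^2 = 1/\mu(A) \le 1/\mu$, etc. The quantity to estimate is
\[
\Pr[a\in A,\ b\in B,\ ab\in C] = \mu(A)\mu(B)\mu(C)\,\big\langle f * g,\ h\big\rangle,
\]
so it suffices to show $|\langle f*g,h\rangle - 1| < 0.01$. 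Writing $f = 1 + f'$ with $f' = \sum_{d\ge 1} f^{=d}$ (and similarly for $g$), and using that convolution with the constant $1$ returns the mean, the cross terms collapse and we are left with $\langle f*g,h\rangle - 1 = \sum_{d\ge 1}\langle f^{=d} * g^{=d},\, h^{=d}\rangle$, since $T_f$ preserves each level $V_{=d}$ and distinct levels are orthogonal. (One should be a little careful here: $T_{f'}$ preserves levels but $f' * g'$ pairs level $d$ of $f'$ only with level $d$ of $g'$ after projecting, which is exactly the orthogonality statement asserted before Theorem \ref{thm:Eigenvalues of operators_intro}.)

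Next I would bound each term. For $d$ in the admissible range $d \le \min(\tfrac18\log(1/\mu),\, n/10^5)$, Theorem \ref{thm:Eigenvalues of operators_intro} applied to $f$ gives $\|T_f\|_{V_{=d}} \le (Cr^4 n^{-1}\log(1/\mu(A)))^{d/2} \le (Cr^4 n^{-1}\log(1/\mu))^{d/2} =: \lambda_d$. Hence, using Cauchy--Schwarz and then $\|g^{=d}\|_2 \le \|g\|_2$, $\|h^{=d}\|_2\le\|h\|_2$,
\[
|\langle f^{=d} * g^{=d}, h^{=d}\rangle| \le \|T_f g^{=d}\|_2\,\|h^{=d}\|_2 \le \lambda_d\,\|g\|_2\|h\|_2 \le \lambda_d\,\mu^{-1}.
\]
With $\log(1/\mu) = cn^{1/3}$ and $r=100$, we get $\lambda_d \le (C'c\, n^{-2/3})^{d/2}$, so choosing $c$ small makes $\lambda_d \le (n^{-1/2})^{d}$ say; then $\sum_{d\ge 1}\lambda_d \mu^{-1} \le \mu^{-1}\sum_{d\ge 1} n^{-d/2}$, which is $O(\mu^{-1} n^{-1/2})$ --- far too large on its own. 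So the naive bound $\|h^{=d}\|_2 \le \|h\|_2 \le \mu^{-1/2}$ is too lossy; the fix is to also invoke the level-$d$ inequality (Theorem \ref{thm:level-d for global functions_intro}) for $g$ and $h$, which gives $\|g^{=d}\|_2^2 \le (C r^4 d^{-1}\log(1/\mu))^d$ and likewise for $h$, so that $\|g^{=d}\|_2\|h^{=d}\|_2 \le (Cr^4 d^{-1}\log(1/\mu))^d = (C'' c\, n^{1/3}/d)^d$. Combined with $\lambda_d^{\phantom{1}} = (C'c n^{-2/3})^{d/2}$, the product is $(C''' c^{3/2} n^{1/3 - 1/3}/d)^{\text{(order }d)}\cdots$; more carefully, $\lambda_d \cdot \|g^{=d}\|_2\|h^{=d}\|_2 \le (Cc\,n^{-1/3})^{d/2}(C'c\,n^{1/3}/d)^{d}$, and for $c$ small enough relative to $C,C'$ the $n$-powers combine to give a geometrically decaying sequence summing to less than $0.01$. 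For the top range $d > \tfrac18\log(1/\mu)$ one no longer has the spectral bound in the clean form, but there Cauchy--Schwarz together with the level-$d$ inequality alone (valid up to $d \le \tfrac18\log(1/\mu)$ --- one must check the ranges line up, or use a Parseval tail bound $\sum_{d>D}\|g^{=d}\|_2^2 \le \|g\|_2^2$ truncated via hypercontractivity) shows the tail is negligible; alternatively one applies Theorem \ref{thm:hypercontractivity for global functions in the symmetric group} directly to control $\|T_\rho g\|_q$ and interpolate.

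\textbf{Main obstacle.} The crux is the bookkeeping in the range where $d$ is comparable to $\log(1/\mu) \sim cn^{1/3}$: one needs the spectral gain from Theorem \ref{thm:Eigenvalues of operators_intro} (a factor $(n^{-1}\log(1/\mu))^{d/2} = (cn^{-2/3})^{d/2}$ per term) to beat the combinatorial loss $\mu^{-1}$ coming from $\|g\|_2\|h\|_2$, and this only works because $\log(1/\mu)$ is a \emph{stretched} exponential with exponent exactly $1/3$: we need $d_{\max}\cdot\tfrac12\log(n^{2/3}/(c\log(1/\mu))) \gtrsim \log(1/\mu)$, i.e. roughly $\log(1/\mu)\cdot\log(n^{2/3}/(cn^{1/3})) \gtrsim \log(1/\mu)$, which holds with room to spare, and pushing $\log(1/\mu)$ up to $n^{1/3+\delta}$ would break it (matching the claimed sharpness). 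I expect the delicate points to be: (i) making the two inequalities' ranges of validity overlap so that every $d\ge 1$ is covered by at least one good bound, handled by a dyadic split at $d \asymp \log(1/\mu)$ and, for the genuinely large degrees, a direct hypercontractive tail estimate; and (ii) tracking the absolute constants so that the final geometric series is provably below $0.01$ after choosing $c$ small --- routine but needs care. Finally, translating the bound $|\langle f*g,h\rangle-1|<0.01$ back into the $\Pr[\cdot]\in(1\pm0.01)\mu(A)\mu(B)\mu(C)$ form of Definition \ref{def:prodmix} is immediate from the displayed identity above.
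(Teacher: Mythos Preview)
Your plan is essentially the paper's proof: expand $\langle f*g,h\rangle$ along the level decomposition, use the spectral bound on $T_f$ together with the level-$d$ inequality on $g,h$ for small $d$, and handle the tail separately. The arithmetic you sketch (the three factors of $n^{1/3}$ from $\log(1/\mu)$ exactly cancelling the $n^{-1}$ from the spectral bound, leaving a geometric series in $c$) is precisely how the paper obtains the bound $(cC^3)^{d/2}$ per level.

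Two points where your write-up is looser than the paper and would need attention:

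\emph{The $A_n$ versus $S_n$ issue.} You apply Theorems \ref{thm:level-d for global functions_intro} and \ref{thm:Eigenvalues of operators_intro}, which are stated for $L^2(S_n)$, while writing $f^{=0}=1$ as though you are in $L^2(A_n)$. The paper resolves this by lifting $1_A,1_B,1_C$ to $S_n$ (zero on odd permutations) and working with the level decomposition $W_{=d}$ rather than $V_{=d}$; then the degree-$0$ projection picks up both the constant and $\mathrm{sign}$, giving $\langle T_f P_0 g, P_0 h\rangle = \mathbb{E}[f]\mathbb{E}[g]\mathbb{E}[h] + \langle f,\mathrm{sign}\rangle\langle g,\mathrm{sign}\rangle\langle h,\mathrm{sign}\rangle = 2$, and the factor $2$ cancels when you translate $\Pr_{a,b\sim S_n}$ back to $\Pr_{a,b\sim A_n}$. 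Without this, your identity $\langle f*g,h\rangle - 1 = \sum_{d\ge 1}(\cdots)$ is not literally correct in the setting where the cited theorems apply.

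\emph{The tail.} For $d>\ell=\lceil\tfrac18\log\|f\|_2\rceil$ the level-$d$ inequality is no longer available; the paper uses the second clause of its spectral estimate, $\|T_f\|_{W_{>\ell}}\le \|f\|_2(e\ell/n)^{\ell/2}$, combined with the trivial bounds $\|P_{>\ell}g\|_2\le\|g\|_2$, $\|P_{>\ell}h\|_2\le\|h\|_2$, and checks that $\|f\|_2\|g\|_2\|h\|_2(e\ell/n)^{\ell/2}$ is small because $\ell\le cn^{1/3}$. Your ``Parseval tail bound'' suggestion is the right instinct, but it is this dimension-based operator bound (not hypercontractivity) that does the work.
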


We note that Theorem \ref{cor:stability result for A_n} follows immediately from
Theorem \ref{thm:The global mixing property_Intro}. Another consequence
(consider triples $(A,A,xA^{-1})$ for any $x \in A_n$) is the following result
that global sets of density $e^{-O(n^{1/3})}$ have covering number at most $3$.

\begin{thm}
For some absolute constant $c>0$,
if $A\subseteq A_n$ is $100$-global 
and $\mu(A)\ge e^{-cn^{1/3}}$ then $A^3=A_n$.
\end{thm}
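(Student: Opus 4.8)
The plan is to deduce this covering result directly from Theorem~\ref{thm:The global mixing property_Intro}, which asserts that $A_n$ is $0.01$-mixing for $100$-global $e^{-cn^{1/3}}$-dense products. First I would fix $c>0$ as in that theorem, and suppose $A\subseteq A_n$ is $100$-global with $\mu(A)\ge e^{-cn^{1/3}}$. To show $A^3=A_n$, fix an arbitrary $x\in A_n$; the goal is to write $x=a_1a_2a_3$ with each $a_i\in A$, equivalently to show that the system $a\in A$, $b\in A$, $ab\in xA^{-1}$ has a solution for $a,b\in A_n$.

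The key step is to apply the mixing property to the triple $(A,A,xA^{-1})$. The first two coordinates are $100$-global of density $\mu(A)\ge e^{-cn^{1/3}}$ by hypothesis. For the third coordinate, note that $xA^{-1}=\{xa^{-1}:a\in A\}$; since $A=A^{-1}$ is not assumed, I would instead use that left translation by $x$ and inversion are measure-preserving bijections of $A_n$ that also preserve $100$-globalness (a $t$-umvirate is sent to a $t$-umvirate under either map, so the restriction norms are unchanged). Hence $xA^{-1}$ is again $100$-global with $\mu(xA^{-1})=\mu(A)\ge e^{-cn^{1/3}}$. Applying Theorem~\ref{thm:The global mixing property_Intro} to $(A,A,xA^{-1})$ gives, for independent uniform $a,b\sim A_n$,
\[
\Pr[a\in A,\ b\in A,\ ab\in xA^{-1}]\ \ge\ (1-0.01)\,\mu(A)^3\ >\ 0.
\]
In particular there exist $a,b\in A_n$ with $a\in A$, $b\in A$, and $ab=xa_3^{-1}$ for some $a_3\in A$. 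Since $A=A^{-1}$ (this is part of the running convention for $A$ in the covering-number setting; if it is not assumed, then $ab\in xA^{-1}$ still yields $x=(ab)a_3$ with $a_3=(ab)^{-1}x\in A$ directly), we rearrange to $x=aba_3\in A^3$. As $x\in A_n$ was arbitrary, $A^3=A_n$.

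I do not expect a serious obstacle here: the whole content is already packaged in Theorem~\ref{thm:The global mixing property_Intro}. The only point requiring a line of care is the verification that the third set $xA^{-1}$ retains the $100$-globalness and density hypotheses, which follows because pointwise stabiliser cosets (i.e.\ $t$-umvirates) are permuted among themselves by left multiplication and by inversion, so $\|(1_{xA^{-1}})_{I\to J}\|_2$ matches $\|(1_A)_{I'\to J'}\|_2$ for a suitable relabelling of the tuples $I,J$. Thus the bound $\|(1_{xA^{-1}})_{I\to J}\|_2\le 100^d\|1_{xA^{-1}}\|_2$ holds with the same constant, and the density is unchanged, so all three sets in the triple satisfy the hypotheses of the mixing theorem with the stated $c$.
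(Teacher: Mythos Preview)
Your proposal is correct and is exactly the argument the paper indicates: apply Theorem~\ref{thm:The global mixing property_Intro} to the triple $(A,A,xA^{-1})$ for each $x\in A_n$. Your verification that $xA^{-1}$ inherits $100$-globalness (since inversion and left translation permute $t$-umvirates) and your parenthetical observation that $ab\in xA^{-1}$ gives $x=ab\cdot((ab)^{-1}x)\in A^3$ without any symmetry assumption on $A$ are both fine; you can drop the hedging about $A=A^{-1}$, as it is not assumed in this statement and is not needed.
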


Clearly such a result does not hold for $A^2$ even when $A$ is quite dense:
consider any global $A$ and delete $A \cap (xA^{-1} \cup A^{-1}x)$
for some suitably chosen $x$.

\subsection{Further developments}

As mentioned above, a future paper of Lifshitz and Marmor~\cite{lifshitz2023bounds} 
applies our hypercontractivity result to improve various results of Larsen and Shalev 
on character bounds and mixing for conjugacy classes. They also obtained bounds for Kronecker coefficients, and upper bounded the size of normal product free sets.
A future paper of  Evra, Kindler, and Lifshitz~\cite{Evra2022Hypercontractivity} obtains analogous results 
for  the special linear group $\mathrm{SL}_n(\mathbb{F}_q)$. 
One can naturally generalise the above notions of globalness and level 
to any transitive subgroup $G$ of $S_n$, and then it is interesting to investigate 
whether there is an effective level $d$ inequality for global functions.
Perhaps it may be possible to extend it to all $2$-transitive subgroups?

\subsection{Organisation of the paper}

We will divide this paper into two parts, 
where the first part develops the general theory,
which is then applied in the second part 
to the various applications discussed above.

\newpage

\part{Hypercontractivity}

This part develops the general theory of sharp hypercontractivity in $S_n$. 
We start in the next section by gathering
various preliminaries from Representation Theory, Analysis and Algebraic Combinatorics.
The second section develops our main new ideas, 
which we use to prove our hypercontractivity result
Theorem \ref{thm:hypercontractivity for global functions in the symmetric group}.
In the last section of the part we use similar methods to prove
our level $d$ inequality Theorem \ref{thm:level-d for biglobal functions}
and deduce Theorem \ref{thm:level-d for global functions_intro}.

\section{Preliminaries}

Here we collect various background results from
Representation Theory (that of the symmetric group),
Algebraic Combinatorics (the spectrum of the Kneser graph),
and Analysis (on product spaces).

\subsection{Representations of the symmetric group}

Here we summarise the basic representation theory of the symmetric group,
referring to the book \cite{james2006representation} for the general theory
and following Ellis, Friedgut and Pilpel \cite{ellis2011intersecting} for its combinatorial interpretation.

Any conjugacy class of $S_n$ corresponds to a partition $\lL \vdash n$,
listing in non-increasing order the cycle lengths of any permutation in the class.
These partitions also index the Specht modules,
which are the ordinary (non-modular) irreducible representations of $S_n$.
These are constructed as follows. For any partition $\lL = (\lL_1,\dots,\lL_k)$ of $n$,
the Young diagram is an array of left-justified boxes, with $\lL_i$ boxes in the $i$th row.
A $\lL$-tableau is obtained by putting the numbers $\{1,\dots,n\}$ in these boxes.
A $\lL$-tabloid is a row equivalence class of  $\lL$-tableaux, where we say two tableau
are row equivalent if they have the same (unordered) set of numbers in each row.
We consider the natural left action of $S_n$ on  $\lL$-tabloids
and write $M^\lL$ for the resulting permutation module.
The Specht module $S^\lL$ is the submodule of $M^\lL$
generated by all elements $\kK_t \{t\}$ where $t$ is a $\lL$-tableau
and $\kK_t = \sum \text{sign}(\sS) \sS \in \mb{Z}[S_n]$,
where the sum is over all $\sS$ fixing the columns of $t$.

We identify $L^2(S_n)$ with the $S_n$-module $\mb{R}[S_n]$ (the regular representation).
The symmetric group $S_{n}$ acts on itself by left and right multiplication.
We embed $S_n$ in $[n]^n$ by identifying $\tau \in S_n$ with $(\tau(i): i \in [n]) \in [n]^n$.
The left action on $S_n$ or $[n]^n$ is $\sS \mapsto L_\sS$,
where $L_\sS(x)=(\sS(x_1),\dots,\sS(x_n))$ permutes the values of the coordinates;
the right action is $\sS \mapsto R_{\sS^{-1}}$,
where $R_\sS(x)=(x_{\sS(1)},\dots,_{\sS(n)})$ permutes the order of the coordinates.
The corresponding dual actions on $L^2(S_n)$
are $^\sS f(x) = f(\sS^{-1} x)$ and $f^\sS(x) = f(x\sS)$.

Viewing $L^2(S_n)$ as an $S_n \times S_n$-module
(via the above two commuting $S_n$ actions),
it decomposes into irreducible $S_n\times S_n$-modules $V_{=\lL} = V_\lL \otimes V_\lL^*$;
these are the isotypic components of $L^2(S_n)$ as an $S_n$-module,
each decomposing (non-canonically) into $\dim V_\lL$ copies
of the irreducible $S_n$-module $V_\lL$.

By Schur's Lemma, any operator $T$ on $L^2(S_n)$
that commutes with the action of $S_n$ from one side 
(such as the convolution operators $T_f$ above) preserves each isotypic component;
furthermore, if $T$ commutes with the action of $S_n$ from both sides
then $T$ acts as a scalar matrix on each isotypic component.

Recall (see Definition \ref{def:degdec}) that we have an orthogonal degree decomposition
$L^2(S_n) = \bigoplus_d V_{=d}$ and for any $f \in L^2(S_n)$ we write $f = \sum_d f^{=d}$,
where each $f^{=d}$ is the orthogonal projection of $f$ on $V_{=d}$.
This is refined by the orthogonal decomposition $L^2(S_n) = \bigoplus_\lL V_{=\lL}$,
where $V_{=\lambda}\le V_{=d}$ if and only if $\lL_1 = n-d$;
we call $n-\lambda_1$ the \emph{strict level} of $\lambda$.

For any $f \in L^2(S_n)$ we let $\tilde{f} = \text{sign} \cdot f$
be obtained by multiplication by the sign character.
This satisfies $(\tilde{f})^{=\lL} = \text{sign} \cdot f^{=\lL'}$,
where $\lL'$ is the conjugate partition whose rows
have the same sizes as the columns of $\lL$.
In particular, if $f$ is a function on $A_n$, also regarded 
as a function on $S_n$ that is zero on odd permutations,
then we have $\tilde{f}=f$, so $f^{=\lL} = \text{sign} \cdot f^{=\lL'}$.

The \emph{level} of $\lL$ is $d(\lL) := \min\{n-\lL_1,n-\lL'_1\}$.
We write $W_{=d} = \bigoplus_{d(\lL)=d} V_{=\lL}$.

\subsection{Juntas}

A function $f$ defined on a product space $\OO = \prod_{i=1}^n \OO_i$
is called an \emph{$I$-junta} for some $I \sub [n]$ if $f(x)$ only depends on $x_I = (x_i: i \in I)$.
Analogously, a function $f$ on $S_n$ is called an \emph{$I$-junta} 
if $f(\sS)$ only depends on $(\sS(i): i \in I)$;
equivalently, $f$ is invariant under the right action of the subgroup $U_I$,
meaning that $f(\sS \tT) = f(\sS)$ for all $\tT$ fixing $I$ pointwise.
We note that $V_{\le d}$ is the span of the $d$-juntas, i.e.\ the $I$-juntas with $|I|=d$.

For any right $S_n$-module $M$, we call $x \in M$ an $I$-junta if $xU_I = x$.
We write $V_{\le d}(M)$ for the span of  the $d$-juntas, i.e.\ the $I$-juntas with $|I|=d$.
This is an $S_n$-submodule, generated by the $I$-juntas for any fixed $I$,
as if $xU_I = x$ then $x\sS U_{\sS^{-1}(I)} = x\sS$.
We let $V_{=d}$ be the quotient module $V_{\le d}(M)/V_{\le d-1}(M)$.
If $M$ is equipped with an $S_n$-invariant inner product then we identify $V_{=d}(M)$ 
with the orthogonal complement of $V_{\le d-1}(M)$ in $V_{\le d}(M)$.

We caution the reader that these two notions 
of junta do \emph{not} agree for the $S_n$-module $[n]^n$;
here we adopt the first notion, which is the stronger of the two. 

We described above the orthogonal decompositions
$L^2(S_n) = \bigoplus_d V_{=d}$ and its refinement
$L^2(S_n) = \bigoplus_\lL V_{=\lL}$.
The following lemma generalises this to any $S_n$-module $M$,
with $V_{=\lL}(M)$ equal to the $\lL$-isotypic component of $M$.
The proof will use the following case of Young's branching rule:
for any $\lL \vdash n$, the restriction of $V_\lL$ to $S_{n-1}$
decomposes as $\bigoplus_\mu V_\mu$ where the sum is over
all $\mu \vdash n-1$ with Young diagram obtained 
by deleting a box from that of $\lL$.

\begin{lem}
\label{lem:rep-theory lemma} 
For any $S_n$-module $M$ we have $V_{=d}(M) = \bigoplus_{\lL: \lL_1=n-d} V_{=\lL}(M)$.
\end{lem}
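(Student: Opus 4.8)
\textbf{Proof plan for Lemma \ref{lem:rep-theory lemma}.}

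The plan is to prove, for every $d$, that $V_{\le d}(M)$ equals the sum of the isotypic components $V_{=\lambda}(M)$ over all $\lambda \vdash n$ with $\lambda_1 \ge n-d$; the lemma then follows by taking the quotient $V_{\le d}(M)/V_{\le d-1}(M)$, which picks out exactly those $\lambda$ with $\lambda_1 = n-d$. Since $M$ decomposes as a direct sum of irreducibles and $V_{\le d}(M)$ is a submodule (as noted in the excerpt, it is generated by the $I$-juntas for any fixed $I$ with $|I|=d$), it suffices to work one irreducible at a time: I would fix $\lambda \vdash n$, set $M = V_\lambda$, and show that $V_{\le d}(V_\lambda)$ is all of $V_\lambda$ if $\lambda_1 \ge n-d$ and is $\{0\}$ if $\lambda_1 < n-d$. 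Because $V_{\le d}(V_\lambda)$ is a submodule of the irreducible $V_\lambda$, it is automatically either $0$ or everything, so the whole statement reduces to: $V_\lambda$ contains a nonzero $I$-junta (for $|I|=d$) if and only if $\lambda_1 \ge n-d$, equivalently, the fixed space $V_\lambda^{U_I}$ of the pointwise stabiliser $U_I \cong S_{n-d}$ is nonzero precisely when $\lambda_1 \ge n-d$.

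The key step is thus a restriction/Frobenius-reciprocity computation. The space of $U_I$-invariants $V_\lambda^{U_I}$ is, by Frobenius reciprocity, $\operatorname{Hom}_{S_{n-d}}(\mathbf{1}, \operatorname{Res}^{S_n}_{S_{n-d}} V_\lambda)$, i.e.\ the multiplicity of the trivial $S_{n-d}$-module in the restriction of $V_\lambda$ down to $S_{n-d}$. Iterating Young's branching rule (quoted in the excerpt) $d$ times, $\operatorname{Res}^{S_n}_{S_{n-d}} V_\lambda = \bigoplus_\mu c_{\lambda\mu} V_\mu$ where the sum is over $\mu \vdash n-d$ obtained from $\lambda$ by removing a horizontal-ish strip of $d$ boxes one at a time, and $c_{\lambda\mu}$ counts the number of such removal sequences. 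The trivial module of $S_{n-d}$ is $V_{(n-d)}$, the one-row partition. So $V_\lambda^{U_I} \ne 0$ iff one can delete $d$ boxes from the Young diagram of $\lambda$, one at a time each time leaving a valid Young diagram, ending at the single row $(n-d)$. Since deleting a box can only decrease $\lambda_1$ or leave it unchanged, reaching a diagram with first row $n-d$ forces $\lambda_1 \ge n-d$; conversely if $\lambda_1 \ge n-d$ one simply deletes the $d$ boxes below the first row together with (if needed) boxes from the end of the first row, in an order from the bottom up — at each stage a removable corner exists — landing at $(n-d)$. This establishes the "only if'' and "if'' directions and hence the lemma.

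I expect the main obstacle to be purely bookkeeping: carefully justifying the "if'' direction, namely that whenever $\lambda_1 \ge n-d$ there is an actual valid sequence of $d$ single-box removals from $\lambda$ terminating at the one-row diagram $(n-d)$ — one must argue that at every intermediate step the partition still has at least $n-d$ boxes in reach and a legitimate removable corner, which amounts to checking that removing corners in, say, reverse reading order never gets stuck. A secondary technical point is handling the passage from $M$ an arbitrary $S_n$-module to the irreducible case cleanly: one should note that $V_{\le d}(M)$ is closed under the $S_n$-action and interacts well with direct sums (the $I$-junta condition $xU_I=x$ is linear and respects the decomposition into isotypic components once we fix a single $I$), so that $V_{\le d}(M) = \bigoplus_\lambda V_{\le d}(V_{=\lambda}(M))$ and each summand is $V_{=\lambda}(M)$ or $0$ according to the irreducible computation above. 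With those two points in hand the identification $V_{=d}(M) = \bigoplus_{\lambda_1 = n-d} V_{=\lambda}(M)$ is immediate.
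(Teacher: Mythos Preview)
Your proposal is correct and follows essentially the same route as the paper: reduce to showing $V_{\le d}(M)=\bigoplus_{\lambda_1\ge n-d}V_{=\lambda}(M)$, work one irreducible at a time, and use iterated Young branching to decide when $V_\lambda^{U_I}\ne 0$, then conclude by irreducibility. The only difference is expository---you spell out the box-removal argument for reaching the one-row partition $(n-d)$, whereas the paper simply invokes ``Young's branching rule'' for both directions.
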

\begin{proof}
It suffices to show $V_{\le d}(M) = \bigoplus_{\lL: \lL_1 \ge n-d} V_{=\lL}(M)$.

Recall that $V_{\le d}(M)$ is the $S_n$-module generated by 
$M^{U_I} := \{x: xU_I=x\}$ for any fixed $d$-set $I \sub [n]$.
Writing $M[U_I]$ for the restriction $M$ to a $U_I$-module,  we have $M^{U_I} = V_0(M[U_I])$: 
the isotypic component of the trivial representation of $U_I$,
which has Young diagram equal to a single row of length $n-d$.
By Young's branching rule, for any $\lL$ with $\lL_1 < n-d$
we deduce $(V_{=\lL}(M))^{U_I} = V_0(V_{=\lL}(M)[U_I]) = 0$.
Thus  $V_{\le d}(M) \le \bigoplus_{\lL: \lL_1 \ge n-d} V_{=\lL}(M)$.

To see that this inclusion is an equality, it suffices to show for each irreducible $X \le M$
where $X \cong V_\lL$ with $\lambda_1 \ge n-d$ that $V_{\le d}(X) = X$.
Now $V_{\le d}(X)$ is generated by $X^{U_I} = V_0(X[U_I])$,
which is nonzero by Young's branching rule.
As $V_{\le d}(X)$ is a subrepresentation of the irreducible $X$
we deduce $V_{\le d}(X)=X$. Summing over $X$, we deduce the lemma.
\end{proof}

\subsection{The spectrum of the Kneser graph} \label{subsec:Kneser}

The Kneser graph $K_{n,k}$ has vertex set $\binom{[n]}{k} := \{A \sub [n]: |A|=k\}$,
where $AB$ is an edge for $A,B \in \binom{[n]}{k}$ if $A \cap B = \es$.
We assume $k \le n/2$ for non-triviality. 
Every vertex in $K_{n,k}$ has degree $\tbinom{n-k}{k}$.
The random walk on the Kneser graph corresponds to 
the operator $\mc{K}=\mc{K}_{n,k}$ on $L^2(\binom{[n]}{k})$ 
given by $\mc{K}f(A)=\mb{E}_{B\sim \binom{[n]\sm A}{k}}[f(B)]$.

The eigenvalues of the Kneser graph were calculated by Lov\'asz  \cite{lovasz1979shannon}.
We will also need the correspondence between these eigenvalues and their representation level, 
as described by Bannai and Ito \cite[Section III.2]{bannai1984algebraic}.
Here, for any $S_n$-morphism $T$ of an $S_n$-module $M$,
we say that an eigenvalue of $T$ has \emph{level} $d$
if its corresponding eigenspace intersects $V_{=d}(M)$ non-trivially.

We regard $L^2(\binom{[n]}{k})$ as an $S_n$-module via $^\sS f(A)=f(\sS(A))$
and note that $\mc{K}$ is an $S_n$-morphism 
(as disjointness of subsets does not depend on the labelling of the underlying set),
so the eigenspaces of $\mc{K}$ are $S_n$-subrepresentations of $L^2(\binom{[n]}{k})$. 
We can naturally identify $L^2(\binom{[n]}{k})$ with the permutation module $M^{(n-k,k)}$,
in which the highest degree $V_{=k}(M^{(n-k,k)})$ is the Specht module $S^{(n-k,k)}$,
generated by elements  $\kK_t \{t\}$ with $\kK_t \in \mb{Z}[S_n]$  
of the form $\prod_{i=1}^k ( \text{id} - (a_i b_i) )$,
which corresponds under the identification with $L^2(\binom{[n]}{k})$
to a signed $k$-octahedron $O \in  \mb{Z}^{\binom{[n]}{k}}$ 
(a $k$-partite $k$-graph with parts $\{a_i,b_i\}_{i=1}^k$).
Applying $\mc{K}$ to $O$ we see that any $k$-set $A$ with $O_A=0$
receives equal $\pm$ contributions that cancel to zero,
whereas any $k$-set $A$ with $O_A \in \pm 1$ 
receives a single contribution $(-1)^k \tbinom{n-k}{k}^{-1} O_A$
from the unique $B$ disjoint from $A$ with $O_B \ne 0$.
Thus we obtain the following lemma.

\begin{lem}\label{lem:eigenvalues of the Kneser graph}
The unique level $k$ eigenvalue of $\mc{K}_{n,k}$ is $(-1)^k \tbinom{n-k}{k}^{-1}$.
\end{lem}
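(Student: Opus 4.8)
The plan is to follow the octahedron computation sketched in the paragraph preceding the statement, making the three steps fully precise. First I would fix the identification of $L^2(\binom{[n]}{k})$ with the permutation module $M^{(n-k,k)}$ and recall that the top graded piece $V_{=k}(M^{(n-k,k)})$ is the Specht module $S^{(n-k,k)}$; by Lemma \ref{lem:rep-theory lemma} (applied to $M^{(n-k,k)}$) this is precisely the level $k$ component, so it suffices to exhibit one nonzero vector in $S^{(n-k,k)}$ that is an eigenvector of $\mc{K}_{n,k}$ and compute its eigenvalue. Since the level $k$ eigenspace is a single irreducible, once we find one such eigenvector the whole of $V_{=k}$ is an eigenspace with that eigenvalue, proving both existence and uniqueness.

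Next I would take the explicit generator of $S^{(n-k,k)}$ described above: a $\lL$-tableau $t$ with column pairs $\{a_i,b_i\}_{i=1}^k$, for which $\kK_t = \prod_{i=1}^k(\mathrm{id}-(a_i b_i))$, and identify $\kK_t\{t\}$ with the signed $k$-octahedron $O\in\mb{Z}^{\binom{[n]}{k}}$: writing $P=\{a_1,\dots,a_k\}\cup\{b_1,\dots,b_k\}$ for the $2k$ vertices, $O$ is supported on the $2^k$ transversals $A\sub P$ picking exactly one of $\{a_i,b_i\}$ from each pair, with sign $(-1)^{\#\{i:\,b_i\in A\}}$, and $O_A=0$ otherwise. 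The key combinatorial observation is: for a fixed $k$-set $A$, among the $B\in\binom{[n]\sm A}{k}$ with $O_B\ne 0$, either there are none (if $O$ does not "see" $A$ in a compatible way) or there is exactly one — namely, if $A$ is itself a transversal then the unique disjoint transversal is the complementary one $P\sm A$, contributing sign $(-1)^k O_A$; and if $A$ is not a transversal but some transversal is disjoint from it, a short case analysis on the pairs $\{a_i,b_i\}$ shows the $\pm$ contributions from all disjoint transversals cancel in pairs. Hence $\mc{K}O = (-1)^k\binom{n-k}{k}^{-1}O$, since each vertex has degree $\binom{n-k}{k}$ and $\mc{K}$ averages over the neighbourhood.

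The main obstacle is the cancellation step: verifying carefully that when $A$ is \emph{not} a transversal of the octahedron, the contributions $\mc{K}O(A)$ from the various disjoint transversals $B$ indeed sum to zero. The cleanest way I would organise this is to note that $A$ fails to be a transversal for one of two local reasons at some pair $\{a_i,b_i\}$: either $A$ contains neither $a_i$ nor $b_i$ (then a disjoint transversal $B$ is free to choose $a_i$ or $b_i$, and swapping that choice is a sign-reversing involution on the set of relevant $B$, forcing cancellation) or $A$ contains both $a_i$ and $b_i$ (then no transversal can be disjoint from $A$ at all, so the sum is empty). Formally one picks the least such index $i$ and either applies the sign-reversing involution or observes the neighbourhood is empty; this disposes of all $A$ with $O_A=0$, while the transversal case was handled above. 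Then a brief check that $O\ne 0$ (e.g.\ $O_{\{a_1,\dots,a_k\}}=1$) and that $O\in S^{(n-k,k)}$ (immediate from the polytabloid description) completes the argument, and invoking Lemma \ref{lem:rep-theory lemma} for uniqueness of the level $k$ eigenspace finishes the proof.
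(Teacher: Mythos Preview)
Your proposal is correct and follows essentially the same approach as the paper: exhibit the signed $k$-octahedron $O$ as a generator of $S^{(n-k,k)}=V_{=k}$, compute $\mc{K}O$ directly, and conclude via irreducibility of the level~$k$ piece. Your treatment of the cancellation step (sign-reversing involution when $A$ misses a pair, empty sum when $A$ contains a pair) is a clean expansion of what the paper condenses into ``equal $\pm$ contributions that cancel to zero.''
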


Now we will lift the above analysis to the setting of ordered $k$-tuples $[n]_k$,
on which we define the disjointness graph $D_{n,k}$, 
joining two $k$-tuples $I,J \in [n]_k$ if their underlying sets are disjoint.
Similarly to above, we write $\mc{D} = \mc{D}_{n,k}$ 
for the corresponding operator on $L^2([n]_k)$,
naturally regarded under the left action of $S_n$
as the permutation $S_n$-module $M^{(n-k,1^k)}$,
on which $\mc{D}$ is an $S_n$-morphism.

\begin{lem}\label{lem:Kneser-type operator}
If $n\ge 2k$ then every level $k$ eigenvalue $\aA$ of $\mc{D}_{n,k}$ 
satisfies $|\aA| \le (2k/n)^k$.
\end{lem}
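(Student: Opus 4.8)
The plan is to relate the ordered operator $\mc{D}_{n,k}$ on $L^2([n]_k) = M^{(n-k,1^k)}$ to the unordered Kneser operator $\mc{K}_{n,k}$ on $L^2(\binom{[n]}{k}) = M^{(n-k,k)}$, for which Lemma \ref{lem:eigenvalues of the Kneser graph} already pins down the level $k$ eigenvalue. First I would observe that there is a natural $S_n$-equivariant surjection $\pi\colon L^2([n]_k)\to L^2(\binom{[n]}{k})$ given by averaging a function over the $S_k$ action that permutes the coordinates of a tuple (equivalently, $\pi$ is the orthogonal projection onto the $S_k$-invariant tuples, suitably normalised), and that $\pi$ intertwines $\mc{D}$ and $\mc{K}$ up to the factor counting disjoint tuples versus disjoint sets: since a disjoint pair of $k$-sets corresponds to $(k!)^2$ disjoint pairs of $k$-tuples while $\mc D$ averages over $\binom{n-k}{k}k!$ neighbours, the normalisations match and $\pi \mc{D} = \mc{K}\pi$. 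So $\pi$ carries eigenvectors of $\mc{D}$ with nonzero image to eigenvectors of $\mc{K}$ with the same eigenvalue.

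The subtlety is that $\pi$ has a large kernel, so an eigenvalue of $\mc D$ need not show up in $\mc K$. To handle this I would pass to the right action of $S_k$ (permuting the $k$ chosen coordinates of a tuple) which commutes with $\mc D$ and with the left $S_n$ action; decomposing $L^2([n]_k)$ into $S_k$-isotypic components $\bigoplus_{\nu \vdash k} L^2([n]_k)_\nu$, each component is $\mc D$-invariant. The trivial component $\nu = (k)$ is exactly where $\pi$ is injective, and there the level $k$ eigenvalue is $\pm\binom{n-k}{k}^{-1}$ by Lemma \ref{lem:eigenvalues of the Kneser graph}, which is much smaller than $(2k/n)^k$. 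For the other components, the key point is a branching/highest-level count: the strict level $k$ part $V_{=(n-k,1^k)}(M^{(n-k,1^k)})$ — i.e.\ the Specht module $S^{(n-k,1^k)}$ — appears in $L^2([n]_k)$ with multiplicity one, and the $S_k$ action sees it through the sign representation $\nu = (1^k)$ only. So within each fixed $S_k$-isotypic component, the highest level that occurs is strictly less than $k$ unless $\nu \in \{(k),(1^k)\}$. This means the only level $k$ eigenvalues of $\mc D$ live in the $\nu=(k)$ part (controlled above) or the $\nu=(1^k)$ part, which by the sign twist $f\mapsto \mathrm{sign}\cdot f$ is isomorphic as an $\mc D$-module — up to relabelling disjointness, which is sign-invariant — to a piece of $M^{(n-k,k)}$ again, giving the same bound.

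Alternatively, and perhaps more cleanly, I would bound things crudely: a level $k$ eigenvector of $\mc D$ lies in $V_{=k}(M^{(n-k,1^k)})$, which decomposes into irreducibles $V_\lambda$ with $\lambda_1 = n-k$ (Lemma \ref{lem:rep-theory lemma}); on each such irreducible $\mc D$ acts as a scalar (it is an $S_n$-morphism of the regular-type module and commutes with both actions on that isotypic piece after the $S_k$-decomposition), and that scalar equals $\frac{1}{[n]_k \text{ disjointness degree}}$ times a signed count of closed structures, which one evaluates by applying $\mc D$ to an explicit generator — a product $\prod_{i=1}^k(\mathrm{id}-(a_ib_i))$ applied to a flag, i.e.\ a signed $k$-octahedron lifted to tuples. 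As in the Kneser computation, all contributions cancel except those from the single disjoint configuration, yielding an eigenvalue of absolute value at most $k!\binom{n-k}{k}^{-1} \le (2k/n)^k$ for $n \ge 2k$; here the extra $k!$ accounts for the ordering and the inequality $k!\binom{n-k}{k}^{-1} = \frac{k!\,k!\,(n-2k)!}{(n-k)!} \le (2k/n)^k$ is a routine estimate.

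The main obstacle is making the octahedron cancellation argument rigorous directly on ordered tuples, or equivalently controlling \emph{all} the level $k$ eigenvalues rather than just the extremal one: I need that every irreducible constituent of $V_{=k}(M^{(n-k,1^k)})$ — not only the top Specht module — contributes an eigenvalue bounded by $(2k/n)^k$. I expect to resolve this by noting that $\mc D^{\,t}$ for suitable $t$, or a symmetrisation of $\mc D$, has nonnegative entries with small row sums after restricting to the level $k$ subspace, or else by explicitly diagonalising via the $S_k\times S_n$ structure and invoking Lemma \ref{lem:eigenvalues of the Kneser graph} componentwise together with the sign twist; the bookkeeping of which $\nu \vdash k$ can carry level $k$ is where the real work lies.
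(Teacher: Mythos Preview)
You have set up the averaging map $\pi$ correctly and obtained the intertwining $\pi\mc D = \mc K\pi$, but you then treat the kernel of $\pi$ as a genuine obstacle and embark on an $S_k$-isotypic decomposition to get around it. This is the gap: there is no obstacle, because in fact $\mc D = \pi^*\mc K\pi$. To see this, note that for an ordered tuple $I$ the set of $J\in[n]_k$ disjoint from $I$ depends only on the underlying set $\{I\}$, and moreover is a union of full $S_k$-orbits; hence $\mc D f(I)$ depends only on $\{I\}$ and only on $\pi f$. Concretely, $\mc D f(I)=\mb E_{B\in\binom{[n]\sm\{I\}}{k}}\,\mb E_{J:\{J\}=B}\,f(J)=(\mc K\pi f)(\{I\})=(\pi^*\mc K\pi f)(I)$. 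In particular $\mc D$ annihilates $\ker\pi$, so every nonzero eigenvalue of $\mc D$ is already an eigenvalue of $\mc K$, and at level $k$ one immediately gets $\aA\in\{0,(-1)^k\tbinom{n-k}{k}^{-1}\}$ by Lemma~\ref{lem:eigenvalues of the Kneser graph}. This is exactly the paper's argument.

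Two further issues with the detour you propose. First, the structural claim that ``within each fixed $S_k$-isotypic component, the highest level that occurs is strictly less than $k$ unless $\nu\in\{(k),(1^k)\}$'' is false: the level $k$ part of $L^2([n]_k)$ decomposes as $\bigoplus_{\mu\vdash k} V_\mu^{S_k}\otimes V_{(n-k,\mu)}^{S_n}$ (an instance of the $S_k\times S_n$ duality on injections), so every $S_k$-type $\nu$ carries a nonzero level $k$ piece. Second, in your alternative octahedron computation the asserted inequality $k!\tbinom{n-k}{k}^{-1}\le(2k/n)^k$ is false already at $n=2k$, where the left side is $k!$ and the right side is $1$; the extra $k!$ you inserted ``for the ordering'' does not belong, precisely because $\mc D$ averages over ordered disjoint tuples and so the correct eigenvalue is $\tbinom{n-k}{k}^{-1}$, not $k!\tbinom{n-k}{k}^{-1}$.
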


\begin{proof}
Consider the $S_n$-morphism $\varphi: L^2([n]_k) \to L^2(\binom{[n]}{k})$ where $\varphi(f)(A)$ is the average of $f(x)$ over all orderings $x$ of $A$. 
The adjoint $S_n$-morphism $\varphi^*$ embeds $L^2(\binom{n}{k})$ as the subspace $L^2([n]_k)^{S_k}$ of $S_k$-invariant functions on $[n]_k$,
considering the right action of $S_k$ on $[n]_k$ by permuting coordinates. As $\varphi, \varphi^*$ are $S_n$-morphisms
they restrict to operators between $V_{\le k}( L^2([n]_k) )$ and $V_{\le k}( L^2(\binom{[n]}{k}) )$.
We note that $\mathcal{D}_{n,k} = \varphi^* \mathcal{K}_{n,k}\varphi.$
By Lemmas \ref{lem:eigenvalues of the Kneser graph} and  \ref{lem:rep-theory lemma} 
we deduce that $\aA$ is $0$ or $(-1)^k \tbinom{n-k}{k}^{-1}$.
As $\tbinom{n-k}{k} \ge (\tfrac{n-k}{k})^k \ge (n/2k)^k$ the lemma follows.
\end{proof}

\subsection{Analysis on product spaces I} \label{subsec:AnI}

In this subsection and the next we discuss various aspects of analysis on product spaces.
We consider the space $L^2\left([n]^n\right)$ of real valued functions on $[n]^n$
with uniform measure and the resulting inner product and $L_p$-norms.

We start with the Efron--Stein decomposition and noise operator, following \cite[Chapter 8]{o2014analysis}.
We call a function on $[n]^n$ a $d$-\emph{junta} if it only depends on $d$ variables. 
The space $L^2([n]^n)$ is equipped with a degree filtration. 
The \emph{degree} of $f\in L^2([n]^n)$ is the minimal $d$ 
for which $f$ is a linear combination of $d$-juntas. 
We let $V_{\le d}$ be the span of all functions of degree at most $d$.
We write $f^{\le d}$ for the orthogonal projection of $f$ on $V_{\le d}$ 
and $f^{=d} = f^{\le d} - f^{\le d-1}$. 
This gives rise to an orthogonal decomposition 
$f= \sum_{d=0}^{n}f^{=d}$.

Next we describe a finer decomposition known as the Efron--Stein decomposition. 
For $S\subseteq [n]$ we let $V_{\le S}$ be the space of functions that depend only on the coordinates of $S$.
We let $V_{=S}$ be the space of functions in $V_{\le S}$ that are othogonal to all functions of degree $\le d-1.$ 
Then the spaces $V_{=S}$ form an orthogonal decomposition of $L^2([n]^n).$ 
We write $f^{=S}$ for the projection of $f$ onto $V^{=S}.$
The  \emph{Efron--Stein} decomposition is $f=\sum_{S\subseteq [n]} f^{=S}$.

For $\rho \in (0,1)$ and $x \in [n]^n$ we define a distribution  $y\sim N_{\rho}(x)$
where for each $i \in [n]$ independently with probability $\rho$ we let $y_i=x_i$
or otherwise we sample  $y_i \sim [n]$ uniformly at random.
The associated noise operator $T_\rho$ on $L^2([n]^n)$ 
is defined by $T_\rho f(x) = \mb{E}_{y\sim N_{\rho}(x)} f(y)$.
In terms of the Efron--Stein decomposition
we have $T_{\rho} f  = \sum_{d=0}^{n}\rho^{d}f^{=d}$.

It will be helpful to recall the proof of this last fact,
as it will motivate our approach to estimating the eigenvalues 
of the noise operators constructed in the proof of 
Theorem~\ref{thm:hypercontractivity for global functions in the symmetric group}.
Consider any $f\in V_{=S}$ and $x\in [n]^n$.
Let  $z\in\{0,1\}^n$ with $\mb{P}(z_i=1)=\rho$ independently.
Let $y$ be obtained from $x$ by resampling those $x_i$ with $z_i=1$.
Then $T_\rho f(x) = \mb{E} f(y)$. 
Let $S' = \{i \in S: z_i=1\}$.
If $S'=S$ then $y=x$.
Otherwise, letting $g$ be the $S'$-junta $g(y)=n^{|S'|}1_{y_{S'} = x_{S'}}$
we have $\mb{E}[f(y)|z] = \langle f,g\rangle = 0$.
Thus $T_\rho f(x) = \mb{E}_z \mb{E}[f(y)|z] = \mb{P}(S'=S) f(x) = \rho^{|S|} f(x)$. 
We deduce $T_{\rho} f  = \sum_{d=0}^{n}\rho^{d}f^{=d}$.

\subsection{Analysis on product spaces II}  \label{subsec:AnII}

We continue our discussion of analysis on $L^2([n]^n)$ 
with global hypercontractivity and the level $d$ inequality.
Here the context is that many applications of the classical hypercontractive inequality
break down in general product spaces (even for Boolean functions with bias)
and that there is a substantial literature of deep results 
addressing the ineffectiveness of this inequality on juntas
(see \cite{friedgut1999sharp, hatami2012structure}).
A breakthrough by Keevash, Lifshitz, Long and Minzer \cite{KLLM}
gave a strong resolution of this problem by proving an effective 
hypercontractive inequality for global functions on general product spaces,
thus obtaining a tight form of Bourgain's sharp threshold theorem
and a variant form of the Kahn-Kalai Isoperimetric Conjecture
and a general form of the Invariance Principle of Mossel, O'Donnell and Oleszkiewicz.
There are many subsequent applications (surveyed in \cite[Section 1.6]{KLLM})
to hypercontractivity in Exotic Settings, Extremal Combinatorics, 
Product-free sets (discussed above), Error-correcting Codes,
High-dimensional Expanders and variants of the Unique Games Conjecture.

Despite these many applications, the result of  \cite{KLLM} is not sharp in terms of the noise rate, 
which precludes its use for applications as in the current paper where this is a crucial point.
Instead, we will rely on a more recent improvement by Keller, Lifshitz and Marcus \cite{keller2023}.

We require the following key definition, which will be used throughout the remainder of the paper.
It subsumes Definition \ref{def:global} on global functions in $L^2(S_n)$,
giving a common description of (bi)globalness on $L^2(S_n)$ and $L^2([n]^n)$.
We recall that globalness for $f \in L^2(S_n)$ was defined 
using functions $f_{I\to J}$, which we call a $d$-restriction, where $d=|I|=|J|$.
Similarly, for $f \in L^2([n]^n)$ we will use $d$-restrictions of the form $f_{S \to x}$,
where $x \in [n]^S$ for $S \sub [n]$ with $|S|=d$, defined by
$f_{S\to x}(y) = f(x,y)$ (aligning $x$ with the $S$ coordinates).

\begin{defn} \label{def:biglobal}
Let $f \in L^2(S_n)$ or $f \in L^2([n]^n)$.
We say that $f$ is $(r,\gG_2,d)$-\emph{global} 
if $\|f_*\|_2 \le r^t \gG_2$ for all $t$-restrictions $f_*$ of $f$ with $t \le d$.

We say that $f$ is $(r,\gG_1,\gG_2,d)$-\emph{biglobal} 
if $\|f_*\|_1 \le r^t \gG_1$ and $\|f_*\|_2 \le r^t \gG_2$ 
for all $t$-restrictions $f_*$ of $f$ with $t \le d$.

If we omit any parameter from the notation it is understood as the default option:
the default for $d$ is $n$, the default for $\gG_i$ is $\|f\|_i$.
We use the same terminology for a set $A$,
meaning that $1_A$ has the corresponding property.
\end{defn}

Now we can state the hypercontractivity theorem and level $d$ inequality 
proved by Keller, Lifshitz and Marcus \cite{keller2023}.

\begin{thm}[{\cite{keller2023}}]\label{thm:KLM}
Suppose $f\in L^2([n]^n)$ is $(r,\gG)$-global with $r>1$ and $\gG>0$.
If $q\ge 2$ and $\rho = \frac{\log q}{16rq}$
then $\|T_{\rho}f\|_q^q \le \|f\|_2^2\gamma^{q-2}$.
 \end{thm}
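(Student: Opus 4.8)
We describe the argument for $q=2m$ an even integer (the general case $q\ge 2$ follows by interpolation against the trivial bound $\|T_\rho f\|_2\le\|f\|_2\le\gG$ --- note $\gG\ge\|f\|_2$ by the $t=0$ case of globalness --- or by re-running the computation below with fractional exponents). Writing $T_\rho f=\sum_S\rho^{|S|}f^{=S}$ in the Efron--Stein decomposition and using that each $f^{=S}$ has mean zero in every coordinate of $S$,
\[
\|T_\rho f\|_{2m}^{2m}=\sum_{(S_1,\dots,S_{2m})}\rho^{|S_1|+\cdots+|S_{2m}|}\;\mb{E}\Big[\prod_{j=1}^{2m}f^{=S_j}\Big],
\]
where the summand vanishes unless $(S_1,\dots,S_{2m})$ is an \emph{even cover}: every coordinate of $\bigcup_jS_j$ lies in at least two of the $S_j$. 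Equivalently, $\|T_\rho f\|_{2m}^{2m}=\mb{E}\big[\prod_jf(y^{(j)})\big]$ for $y^{(1)},\dots,y^{(2m)}$ obtained from a common uniform $x$ by independently retaining each coordinate with probability $\rho$ and resampling the rest, so that only coordinates retained in at least two copies contribute.

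The heart of the argument is to bound a single surviving term, having fixed its combinatorial type: the set $W=\bigcup_jS_j$ of used coordinates, the total size $2\ell=\sum_j|S_j|\ge 2|W|$, and the incidence pattern between $W$ and $\{1,\dots,2m\}$. One estimates $\big|\mb{E}[\prod_jf^{=S_j}]\big|$ by a generalised H\"older inequality and then bounds the resulting factors: the coordinates lying in a single $S_j$ (``private'' coordinates) are handled by the ordinary hypercontractive inequality for product spaces applied coordinate by coordinate, whereas the coordinates shared between two or more factors (the ``diagonal'') are exactly where globalness enters --- restricting $f$ to a set of coordinates multiplies the relevant $L^2$- or $L^1$-norm by at most $r$ per coordinate, which is precisely the gain needed to absorb the $r^{O(\ell)}$ cost that enumerating incidence patterns would otherwise incur. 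Book-keeping this, two of the $2m$ factors contribute a factor $\|f\|_2$ (their trivial $L^2$-bound) and the remaining $2m-2$ a factor $\gG$ (an $L^1$-bound on a restriction), producing the shape $\|f\|_2^2\,\gG^{2m-2}$, with a coefficient of the form $(Cr^2m\,\rho^2)^{\ell}$ times a binomial factor counting the choice of $W$ and the incidences.

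Summing over all types then comes down to balancing the factors $\rho^{2\ell}$ against the number of even covers of total size $2\ell$ and the $r^{O(\ell)}$ produced by globalness. Done crudely this only yields the conclusion for $\rho$ polynomially small in $q$, or with a power of $q$ lost in the conclusion, as in the earlier inequalities of \cite{KLLM, filmus2020hypercontractivity}; the decisive point --- and the step I expect to be the main obstacle --- is to show that, once globalness has been used on the diagonal coordinates, the residual combinatorial entropy is only logarithmic in $q$, which is what produces the sharp rate $\rho\asymp\frac{\log q}{rq}$ (the $\log q$ being essentially the per-coordinate entropy of deciding which pair among the $2m$ factors a shared coordinate belongs to). This tight simultaneous accounting of the H\"older split, the globalness bounds on restrictions, and the enumeration of even covers is the technical core of \cite{keller2023}.
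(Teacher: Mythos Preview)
This theorem is not proved in the paper: it is quoted as a black-box input from Keller, Lifshitz and Marcus \cite{keller2023} (see the sentence immediately preceding the statement), and is then used, via the coupling of Section~3, to deduce the corresponding hypercontractive and level-$d$ statements on $S_n$. There is therefore no ``paper's own proof'' to compare your sketch against.

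For what it is worth, your outline does capture the broad shape of the argument in \cite{keller2023} --- expand $\|T_\rho f\|_{2m}^{2m}$ over Efron--Stein tuples, reduce to even covers, and exploit globalness on the shared coordinates --- but as written it is a heuristic rather than a proof. Two concrete inaccuracies: (i) $(r,\gG)$-globalness (Definition~\ref{def:biglobal}) controls the $L^2$ norms of restrictions, not the $L^1$ norms, so the phrase ``an $L^1$-bound on a restriction'' mis-states the mechanism by which the $\gG$'s enter; (ii) the reason one obtains exactly $\|f\|_2^2\gG^{q-2}$ rather than $\gG^q$, together with the sharp rate $\rho\asymp\tfrac{\log q}{rq}$, is not simply that ``two of the $2m$ factors are special'' in a H\"older split --- extracting this precise shape is the actual technical content of \cite{keller2023} and requires a more careful recursive argument than your sketch indicates.
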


\begin{thm}[\cite{keller2023}]\label{thm:KLM2} 
Suppose $f\in L^2([n]^n)$ is $(r,\gG_1,\gG_2,d)$-biglobal
with $r>1$, $\gG_2>\gG_1>0$ and $2d \le \log(\gG_2/\gG_1)$.
Then $\| f^{=d} \|_2^2 \leq \gG_1^2 ( 2200 r^2 d^{-1} \log(\gG_2 /\gG_1) )^d$.
 \end{thm}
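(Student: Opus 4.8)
The plan is to run the classical proof of the level $d$ inequality (see \cite[Section~9.5]{o2014analysis}) with Bonami's hypercontractivity replaced by the global hypercontractive estimate of Theorem~\ref{thm:KLM}. First I would use the orthogonality of the levels to write $\|f^{=d}\|_2^2=\langle f^{=d},f\rangle$ and apply H\"older's inequality with a parameter $q\ge 2$ (to be optimised at the end), setting $q'=q/(q-1)\in(1,2]$:
\[
\|f^{=d}\|_2^2=\langle f^{=d},f\rangle\le\|f^{=d}\|_q\,\|f\|_{q'}.
\]
The factor $\|f\|_{q'}$ is controlled using only the $t=0$ biglobal bounds: since $1/q'=(1-2/q)\cdot 1+(2/q)\cdot\tfrac12$, log-convexity of the $L^p$-norms gives $\|f\|_{q'}\le\|f\|_1^{1-2/q}\|f\|_2^{2/q}\le\gG_1^{1-2/q}\gG_2^{2/q}$.

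The crux is the factor $\|f^{=d}\|_q$, and this is where the remaining (higher-order) biglobalness of $f$ and the low degree of $f^{=d}$ must be exploited. Since $f^{=d}\in V_{=d}$ is homogeneous, $T_\rho f^{=d}=\rho^d f^{=d}$, so $\|f^{=d}\|_q=\rho^{-d}\|T_\rho f^{=d}\|_q$ for every $\rho$. What I would want here is a \emph{sharp low-degree} hypercontractive inequality: a global function of degree $\le d$ should satisfy $\|T_\rho g\|_q\le\|g\|_2$ already at a Bonami-type noise rate $\rho$ of order $(r\sqrt q)^{-1}$, rather than at the rate of order $(rq)^{-1}$ that Theorem~\ref{thm:KLM} gives for arbitrary global functions. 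Granting this, and applying it to $g=f^{=d}$ — which requires the auxiliary fact that the projection onto $V_{=d}$ preserves globalness, so that $f^{=d}$ is $\big(O(r),\|f^{=d}\|_2,d\big)$-global — produces $\|f^{=d}\|_q\le (Cr\sqrt q)^d\,\|f^{=d}\|_2$ for an absolute constant $C$.

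Now I would combine: substituting the two estimates, cancelling a factor of $\|f^{=d}\|_2$ and squaring gives, with $L:=\log(\gG_2/\gG_1)$,
\[
\|f^{=d}\|_2^2\le (Cr\sqrt q)^{2d}\,\gG_1^{2-4/q}\gG_2^{4/q}=\gG_1^2\,(C^2r^2q)^d\,e^{4L/q}.
\]
Finally I would optimise over $q$: the scalar $q\mapsto q^d e^{4L/q}$ is minimised at $q=4L/d$, where it equals $(4eL/d)^d$, and the hypothesis $2d\le L$ makes this choice legitimate (then $q=4L/d\ge 8\ge 2$) while pinning the exponential factor at $e^d$. This yields $\|f^{=d}\|_2^2\le\gG_1^2\,(4eC^2r^2L/d)^d$, which is the asserted inequality up to the value of the absolute constant.

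The hard part is precisely the sharp low-degree hypercontractive inequality, i.e.\ obtaining the Bonami-type noise rate $\rho\sim (r\sqrt q)^{-1}$ for functions of degree $\le d$: if one instead feeds the rate $\rho\sim(rq)^{-1}$ of Theorem~\ref{thm:KLM} through the same computation, the optimisation produces only $\|f^{=d}\|_2^2\le\gG_1^2\big(Cr^2(L/d)^2\big)^d$ (up to a logarithmic factor in the bracket), which carries a power $(L/d)^2$ instead of $L/d$ and is therefore too weak. Establishing the Bonami-type rate for low-degree global functions — together with the subsidiary statement that $f^{=d}$ is global with globalness parameter comparable to $\|f^{=d}\|_2$ — is the technical heart of the argument, and is what the fine analysis of \cite{keller2023} is designed to deliver.
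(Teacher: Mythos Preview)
The paper does not prove Theorem~\ref{thm:KLM2}: it is quoted verbatim from \cite{keller2023} and used as a black box. So there is no proof in the paper to compare your proposal against.

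As for the proposal itself: the reduction you describe is correct --- once you have the inequality $\|f^{=d}\|_q\le (Cr\sqrt q)^d\|f^{=d}\|_2$ for $q\ge 2$, the H\"older/log-convexity/optimisation steps go through exactly as you wrote and produce the stated bound with the right dependence on $r$, $d$, and $\log(\gG_2/\gG_1)$. You have also correctly diagnosed that plugging the rate $\rho=\frac{\log q}{16rq}$ from Theorem~\ref{thm:KLM} into the same scheme is too weak by a full power of $\log(\gG_2/\gG_1)/d$.

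However, what you have written is a reduction, not a proof. Both ingredients you defer --- that the projection $f\mapsto f^{=d}$ preserves globalness with parameter comparable to $\|f^{=d}\|_2$, and that low-degree global functions enjoy hypercontractivity at the Bonami rate $\rho\sim (r\sqrt q)^{-1}$ --- are precisely the content of the theorem, and neither follows in any obvious way from Theorem~\ref{thm:KLM} as stated. In particular, note that even granting the first ingredient and applying Theorem~\ref{thm:KLM} to $f^{=d}$ with $\gamma=O(\|f^{=d}\|_2)$ only yields $\|f^{=d}\|_q\le C(rq/\log q)^d\|f^{=d}\|_2$, which is still the wrong rate. So the second ingredient genuinely requires new work (in \cite{keller2023} it is obtained by a separate inductive argument on the degree, not by quoting the $2\to q$ statement). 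Your outline is a fair summary of the shape of the argument, but the substance lies entirely in the parts you have left to the citation.
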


We note that if $f=1_A$ is $(r,d)$-biglobal then Theorem \ref{thm:KLM2}
applies with $\gG_i = \|f\|_i = \mu(A)^{1/i}$ for $i=1,2$,
so the factor $\log(\gG_2/\gG_1) = \tfrac{1}{2}\log(1/\mu(A))$
generalises the $\log(1/\mu(A))$ factor appearing
in the results stated in the introduction.

\section{Hypercontractivity}

In this section we prove our hypercontractivity result,
Theorem \ref{thm:hypercontractivity for global functions in the symmetric group}.
The idea is to couple the symmetric group $S_n$ with the product space  $[n]^{n}$
and exploit the sharp hypercontractivity theorem for global functions on $[n]^{n}$
(Theorem \ref{thm:KLM}) proved by Keller, Lifshitz and Marcus \cite{keller2023}.
Our coupling will allow us to define a noise operator on $S_n$ that inherits 
similar properties from the standard noise operator on $[n]^n$.
The definition of the coupling is fairly straightforward,
but the definition and analysis of the noise operator is quite subtle:
it is here that the main technical difficulties of this paper will arise.

\subsection{Coupling}

In this subsection we will define a coupling between $S_n$ and $[n]^n$,
which will commute with the left action of $S_n$,
meaning that it depends on the order of the coordinates
but not on the values taken by the coordinates. 

We start with some general preliminaries on couplings.
Given two finite probability spaces $(X,\mu_X)$ and $(Y,\mu_Y)$,
a \emph{coupling} $\mu$ (of $X$ and $Y$) is a probability distribution on $X \times Y$
with marginal distributions $\mu_X$ on $X$ and $\mu_Y$ on $Y$.
We also think of $\mu$ as a weighted bipartite graph 
with parts $X$ and $Y$ and edge weights $\mu(x,y)$.
Hence we obtain a random walk where the probability
of moving from $x \in X$ to $y \in Y$ is $\mu(x,y)/\mu_X(x)$;
we denote this distribution by $y\sim N(x)$.

We define the operator $T_{X\to Y}: L^2(X)\to L^2(Y)$
associated with the coupling $\mu$ by 
$T_{X\to Y}f(y) = \mathbb{E}_{x\sim N(y)}[f(x)]$.
Reversing the roles of $X$ and $Y$ we similarly obtain an operator
$T_{Y\to X}: L^2(Y)\to L^2(X)$, which is adjoint to $T_{X\to Y}$.
Jensen's inequality implies that these operators are contractions in $L^p$
for any $p \ge 1$.

It will be helpful to describe our coupling in two equivalent ways.
For the first description, which we call the \emph{greedy coupling},
we choose $x\sim [n]^{n}$
and turn it into a permutation $\sS$ by looking at the coordinates
of $x$ one by one and setting $\sS(i)=x_{i}$ if possible.
The formal definition is as follows.

\begin{defn}[Greedy coupling]
Choose $x\sim [n]^{n}$ uniformly at random. 
For $i=1,\dots,n$ sequentially we define $\sS(i)$ as follows.
If $x_i \notin \{\sS(1),\dots,\sS(i-1)\}$ we let $\sS(i)=x_i$.
Otherwise, we choose $\sS(i)$ in $[n] \sm \{\sS(1),\dots,\sS(i-1)\}$
 uniformly at random. We write $\sS \sim N(x)$ for $\sS$ given $x$
and define $T_C:L^2(S_n) \to L^2([n]^n)$
by $T_C f(x) = \mb{E}_{\sS \sim N(x)} f(\sS)$.
\end{defn}

Our second description. which we call the \emph{forgetful model},
operates in the reverse direction, starting with $\sS \sim S_n$ 
and letting each $\sS(i)$ be either remembered, when we set $x_i=\sS(i)$, or forgotten,
when we sample $x_i$ uniformly from the conflicting set $\{\sS(1),\dots,\sS(i-1)\}$.
We choose the probability of remembering to make the
marginal distribution of $x$ uniformly random in $[n]^n$.
  
\begin{defn}[The forgetful model]
Let $\sigma\sim S_{n}$ and $z_1,\dots,z_n \sim \{0,1\}$ be independent
with $\sS$ uniform and each $\mb{P}(z_i=1)=\tfrac{n-i+1}{n}$.
If $z_i=1$ we let $x_i=\sS(i)$, or if $z_i=0$ we choose
$x_i$ uniformly in $\{\sS(1),\dots,\sS(i-1)\}$.
We write $x \sim N(\sS)$ for $x$ given $\sS$
and define $T^*_C: L^2([n]^n) \to L^2(S_n) $
by $T^*_C f(x) = \mb{E}_{x \sim N(\sS)} f(x)$.
\end{defn}

The following lemma justifies our assertion that these descriptions are equivalent.

\begin{lem}
\label{lem:noisy model} 
Let $\sigma\sim S_n,x\sim N(\sigma)$ be chosen according to the greedy coupling 
and let $y \sim [n]^n,\tau\sim N(y)$ be chosen according to the forgetful model. 
Then $\left(\sigma,x\right)$ and $(\tau,y)$ have the same distribution.
\end{lem}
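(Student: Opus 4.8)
The plan is to show that both descriptions are realisations of the same joint distribution on pairs $(\sS,x) \in S_n \times [n]^n$ by computing, for a fixed pair, the probability mass assigned to it by each model and checking they coincide. Since both models produce $\sS$ uniform and then $x$ conditionally, or $x$ uniform and then $\sS$ conditionally, it suffices to exhibit one explicit formula for the joint mass $p(\sS,x)$ and verify that each of the two sequential recipes computes exactly this $p$.

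First I would set up notation for the conflict structure of a pair $(\sS,x)$. For each $i$, let $P_{<i} := \{\sS(1),\dots,\sS(i-1)\}$ be the set of values used in the first $i-1$ coordinates, which has size $i-1$. Call coordinate $i$ \emph{free} if $x_i = \sS(i)$ (equivalently $x_i \notin P_{<i}$, using that $\sS$ is a permutation so $\sS(i) \notin P_{<i}$) and \emph{blocked} if $x_i \in P_{<i}$; note these are the only two possibilities for which the pair has positive mass in either model, so I restrict attention to such \emph{compatible} pairs. Let $B = B(\sS,x)$ be the set of blocked coordinates. In the forgetful model the mass is
\[
\frac{1}{n!}\prod_{i \notin B}\frac{n-i+1}{n}\prod_{i \in B}\frac{1}{n}\cdot\frac{1}{i-1},
\]
where the factor $\tfrac{1}{i-1}$ comes from choosing $x_i$ uniformly in $P_{<i}$ when coordinate $i$ is forgotten (and $\tfrac{i}{n}=1-\tfrac{n-i+1}{n}$ is the probability of forgetting). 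For the greedy coupling, $x$ is uniform, contributing $n^{-n}$, and then given $x$ the recipe sets $\sS(i)=x_i$ deterministically when $i$ is free, while when $i$ is blocked it picks $\sS(i)$ uniformly from the $n-i+1$ remaining values, contributing $\tfrac{1}{n-i+1}$; so the greedy mass of a compatible pair is $n^{-n}\prod_{i\in B}\tfrac{1}{n-i+1}$.

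The main (and essentially only) step is then the algebraic identity that these two expressions agree for every compatible pair, i.e.
\[
\frac{1}{n!}\prod_{i\notin B}\frac{n-i+1}{n}\prod_{i\in B}\frac{1}{n(i-1)} \;=\; \frac{1}{n^n}\prod_{i\in B}\frac{1}{n-i+1}.
\]
This is a routine rearrangement: multiply out $\prod_{i=1}^{n}\tfrac{n-i+1}{n} = \tfrac{n!}{n^n}$, split the product over $i\in B$ versus $i\notin B$, and cancel. Concretely, $\prod_{i\notin B}\tfrac{n-i+1}{n} = \tfrac{n!}{n^n}\prod_{i\in B}\tfrac{n}{n-i+1}$, so the left side becomes $\tfrac{1}{n!}\cdot\tfrac{n!}{n^n}\prod_{i\in B}\tfrac{n}{n-i+1}\cdot\prod_{i\in B}\tfrac{1}{n(i-1)} = \tfrac{1}{n^n}\prod_{i\in B}\tfrac{1}{(n-i+1)(i-1)}$ --- wait, this shows I must be careful about the exact form of the blocked-coordinate factor, and the cleanest route is actually to avoid the explicit product altogether.

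I expect the cleanest proof, and the one I would ultimately write, proceeds by \emph{induction on $n$} (or equivalently coordinate-by-coordinate), coupling the two processes step by step rather than comparing closed forms; this sidesteps the bookkeeping above. The key observation is that the conditional law of $(\sS(i),x_i)$ given the history $(\sS(1),x_1),\dots,(\sS(i-1),x_{i-1})$ is the same in both models: in each case, conditioned on the first $i-1$ steps (equivalently on $P_{<i}$), the pair $(\sS(i),x_i)$ is obtained by first choosing $\sS(i)$ uniformly from $[n]\setminus P_{<i}$ and then, independently, setting $x_i = \sS(i)$ with probability $\tfrac{n-i+1}{n}$ or else $x_i$ uniform in $P_{<i}$ --- one checks the greedy recipe yields this by noting $x_i$ uniform in $[n]$ decomposes as ``$x_i \notin P_{<i}$ (probability $\tfrac{n-i+1}{n}$), in which case $\sS(i)=x_i$ is uniform on $[n]\setminus P_{<i}$'' versus ``$x_i \in P_{<i}$, in which case $\sS(i)$ is resampled uniformly from $[n]\setminus P_{<i}$''. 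The hard part, such as it is, is simply organising this conditional-law computation so the marginal of $\sS$ stays uniform at every stage; once that invariant is maintained the equality of the two joint distributions follows by a standard induction, and hence in particular $T_C^* = (T_C)^*$ as the lemma asserts.
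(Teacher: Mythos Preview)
Your second approach---comparing the conditional law of $(\sigma(i),x_i)$ given the history $(\sigma(j),x_j)_{j<i}$ and checking it agrees in both models---is exactly the paper's proof, and your description of the greedy side of that computation is correct.

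Regarding your abandoned first approach: the slip is that the blocked-coordinate factor in the forgetful model is $\tfrac{i-1}{n}\cdot\tfrac{1}{i-1}=\tfrac{1}{n}$ (the forgetting probability is $1-\tfrac{n-i+1}{n}=\tfrac{i-1}{n}$, not $\tfrac{i}{n}$, and the $\tfrac{1}{i-1}$ cancels), so the forgetful mass of a compatible pair is $\tfrac{1}{n!}\prod_{i\notin B}\tfrac{n-i+1}{n}\prod_{i\in B}\tfrac{1}{n}$; the claimed identity with the greedy mass $n^{-n}\prod_{i\in B}\tfrac{1}{n-i+1}$ then reduces to $\prod_{i=1}^n(n-i+1)=n!$, so the direct computation also works once this is fixed.
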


\begin{proof}
It suffices to show for each $i$ that both models have the same 
conditional distibution of $(x_i,\sS(i))$ given all $(x_j,\sS(j))$ with $j<i$.
In both models the distribution of  $(x_i,\sS(i))$
only depends on the conflicting set $C_i := \{\sS(1),\dots,\sS(i-1)\}$
and by symmetry the marginal distribution of $\sS(i)$ is uniform in $[n] \sm C_i$.
Thus it suffices to show that both models have the same
conditional distibution $\mb{P}'$ of $x_i$ given $\{\sS(1),\dots,\sS(i)\}$.

In the forgetful model, by definition we have $\mb{P}'(x_i=\sS(i)) = \tfrac{n-i+1}{n}$
and $\mb{P}'(x_i=\sS(j)) = \tfrac{1}{n}$ for each $j<i$.
In the greedy model we always have $x_i \in \{\sS(1),\dots,\sS(i)\}$,
with $\mb{P}'(x_i=\sS(i)) = \mb{P}'(x_i \notin C_i) =  \tfrac{n-i+1}{n}$,
and otherwise $x_i$ is uniform on $C_i$ by symmetry. The lemma follows.
\end{proof}

We write $C(\sS,x)$ for the above coupling on $S_n \times [n]^n$.
Now we justify our assertion that $C$ depends only on the order of the coordinates,
not their values, so that the associated operators commute with the left action of $S_n$.
We note that this also holds for the noise operator $T_\rho$ on $[n]^n$.
 
\begin{lem}
\label{lem:Coupling operators are left morphisms} 
The coupling $C$ is left $S_n$-invariant and
the operators $T_C$, $T^*_C$ and $T_\rho$ are left $S_n$-morphisms.
\end{lem}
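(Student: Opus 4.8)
The plan is to work with the \emph{greedy coupling} description of $C$, for which left $S_n$-invariance becomes transparent once the right inductive coupling of the auxiliary randomness is set up. Fix $\pi \in S_n$. Given $x \sim [n]^n$, let $\sS \sim N(x)$ be produced by the greedy procedure; note $L_\pi x = (\pi(x_1),\dots,\pi(x_n))$ is again uniform in $[n]^n$. I would show by induction on $i=1,\dots,n$ that the greedy procedure applied to $L_\pi x$ can be run with coupled auxiliary randomness so that its output $\sS'$ satisfies $\sS' = \pi \circ \sS$. The inductive step is immediate: assuming $\sS'(j)=\pi(\sS(j))$ for all $j<i$, the conflicting set for $L_\pi x$ at step $i$ is exactly $\pi(C_i)$ with $C_i = \{\sS(1),\dots,\sS(i-1)\}$, so $\pi(x_i)\in\pi(C_i)$ iff $x_i\in C_i$; in the deterministic branch $\sS'(i)=\pi(x_i)=\pi(\sS(i))$, and in the random branch a uniform choice in $[n]\sm\pi(C_i)=\pi([n]\sm C_i)$ can be coupled to $\pi(\sS(i))$ since $\sS(i)$ is uniform in $[n]\sm C_i$. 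Hence $(\sS',L_\pi x)$ is distributed according to $C$ by construction, while it equals $(\pi\sS,L_\pi x)$; thus $(\sS,x)\sim C$ implies $(\pi\sS,L_\pi x)\sim C$, which is left $S_n$-invariance of $C$.

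Next I would deduce that $T_C$ is a left $S_n$-morphism by unwinding definitions. For $f\in L^2(S_n)$ and $x\in[n]^n$ we have $(T_C\,{}^\pi f)(x)=\mb{E}_{\sS\sim N(x)}f(\pi^{-1}\sS)$ and $({}^\pi T_C f)(x)=(T_C f)(L_{\pi^{-1}}x)=\mb{E}_{\sS\sim N(L_{\pi^{-1}}x)}f(\sS)$. Conditioning the invariance statement on the $[n]^n$-coordinate shows precisely that, for fixed $x$, the law of $\pi^{-1}\sS$ with $\sS\sim N(x)$ equals the law of $N(L_{\pi^{-1}}x)$, so the two expectations agree. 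The same argument applies to $T_C^*$; equivalently, $T_C^*$ is the operator associated with the same coupling read in the opposite direction, which by Lemma \ref{lem:noisy model} is the forgetful model, again left $S_n$-invariant, so $T_C^*$ is a left $S_n$-morphism by adjointness.

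Finally, for $T_\rho$ on $[n]^n$: the resampling distribution $N_\rho$ is itself left $S_n$-invariant, since keeping a coordinate commutes with applying $\pi$ coordinatewise and a uniformly random value in $[n]$ has $\pi$-invariant law; the same unwinding as for $T_C$, now with $[n]^n$ as the source space, then gives that $T_\rho$ commutes with the left action. I expect no serious difficulty in this lemma; the only points requiring care are the bookkeeping in the inductive coupling — in particular coupling the uniform choices in the random branch through $\pi$ — and stating cleanly the conditional-distribution fact that converts invariance of the coupling into the operator identities.
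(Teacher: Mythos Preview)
Your proof is correct and follows essentially the same approach as the paper: the paper dispatches left $S_n$-invariance of $C$ in one line (``the definition of the coupling is invariant under renaming coordinate values'') and then performs the same unwinding calculation you give for $T_C$, omitting the analogous computations for $T_C^*$ and $T_\rho$. Your inductive coupling argument is simply a careful unpacking of that one-line observation, so there is no substantive difference in method.
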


\begin{proof}
The left invariance $C(\sS,x) = C(\tau\sS,\tau x)$ for all $\tau\in S_{n}$
holds as the definition of the coupling is invariant under renaming coordinate values.

To deduce that $T_C$ is a left $S_n$-morphism we calculate
\[
\tau\left(T_{C}f\right)\left(x\right)=T_{C}f\left(\tau^{-1}x\right)
=\mathbb{E}_{\sigma\sim N\left(\tau^{-1}x\right)}\left[f\left(\sigma\right)\right]
=\mathbb{E}_{\sigma\sim N\left(x\right)}\left[f\left(\tau^{-1}\sigma\right)\right]=T_{C}\left(\tau f\right).
\]
We omit the similar calculations for $T_C^*$ and $T_\rho$.
\end{proof}

\subsection{Our noise operator on $S_n$}

Lemma \ref{lem:Coupling operators are left morphisms} implies that
$\tilde{T}_{\rho}:=T_{C}^{*} T_{\rho} T_{C}$ 
is a left $S_{n}$-morphism on $L^2(S_n)$. 
A symmetrization trick turns it into an operator\footnote{
We overload the notation for $T_\rho$ on $[n]^n$,
as it will be clear from the context which is intended.
} 
\[ T_\rho := \mb{E}_{\sS \sim S_n} [ R_\sS^* \tilde{T}_{\rho} R_\sS ] \] 
that commutes with the action of $S_n$ from both sides, as required for 
part (3) of Theorem \ref{thm:hypercontractivity for global functions in the symmetric group}.
Here we recall that $R_{\sigma}$ is the operator $f(x) \mapsto f(x\sS)$,
noting that its adjoint is $R_\sS^*=R_{\sS^{-1}}=R_\sS^{-1}$
and it satisfies $R_\sS R_\tau = R_{\sS \tau}$.

\begin{lem}
\label{lem:Part 3 of hypercontractivity} The operator $\mathrm{T}_{\rho}$ on $L^2(S_n)$
commutes with the action of $S_{n}$ from both sides.
\end{lem}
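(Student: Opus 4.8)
The plan is to unwind the definition $\mathrm{T}_\rho = \mathbb{E}_{\sS \sim S_n}[R_\sS^* \tilde{T}_\rho R_\sS]$ and check the two-sided commutation directly, using Lemma \ref{lem:Coupling operators are left morphisms} for the left action and the averaging over $S_n$ together with the multiplicativity $R_\sS R_\tau = R_{\sS\tau}$ for the right action. Since the left and right actions of $S_n$ on $L^2(S_n)$ commute, and $\mathrm{T}_\rho$ is a positive combination of the operators $R_\sS^* \tilde{T}_\rho R_\sS$, it suffices to treat the two sides in turn.

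First I would handle the left action. By Lemma \ref{lem:Coupling operators are left morphisms}, each of $T_C$, $T_C^*$, $T_\rho$ (on $[n]^n$) is a left $S_n$-morphism, hence so is $\tilde{T}_\rho = T_C^* T_\rho T_C$. Moreover the right translation operators $R_\sS$ on $L^2(S_n)$ (and the corresponding operators on $[n]^n$) commute with the left action, since left and right multiplication on $S_n$ commute; the same holds on $[n]^n$ because permuting the order of coordinates commutes with renaming their values. Therefore each conjugate $R_\sS^* \tilde{T}_\rho R_\sS$ is a composition of left $S_n$-morphisms, hence a left $S_n$-morphism, and averaging over $\sS$ preserves this. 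So $\mathrm{T}_\rho$ commutes with the left action.

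Next I would handle the right action. Fix $\pi \in S_n$; we must show $R_\pi^* \mathrm{T}_\rho R_\pi = \mathrm{T}_\rho$. Substituting the definition,
\[
R_\pi^* \mathrm{T}_\rho R_\pi = \mathbb{E}_{\sS \sim S_n}\big[ R_\pi^* R_\sS^* \tilde{T}_\rho R_\sS R_\pi \big]
= \mathbb{E}_{\sS \sim S_n}\big[ R_{\sS\pi}^* \tilde{T}_\rho R_{\sS\pi} \big],
\]
using $R_\sS R_\pi = R_{\sS\pi}$ and $R_\pi^* R_\sS^* = (R_\sS R_\pi)^* = R_{\sS\pi}^*$. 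Since $\sS \mapsto \sS\pi$ is a bijection of $S_n$, the expectation over $\sS$ of the right-hand side equals the expectation over $\sS' = \sS\pi$, which is exactly $\mathrm{T}_\rho$. Hence $\mathrm{T}_\rho$ commutes with the right action as well.

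Putting the two parts together, $\mathrm{T}_\rho$ commutes with the action of $S_n$ from both sides, as claimed. The only mild subtlety — and the one point worth stating carefully rather than the crux of any difficulty — is the bookkeeping that the right-translation operators $R_\sS$ commute with the left $S_n$-action on both $L^2(S_n)$ and $L^2([n]^n)$, so that conjugating a left-morphism by $R_\sS$ yields a left-morphism; everything else is a one-line change of variables in the averaging, so I do not anticipate a real obstacle here.
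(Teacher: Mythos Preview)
Your proof is correct and follows essentially the same approach as the paper's: for the left action you use that $\tilde{T}_\rho$, $R_\sS$, $R_\sS^*$ are all left morphisms (the paper states this in one line), and for the right action you show $R_\pi^* \mathrm{T}_\rho R_\pi = \mathrm{T}_\rho$ via the change of variables $\sS \mapsto \sS\pi$, which is equivalent to the paper's direct computation of $R_\tau \mathrm{T}_\rho = \mathrm{T}_\rho R_\tau$ via $\sS \mapsto \sS\tau^{-1}$.
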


\begin{proof}
To see that $\mathrm{T}_{\rho}$ is a left $S_{n}$-morphism,
we note that this holds for $\tilde{T}_{\rho}$, $R_{\sigma}$ and $R_{\sigma}^{*}=R_{\sigma^{-1}}$.
To complete the proof we show that it is a right morphism:
\begin{align*}
R_\tau T_\rho & = R_\tau \mb{E}_\sS[  R_\sS^* T_\rho R_\sS ]
= \mb{E}_\sS [ R_{\tau\sS^{-1}} T_\rho R_{\sS\tau^{-1}} R_\tau ]
= T_\rho R_\tau. \qedhere
\end{align*}
\end{proof}

\subsubsection*{Globalness is preserved} 
In preparation for proving part (2) of Theorem \ref{thm:hypercontractivity for global functions in the symmetric group},
we show that globalness is preserved by the coupling.

\begin{lem}
\label{lem:globalness is preserved by T_C} 
Suppose that $f$ is $\left(r,\gamma\right)$-global.
Then $T_C f$ is also $\left(r,\gamma\right)$-global. 
\end{lem}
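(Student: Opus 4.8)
We must show that if $f\in L^2(S_n)$ is $(r,\gamma)$-global then $T_C f \in L^2([n]^n)$ is $(r,\gamma)$-global. Unwinding Definition~\ref{def:global} and Definition~\ref{def:biglobal}, this means: for every $t\le n$, every $t$-set $S\sub [n]$, and every $z\in[n]^S$, the restriction $(T_C f)_{S\to z}$ has $\|(T_C f)_{S\to z}\|_2 \le r^t\gamma$. The key structural fact I would use is that the greedy coupling is built by reading coordinates \emph{in order}, so the value $x_i$ (and hence $\sS(i)$) depends only on $\sS(1),\dots,\sS(i)$ and fresh randomness; in particular, conditioning on the first few coordinates of $x$ being equal to a prescribed tuple is compatible with the sequential structure. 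The cleanest route is to relate the $S$-restriction of $T_C f$ on $[n]^n$ to a restriction of $f$ on $S_n$ of the same (or smaller) size, and then invoke the hypothesis on $f$.

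\textbf{Step 1: reduce to initial segments.} Because the coupling is left $S_n$-invariant but \emph{not} right-invariant, I would first reduce to the case where $S = [t] = \{1,\dots,t\}$ is an initial segment. For a general $t$-set $S$, pick $\pi\in S_n$ with $\pi([t])=S$ and use the right action $R_\pi$: one checks $R_\pi$ intertwines $T_C$ with the operator $T_{C}$ composed with a coordinate permutation on $[n]^n$, and coordinate permutations permute restrictions among sets of the same size while acting as an isometry on $L^2$. So it suffices to bound $\|(T_C f)_{[t]\to z}\|_2$ for $z\in[n]^{[t]}$. (Here I would state this carefully but not grind the bookkeeping.)

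\textbf{Step 2: identify the restricted operator.} Now run the greedy coupling and condition on the event $E_z = \{x_1=z_1,\dots,x_t=z_t\}$, which has positive probability. Under this conditioning, the partial permutation $(\sS(1),\dots,\sS(t))$ is constrained: for each $i\le t$ we have $\sS(i)=z_i$ if $z_i\notin\{\sS(1),\dots,\sS(i-1)\}$, and otherwise $\sS(i)$ is uniform over the remaining values. Crucially, the distribution of $(\sS(1),\dots,\sS(t))$ given $E_z$ is a distribution supported on $t$-tuples $J=(j_1,\dots,j_t)\in[n]_t$ (with some conditional probabilities $p_J$), and \emph{conditionally on} this partial permutation equalling $J$, the rest of the greedy coupling proceeds exactly as the greedy coupling between $S_n$ restricted to the umvirate $U_{[t]\to J}$ and $[n]^{n-t}$ restricted to coordinates $t+1,\dots,n$ --- this is precisely where the sequential/greedy structure is used. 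Hence
\[
(T_C f)_{[t]\to z} \;=\; \sum_{J\in[n]_t} p_J \, T_{C'}\!\bigl(f_{[t]\to J}\bigr),
\]
where $T_{C'}$ is the (greedy-type) coupling operator from $L^2(U_{[t]\to J})$ to $L^2([n]^{\{t+1,\dots,n\}})$, which by Jensen/the contraction property in Step~3 below is a contraction in $L^2$.

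\textbf{Step 3: conclude by convexity.} The operators $T_{C'}$ are conditional-expectation-type operators and hence $L^2$-contractions (this is the Jensen's-inequality remark in the ``general preliminaries on couplings''). Therefore $\|T_{C'}(f_{[t]\to J})\|_2 \le \|f_{[t]\to J}\|_2 \le r^t\gamma$ by the $(r,\gamma)$-globalness of $f$ applied to the $t$-restriction $f_{[t]\to J}$. Since $\sum_J p_J = 1$ and $p_J\ge 0$, the triangle inequality (or convexity of $\|\cdot\|_2$) gives $\|(T_C f)_{[t]\to z}\|_2 \le \sum_J p_J \, r^t\gamma = r^t\gamma$, as desired.

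\textbf{Main obstacle.} The routine-looking part (the $L^2$ bound and the convex combination) is genuinely easy; the real content is Step~2 --- verifying that conditioning the greedy coupling on an initial segment of $x$ yields, for each outcome of the corresponding partial permutation, exactly a greedy coupling of an umvirate with a smaller product space. This is where one must be careful that ``greedy'' means processing coordinates $1,2,\dots$ in order, so that the conditioning on $x_1,\dots,x_t$ does not interfere with the later steps except through the realized values $\sS(1),\dots,\sS(t)$; the forgetful-model description of Lemma~\ref{lem:noisy model} may make this bookkeeping cleaner, since there the resampling at step $i$ only ever looks at $\{\sS(1),\dots,\sS(i-1)\}$. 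I expect the write-up to hinge on making this reduction precise rather than on any inequality.
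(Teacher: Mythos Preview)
Your Step 1 has a genuine gap. You correctly note that the coupling $C$ is not right $S_n$-invariant, but then assert that $R_\pi$ intertwines $T_C$ with $T_C$ composed with a coordinate permutation on $[n]^n$; this is precisely what right-invariance would say, and it is false. Concretely, $R_\pi^{[n]^n} T_C f \ne T_C R_\pi f$ in general, because the greedy coupling processes coordinates in the fixed order $1,2,\dots,n$ and this order is destroyed by permuting coordinates. So you cannot reduce $(T_Cf)_{S\to z}$ to a restriction of $T_C g$ on an initial segment for some $g$. Moreover your Step~2 genuinely needs $S$ to be an initial segment: for a general $S$ (take $n\ge 3$, $S=\{2\}$, $z_2=1$), conditioning on $\sigma|_S$ does \emph{not} leave the remaining coordinates uniform on the complementary umvirate, because earlier unconditioned coordinates have already interacted with the conditioned one (if $x_1\ne 1$ then $\sigma(2)=1$, but if $\sigma(2)=j\ne 1$ then necessarily $\sigma(1)=1$, which is far from uniform).

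The paper's proof avoids this entirely and is both shorter and conceptually different. It does not reduce to initial segments and does not use the sequential structure at all. For arbitrary $S$ and $a\in[n]^S$, it bounds $\|(T_Cf)_{S\to a}\|_2^2$ by Cauchy--Schwarz as $\mathbb{E}_b\mathbb{E}_{\sigma\sim N(a,b)}|f(\sigma)|^2$, and then shows this distribution on $\sigma$ is a mixture of uniform distributions on umvirates of size at most $|S|$ by conditioning on the tuple $(\sigma^{-1}(a_i))_{i\in S}$ --- crucially \emph{not} on $(\sigma(i))_{i\in S}$. The uniformity on each piece follows in one line from left $S_n$-invariance (Lemma~\ref{lem:Coupling operators are left morphisms}): for any $\tau$ fixing every value $a_i$, the map $L_\tau$ preserves the distribution of $(a,b)$ and hence of $\sigma$, giving $\sigma\stackrel{D}{=}\tau\sigma$. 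Your approach could be rescued by proving the lemma simultaneously for all ``$\pi$-ordered'' greedy couplings (processing coordinates in the order $\pi(1),\pi(2),\dots$), which would legitimise Step~1; but that is extra work the paper's symmetry argument does not need.
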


\begin{proof}
Let $S\subseteq\left[n\right]$ and $a\in\left[n\right]^{S}$.
With $b \sim [n]^{S^c}$, by Cauchy--Schwarz we have 
\begin{align*}
\|\left(T_{C}f\right)_{S\to a}\|_{2}^{2} & =\mathbb{E}_b \left[T_{C}f\left(a,b\right)\right]^{2}\\
 & =\mathbb{E}_b \left|\mathbb{E}_{\sigma\sim N\left(a,b\right)}\left[f\left(\sigma\right)\right]\right|^{2}\\
 & \le\mathbb{E}_{b}\mathbb{E}_{\sigma\sim N\left(a,b\right)}\left|f\left(\sigma\right)\right|^{2}.
\end{align*}
To prove the lemma, it suffices to show that the above distribution of $\sS \sim N(a,b)$
is a mixture of uniform distributions in $t$-umvirates with $t \le |S|$.
We will show conditionally on $\{\sS^{-1}(a_i): i \in S\}$ that 
$\sS$ is uniform in $\bigcap_{i \in S} U_{\sS^{-1}(a_i) \to a_i}$.

To see this, let $U = \bigcap_{i \in S} U_{a_i \to a_i}$ and note for any $\tau \in U$ 
that $L_\tau (a,b)$ has the same distribution as $(a,b)$,
and so $\sS' \sim N(L_\tau (a,b))$ has the same distribution as $\sS \sim N(a,b)$.
By left $S_n$-invariance of the coupling $C$ we have $\sS' \stackrel{D}{=} L_\tau \sS$,
so $\sS$ is uniform in $\bigcap_{i \in S} U_{\sS^{-1}(a_i) \to a_i}$, as required.
\end{proof}%

The same proof gives a corresponding statement for biglobal functions.

\begin{lem}\label{lem:biglobalness is preserved under the coupling}
If $f \in L^2(S_n)$ is $(r,\gamma_1,\gamma_2,d)$-biglobal
then $T_C f$ is $(r,\gamma_1,\gamma_2,d)$-biglobal.
\end{lem}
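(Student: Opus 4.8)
The plan is to mimic the proof of Lemma \ref{lem:globalness is preserved by T_C}, simply tracking the $L^1$ bound alongside the $L^2$ bound. Recall that for the $(r,\gamma_2,d)$-globalness conclusion we showed that for $S \subseteq [n]$ with $|S| = t \le d$ and $a \in [n]^S$, the distribution of $\sS \sim N(a,b)$ (with $b \sim [n]^{S^c}$) is a mixture of uniform distributions on $t'$-umvirates $U_{I \to J}$ with $t' \le t$; this was the key structural fact, and it is independent of which norm we intend to control. So first I would invoke exactly that fact.

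Next, for the $L^1$ bound I would estimate $\|(T_C f)_{S \to a}\|_1 = \mb{E}_b |\mb{E}_{\sS \sim N(a,b)} f(\sS)| \le \mb{E}_b \mb{E}_{\sS \sim N(a,b)} |f(\sS)|$ by Jensen (in place of the Cauchy--Schwarz step used for $L^2$). Writing the distribution of $(a,b,\sS)$ as a mixture over umvirates $U_{I \to J}$ with $|I| = |J| = t' \le t$, each component contributes $\mb{E}_{\sS \in U_{I \to J}} |f(\sS)| = \|f_{I \to J}\|_1 \le r^{t'} \gamma_1 \le r^t \gamma_1$ using biglobalness of $f$ (here we use $r \ge 1$ so that $r^{t'} \le r^t$). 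Averaging over the mixture gives $\|(T_C f)_{S \to a}\|_1 \le r^t \gamma_1$. The $L^2$ bound $\|(T_C f)_{S \to a}\|_2 \le r^t \gamma_2$ is literally Lemma \ref{lem:globalness is preserved by T_C} applied with the parameter $d$ tracked, since that proof already produces exactly the mixture-over-small-umvirates structure and bounds each piece by $r^{t'}\gamma_2 \le r^t \gamma_2$. Combining the two bounds over all $t$-restrictions with $t \le d$ gives that $T_C f$ is $(r,\gamma_1,\gamma_2,d)$-biglobal.

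There is no real obstacle here: the entire content is already in the proof of Lemma \ref{lem:globalness is preserved by T_C}, and the only thing to notice is that the mixture decomposition of $\sS \sim N(a,b)$ into small umvirates is norm-agnostic, so it simultaneously yields the $L^1$ and $L^2$ restriction bounds. The mildest point worth stating explicitly is that the umvirate sizes $t'$ produced by the coupling are at most $t \le d$, so we never need to know that $f$ is well-behaved on restrictions of size larger than $d$ — consistent with the $d$-truncated definition of biglobalness in Definition \ref{def:biglobal}.

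\begin{proof}
We follow the proof of Lemma \ref{lem:globalness is preserved by T_C}. Fix $S \subseteq [n]$ with $t := |S| \le d$ and $a \in [n]^S$, and let $b \sim [n]^{S^c}$. As shown there, the distribution of $\sS \sim N(a,b)$ is a mixture of uniform distributions on $t'$-umvirates $U_{I \to J}$ with $t' \le t$ (conditionally on $\{\sS^{-1}(a_i): i \in S\}$, the permutation $\sS$ is uniform in $\bigcap_{i \in S} U_{\sS^{-1}(a_i) \to a_i}$, a $t'$-umvirate with $t' = |\{\sS^{-1}(a_i): i\in S\}| \le t$). Thus the joint distribution of $(a,b,\sS)$ exhibits $(T_C f)_{S \to a}(b) = \mb{E}_{\sS \sim N(a,b)} f(\sS)$ as an average of restrictions $f_{I \to J}$ over $(t')$-umvirates with $t' \le t$.

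For the $L^1$ bound, by Jensen's inequality,
\[
\|(T_C f)_{S\to a}\|_1 = \mb{E}_b \left| \mb{E}_{\sS \sim N(a,b)} f(\sS) \right|
\le \mb{E}_b \, \mb{E}_{\sS \sim N(a,b)} |f(\sS)|,
\]
and writing the right-hand side as the corresponding mixture over umvirates $U_{I\to J}$ with $|I| = |J| = t' \le t$, each term equals $\mb{E}_{\sS \in U_{I \to J}} |f(\sS)| = \|f_{I \to J}\|_1 \le r^{t'} \gamma_1 \le r^t \gamma_1$ by biglobalness of $f$ and $r \ge 1$. Averaging over the mixture gives $\|(T_C f)_{S\to a}\|_1 \le r^t \gamma_1$.

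For the $L^2$ bound, exactly as in Lemma \ref{lem:globalness is preserved by T_C}, Cauchy--Schwarz gives $\|(T_C f)_{S \to a}\|_2^2 \le \mb{E}_b \, \mb{E}_{\sS \sim N(a,b)} |f(\sS)|^2$; expressing the right-hand side as the same mixture over umvirates $U_{I \to J}$ with $t' \le t$, each term is $\|f_{I\to J}\|_2^2 \le r^{2t'} \gamma_2^2 \le r^{2t} \gamma_2^2$, so $\|(T_C f)_{S \to a}\|_2 \le r^t \gamma_2$.

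Since $S$ with $|S| \le d$ and $a \in [n]^S$ were arbitrary, $T_C f$ is $(r,\gamma_1,\gamma_2,d)$-biglobal.
\end{proof}
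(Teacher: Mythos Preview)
Your proposal is correct and is exactly what the paper intends: the paper does not write out a separate proof but simply remarks that ``the same proof gives a corresponding statement for biglobal functions,'' and you have spelled out precisely that argument, replacing Cauchy--Schwarz by Jensen for the $L^1$ bound and reusing the mixture-over-small-umvirates structure verbatim.
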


\subsubsection*{Hypercontractivity}
We are now ready to prove part (2) of Theorem \ref{thm:hypercontractivity for global functions in the symmetric group}.

\begin{lem}
\label{lem:Part 2 of the hypercontractivity theorem }
If $f\in L^{2}\left(S_{n}\right)$ is $\left(r,\gamma\right)$-global, $q\ge 2$ and $\rho\le\frac{\log q}{16rq}$
then $\|\mathrm{T}_{\rho}f\|_{q}^{q}\le\gamma^{q-2}\|f\|_{2}^{2}$.
\end{lem}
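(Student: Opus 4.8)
The plan is to reduce the hypercontractivity statement on $S_n$ directly to Theorem~\ref{thm:KLM} on $[n]^n$ by unpacking the definition $\mathrm{T}_\rho = \mb{E}_{\sS}[R_\sS^* \tilde{T}_\rho R_\sS]$ with $\tilde{T}_\rho = T_C^* T_\rho T_C$, and exploiting the fact that the various building blocks are either genuine contractions in every $L^p$ (namely $T_C$, $T_C^*$, and each $R_\sS$, $R_\sS^*$) or exactly the object controlled by Theorem~\ref{thm:KLM} (namely the noise operator $T_\rho$ on $[n]^n$). The key structural observation is that globalness survives all of these operations: $R_\sS$ and $R_\sS^*$ permute coordinates and so trivially preserve the $(r,\gamma)$-global property of functions on $[n]^n$, while Lemma~\ref{lem:globalness is preserved by T_C} tells us $T_C f$ is still $(r,\gamma)$-global when $f$ is. So I would first fix $\sS \in S_n$ and analyse $R_\sS^* \tilde{T}_\rho R_\sS f = R_\sS^* T_C^* T_\rho T_C R_\sS f$.

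The main steps, in order: (1) Starting from $f \in L^2(S_n)$ that is $(r,\gamma)$-global, note $R_\sS f$ is still $(r,\gamma)$-global (right translation just relabels which coordinates are being restricted), and then $g := T_C R_\sS f \in L^2([n]^n)$ is $(r,\gamma)$-global by Lemma~\ref{lem:globalness is preserved by T_C}. (2) Apply Theorem~\ref{thm:KLM} to $g$: since $q \ge 2$ and $\rho \le \frac{\log q}{16 r q}$, we get $\|T_\rho g\|_q^q \le \|g\|_2^2 \gamma^{q-2}$. (3) Push $T_\rho g$ back through $T_C^*$ and then $R_\sS^*$: both are $L^q \to L^q$ contractions (Jensen for $T_C^*$ as noted in the coupling preliminaries, and $R_\sS^*$ is an isometry since it permutes coordinates), so $\|R_\sS^* T_C^* T_\rho g\|_q^q \le \|T_\rho g\|_q^q \le \|g\|_2^2 \gamma^{q-2}$. (4) Control $\|g\|_2$: since $T_C$ is an $L^2$-contraction and $R_\sS$ is an $L^2$-isometry, $\|g\|_2 = \|T_C R_\sS f\|_2 \le \|R_\sS f\|_2 = \|f\|_2$, hence $\|g\|_2^2 \le \|f\|_2^2$. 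Combining, $\|R_\sS^* \tilde{T}_\rho R_\sS f\|_q^q \le \|f\|_2^2 \gamma^{q-2}$ for every fixed $\sS$. (5) Finally, average over $\sS$ using convexity of $t \mapsto t^q$ (equivalently, the triangle inequality for $\|\cdot\|_q$ applied to the average, or Jensen): $\|\mathrm{T}_\rho f\|_q^q = \big\| \mb{E}_\sS [R_\sS^* \tilde{T}_\rho R_\sS f] \big\|_q^q \le \mb{E}_\sS \big\| R_\sS^* \tilde{T}_\rho R_\sS f \big\|_q^q \le \gamma^{q-2}\|f\|_2^2$.

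I expect the only genuinely delicate point to be step (5), or rather making sure the symmetrization average does not spoil the bound: one must be careful that the inequality $\|\mb{E}_\sS h_\sS\|_q^q \le \mb{E}_\sS \|h_\sS\|_q^q$ is indeed valid — it follows from Jensen's inequality applied pointwise (convexity of $x \mapsto |x|^q$ for $q \ge 1$) together with Fubini, so this is routine but worth stating cleanly. Everything else is a bookkeeping chain of contraction estimates, with the one substantive input being Lemma~\ref{lem:globalness is preserved by T_C} feeding the external black box Theorem~\ref{thm:KLM}. A minor subtlety is simply to record that right translation $R_\sS$ preserves globalness — since a $t$-restriction of $R_\sS f$ is, up to relabelling coordinates, a $t$-restriction of $f$, its $L^2$-norm is unchanged, so this costs nothing.
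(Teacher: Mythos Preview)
Your proposal is correct and follows essentially the same route as the paper: fix $\sS$, use that $R_\sS$ preserves $(r,\gamma)$-globalness and the $L^2$-norm, invoke Lemma~\ref{lem:globalness is preserved by T_C} to transfer globalness to $T_C R_\sS f$, apply Theorem~\ref{thm:KLM}, then contract back through $T_C^*$ and $R_\sS^*$, and finally average over $\sS$ via the triangle inequality / Jensen. The only cosmetic difference is that the paper names $g := R_\sS f$ (so that $\|g\|_2 = \|f\|_2$ exactly) rather than $g := T_C R_\sS f$, but the chain of estimates is identical.
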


\begin{proof}
By the triangle inequality it suffices to show
$\|R_{\sigma}^{*}\tilde{\mathrm{T}}_{\rho}R_{\sigma}f\|_{q}^{q}\le\gamma^{q-2}\|f\|_{2}^{2}$
for each $\sS \in S_n$. We note that $g := R_{\sigma}f$
is $\left(r,\gamma\right)$-global and $\|g\|_{2}^{2}=\|f\|_{2}^{2}.$ 
By invariance of the uniform measure on $S_{n}$ 
it suffices to show $\| \tilde{T}_\rho g \|_q^q \le\gamma^{q-2}\|g\|_{2}^{2}$.
 
Recall that $\tilde{T}_{\rho}:=T_{C}^{*} T_{\rho} T_{C}$,
where $T_C$ and $T_C^*$ are contractions in all $L^p$ norms with $p \ge 1$.
Furthermore, $T_C g$ is  $\left(r,\gamma\right)$-global by Lemma \ref{lem:globalness is preserved by T_C},
so the lemma  follows from Theorem \ref{thm:KLM}.
\end{proof}

\subsection{The eigenvalue bound}

The remainder of this section will address the main difficulty in the proof of 
Theorem \ref{thm:hypercontractivity for global functions in the symmetric group},
namely part (1), which is the eigenvalue bound.
In this subsection we prove this bound, assuming the following technical lemma
that will then become our target for the remainder of the section.
To indicate its utility, we remark that it suffices 
to verify the eigenvalue bound for a $T$-junta $f$,
where as we average over $\sS$ in the definition of our noise operator
it will only cost a factor exponential in $|T|$ to assume $T \sub [n/4,n/2]$;
the lemma shows for such $f \in V_{=d}$  (the pure degree $d$ functions on $S_n$)
that $T_C f$ has significant projection on $V_{\le d}(L^2([n]^n)$, so we can obtain
the required eigenvalue bound from that of the usual noise operator on $[n]^n$.

For the statement, we recall that $f$ in $L^2(S_n)$ or $L^2([n]^n)$
is a $T$-junta if its value only depends on the coordinates in $T$.
Given a $T$-junta $f$ on $S_n$, we define the \emph{associated} $T$-junta
$\tilde{f}$ on $[n]^n$ by letting $\tilde{f}(x)=0$ if $(x_i: i \in T)$ are not all distinct,
or otherwise $\tilde{f}(x)=f(\sS)$ for any/all $\sS \in S_n$ agreeing with $x$ on $T$.

\begin{lem}
\label{lem:large inner product}  
Suppose $d\le \frac{n}{10^5}$ and $f \in V_{=d}(L^2(S_n))$ 
is a $T$-junta with $|T|=d$ and $T \sub [n/4,n/2]$.
Let $\tilde{f} \in L^2([n]^n)$ be the associated $T$-junta of $f$.
Then $\langle T_C f, \tilde{f} \rangle \ge 3^{-d} \|f\|_{2}^{2}$.
\end{lem}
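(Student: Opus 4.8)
The plan is to compute $\langle T_C f, \tilde f\rangle$ by unfolding the greedy coupling and controlling the error caused by ``conflicts'' during the greedy process. First I would rewrite the inner product using the forgetful model: by Lemma \ref{lem:noisy model}, $\langle T_C f,\tilde f\rangle = \mb{E}_{\sS \sim S_n}\mb{E}_{x \sim N(\sS)}[f(\sS)\tilde f(x)]$, where $x\sim N(\sS)$ forgets each coordinate $\sS(i)$ independently with probability $\tfrac{i-1}{n}$ and replaces it by a uniform element of the conflicting set $\{\sS(1),\dots,\sS(i-1)\}$. Since $f$ and $\tilde f$ are $T$-juntas, only the coordinates $i \in T$ matter, and the key event is $E$ = ``for every $i\in T$, the coordinate $\sS(i)$ is remembered, i.e.\ $x_i = \sS(i)$''. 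On $E$ we have $\tilde f(x) = f(\sS)$ (the values $(x_i:i\in T)$ are genuinely $(\sS(i):i\in T)$, hence distinct), so the contribution of $E$ is $\Pr[E]\,\|f\|_2^2$. Since the forgetting events are independent across coordinates and $T \sub [n/4,n/2]$, we get $\Pr[E] = \prod_{i\in T}\tfrac{n-i+1}{n} \ge (1/2)^{d}$, which is already almost the bound we want but with base $2$ rather than $3$; the slack is there precisely to absorb the off-event $E^c$.

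The main work is bounding $|\mb{E}[f(\sS)\tilde f(x)\,1_{E^c}]|$. Here one conditions on which coordinates of $T$ are forgotten: let $S \sub T$ be the (nonempty) set of forgotten coordinates. Given $\sS$ and $S$, the forgotten values $(x_i)_{i\in S}$ are each uniform in their conflicting sets, and $\tilde f(x)$ depends only on $(x_i)_{i\in T}$. I would argue that, after averaging over $\sS$, the resulting operator that sends $f$ to this error term is ``low degree'' in a suitable sense — more precisely, for fixed $S \ne \es$ the conditional expectation $\mb{E}[\tilde f(x)\mid \sS,\ S \text{ forgotten}]$, viewed as a function of $\sS$, has degree at most $d-1$ as a junta (it depends on $\sS$ only through the coordinates $T\sm S$ together with the conflicting sets, and the averaging over a forgotten coordinate kills one degree since the replacement value ranges over $\{\sS(1),\dots,\sS(i-1)\}$, a set of earlier values, making the dependence on $\sS(i)$ disappear). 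Because $f \in V_{=d}$ is orthogonal to $V_{\le d-1}(L^2(S_n))$, the inner product of $f$ against each such term would vanish were it exactly degree $\le d-1$; the actual error is therefore controlled by the ``defect'' from exact low-degreeness, which is where the quantitative $d \le n/10^5$ hypothesis enters to show the defect is at most, say, $(1/2)^{d} - 3^{-d}$ times $\|f\|_2^2$ (equivalently a factor like $(1 - cd/n)^{\text{something}}$ beating the gap between $2^{-d}$ and $3^{-d}$).

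Concretely, I would organise the estimate as: (i) exact identity $\langle T_C f,\tilde f\rangle = \Pr[E]\|f\|_2^2 + \sum_{\es\ne S\sub T}\mb{E}[\,f(\sS)\cdot\mb{E}(\tilde f(x)\mid \sS, S)\cdot 1_{\{S \text{ forgotten}\}}\,]$; (ii) for each $S$, decompose $\mb{E}(\tilde f(x)\mid \sS,S)$ as its projection onto $V_{\le d-1}$ (which is annihilated by $f$) plus a remainder supported on the event that some forgotten value collides with a $T$-coordinate value in a degree-$d$-detectable way, and bound the remainder's $L^2$ norm by a small multiple of $\|f\|_2$ using that conflicting sets have size $< n/2$ while $|T| = d \ll n$; (iii) combine via Cauchy–Schwarz and the crude bound $\sum_{S}\Pr[S\text{ forgotten}]\le 1$ to conclude the total error is at most $(2^{-d}-3^{-d})\|f\|_2^2$. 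The hard part will be step (ii): making rigorous the claim that forgetting a coordinate ``costs a degree'' and quantifying the small leftover, since the conflicting sets $\{\sS(1),\dots,\sS(i-1)\}$ depend on $\sS$ in a way that is not a clean junta; I expect this to require a careful coupling argument comparing ``resample within the conflicting set'' to ``resample uniformly in $[n]$'', with the discrepancy between these two being $O(d/n)$ per coordinate, summable to the needed slack. Everything else is bookkeeping.
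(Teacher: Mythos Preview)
Your opening is right: the main term is $\Pr[E]\,\|f\|_2^2 \ge 2^{-d}\|f\|_2^2$, and the slack between $2^{-d}$ and $3^{-d}$ is there to absorb the off-event. The gap is in step~(ii). The comparison you propose --- ``resample $x_i$ within the conflicting set $\{\sigma(1),\dots,\sigma(i-1)\}$'' versus ``resample uniformly in $[n]$'' --- does \emph{not} have discrepancy $O(d/n)$: since $i-1 \in [n/4,n/2]$, the two distributions are at total-variation distance $\Theta(1)$. Your claim that the remainder is ``supported on the event that some forgotten value collides with a $T$-coordinate value'' is also backwards: when $x_i = \sigma(j)$ for some $j \in T\setminus\{i\}$ we get a repeated coordinate and $\tilde f(x)=0$, so all of the nonzero error lives on the complementary event where the forgotten $x_i$ land \emph{outside} $\sigma(T)$. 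On that event the leftover function of $\sigma$ is genuinely of degree $d$, not $d-1$, and nothing you have written explains why $\langle f,\,\cdot\,\rangle$ is small.

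The mechanism the paper uses is spectral, not a coupling bound. One first reduces (via \eqref{eq:fg}) to a left-invariant coupling $\nu$ on $[n]_d\times[n]_d$ and then decomposes not by your forgotten set but by the \emph{staying set} $\mathrm{stay}(a,b)=\{i:b_i\in\{a_1,\dots,a_d\}\}$. For an $S$-staying piece with $|S|<d$, conditional on the staying coordinates the remaining $b_i$ are a uniform $(d-|S|)$-tuple disjoint from $\{a_j\}$ --- i.e.\ the ordered Kneser (disjointness) operator $\mc{D}_{n-|S|,\,d-|S|}$ acting on a restriction of $g\in V_{=d}(L^2([n]_d))$. By Lemma~\ref{lem:Kneser-type operator} this operator has norm at most $\bigl(2(d-|S|)/(n-|S|)\bigr)^{d-|S|}$ on the relevant level, which is what makes every $S\neq[d]$ term negligible. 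Without invoking the Kneser eigenvalue (or an equivalent Specht-module orthogonality statement), the error from $E^c$ cannot be controlled to within the required $2^{-d}-3^{-d}$ window.
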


Assuming this lemma, we now complete the proof of our hypercontractivity theorem.

\begin{proof}[Proof of Theorem \ref{thm:hypercontractivity for global functions in the symmetric group}]
We proved part (2) as Lemma \ref{lem:Part 2 of the hypercontractivity theorem },
and part (3) as Lemma \ref{lem:Part 3 of hypercontractivity}. 
Thus it remains to show part (1), which states that if $f \in V_{\le d}$ with $d \le 10^{-5} n$ 
then $\langle \mathrm{T}_{\rho}f, f \rangle \ge (\rho/72)^{d} \|f\|_{2}^{2}$.
This is trivial for $d=0$, so we assume $d \ge 1$. 
We have $\langle T_\rho f, f \rangle \ge \aA \|f\|_2^2$, 
where $\aA$ is the minimum level $d$ eigenvalue of $T_\rho$,
which is non-negative since $\tilde{T}_{\rho} = (T_{\sqrt{\rho}} T_C)^*(T_{\sqrt{\rho}} T_C)$ is positive semidefinite. 
It suffices to show $\aA \ge (\rho/72)^d$.

We claim that the $\aA$-eigenspace of $T_\rho$ at level $d$ contains a $d$-junta $f$.
To see this, note that as $T_\rho$ commutes with the $S_n$ action,
it preserves the space $V_{T}$ of $T$-juntas for any $T$,
which therefore decomposes as a sum of eigenspaces of $T_\rho$.
Since $V_{=d}$ is spanned by juntas, its $\aA$-eigenspace is the sum of the
$\aA$-eigenspaces at level $d$ of each junta space $V_{T}$, so the claim follows.
Say $f$ is a $T$-junta.

For any $\tau \in S_n$ with $\tau(T) \sub [n/4,n/2]$,
applying Lemma \ref{lem:large inner product} to $R_\tau f$ 
gives $\langle T_C R_\tau f, R_\tau \tilde{f} \rangle \ge 3^{-d} \|f\|_{2}^{2}$.
The degree of $R_\tau \tilde{f}$ is at most $d$,
so by orthogonality, Cauchy-Schwartz and $\| \tilde{f} \| \le \| f \|$ we have
\[ \langle T_C R_\tau f, R_\tau \tilde{f} \rangle
 =   \langle (T_C R_\tau f)^{\le d}, R_\tau \tilde{f} \rangle
 \le \| (T_C R_\tau f)^{\le d} \| \| f \|. \]
Combining with the lower bound and rearranging yields
\begin{equation}\label{eq:4.1}
\|\left(T_{C}R_{\tau}f\right)^{\le d}\|_{2} \ge 3^{-d} \|f\|_{2}.
\end{equation}
As $\mb{P}_\tau( \tau(T) \sub [n/4,n/2] ) \ge \prod_{i=1}^d \tfrac{n/4 - i}{n-i} \ge \tfrac12 4^{-d}$,
using \eqref{eq:4.1} and the eigenvalues of the standard noise operator on $[n]^n$ we deduce
\begin{align*}
\alpha \| f \|^2 & = \left\langle \mathrm{T}_{\rho}f,f\right\rangle  
=\mathbb{E}_{\tau}\|\mathrm{T}_{\sqrt{\rho}}T_{C}R_{\tau}f\|_{2}^{2}\\
 & \ge\rho^{d}\mathbb{E}_{\tau}\|\left(T_{C}R_{\tau}f\right)^{\le d}\|_{2}^{2}
  \ge \tfrac12 (\rho/36)^{d}\|f\|_{2}^{2}.
\end{align*}
Thus we have the required bound $\aA \ge (\rho/72)^d$, so the lemma follows.
\end{proof}

\subsection{Spread couplings}

Our task now is bounding $\langle T_C f, \tilde{f} \rangle$
where $f \in L^2(S_n)$ is an $I$-junta with $I \sub [n/4,n/2]$
and $\tilde{f} \in L^2([n]^n)$ is its associated $I$-junta.
We will think of this inner product as $\mb{P}[\mc{E}] \langle Tg, g \rangle$, where
$g \in L^2([n]_d)$ is naturally defined by $g(J)=f(\sS)$ for any/all $\sS \in U_{I \to J}$,
and $T=T(\nu)$ is the operator on $L^2([n]_d)$ 
associated to the coupling $\nu(a,b)$ on $[n]_d \times [n]_d$
obtained from our coupling $C(\sS,x)$ on $S_n \times [n]^n$
by letting $a = (\sS(i): i \in I)$ and $b = (x_i: i \in I)$, 
conditioned on the event $\mc{E}$ that $b$ has distinct coordinates. 
This is indeed equivalent, as
\begin{equation} \label{eq:fg}
 \langle T_C f, \tilde{f} \rangle = \mb{E}_{(\sS,x) \sim C} [ f(\sS)\tilde{f}(x) ] 
= \mb{P}[\mc{E}]  \mb{E}_{(a,b) \sim \nu} [ g(a)g(b) ] = \mb{P}[\mc{E}]  \langle T(\nu)g, g \rangle. 
\end{equation}

Recalling that $C$ is left $S_n$-invariant, we see that $\nu$ is  left $S_n$-invariant,
i.e.\  $\nu(L_\sS a, L_\sS b) = \nu(a,b)$ for all $\sS \in S_n$.
Thus $T$, $T^*$ are $S_n$-endomorphisms of $[n]_d$,
and both marginals of $\nu$ are uniform on $[n]_d$,
so henceforth we equip $[n]_d$ with the uniform measure.
Now $\mb{P}(\mc{E}) = |[n]_d|/n^d$ and
\[p_{\text{lazy}} := \nu(\{a=b\}) = \sum_a \nu(a,a) = |[n]_d|\nu(a,a)\] 
does not depend on $a$.
By the forgetful model of the coupling, as $I \sub [n/4,n/2]$,
conditional on $\sS$ we have
\begin{equation} \label{eq:lazy}
 \mb{P}(\mc{E})  p_{\text{lazy}} = \mb{P}_{x \sim N(\sS)} \Big[ \bigcap_{i \in I} \{ x_i = \sS(i) \} \Big]
= \prod_{i \in I}  \frac{n-i+1}{n} \ge 2^{-d}.  
\end{equation}

To obtain a lower bound on $\langle Tg, g \rangle$, we will show that $Tg$ is close to $p_{\text{lazy}} g$.
We will obtain this from spreadness of $\nu$, in the sense of the following definition.

\begin{defn}
 We say that an $S_n$-invariant coupling $(a,b) \sim \nu$ on $[n]_d$ with $p_{\text{lazy}} := \nu(\{a=b\})$
 is $p$-spread if $\nu(a,b) \le p_{\text{lazy}} p^{|\{i:a_i\ne b_i\}|}$.
\end{defn}

Considering the forgetful model as above, clearly $\nu$ is $10/n$-spread, say.

\begin{lem}\label{lem:noisy coupling eigenvalues}
Let $g \in V_{=d}(L^2([n]_d))$ with $n \ge 10^5 d$ 
and $\nu$ be a $10/n$-spread $S_n$-invariant coupling on $[n]_d$.
Then $\| T(\nu)g - p_{\mathrm{lazy}} g\| \le .1p_{\mathrm{lazy}}\|g\|_2$.
\end{lem}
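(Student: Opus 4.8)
The plan is to show that the operator $T(\nu)$ acts almost like the scalar $p_{\mathrm{lazy}}$ on the top level $V_{=d}(L^2([n]_d))$, by decomposing $T(\nu)$ according to how many coordinates are changed by the coupling. Writing $\nu = \sum_{k=0}^d \nu_k$ where $\nu_k$ is the part of the coupling supported on pairs $(a,b)$ with exactly $k$ coordinates differing, we have $T(\nu) = \sum_{k=0}^d T_k$ with $T_0 = p_{\mathrm{lazy}} \cdot \mathrm{Id}$. So it suffices to bound $\|\sum_{k=1}^d T_k g\|_2 \le .1 p_{\mathrm{lazy}} \|g\|_2$ for $g \in V_{=d}$. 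The key point is that each $T_k$ is itself an $S_n$-endomorphism of $[n]_d$ (since $\nu$ is $S_n$-invariant and the number of changed coordinates is $S_n$-invariant), so it preserves $V_{=d}(L^2([n]_d)) = S^{(n-d,1^d)}$, and on this irreducible module it acts as a scalar by Schur's lemma. Thus I only need to bound the absolute value of that scalar eigenvalue of $T_k$ on the top level.

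First I would set up the decomposition and reduce, via the triangle inequality and Schur's lemma, to bounding $|\lambda_k|$ where $\lambda_k$ is the level-$d$ eigenvalue of $T_k$; the target is $\sum_{k=1}^d |\lambda_k| \le .1 p_{\mathrm{lazy}}$. Next I would relate $T_k$ to the Kneser-type operators already analysed in Section~\ref{subsec:Kneser}. The operator $T_k$ moves from $a$ to $b$ where $b$ agrees with $a$ outside a $k$-subset of coordinates and is ``disjoint'' from $a$ on those coordinates (in the sense that the new values avoid the old ones, this being exactly how the forgetful model resamples a forgotten coordinate among conflicting earlier values). So $T_k$ essentially factors through a $\mathcal{D}_{m,k}$-type operator acting on the $k$ resampled coordinates for an appropriate effective ground-set size $m \le n$, composed with averaging maps. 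The level-$d$ part of $V_{=d}(L^2([n]_d))$ restricts, on any fixed set of $k$ coordinates, to a level-$k$ eigenspace of such a disjointness operator, so by Lemma~\ref{lem:Kneser-type operator} the contribution of those $k$ coordinates to the eigenvalue is at most $(2k/m)^k$ in absolute value, up to the total weight $\nu$ assigns to changing exactly $k$ coordinates.

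To make that precise I would use $p$-spreadness with $p = 10/n$: the total $\nu$-mass on pairs differing in a prescribed set of $k$ coordinates is at most $p_{\mathrm{lazy}} p^k$ times the number of admissible target values, and summing over the $\binom{d}{k}$ choices of which coordinates change gives a bound of the shape $|\lambda_k| \le p_{\mathrm{lazy}} \binom{d}{k} (Ck/n)^k$ for an absolute constant $C$, absorbing the disjointness-operator eigenvalue bound $(2k/m)^k$ and the number of admissible resampled tuples. Then
\[
\sum_{k=1}^d |\lambda_k| \le p_{\mathrm{lazy}} \sum_{k=1}^d \binom{d}{k} \left( \frac{Ck}{n} \right)^k \le p_{\mathrm{lazy}} \sum_{k=1}^d \left( \frac{eCd}{n} \right)^k,
\]
using $\binom{d}{k} \le (ed/k)^k$, and since $n \ge 10^5 d$ the geometric series is bounded by $.1 p_{\mathrm{lazy}}$, as required.

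The main obstacle I anticipate is the bookkeeping in identifying $T_k$ with a genuine disjointness/Kneser operator: one has to check carefully that, after conditioning on which $k$ coordinates get resampled, the induced transition on those coordinates really is (a scaled version of) $\mathcal{D}_{m,k}$ acting $S_n$-equivariantly, so that the level-$k$ eigenvalue bound of Lemma~\ref{lem:Kneser-type operator} applies — the subtlety being that the ``conflicting set'' from which forgotten values are drawn depends on the positions $I \subseteq [n/4,n/2]$, so the effective ground set for the disjointness structure is of size comparable to $n$ but not exactly $n$, and one must ensure this only costs constant factors. Establishing that $T_k$ preserves $V_{=d}$ and hence is scalar there (via Schur, using that $V_{=d}(L^2([n]_d)) = S^{(n-d,1^d)}$ is irreducible, together with Lemma~\ref{lem:rep-theory lemma}) is the clean structural input that makes the whole reduction work.
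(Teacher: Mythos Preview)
Your overall strategy --- decompose $T(\nu)$, bound coefficients via spreadness, and bound the pieces via Kneser-type eigenvalue estimates --- matches the paper's, but two points are off.

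First, the Schur step fails. The space $V_{=d}(L^2([n]_d))$ is \emph{not} the irreducible Specht module $S^{(n-d,1^d)}$. By Lemma~\ref{lem:rep-theory lemma} it is the sum of all isotypic components $V_{=\lambda}(L^2([n]_d))$ with $\lambda_1=n-d$, and since $L^2([n]_d)\cong M^{(n-d,1^d)}$, every $S^\lambda$ with $\lambda_1=n-d$ occurs (with multiplicity $f^{\lambda/(n-d)}$, the number of standard Young tableaux of the skew shape; already for $d=2$ you get $S^{(n-2,2)}\oplus S^{(n-2,1,1)}$, and for $d\ge 3$ some multiplicities exceed $1$). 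A left $S_n$-morphism therefore need not act as a scalar on $V_{=d}$, and you cannot reduce to bounding a single eigenvalue $\lambda_k$. The paper avoids this by bounding the operator norms $\|T_S\|_{V_{=d}}$ directly (Lemma~\ref{lem:s-staying kill spec_k}), using Cauchy--Schwarz rather than Schur.

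Second, your decomposition is the wrong one for the Kneser connection. You split by the agreement set $\{i:a_i=b_i\}$, but the paper splits by the \emph{staying set} $\mathrm{stay}(a,b)=\{i:b_i\in\{a_1,\ldots,a_d\}\}$ (Lemma~\ref{lem:decomposition into staying}). Only after conditioning on the staying set $S$ do the remaining coordinates $b_{S^c}$ become genuinely disjoint from $\{a_1,\ldots,a_d\}$, which is exactly what is needed to factor through $\mathcal{D}_{[n]\setminus X,\,d-|S|}$ and invoke Lemma~\ref{lem:Kneser-type operator}. Under your split, $a_i\ne b_i$ does not rule out $b_i=a_j$ for some $j\ne i$, so your $T_k$ does not factor through a disjointness operator; this is a real obstruction, not just bookkeeping. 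The spreadness hypothesis is indeed phrased in terms of $|\{i:a_i\ne b_i\}|$, and the paper reconciles the two indexings by a further decomposition over $S'\subseteq S$ (the agreement set) in the proof of Lemma~\ref{lem:Upper bound on alpha_S}.
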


Assuming this lemma, we now deduce the required bound on $\langle T_C f, \tilde{f} \rangle$.
 
\begin{proof}[Proof of Lemma \ref{lem:large inner product}]
By \eqref{eq:fg} we have $\langle T_C f, \tilde{f} \rangle = \mb{P}[\mc{E}]  \langle T(\nu)g, g \rangle$,
with $g,T(\nu),\mc{E}$ defined as above. We may assume $f \in V_{=d}(L^2(S_n)$, and so $g \in V_{=d}(L^2([n]_d))$.

By Cauchy-Schwartz and Lemma \ref{lem:noisy coupling eigenvalues} we have
\begin{align*}
& |\langle Tg - p_{\mathrm{lazy}}g, g\rangle| \le \| T(\nu)g - p_{\mathrm{lazy}} g\| \|g\| \le .1p_{\mathrm{lazy}}\|g\|_2^2, \text{ so} \\
& \langle Tg , g\rangle  = p_{\mathrm{lazy}}\|g\|_2^2 + \langle Tg - p_{\mathrm{lazy}}g, g\rangle \ge .9p_{\mathrm{lazy}} \|g\|_2^2.
\end{align*}
By \eqref{eq:lazy} and $\|f\|_2=\|g\|_2$ we deduce $\langle T_C f, \tilde{f} \rangle \ge 3^{-d} \|f\|_{2}^{2}$.
\end{proof}

\subsection{The staying decomposition}

Going deeper into the heart of the proof, 
for the remainder of the section we now fix $T=T(\nu)$ 
for some general $10/n$-spread coupling $\nu$ on $[n]_d$ with $n \ge 10^5 d$ 
and show that $Tg$ is close to $p_{\mathrm{lazy}}g$ for any $g \in V_{=d}(L^2([n]_d))$.
It will be helpful now to recall our earlier discussion (see Section \ref{subsec:AnI})
of the eigenvalues of the noise operator $T_\rho$ on $[n]^n$. The idea was to condition on
the auxiliary random variable $z \in \{0,1\}^n$ that identifies the remembered coordinates in $T_\rho$.
There the analysis was very simple, as this conditioning produces an operator that is either the identity or zero.
Here we follow an analogous procedure, using the analysis of the Kneser graph  (see Section \ref{subsec:Kneser}) 
to show that our conditioning produces an operator that is either close to the identity or close to zero.

Our decomposition requires the following definition.

\begin{defn}
For any $a,b$ in $[n]_d$ we let $\text{stay}(a,b) := \{ i \in [d]: b_i \in \{a_1,\dots,a_d\} \}$.
Given a left invariant coupling $\nu'$ on $[n]_k$, 
we say that $\nu'$ and $T(\nu')$ are \emph{$S$-staying}
if $\nu'$ is supported on the event $A_S := \{ \text{stay}(a,b)=S \}$.
\end{defn}

The following lemma describes the staying decomposition. We omit the routine proof,
just remarking that by $S_n$-invariance of $\nu$, for any $a \in [n]_d$ we have 
$\mb{P}_{b \sim N_\nu(a)} [ A_S ] = \aA_S$ independent of $a$.

\begin{lem}\label{lem:decomposition into staying}
Let $\nu$ be a left invariant coupling on $[n]_d$.
Write $\nu = \sum_{S \sub [d]} \aA_S \nu_S$ and $T = \sum_{S \sub [d]} \aA_S T_S$, 
where $\aA_S = \nu(A_S)$, $\nu_S(a,b) = \nu(a,b)/\aA_S$, $T=T(\nu)$, $T_S=T(\nu_S)$.
Then each $T_S$ is left invariant and $S$-staying.
\end{lem}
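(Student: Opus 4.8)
The plan is to show that the partition of $[n]_d \times [n]_d$ into the events $A_S$ is $S_n$-equivariant, and to deduce everything in the statement from this together with transitivity of $S_n$ on $[n]_d$.

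First I would check that each event $A_S = \{(a,b)\in [n]_d\times[n]_d:\ \mathrm{stay}(a,b)=S\}$ is invariant under the diagonal left action $\sS\cdot(a,b) := (L_\sS a, L_\sS b)$. This is immediate from the definition of $\mathrm{stay}$: $b_i \in \{a_1,\dots,a_d\}$ if and only if $\sS(b_i)\in\{\sS(a_1),\dots,\sS(a_d)\}$, so $\mathrm{stay}(L_\sS a, L_\sS b) = \mathrm{stay}(a,b)$. Hence $\{A_S : S\sub [d]\}$ is a partition of $[n]_d\times[n]_d$ into left-invariant sets. Since $\nu$ is left-invariant, each conditional measure $\nu_S := \nu|_{A_S}/\aA_S$ (with $\aA_S := \nu(A_S)$, and the term simply dropped if $\aA_S = 0$) is left-invariant, and $\nu = \sum_{S\sub[d]}\aA_S\nu_S$ holds by the definition of conditional probability.

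Next I would record the elementary fact that a left-invariant probability measure $\nu'$ on $[n]_d\times[n]_d$ automatically has uniform marginals: the marginal $\mu_{\nu'}(b) = \sum_a \nu'(a,b)$ satisfies $\mu_{\nu'}(L_\sS b) = \mu_{\nu'}(b)$ for all $\sS\in S_n$, and since $S_n$ acts transitively on $[n]_d$, $\mu_{\nu'}$ is constant, hence uniform. Applying this to each $\nu_S$ shows that $\nu_S$ is a genuine coupling of the uniform measure on $[n]_d$ with itself, so $T_S := T(\nu_S)$ is well-defined, with $T_S f(b) = |[n]_d|\sum_a \nu_S(a,b)f(a)$ for $f\in L^2([n]_d)$. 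Summing $\aA_S T_S f(b)$ over $S$ and using $\nu = \sum_S \aA_S\nu_S$ recovers $Tf(b)$, giving $T = \sum_{S\sub[d]}\aA_S T_S$. Left-invariance of each $T_S$ then follows exactly as in Lemma \ref{lem:Coupling operators are left morphisms} (the operator attached to a left-$S_n$-invariant coupling is a left $S_n$-morphism, by a one-line change of variables in the defining expectation), and $T_S$ is $S$-staying by construction, since $\nu_S$ is supported on $A_S$, which is precisely the defining condition. For the remark preceding the lemma, the independence of $\aA_S = \mb{P}_{b\sim N_\nu(a)}[A_S]$ from $a$ is the same transitivity argument: given $a,a'\in[n]_d$ pick $\sS$ with $L_\sS a = a'$; then invariance of $\nu$ and of $A_S$ gives $\mb{P}_{b\sim N_\nu(a')}[A_S] = \mb{P}_{b\sim N_\nu(a)}[A_S]$.

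The only (very mild) obstacle is the bookkeeping around marginals: one must be slightly careful that restricting $\nu$ to an invariant event and renormalising still yields a coupling with the correct uniform marginals, so that $T$ decomposes linearly as claimed. This is exactly the point at which transitivity of $S_n$ on $[n]_d$ enters; everything else is formal.
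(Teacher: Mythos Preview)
Your proof is correct and is precisely the routine verification the paper omits; the paper explicitly skips the argument, only remarking on the single point (independence of $\aA_S$ from $a$) that you also address at the end.
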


We now state two technical lemmas concerning the staying decomposition.

\begin{lem}\label{lem:Upper bound on alpha_S}
Let $\nu$ be a $10/n$-spread left-invariant coupling on $[n]_d$ with $n \ge 10^5 d$.
For $S \sub [d]$ let $A_S := \{ \text{stay}(a,b)=S \}$.
Then $\nu(A_S) \le 1.001 p_{\mathrm{lazy}} 10^{d-|S|}$.
\end{lem}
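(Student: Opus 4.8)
The plan is to unwind the definitions, reduce to a single fixed $a$, and then estimate a sum over $b$ weighted by the spread bound, splitting $b$ into its "escaping" and "staying" coordinates.

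First I would fix any $a\in[n]_d$. Since $\nu$ is left $S_n$-invariant with uniform marginals, the conditional law $N_\nu(a)$ of $b$ given $a$ is the same for every $a$ (as remarked before Lemma \ref{lem:decomposition into staying}), so $\nu(A_S)=\mb{P}_{b\sim N_\nu(a)}[\mathrm{stay}(a,b)=S]$. The $10/n$-spread hypothesis gives $\mb{P}_{N_\nu(a)}(b)\le p_{\mathrm{lazy}}(10/n)^{|\{i:a_i\ne b_i\}|}$, so it suffices to bound $\sum_{b:\,\mathrm{stay}(a,b)=S}(10/n)^{|\{i:a_i\ne b_i\}|}$.

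Next I would factor this sum. Write $A=\{a_1,\dots,a_d\}$, $R=[d]\setminus S$, and $j=|R|=d-|S|$. If $\mathrm{stay}(a,b)=S$ then $(b_i)_{i\in R}$ is an injection from $R$ into $[n]\setminus A$ while $(b_i)_{i\in S}$ is an injection from $S$ into $A$, the two images being automatically disjoint; moreover every $b_i$ with $i\in R$ already satisfies $b_i\ne a_i$, so $|\{i:a_i\ne b_i\}|=j+|\{i\in S:b_i\ne a_i\}|$. Hence the sum factors as $\big[(10/n)^{j}\cdot\#\{\text{injections }R\hookrightarrow[n]\setminus A\}\big]\cdot\Psi$, where $\Psi:=\sum_{\phi}(10/n)^{|\{i:\phi(i)\ne a_i\}|}$ runs over injections $\phi\colon S\hookrightarrow A$. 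The first bracket equals $\prod_{i=0}^{j-1}\tfrac{10(n-d-i)}{n}$, which is $\le 10^{j}=10^{d-|S|}$ and produces exactly the claimed power of $10$; crucially, it is in fact $10^{j}\prod_{i=0}^{j-1}(1-\tfrac{d+i}{n})$, carrying an exponential margin below $10^{j}$ that I will need in the last step.

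Finally I would estimate $\Psi$: grouping injections $\phi$ by the number $m$ of moved coordinates, there are at most $\binom{|S|}{m}$ choices for the moved set and, for each, at most $\tfrac{(d-|S|+m)!}{(d-|S|)!}\le d^{m}$ fixed-point-free injections of an $m$-set into a $(d-|S|+m)$-set, so $\Psi\le\sum_{m}\binom{|S|}{m}d^{m}(10/n)^{m}\le\exp(10|S|d/n)$. Combining with the margin from the escape factor and using $n\ge 10^{5}d$, the total factor multiplying $p_{\mathrm{lazy}}10^{d-|S|}$ should come out below $1.001$. I expect this final balancing to be the main obstacle: the crude estimate $\Psi\le e^{10|S|d/n}$ has to be played off accurately against $\prod_{i=0}^{j-1}(1-\tfrac{d+i}{n})$ — equivalently, one must keep the falling factorials and the fixed-point-free (inclusion–exclusion) correction rather than replacing each factor by its trivial maximum — so that the constant stays at $1.001$ uniformly over all $S$, rather than degrading for $S$ of intermediate size.
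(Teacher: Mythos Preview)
Your approach is essentially the paper's: both apply the spread bound pointwise to $\nu(a,b)$ and sum over admissible $b$, grouping by the number $m$ of coordinates in $S$ that move. The paper phrases this as a decomposition over $S'=\{i:a_i=b_i\}\subseteq S$ (so $r=|S\setminus S'|$ is your $m$), but after unwinding, the escape/stay factoring, the $10^{d-|S|}$ from the escape count, and the reduction to controlling $\Psi$ all line up.

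The substantive divergence is in the count for the staying-but-moved coordinates. You bound the number of fixed-point-free injections of the $m$-set $M$ into $A\setminus\{a_i:i\in S\setminus M\}$ (a set of size $d-|S|+m$) by $\tfrac{(d-|S|+m)!}{(d-|S|)!}\le d^m$, giving $\Psi\le(1+10d/n)^{|S|}$; as you correctly flag, this exceeds $1.001$ once $|S|$ is comparable to $d$ (indeed $(1+10d/n)^{|S|}$ can be as large as $e^{10d^2/n}$). The paper instead asserts the count $|\{b':(a',b')\in A'_S\}|\le r!\,n^{d-|S|}$, i.e.\ it replaces your $\tfrac{(d-|S|+m)!}{(d-|S|)!}$ by $m!$, and this is what collapses the sum to the geometric series $\sum_r(10e|S|/n)^r\le 1.001$. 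Your proposed refinements do not recover this: the inclusion--exclusion correction for fixed-point-freeness only saves a bounded factor, and the escape margin $\prod_{i<j}(1-(d+i)/n)\approx e^{-(d-|S|)d/n}$ vanishes precisely in the worst regime $|S|\approx d$, so it cannot offset $\Psi$. The obstacle you identify is therefore real, and in the paper it is closed by the sharper $r!$ enumeration rather than by any balancing of escape against stay; if you want to follow the paper's line you must justify that count, not refine the estimates you already have.
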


\begin{lem}\label{lem:s-staying kill spec_k}
Let $f \in V_{=d}(L^2([n]_d))$ and  $T$ be a left-invariant $S$-staying operator on $L^2([n]_d)$.
Then $\|Tf\|_2 \le \left(\frac{2(d-|S|)}{n-|S|}\right)^{d-|S|}\|f\|_2$.
\end{lem}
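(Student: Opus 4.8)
The statement says that an $S$-staying left-invariant operator $T$ on $L^2([n]_d)$ contracts $V_{=d}$ by a factor $\left(\tfrac{2(d-|S|)}{n-|S|}\right)^{d-|S|}$. The key observation is that an $S$-staying operator essentially ignores the $S$-coordinates and, on the complementary $[d]\setminus S$ coordinates, behaves like the ordered-disjointness operator $\mc{D}_{m,k}$ studied in Section \ref{subsec:Kneser}, with $k = d-|S|$ and $m$ roughly $n-|S|$. So the plan is to reduce to Lemma \ref{lem:Kneser-type operator}.

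First I would fix the $S$-coordinates and condition. Since $\nu$ (hence $T$) is supported on $A_S = \{\text{stay}(a,b)=S\}$, given $a$, the coupling only moves the $i \in [d]\setminus S$ coordinates of $a$ to values \emph{outside} $\{a_1,\dots,a_d\}$, while the $i \in S$ coordinates are resampled to values \emph{inside} $\{a_1,\dots,a_d\}$. The point is that once one restricts attention to the degree-$d$ part $V_{=d}(L^2([n]_d))$, the behaviour on the $S$-coordinates cannot increase the norm (those coordinates land in a set of bounded size, contributing nothing new at top degree), and the behaviour on the $[d]\setminus S$ coordinates is exactly a disjointness-type move. Concretely, I would set $k = d-|S|$, and realise the relevant piece of $T$ as (a contraction composed with) an operator of the form $\mc{D}_{m,k}$ acting on the $[d]\setminus S$ block of coordinates within the complement of the $S$-values, where the ground set has size $\ge n-|S|$. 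By Lemma \ref{lem:Kneser-type operator}, every level-$k$ eigenvalue of $\mc{D}_{m,k}$ has absolute value at most $(2k/m)^k \le \left(\tfrac{2(d-|S|)}{n-|S|}\right)^{d-|S|}$, and an $f \in V_{=d}(L^2([n]_d))$ projects (after fixing the $S$-coordinates) into the top level $V_{=k}$ of this smaller disjointness space — this is where Lemma \ref{lem:rep-theory lemma} is used, to identify the top level with the relevant Specht module and kill all lower-level contributions.

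The precise way to make the reduction clean is to write $T = \varphi^* \circ \mc{D} \circ \varphi$ for appropriate $S_n$-morphisms $\varphi, \varphi^*$ (analogous to the proof of Lemma \ref{lem:Kneser-type operator}), where $\varphi$ records the unordered $k$-set of values taken by the non-staying coordinates and forgets the staying coordinates, $\mc{D}$ (or $\mc{K}$) is the Kneser/disjointness operator on that $k$-set, and $\varphi, \varphi^*$ are contractions in $L^2$. Since $\varphi, \varphi^*$ are $S_n$-morphisms they respect the level decomposition, so $V_{=d}$ maps into the top level of the Kneser space, where the eigenvalue bound of Lemma \ref{lem:eigenvalues of the Kneser graph} / Lemma \ref{lem:Kneser-type operator} applies; composing the three contractions/bounds gives $\|Tf\|_2 \le \left(\tfrac{2(d-|S|)}{n-|S|}\right)^{d-|S|}\|f\|_2$.

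The main obstacle I anticipate is bookkeeping the ``fix the $S$-coordinates'' step rigorously: one must check that conditioning on the values and positions of the staying coordinates really does turn $T$ into a direct integral of disjointness operators on a ground set of size $\ge n-|S|$, and — crucially — that $f \in V_{=d}$ restricts to something in the \emph{top} level $V_{=k}$ of each fibre (so that lower eigenvalues, which could be as large as $1$, never enter). This is a representation-theoretic transversality claim: the degree filtration on $[n]_d$ must interact correctly with the fibration over the choice of staying data. Verifying this — presumably again via Young's branching rule as in Lemma \ref{lem:rep-theory lemma} — is the technical heart; the eigenvalue estimate itself is then immediate from Section \ref{subsec:Kneser}.
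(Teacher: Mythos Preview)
Your overall plan is correct and matches the paper's: condition on the staying data, reduce to the disjointness operator $\mc{D}$ on the remaining $d-|S|$ coordinates, and use that restrictions of $f\in V_{=d}$ land in the top level so that Lemma~\ref{lem:Kneser-type operator} applies. The ``transversality'' fact you worry about is precisely Lemma~\ref{lem:restrictions and juntaness} in the paper, proved by a short adjoint argument rather than Young's branching rule.

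Where your proposal diverges is the clean factorisation $T=\varphi^*\mc{D}\varphi$. This does not work as written, because the roles of $a$ and $b$ in $\text{stay}(a,b)$ are asymmetric: the set $S$ indexes positions of $b$ whose values lie in $\{a_1,\dots,a_d\}$, but on the $a$-side the positions whose values are ``used up'' form a \emph{different}, random set $S'\sub[d]$ with $\{a_i:i\in S'\}=\{b_i:i\in S\}=:X$. So there is no single $\varphi$ that works symmetrically on both sides. The paper circumvents this by writing $\|Tf\|_2=\langle Tf,g\rangle$ for the dual unit vector $g=Tf/\|Tf\|_2$, conditioning on $(b_S,S',a_{S'})$, observing that left-invariance then makes $(a_{S'^c},b_{S^c})$ uniformly disjoint in $([n]\sm X)_{d-|S|}$, and applying Cauchy--Schwarz twice to separate $f_{S\to b_S}$ from $g_{S'\to a_{S'}}$. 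This yields $\|Tf\|_2\le\sqrt{\mb{E}_{b_S}\|\mc{D}f_{S\to b_S}\|_2^2}$, and the Kneser bound finishes. Your intuition is right; just replace the global factorisation by this duality-plus-conditioning step.
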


We now complete the proof assuming these two technical lemmas.

\begin{proof}[Proof of Lemma \ref{lem:noisy coupling eigenvalues}]
Let $g \in V_{=d}(L^2([n]_d))$ with $n \ge 10^5 d$ 
and $\nu$ be a $10/n$-spread $S_n$-invariant coupling on $[n]_d$.
By Lemma \ref{lem:decomposition into staying} we invoke the staying decomposition
$T = T(\nu) = \sum \aA_S T_S$ for $S$-staying operators $T_S$,
with each $\aA_S \le 1.001p_{\mathrm{lazy}}10^{d-|S|}$
by  Lemma \ref{lem:Upper bound on alpha_S}
and $\|T_S g\|_2 \le \left(\frac{2(d-|S|)}{n-|S|}\right)^{d-|S|}\|g\|_2$
by Lemma \ref{lem:s-staying kill spec_k}.

Considering \[1.001p_{\mathrm{lazy}} \ge \aA_{[d]} = \mb{P}(\text{stay}(a,b)=[d])
= p_{\mathrm{lazy}} + \mb{P}(A'),\] where $A'=A_{[d]} \cap \{a \ne b\}$,
we can write $T_{[d]} = (1-\bB) Id + \bB T'$ with $T' = T(\nu | A')$
and $\bB = \mb{P}(A' | A_{[d]}) < 0.001$. As $T'$ is contractive, by the triangle inequality
\[\|T_{[d]}g-g\|_2 \le \bB\|g\|+\bB\|T'g\| \le 0.002\|g\|_2,\]
so \[ \|\aA_{[d]}T_{[d]}g - p_{\mathrm{lazy}} g\|_2 \le  0.003 p_{\mathrm{lazy}} \|g\|_2. \] 
  
We estimate the remaining terms, using $\binom{d}{i}\le \left(\frac{ed}{d-i}\right)^{d-i}$, as
\begin{align*}
 \sum_{S\ne [d]}\alpha_S\|T_S[g]\|_2 
  & \le \sum_{i=0}^{d-1} \left(\frac{ed}{d-i}\right)^{d-i} \cdot
    1.001p_{\mathrm{lazy}}10^{d-i } \cdot
     \left(\frac{2(d-i)}{n-i}\right)^{d-i}\|g\|_2 \\
  & = 1.001p_{\mathrm{lazy}} \|g\|_2  
    \sum_{i=0}^{d-1}   \left(\frac{20ed}{n-i}\right)^{d-i}
    \le .001p_{\mathrm{lazy}} \|g\|_2.  
\end{align*}
We deduce $\| T(\nu)g - p_{\mathrm{lazy}} g\| \le 0.01p_{\mathrm{lazy}}\|g\|_2$, as required.
\end{proof}

To complete the proof of our hypercontractivity theorem,
we prove these two technical lemmas in the final two subsections of this section.

\subsection{Staying coefficients of spread couplings}

Here we prove the first technical lemma, concerning the staying coefficients $\aA_S = \nu(A_S)$.

\begin{proof}[Proof of Lemma \ref{lem:Upper bound on alpha_S}]
Let $(a,b) \sim \nu$ be a $10/n$-spread left-invariant coupling on $[n]_d$ with $n \ge 10^5 d$.
For $S \sub [d]$ let $\aA_S = \nu(A_S)$ with $A_S := \{ \text{stay}(a,b)=S \}$.
For $S' \sub S$ we define events $E_{S'} = \{ \{i:a_i = b_i\} = S'\}$
and \[E'_{S'} = E_{S'} \cap \{ a_{S'}=b_{S'} = (n,\ldots ,n-|S'|+1) \}.\]
We note that $\nu(A_S|E_{S'})=\nu(A_S|E'_{S'})$ by $S_n$-invariance.
We bound $\aA_S$ using the decomposition
\begin{equation} \label{eq:aAS}
 \aA_S = \nu(A_S) = \sum_{S'}\nu(E_{S'})\nu(A_S|E_{S'})
 = \sum_{S'}\nu(E_{S'})\nu(A_S|E'_{S'}).
 \end{equation}
 
We write $\nu(A_S|E'_{S'}) = \nu_{S'}(A'_S)$, where
$(a',b') \sim \nu_{S'}$ is the distribution obtained from $(a,b) \sim \nu | E'_{S'}$
by deleting the coordinates $a_{S'}=b_{S'} = (n,\ldots ,n-|S'|+1)$,
and $A'_S = \{ \text{stay}(a',b') = S \sm S'\}$.
We note that $\nu_{S'}$ is left-invariant, 
so has uniform marginals (denoted $\mu$) on $[n-|S'|]_{d-|S'|}$.
For any $(a',b')$ obtained from $(a,b) \in E'_{S'}$, 
we have $a'_i \ne b'_i$ for all $i' \in [d] \sm S'$ by definition of $E_{S'}$,
so by spreadness of $\nu$,
\[  \nu(E'_{S'})\nu_{S'}(a',b') = \nu(a,b) \le (10/n)^{d-|S'|} \nu(a,a).  \] 
By $S_n$-invariance we have $\nu(E_{S'}) = \left|[n]_{|S'|}\right| \nu(E'_{S'})$
and $\nu(a,a) =  p_{\text{lazy}} / |[n]_d|$, so
\[  \nu(E_{S'})\nu_{S'}(a',b') \le (10/n)^{d-|S'|} p_{\text{lazy}} \mu(a').    \]
For each $a' \in [n-|S'|]_{d-|S'|}$ we have
$|\{b': (a',b') \in A'_S\}| \le  |S \sm S'|! n^{d-|S|}$, so
\[  \nu(E_{S'})\nu_{S'}(A'_S) \le 10^{d-|S'|} p_{\text{lazy}} (r/n)^r 
\text{ with }  r := |S \sm S'|.  \]
Substituting in \eqref{eq:aAS} we obtain
\begin{align*}
\aA_S & \le p_{\text{lazy}} \sum_{r=0}^{|S|}\binom{|S|}{r} 10^{d+r-|S|} (r/n)^r  \\
& \le  p_{\text{lazy}} 10^{d-|S|} \sum_{r \ge 0} (10e|S|/n)^r \le 1.001 p_{\text{lazy}} 10^{d-|S|}. \qedhere
\end{align*}
\end{proof}

\subsection{Norms of staying operators}

It remains to prove the second technical lemma, concerning the norms 
of the staying operators $T_S$ on $V_{=d}(L^2([n]_d))$

\subsubsection*{Restrictions}
First we make some preliminary remarks on restrictions.
For $R \sub [n]$ we write $R_d$ for the set of $d$-tuples in $R^d$ with distinct coordinates.
For $f \in L^2(R_d)$, $S \sub [d]$, $x \in R_{|S|}$, $X =\{x_1, \ldots , x_{|S|}\}$ we define 
$f_{S \to x} \in (R \sm X)_{d-|S|}$ by $f_{S \to x}(y)=f(x,y)$, with $x$ in the $S$ coordinates.
The following lemma shows that pure degree behaves nicely under restriction.

\begin{lem}\label{lem:restrictions and juntaness}
Let $f \in V_{=d}(L^2([n]_d))$, $S \sub [d]$, $x \in [n]_{|S|}$ and $X =\{x_1, \ldots , x_{|S|}\}$
Then $f_{S \to x} \in V_{=d-|S|}(L^2(([n] \sm X)_{d-|S|}))$.
\end{lem}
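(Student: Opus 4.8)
The plan is to characterize $V_{=d}(L^2([n]_d))$ in terms of orthogonality to lower-degree juntas and then check this property is inherited by restrictions. First I would unwind the definitions from Subsection \ref{subsec:AnI} (adapted from $[n]^n$ to $[n]_d$): a function on $[n]_d$ has degree $\le k$ iff it is a linear combination of $k$-juntas, i.e. functions depending on $k$ of the $d$ coordinates, and $V_{=d}$ is the orthogonal complement of $V_{\le d-1}$ inside $V_{\le d} = L^2([n]_d)$. So $f \in V_{=d}$ precisely when $f \perp g$ for every $(d-1)$-junta $g$ on $[n]_d$. The goal is to show $f_{S\to x}$, a function on $([n]\sm X)_{d-|S|}$, is orthogonal to every $(d-|S|-1)$-junta there. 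Write $d' = d-|S|$ and $m = n - |S|$; note $([n]\sm X)_{d'}$ is just $R_{d'}$ for an $m$-element set $R$, so $V_{\le d'-1}$ makes sense there.

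The key step is a restriction-of-juntas computation. Let $h$ be a $(d'-1)$-junta on $([n]\sm X)_{d'}$, depending only on coordinates in some $T \sub [d]\sm S$ with $|T| = d'-1$. I want to show $\langle f_{S\to x}, h\rangle = 0$. Pull $h$ back to a function $\tilde h$ on $[n]_d$ by $\tilde h(y) := h(y_{[d]\sm S})$ when $y_S = x$ and $\tilde h(y) := 0$ otherwise — but this $\tilde h$ depends on all the $S$ coordinates (to check $y_S = x$) plus the $T$ coordinates, so it is a $(|S| + d'-1) = (d-1)$-junta on $[n]_d$. Since $f \in V_{=d}$, we have $\langle f, \tilde h\rangle = 0$. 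Now I unfold this inner product over $[n]_d$ by conditioning on the values in the $S$-coordinates: the only tuples contributing are those with $y_S = x$, and for those the conditional distribution of $y_{[d]\sm S}$ is uniform on $([n]\sm X)_{d'}$; hence $0 = \langle f,\tilde h\rangle = \mathbb{P}(y_S = x)\cdot \langle f_{S\to x}, h\rangle$ (up to a positive normalizing constant), giving $\langle f_{S\to x}, h\rangle = 0$. Since $(d'-1)$-juntas span $V_{\le d'-1}$ on $([n]\sm X)_{d'}$, this shows $f_{S\to x} \in V_{=d'}$, as claimed. By iterating or by symmetry of the argument in $x$ it suffices to treat $|S|=1$ and induct, but doing it in one shot as above is cleaner.

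The main obstacle is bookkeeping rather than conceptual: one must be careful that "degree $\le k$" and "$V_{=d}$" are being used with respect to the correct ground set (the tuple-length $d'$ and alphabet size $m$ both change under restriction), and that the pullback $\tilde h$ genuinely lands in $V_{\le d-1}(L^2([n]_d))$ — the subtle point being that testing $y_S = x$ costs exactly $|S|$ coordinates of junta-dependence, no more. One should also note that $f_{S\to x}$ a priori lies in $V_{\le d'}$ (it is trivially a linear combination of $\le d'$-juntas since it is any function on a set of $d'$-tuples — indeed $V_{\le d'} = L^2(([n]\sm X)_{d'})$), so the content is entirely the orthogonality to $V_{\le d'-1}$. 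No genuinely hard estimate is involved; it is a clean consequence of the definition of the degree decomposition together with the observation that restriction sends low-degree juntas to low-degree juntas.
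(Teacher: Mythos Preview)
Your proposal is correct and follows essentially the same approach as the paper: the paper defines the restriction operator $F$ and observes that its adjoint $F^*$ (your pullback $\tilde h$, up to a positive normalizing constant) sends $(d-|S|-1)$-juntas to $(d-1)$-juntas, so $\langle f_{S\to x}, h\rangle = \langle Ff, h\rangle = \langle f, F^* h\rangle = 0$. Your direct unfolding of the inner product by conditioning on $y_S = x$ is exactly this adjoint computation spelled out.
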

\begin{proof}
By definition, $f$ is orthogonal to all $(d-1)$-juntas
and we must show that $f_{S \to x}$ is orthogonal to all $(d-|S|-1)$-juntas.
We consider the operator $F: L^2([n]_d) \to L^2(([n] \sm X)_{d-|S|})$ defined by $Fh=h_{S \to x}$
and its adjoint $F^*$ given by $F^* g(x',y) = g(y) \cdot 1_{x'=x} |[n]_{|S|}|$.
If $g$ is a $(d-|S|-1)$-juntas then $F^* g$ is a $(d-1)$-junta,
so $\langle Ff, g \rangle = \langle f, F^*g \rangle = 0$.
\end{proof}

We also recall the disjointness operator from Section \ref{subsec:Kneser}:
for $2k\le |R|$ we let $\mc{D}_{R,k}$ be the operator on $R_k$ corresponding to the coupling
where $(x,y) \in R_k \times R_k$ is uniformly random 
subject to $\{x_1,\dots,x_k\} \cap \{y_1,\dots,y_k\} = \es$.

\begin{proof}[Proof of Lemma \ref{lem:s-staying kill spec_k}]
Let $f \in V_{=d}(L^2([n]_d))$ and  $T=T(\nu)$ be a left-invariant $S$-staying operator on $L^2([n]_d)$.
By definition $\text{stay}(a,b)=\{i: b_i \in \{a_1,\dots,a_d\}\}=S$ whenever $\nu(a,b)>0$.
We let $g=\frac{Tf}{\|Tf\|_2}$ (so $\|g\|_2=1$) and claim that
\[ \|Tf\|_2= \langle Tf, g\rangle = \mb{E}_{(a,b) \sim \nu} [f(b)g(a)] 
  = \mb{E}_{b_S,S',a_{S'}} [\langle \mc{D}(f_{S \to b_S}), g_{S' \to a_{S'}}\rangle],  \]
where $\{b_i: i \in S\} = \{ a_i: i \in S' \} = X$ and $\mc{D} = \mc{D}_{[n] \sm X, d-|S|}$.
Indeed, this claim holds as conditional on $b_S,S',a_{S'}$, by left-invariance
we have $a_{S'^c},b_{S^c}$ in $([n] \sm X)_{d-|S|}$
uniformly random subject to $\{a_i: i \in S'^c\} \cap \{b_i: i \in S^c\} = \es$.
 
Now $\mb{E}_{S',a_{S'}} \| g_{S' \to a_{S'}} \|_2^2 = \|g\|_2^2=1$,
so applying Cauchy--Schwarz twice we obtain 
    \begin{equation}\label{eq:cs1}
    \|Tf\|_2 \le  \sqrt{\mathbb{E}_{b_S}\|\mc{D}[f_{S\to b_S}]\|_2^2}.
    \end{equation}
   
 By Lemma \ref{lem:restrictions and juntaness}
 we have $f_{S\to b_S} \in V_{=d-|S|}(L^2(([n]\sm X)_{d-|S|}))$,
 so by Lemma \ref{lem:Kneser-type operator} we have 
 $\|\mc{D}[f_{S\to b_S}]\|_2 \le \left( \frac{2(d-|S|)}{n-|S|}\right)^{d-|S|}\|f_{S\to b_S}\|_2.$ 
 Substituting in \eqref{eq:cs1} we conclude 
 \begin{align*}
  \|Tf\|_2  & \le \left( \frac{2(d-|S|)}{n-|S|}\right)^{d-|S|} \sqrt{\mathbb{E}_{b_S}\|f_{S\to b_S}\|_2^2} 
  = \left( \frac{2(d-|S|)}{n-|S|}\right)^{d-|S|} \|f\|_2. \qedhere
 \end{align*}
\end{proof}

\section{Level and spectral estimates}

In this section we prove our level $d$ inequality
and spectral estimates for global convolution operators.
We also give an example showing sharpness. 
We will prove the following general form of our level $d$ inequality
(Theorem \ref{thm:level-d for global functions_intro} is a special case).

\begin{thm}\label{thm:level-d for biglobal functions}
Let $f \in L^2(S_n)$ be $(r,\gG_1,\gG_2,d)$-biglobal with $r>1$, $\gG_2>\gG_1>0$ and
$d\le\min\left(\tfrac{1}{4}\log(\gG_2/\gG_1), 10^{-5}n \right)$.
Then  $\| f^{=d} \|_2^2\le \gG_1^2 \left( 10^6 r^2 d^{-1} \log (\gG_2/\gG_1) \right)^d$.

Furthermore, there is an absolute constant $C>0$ such 
if $f$ is $r$-biglobal with $\log(\|f\|_2/\|f\|_1) < 10^{-6} n,$ then
\[\| f^{=d} \|_2^2 \le \|f_1\|^2 \left( Cr^2 \log(e\|f\|_2/\|f\|_1) \right)^d\] for any $d$.
 \end{thm}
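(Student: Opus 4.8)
The plan is to run the coupling argument of the previous section once more, now transferring the sharp biglobal level $d$ inequality of Keller, Lifshitz and Marcus (Theorem~\ref{thm:KLM2}), rather than just their hypercontractivity theorem, from $[n]^{n}$ to $S_{n}$ through the noise operator $\mathrm{T}_{\rho}$, and then optimising over the noise rate $\rho$.

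First I would reduce the ``furthermore'' clause to the first statement. Write $L=\log(\|f\|_{2}/\|f\|_{1})\ge0$. For $1\le d\le\tfrac14 L$ the claimed bound is an instance of the first statement with $\gG_{i}=\|f\|_{i}$, using $d^{-1}L\le L\le\log(e\|f\|_{2}/\|f\|_{1})$; for $d>\tfrac14 L$---in particular for all $d\ge 10^{-5}n$---it follows from $\|f^{=d}\|_{2}\le\|f\|_{2}$, since then $\|f\|_{2}^{2}/\|f\|_{1}^{2}=e^{2L}\le(Cr^{2}\log(e\|f\|_{2}/\|f\|_{1}))^{d}$ once $Cr^{2}\ge e^{8}$, using $r\ge1$, $d\ge\max(1,L/4)$ and $L<10^{-6}n$; the case $d=0$ is immediate. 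So it remains to prove the first statement, where we may assume $d\ge1$.

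For the first statement, fix $\rho\in(0,1)$. Since $\mathrm{T}_{\rho}$ is positive semidefinite and commutes with $S_{n}$ on both sides, it preserves each $V_{=k}$ and acts non-negatively there, so
\[
\langle\mathrm{T}_{\rho}f,f\rangle=\sum_{k}\langle\mathrm{T}_{\rho}f^{=k},f^{=k}\rangle\ge\langle\mathrm{T}_{\rho}f^{=d},f^{=d}\rangle\ge(\rho/72)^{d}\|f^{=d}\|_{2}^{2},
\]
the last step by part~(1) of Theorem~\ref{thm:hypercontractivity for global functions in the symmetric group} (valid as $d\le 10^{-5}n$). So it suffices to prove an upper bound $\langle\mathrm{T}_{\rho}f,f\rangle\le\gG_{1}^{2}e^{O(d)}$ for a noise rate $\rho$ of order $d/(r^{2}\log(\gG_{2}/\gG_{1}))$ (truncated at the hypercontractive threshold if that is smaller); combined with the display and $72/\rho=O(r^{2}d^{-1}\log(\gG_{2}/\gG_{1}))$ this gives $\|f^{=d}\|_{2}^{2}\le\gG_{1}^{2}(Cr^{2}d^{-1}\log(\gG_{2}/\gG_{1}))^{d}$ after absorbing the $e^{O(d)}$.

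For the upper bound I would use the identity $\langle\mathrm{T}_{\rho}f,f\rangle=\mathbb{E}_{\sigma}\|T_{\sqrt{\rho}}T_{C}R_{\sigma}f\|_{2}^{2}$ from the proof of part~(1), where $T_{\sqrt{\rho}}$ is the product-space noise operator; by Lemma~\ref{lem:biglobalness is preserved under the coupling} (and invariance of biglobalness under $R_{\sigma}$, as in Lemma~\ref{lem:Part 2 of the hypercontractivity theorem }) each $h:=T_{C}R_{\sigma}f\in L^{2}([n]^{n})$ is $(r,\gG_{1},\gG_{2},d)$-biglobal, so the task is to bound $\|T_{\sqrt{\rho}}h\|_{2}$ for such $h$. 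Expanding $\|T_{\sqrt{\rho}}h\|_{2}^{2}=\sum_{k}\rho^{k}\|h^{=k}\|_{2}^{2}$, the levels $k\le d$ are controlled by Theorem~\ref{thm:KLM2} (valid since $2k\le 2d\le\tfrac12\log(\gG_{2}/\gG_{1})$), contributing at most $\gG_{1}^{2}\sum_{k\le d}\rho^{k}(2200r^{2}k^{-1}\log(\gG_{2}/\gG_{1}))^{k}$, which for $\rho$ as above is a geometric-type sum peaked at $k=d$ of total size $\gG_{1}^{2}e^{O(d)}$. I expect the crux to be the levels $k>d$, where biglobalness with parameter $d$ gives nothing and the crude bound $\sum_{k>d}\|h^{=k}\|_{2}^{2}\le\|h\|_{2}^{2}\le\gG_{2}^{2}$ is useless, since $\gG_{2}^{2}=\gG_{1}^{2}e^{2\log(\gG_{2}/\gG_{1})}$ can vastly exceed the target $\gG_{1}^{2}e^{O(d)}$. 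The way around this is not to split these levels off but to estimate $\|T_{\sqrt{\rho}}h\|_{2}$ in one stroke via the biglobal hypercontractive bound on $[n]^{n}$ that powers Theorem~\ref{thm:KLM2}---the analogue of the classical $\|T_{\rho}1_{A}\|_{2}\le\|1_{A}\|_{1+\rho^{2}}$---which for $(r,\gG_{1},\gG_{2},d)$-biglobal $h$ and $\rho$ below the relevant threshold gives $\|T_{\sqrt{\rho}}h\|_{2}\le\gG_{1}\,e^{O(\rho\log(\gG_{2}/\gG_{1}))}$; it is here that the hypotheses $d\le\tfrac14\log(\gG_{2}/\gG_{1})$ and $r>1$ are used, to keep the needed $\rho$ within range. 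With this estimate in hand (extracted from, or reproved in the style of, \cite{keller2023}), the remaining bookkeeping---fixing the admissible $\rho$, summing the geometric series, and driving the constant down to $10^{6}$---is routine.
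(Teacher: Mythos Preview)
Your reduction of the ``furthermore'' clause is fine and matches the paper. The overall architecture---lower-bound the quadratic form via the eigenvalue estimate, upper-bound it by transferring a product-space inequality through the coupling---is also the right one. The gap is in the upper bound.

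You correctly observe that $\langle \mathrm{T}_\rho f,f\rangle=\mathbb{E}_\sigma\|T_{\sqrt{\rho}}T_C R_\sigma f\|_2^2$ and that the crude estimate $\sum_{k>d}\rho^k\|h^{=k}\|_2^2\le\rho^{d+1}\gG_2^2$ is far too weak when $L:=\log(\gG_2/\gG_1)$ is much larger than $d$. But your proposed fix---a bound of the shape $\|T_{\sqrt{\rho}}h\|_2\le\gG_1 e^{O(\rho L)}$ for $(r,\gG_1,\gG_2,d)$-biglobal $h$ on $[n]^n$---is not a known result and is not what powers Theorem~\ref{thm:KLM2}. The hypothesis ``biglobal with parameter $d$'' says nothing about $\|h^{=k}\|_2$ for $k>d$, and classical hypercontractivity on $[n]^n$ (which would give $\|T_{\sqrt{\rho}}h\|_2\le\|h\|_{1+\rho}$) fails with useful constants on general product spaces; that failure is precisely why global hypercontractivity was developed. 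So as written the argument does not close.

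The paper avoids this issue by changing the operator rather than the estimate. Instead of the noise operator $\mathrm{T}_\rho$, it defines
\[
T_d:=\mathbb{E}_{\sigma}\bigl[R_{\sigma}^{-1}T_C^{*}P_d\,T_C R_\sigma\bigr],
\]
where $P_d$ is the orthogonal projection onto $V_{\le d}(L^2([n]^n))$. Then $\langle T_d f,f\rangle=\mathbb{E}_\sigma\|P_d T_C R_\sigma f\|_2^2=\mathbb{E}_\sigma\sum_{k\le d}\|(T_C R_\sigma f)^{=k}\|_2^2$, and now there simply are no levels $k>d$ to worry about: each summand is bounded directly by Theorem~\ref{thm:KLM2}, which needs exactly $(r,\gG_1,\gG_2,d)$-biglobalness, and the geometric sum gives $\|T_d f\|_2^2\le\gG_1^2(10^5 r^2 d^{-1}L)^d$. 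For the lower bound, $T_d$ is positive semidefinite and commutes with $S_n\times S_n$, and the same eigenvalue analysis behind part~(1) (namely \eqref{eq:4.1}) shows that $T_d$ has all eigenvalues on $V_{=d}$ at least $c^d$; hence $\|T_d f\|_2^2\ge\|T_d(f^{=d})\|_2^2\ge c^{2d}\|f^{=d}\|_2^2$. Combining the two bounds finishes the proof. Replacing $T_\rho$ by $P_d$ is the missing idea.
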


It will be helpful later to distinguish between 
the strict level $n-\lL_1$ and level $\min(n-\lL_1,n-\lL'_1)$ of $\lL \vert n$,
recalling that $\lL'$ denotes the conjugate partition
and multiplication by the sign character interchanges $V_\lL$ and $V_{\lL'}$.
We recall the degree decomposition $L^2(S_n) = \bigoplus_d V_{=d}$
and that $V_\lL \le V_{=d}$ if $\lL$ has strict level $d$.
We also consider the following (non-canonical) level decomposition.

\begin{defn}
For $d<n/10$ we write $W_{=d} = V_{=d} \oplus \text{sign} \cdot V_{=d}$
for the sum of all $V_\lL$ of level $d$, and consider the \emph{level decomposition}
$L^2(S_n) = W \oplus \bigoplus_{i=0}^{n/10} W_{=d}$,
where $W$ is the sum of all $V_\lL$ with level $>n/10$.
We let $P_d$ and $P_{>d}$ denote the projections on $W_{=d}$ and 
 $W_{>d} :=  W \oplus \bigoplus_{i=d+1}^{n/10}W_{=i}$.
We write $\tilde{f}$ for $f\cdot \mathrm{sign}.$ 
\end{defn}

\begin{rem} \label{rem:Pd}
For any $f \in L^2(S_n)$,
we note that $\tilde{f}$ has the same (bi)globalness properties as $f$,
as multiplying by sign does not affect norms.
Thus multiplying by a factor $4$ the bound 
on $\| f^{=d} \|_2^2$ in Theorem \ref{thm:level-d for biglobal functions}
gives a bound on $\| P_d f \|_2^2 $.
\end{rem}

Now we state our spectral bound on convolution operators,
noting that it implies Theorem \ref{thm:Eigenvalues of operators_intro}.

\begin{thm} \label{thm:Eigenvalues of operators}  
Let $f$ be an $r$-biglobal function. Let $T_f$ be the operator $g\mapsto f * g$.  When 
$d\le \min(\frac{1}{4}\log\left(\frac{\|f\|_2}{\|f\|_1}, 10^{-5} n \right)$ we have 
\[ \|T_f\|_{W_{=d}} \le  \left\|f\right\|_1\left( 10^7 r^2 n^{-1} \log(\|f\|_2/\|f\|_1)\right)^{d/2}.\] 
We also have  $\|T_f\|_{W_{>d}} \le \| f\|_2 (ed/n)^{d/2}$ for all $d$.
\end{thm}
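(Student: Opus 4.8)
The plan is to read off both inequalities from the Fourier-theoretic description of the convolution operator, using the level $d$ inequality to control the low-level contribution. Since $T_f$ commutes with the right action of $S_n$, it preserves each isotypic component $V_{=\lL} = V_\lL \otimes V_\lL^*$ of $L^2(S_n)$, acting there as $\hat f(\lL) \otimes \mathrm{id}$, where $\hat f(\lL) := \mb{E}_{\sS}[f(\sS)\rho_\lL(\sS)]$ is the $d_\lL \times d_\lL$ Fourier matrix of $f$ at $\lL$ (with $d_\lL := \dim V_\lL$). Hence $\|T_f\|_{V_{=\lL}} = \|\hat f(\lL)\|_{\mathrm{op}}$, and since Plancherel gives $\|h^{=\lL}\|_2^2 = d_\lL \|\hat h(\lL)\|_{\mathrm{HS}}^2$ for every $h \in L^2(S_n)$, I obtain the basic estimate
\[ \|T_f\|_{V_{=\lL}} = \|\hat f(\lL)\|_{\mathrm{op}} \le \|\hat f(\lL)\|_{\mathrm{HS}} = \frac{\|f^{=\lL}\|_2}{\sqrt{d_\lL}} . \]
Everything then reduces to an upper bound on $\|f^{=\lL}\|_2$ and a lower bound on $d_\lL$.

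For the dimension bounds I would use the standard fact that, among partitions with a fixed first-row length $\lL_1 = n - k$, the dimension $d_\lL$ is minimised by the two-row shape $(n-k,k)$, for which the hook-length formula gives $d_{(n-k,k)} = \binom{n}{k}\tfrac{n-2k+1}{n-k+1} \ge \tfrac12 \binom{n}{k} \ge \tfrac12 (n/k)^k$ as long as $k \le n/3$. Thus $d_\lL \ge \tfrac12(n/d)^d$ when $\lL$ has strict level $d \le 10^{-5}n$, and more crudely $d_\lL \ge (n/ed)^d$ for every $\lL$ of level $> d$ (this being vacuous when $d > n/2$, since then $(ed/n)^{d/2} > 1$ while $\|T_f\|_{W_{>d}} \le \|f\|_1 \le \|f\|_2$ by Young's convolution inequality, and easy when the level exceeds $n/10$, where $d_\lL$ is exponentially large in $n$). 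The second assertion follows at once: for $\lL$ of level $> d$ we have $\|f^{=\lL}\|_2 \le \|f\|_2$, so $\|T_f\|_{V_{=\lL}} \le \|f\|_2 / \sqrt{d_\lL} \le \|f\|_2 (ed/n)^{d/2}$, and taking the maximum over such $\lL$ bounds $\|T_f\|_{W_{>d}}$.

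For the first assertion, recall $W_{=d} = V_{=d} \oplus \mathrm{sign}\cdot V_{=d}$. A one-line computation using that $\mathrm{sign}$ is a homomorphism shows $f * (\mathrm{sign}\cdot h) = \mathrm{sign}\cdot(\tilde f * h)$ with $\tilde f = \mathrm{sign}\cdot f$, so $T_f$ preserves each summand, acting as $T_f$ on $V_{=d}$ and as $T_{\tilde f}$ on $\mathrm{sign}\cdot V_{=d}$; since $\tilde f$ is $r$-biglobal with the same $L^1$ and $L^2$ norms as $f$ (Remark \ref{rem:Pd}), it suffices to bound $\|T_f\|_{V_{=d}} = \max_{\lL:\, n-\lL_1 = d}\|T_f\|_{V_{=\lL}}$. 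Here the hypothesis $d \le \min(\tfrac14\log(\|f\|_2/\|f\|_1), 10^{-5}n)$ is exactly what is required to apply the first part of Theorem \ref{thm:level-d for biglobal functions} with $\gG_1 = \|f\|_1$ and $\gG_2 = \|f\|_2$ (the case $d = 0$ being immediate, as $\|T_f\|_{V_{=0}} = |\mb{E}f| \le \|f\|_1$), yielding $\|f^{=d}\|_2^2 \le \|f\|_1^2(10^6 r^2 d^{-1}\log(\|f\|_2/\|f\|_1))^d$. Combining this with $\|f^{=\lL}\|_2 \le \|f^{=d}\|_2$ and $d_\lL \ge \tfrac12(n/d)^d$, the factors $d^{-1}$ and $d$ cancel and I am left with $\|T_f\|_{V_{=d}}^2 \le 2\|f\|_1^2(10^6 r^2 n^{-1}\log(\|f\|_2/\|f\|_1))^d \le \|f\|_1^2(10^7 r^2 n^{-1}\log(\|f\|_2/\|f\|_1))^d$, since $5^d \ge 2$ for $d \ge 1$; the same bound for the $\mathrm{sign}$-twisted summand gives the first assertion.

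The substantive input — the sharp level $d$ inequality — is already in hand, so the rest is essentially bookkeeping. The one place that needs genuine care is the uniform lower bound on $d_\lL$ over all partitions of a given (strict) level: verifying that the two-row shape is extremal, and checking that the crude bound $d_\lL \ge (n/ed)^d$ really holds for every partition of level $> d$ once $d$ is allowed to grow towards $n/2$. Matching the hypotheses of Theorem \ref{thm:level-d for biglobal functions} and handling the sign-twist via Remark \ref{rem:Pd} are routine.
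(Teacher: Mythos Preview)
Your proof is correct and follows essentially the same route as the paper. The paper packages your Fourier/Plancherel computation as Lemma~\ref{lem:Other KLM} (cited from \cite{keevash2022largest}) and cites the dimension lower bound $\dim V_\lL \ge (n/ed)^d$ from \cite{ellis2011intersecting} rather than deriving it via the two-row extremality; your handling of the sign-twist (conjugating $T_f$ on $\mathrm{sign}\cdot V_{=d}$ to $T_{\tilde f}$ on $V_{=d}$) is exactly what underlies the paper's use of $\|P_d f\|_2 \le \|f^{=d}\|_2 + \|\tilde f^{=d}\|_2$ together with Remark~\ref{rem:Pd}.
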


We start by deducing Theorem \ref{thm:Eigenvalues of operators} 
from Theorem \ref{thm:level-d for biglobal functions} 
and the following lemma implicit in \cite{keevash2022largest}
(see the proof of Theorem 2.8 therein.) 

\begin{lem}\label{lem:Other KLM}
For any $f \in L^2(S_n)$ and $\lL \vert n$ we have
$\dim(V_\lL) \|T_f\|_{V_\lL}^2 \le  \|f^{=\lL} \|_2^2$.
\end{lem}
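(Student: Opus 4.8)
The plan is to exploit the $S_n \times S_n$-module structure of $L^2(S_n)$, namely that the $\lL$-isotypic component decomposes as $V_{=\lL} \cong V_\lL \otimes V_\lL^*$, together with the fact that convolution by $f$ acts only on the left tensor factor. First I would recall that $T_f(g) = f*g$ commutes with the right regular action of $S_n$ (since convolving on the left and multiplying on the right are independent operations), so by Schur's Lemma $T_f$ preserves $V_{=\lL}$ and, under the identification $V_{=\lL} = V_\lL \otimes V_\lL^*$, acts as $A_\lL \otimes \mathrm{Id}$ for some operator $A_\lL$ on $V_\lL$. Consequently $\|T_f\|_{V_\lL} = \|A_\lL\|_{\mathrm{op}}$, while the Hilbert--Schmidt (Frobenius) norm satisfies $\|T_f|_{V_{=\lL}}\|_{HS}^2 = \dim(V_\lL) \cdot \|A_\lL\|_{HS}^2 \ge \dim(V_\lL)\,\|A_\lL\|_{\mathrm{op}}^2 = \dim(V_\lL)\,\|T_f\|_{V_\lL}^2$, using that a single operator's HS norm dominates its operator norm.

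The remaining step is to identify $\|T_f|_{V_{=\lL}}\|_{HS}^2$ with $\|f^{=\lL}\|_2^2$. Here I would use that the Hilbert--Schmidt norm of $T_f$ restricted to a subspace $W$ equals $\sum_i \|T_f e_i\|_2^2$ for any orthonormal basis $e_i$ of $W$, and that convolution is governed by the Fourier/Wedderburn decomposition of the group algebra: writing $\hat f(\lL)$ for the Fourier coefficient of $f$ at the irreducible $V_\lL$, one has $T_f|_{V_{=\lL}}$ given (up to the standard normalizing constants coming from the measure $\mu$) by left multiplication by $\hat f(\lL)$, and the Plancherel formula yields $\|f^{=\lL}\|_2^2 = \dim(V_\lL)\,\|\hat f(\lL)\|_{HS}^2$, which matches $\|T_f|_{V_{=\lL}}\|_{HS}^2$ after one checks the constants line up. Since $f^{=\lL}$ is precisely the projection of $f$ onto $V_{=\lL}$, this gives $\dim(V_\lL)\|T_f\|_{V_\lL}^2 \le \|T_f|_{V_{=\lL}}\|_{HS}^2 = \|f^{=\lL}\|_2^2$, as claimed.

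The main obstacle I anticipate is purely bookkeeping: getting all the normalization constants right. Because the paper works with the \emph{uniform} probability measure $\mu$ on $S_n$ (so $\langle f,g\rangle = \mb{E}_\sigma[f(\sigma)g(\sigma)]$ rather than a counting inner product) and defines convolution with an expectation, $f*g(\sigma) = \mb{E}_\tau[f(\tau)g(\tau^{-1}\sigma)]$, the various $|S_n|$ and $\dim(V_\lL)$ factors appearing in the classical (counting-measure) Fourier inversion and Plancherel identities get redistributed. The cleanest route is probably to avoid explicit Fourier coefficients entirely: pick an orthonormal basis of $V_{=\lL}$ adapted to the tensor decomposition $V_\lL \otimes V_\lL^*$, observe directly that $T_f$ acts as $A_\lL \otimes \mathrm{Id}$ on it, and compute $\sum \|T_f e_i\|_2^2$ in two ways — once as $\dim(V_\lL)\|A_\lL\|_{HS}^2$ and once as $\|P_{V_{=\lL}} T_f P_{V_{=\lL}}\|_{HS}^2$, then relate the latter to $\|f^{=\lL}\|_2^2$ by noting that $T_f$ applied to a suitable "delta-like" element of $V_{=\lL}$ reproduces $f^{=\lL}$. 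Since the lemma is quoted as already implicit in \cite{keevash2022largest}, I would ultimately cite that computation for the constant-chasing and only spell out the Schur's Lemma / HS-dominates-operator-norm skeleton in full.
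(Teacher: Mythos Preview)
Your proposal is correct and is essentially the standard Fourier-analytic argument that the paper defers to \cite{keevash2022largest}: decompose $V_{=\lL}\cong V_\lL\otimes V_\lL^*$, observe that $T_f$ acts as $A_\lL\otimes\mathrm{Id}$ with $A_\lL=\hat f(\lL)$, and combine $\|A_\lL\|_{HS}\ge\|A_\lL\|_{\mathrm{op}}$ with Plancherel $\|f^{=\lL}\|_2^2=\dim(V_\lL)\|\hat f(\lL)\|_{HS}^2$. Since the paper gives no proof of its own and simply cites this computation, your sketch matches exactly what is intended, and the normalization concerns you flag do indeed resolve cleanly in the uniform-measure/expectation-convolution convention as you suspected.
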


\begin{proof}[Proof of Theorem \ref{thm:Eigenvalues of operators}]
The second bound follows from Lemma \ref{lem:Other KLM},
using $\| P_{>d} f \|_2 \le \|f\|_2$ and $\dim(V_\lL) \ge \left( \tfrac{n}{ed} \right)^d$ 
for any $\lL$ of level $d$ (see \cite[Claim 1]{ellis2011intersecting}).
The first bound follows similarly, 
also using $\|P_d f\|_2\le \|f^{=d}\|_2 + \|\tilde{f}^{=d}\|_2$
and applying Theorem \ref{thm:level-d for biglobal functions}
to $f$ and $\tilde{f}$ (see Remark \ref{rem:Pd}).
\end{proof}

It remains to prove the level $d$ inequality in $S_n$.
Similarly to our proof of hypercontractivity, we will deduce it via the coupling 
from the sharp level $d$ inequality in $[n]^n$. 
We start with the statement that follows most easily:
we consider the projection $P_d$ on $L^2([n]^n)$ given by $f\mapsto f^{ \le d}$
and deduce a spectral bound for the operator on $L^2(S_n)$ obtained from $P_d$
in the same way that we obtained our noise operator on $L^2(S_n)$ 
from the noise operator on $L^2([n]^n)$, namely
 \[T_d := \mathbb{E}_{\sigma\sim S_n}[R_{\sigma^{-1}}T_C^*P_d T_C R_\sigma].\] 

\begin{lem}\label{lem:hypercontractivity for T_d}
Let $f \in L^2(S_n)$ be $(r,\gG_1,\gG_2,d)$-biglobal 
with $r>1$, $\gG_2>\gG_1>0$ and $2d \le \log(\gG_2/\gG_1)$.
Then $\|T_d f\|_2^2\le \gG_1^2 ( 10^5 r^2 d^{-1} \log(\gG_2 /\gG_1) )^d$. 
\end{lem}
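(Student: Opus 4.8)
The plan is to transfer the sharp level $d$ inequality on $[n]^n$ (Theorem~\ref{thm:KLM2}) through the coupling $C$, exactly in the spirit of the hypercontractivity argument. First I would recall that $T_d = \mathbb{E}_{\sigma}[R_{\sigma^{-1}}T_C^*P_d T_C R_\sigma]$ and use the triangle inequality in $L^2$ to reduce to bounding $\|R_{\sigma^{-1}}T_C^*P_d T_C R_\sigma f\|_2$ for a fixed $\sigma\in S_n$. As in Lemma~\ref{lem:Part 2 of the hypercontractivity theorem }, $g:=R_\sigma f$ has the same biglobalness parameters as $f$ (multiplying by $R_\sigma$ only permutes coordinates) and $\|g\|_2=\|f\|_2$, and $R_{\sigma^{-1}}$ is an isometry, so it suffices to bound $\|T_C^* P_d T_C g\|_2$. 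Since $T_C^*$ is a contraction in $L^2$ (Jensen), this is at most $\|P_d T_C g\|_2 = \|(T_C g)^{\le d}\|_2$.

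Now $T_C g \in L^2([n]^n)$ is $(r,\gG_1,\gG_2,d)$-biglobal by Lemma~\ref{lem:biglobalness is preserved under the coupling}, so I would like to apply the $[n]^n$ level $d$ inequality. The subtlety is that Theorem~\ref{thm:KLM2} bounds $\|h^{=d}\|_2^2$, the \emph{pure} degree-$d$ part, whereas here I need $\|h^{\le d}\|_2^2 = \sum_{e\le d}\|h^{=e}\|_2^2$. I would handle this by summing the level $e$ inequality over $e=0,1,\dots,d$: for each such $e$ Theorem~\ref{thm:KLM2} (applicable since $2e\le 2d\le\log(\gG_2/\gG_1)$) gives $\|h^{=e}\|_2^2\le \gG_1^2(2200 r^2 e^{-1}\log(\gG_2/\gG_1))^e$. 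The function $e\mapsto (A/e)^e$ with $A=2200r^2\log(\gG_2/\gG_1)$ is increasing for $e\le A/e$, i.e. essentially up to $e= A$; since $d\le \tfrac12\log(\gG_2/\gG_1)\le A$, the terms are increasing in $e$ over the range $e\le d$, so the sum is at most $(d+1)$ times the $e=d$ term, and absorbing the factor $d+1$ (crudely, $(d+1)\cdot 2200^d \le (10^5)^d$, say, or more carefully $2200(d+1)\le 10^5$ fails for large $d$ but $(d+1)^{1/d}\to 1$ so $2200(d+1)^{1/d}\le 10^5$ holds for all $d\ge1$) gives $\|h^{\le d}\|_2^2 \le \gG_1^2(10^5 r^2 d^{-1}\log(\gG_2/\gG_1))^d$. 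One should also note the $e=0$ term is $\gG_1^2$ which is dominated.

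Putting it together: $\|T_d f\|_2 \le \mathbb{E}_\sigma \|(T_C R_\sigma f)^{\le d}\|_2$, and the bound above applies uniformly in $\sigma$, yielding the claim. The main obstacle is the bookkeeping in the previous paragraph: verifying that summing over $e\le d$ only costs a constant factor in the base of the exponential (so that $2200$ becomes $10^5$), which requires checking the monotonicity of $(A/e)^e$ over the relevant range using the hypothesis $d\le\tfrac14\log(\gG_2/\gG_1)$ — comfortably within the region where $e \le A/e$ — and then a crude estimate such as $(d+1)^{1/d}\le 45$ to absorb the polynomial factor. Everything else (contractivity of $T_C^*$, invariance of biglobalness under $T_C$ and under $R_\sigma$, the triangle inequality over $\sigma$) is already established in the excerpt.
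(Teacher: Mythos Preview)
Your proposal is correct and follows essentially the same route as the paper: reduce via $R_\sigma$-invariance and contractivity of $T_C^*$ to $\|(T_C R_\sigma f)^{\le d}\|_2$, invoke Lemma~\ref{lem:biglobalness is preserved under the coupling}, and sum Theorem~\ref{thm:KLM2} over $e\le d$. The only cosmetic difference is in controlling the sum $\sum_{e\le d}(A/e)^e$: the paper observes that each term is at least twice the previous (so the sum is $\le 2$ times the top term), whereas you bound by $(d+1)$ times the top term and absorb $(d+1)^{1/d}$ into the constant; both work, and the paper's estimate is slightly cleaner since $A/e \ge 2200 r^2\log(\gG_2/\gG_1)/d \ge 4400$ makes the ratio of consecutive terms large. (Minor slip: the hypothesis is $2d\le\log(\gG_2/\gG_1)$, not $d\le\tfrac14\log(\gG_2/\gG_1)$, but your argument goes through with either.)
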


\begin{proof}
For each $\sS \in S_n$, clearly $R_\sS f$ has the same biglobalness as $f$.
This is also true of  $T_C R_\sS f$ by  Lemma \ref{lem:biglobalness is preserved under the coupling}.
Applying Theorem \ref{thm:KLM2} gives
  \[\|P_dT_C R_{\sigma}f\|=\|(T_C R_\sigma f)^{\le d}\|_2^2
  \le \sum_{i=0}^d \gG_1^2 ( 2200 r^2 i^{-1} \log(\gG_2 /\gG_1) )^i. \]
As $T_C^*$ is contractive we deduce the same bound for
$\| R_{\sigma^{-1}}T_C^*P_d T_C R_\sigma f \|$.
Using  Cauchy--Schwartz with respect to averaging over $\sS$
we deduce the same bound for $\|T_d f\|$.
The lemma follows as each summand is at least twice the previous.
\end{proof}

Now we can deduce the level $d$ inequality.

\begin{proof}[Proof of Theorem \ref{thm:level-d for biglobal functions}]
As $P_d$ is self-adjoint and$P_d^2 =P_d$ we have
\[\langle T_d f, f \rangle = \langle \mb{E}_{\sigma}R_{\sigma}^*T_C^*P_d^* P_d T_C R_\sigma f, f \rangle
 = \mathbb{E}_\sigma[\|(T_C R_{\sigma} f^{=d}\|_2^2.]\]
Recall from \eqref{eq:4.1} that if $n \ge 10^5 d$ and $g$ is a $d$-junta then
$\|(T_{C}R_{\tau}g)^{\le d}\|_{2} \ge 3^{-d} \|g\|_{2}$,
so $\langle T_d g, g \rangle \ge 3^{-d} \|g\|_{2}$.
Similarly to the proof of Lemma \ref{lem:Part 3 of hypercontractivity},
we see that $T_d$ commutes with the action of $S_n$ from both sides,
so as in the proof of Theorem \ref{thm:hypercontractivity for global functions in the symmetric group}
we deduce $\langle T_d g, g \rangle \ge 3^{-d} \|g\|_{2}$ for all $g \in V_{=d}$.

Now for any $f$, as $T_d$ is a self-adjoint $S_n$-morphism we have
\[
\|T_d f\|_2^2 \ge  \| (T_d f)^{=d} \|_2^2 = \|T_d (f^{=d})\|_2^2 \ge 3^{-d}\|f^{=d}\|_2^2. 
\]
The first statement of the theorem follows by combining this lower bound on $\| T_d f\|$
with the upper bound in Lemma \ref{lem:hypercontractivity for T_d}.
The second statement follows from the first,
or from the trivial bound $\| f^{=d} \|_2 \le \| f\|_2$ if $d > \tfrac{1}{4}\log(\|f_2\|/\|f_1\|)$.
\end{proof}

We conclude this section with an example showing
that the bound in our level $d$ inequality is essentially optimal.

\begin{ex} \label{ex:leveld}
Let $s>0$, fix $S \sub [n]$ of size $s$ and let $f=1_A$, where $A = \{ \sS \in S_n: \sS(S) \sub [n/2] \}$.
\end{ex}

\begin{lem}
For integers $d,s,n$ with $s$ sufficiently large with respect to $d$ and $n$ sufficiently large with respect to both we have $\mu := \mu(A) = 2^{-s + O(s^2/n)}$ and 
$\| f^{=d} \|_2^2 > \mu^2 (cd^{-1}\log(1/\mu))^d$ for some absolute constant $c>0$.
\end{lem}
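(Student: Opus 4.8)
The plan is to compute $\mu$ by an elementary probability estimate, and to obtain the lower bound on $\|f^{=d}\|_2$ by writing $1_A$ as a product of simple junta factors and testing against a near‑optimal degree‑$d$ function built from the same factors; the symmetric group then behaves, up to errors that vanish as $n\to\infty$, like a genuine product space where the analogous weight is exactly $\binom sd 4^{-s}$. Throughout write $m=\lfloor n/2\rfloor$ and $b_i(\sigma):=1_{\sigma(i)\le m}$, so that $f(\sigma)=\prod_{i\in S}b_i(\sigma)$. In the regime of the lemma $s$ is fixed as $n\to\infty$, so $s^2/n\to 0$.

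First, $\mu=\prod_{i=0}^{s-1}\tfrac{m-i}{n-i}=2^{-s}\prod_{i=0}^{s-1}\tfrac{n-2i}{n-i}$ up to a harmless factor $1+O(s/n)$ coming from the floor; since $1-\tfrac{2i}{n}\le\tfrac{n-2i}{n-i}\le 1$ we get $\prod_{i<s}\tfrac{n-2i}{n-i}=e^{-\Theta(s^2/n)}$, hence $\mu=2^{-s+O(s^2/n)}$. In particular, once $n$ is large, $\mu^2\le 4^{-s}$ and $s\log 2\le\log(1/\mu)\le 2s$.

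For the lower bound I would proceed as follows. For each $d$‑set $T\subseteq S$ put $v_T:=\prod_{i\in T}(b_i-\tfrac12)$ and $\Psi:=\sum_{|T|=d}v_T\in V_{\le d}(L^2(S_n))$, so $\Psi^{=e}=0$ for $e>d$. I rely on three computations. \textbf{(i) Exact inner product.} Since $b_i^2=b_i$ one has $b_i(b_i-\tfrac12)=\tfrac12 b_i$, so $f\cdot v_T=2^{-d}f$ for every $T\subseteq S$, whence $\langle f,\Psi\rangle=\binom sd 2^{-d}\mu$. \textbf{(ii) Norm.} As $(b_i-\tfrac12)^2=\tfrac14$ identically, $v_Tv_{T'}=4^{-|T\cap T'|}\prod_{i\in T\triangle T'}(b_i-\tfrac12)$; the marginal law of $(\sigma(i))_{i\in U}$ is uniform on $[n]_{|U|}$, and expanding $\prod_{i\in U}(b_i-\tfrac12)$ and using $\Pr[\sigma(i)\le m\ \forall i\in V]=2^{-|V|}(1+O(|V|^2/n))$ together with $\sum_{V\subseteq U}(-1)^{|U\setminus V|}=0$ gives $\mathbb E_\sigma[\prod_{i\in U}(b_i-\tfrac12)]=O(|U|^2/n)$ for $U\neq\varnothing$; hence $\|\Psi\|_2^2=\binom sd 4^{-d}+O_{s,d}(1/n)$. \textbf{(iii) Near purity.} $\Psi^{<d}=\sum_{|T|=d}v_T^{<d}$; each $v_T$ is a $T$‑junta, so its degree parts are $T$‑juntas and $\|v_T^{<d}\|_2$ may be computed inside $L^2([n]_d)$. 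Conditioning on $(\sigma(j))_{j\in T\setminus\{i_0\}}$ and using the identity (valid precisely because $m=\lfloor n/2\rfloor$)
\[
\mathbb E\Big[(b_{i_0}-\tfrac12)\,\Big|\,(\sigma(j))_{j\in T\setminus\{i_0\}}\Big]=-\frac1{n-d+1}\sum_{j\in T\setminus\{i_0\}}(b_j-\tfrac12),
\]
one sees that the inner product of $v_T$ with any junta in $V_{\le d-1}$ depending on $(\sigma(j))_{j\in T\setminus\{i_0\}}$ is $O_d(1/n)$ times its $L^2$‑norm; since $V_{\le d-1}\cap\{T\text{-juntas}\}$ is the span of the $d$ maximal such subspaces, this yields $\|v_T^{<d}\|_2=O_d(1/n)$ and so $\|\Psi^{<d}\|_2=O_{s,d}(1/n)$.

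Combining these, with $\phi:=\Psi^{=d}\in V_{=d}$, we have $\langle f^{=d},\phi\rangle=\langle f,\Psi\rangle-\langle f^{<d},\Psi^{<d}\rangle$, so $|\langle f^{=d},\phi\rangle|\ge \binom sd 2^{-d}\mu-\|f\|_2\|\Psi^{<d}\|_2=\binom sd 2^{-d}\mu(1-o(1))$ (here $\|f\|_2=\sqrt\mu$ and $\binom sd 2^{-d}\sqrt\mu$ is a fixed positive constant), while $\|\phi\|_2\le\|\Psi\|_2=2^{-d}\sqrt{\binom sd}(1+o(1))$. Therefore $\|f^{=d}\|_2\ge \langle f^{=d},\phi\rangle/\|\phi\|_2\ge\sqrt{\binom sd}\,\mu\,(1-o(1))$, so $\|f^{=d}\|_2^2\ge\tfrac12\binom sd\mu^2$ for $n$ large. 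Finally, $\binom sd\ge(s/d)^d$ and $\log(1/\mu)\le 2s$ give $\tfrac12\binom sd\ge\tfrac12(s/d)^d>(2c\,s/d)^d\ge(c\,d^{-1}\log(1/\mu))^d$ whenever $2c<2^{-1/d}$ for all $d\ge 1$, e.g. $c=\tfrac18$; this is the claimed inequality. The $\mu$ formula holds for all $n$, and the rest holds for $n$ large in terms of $d,s$, as allowed.

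The step I expect to be the real obstacle is (iii): on $S_n$ (equivalently on the injection space $[n]_d$) a product of junta factors is not of pure degree $d$, so one must show its impurity is $o(1)$. The saving feature is the exact threshold $m=\lfloor n/2\rfloor$, which collapses the relevant conditional expectations to $-\tfrac1{n-d+1}\sum_j(b_j-\tfrac12)$ and hence to size $O(1/n)$; one also has to pass from these bounds against individual $(d-1)$‑juntas to a bound on the projection onto all of $V_{\le d-1}$, which is routine because only $d$ junta subspaces are relevant and, by the near‑independence of coordinates for large $n$, they form a frame with bounded constant. (Alternatively, the whole of steps (ii)–(iii) can be replaced by an exact computation of the level‑$d$ weight of $1_{\binom{[m]}{s}}$ in $L^2(\binom{[n]}{s})$ using the harmonic decomposition of the Johnson scheme, but the test‑function route above is shorter.)
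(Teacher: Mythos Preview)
Your proof is correct and takes a genuinely different route from the paper. The test functions are closely related: the paper sets $g=2s^{-1/2}\sum_{i\in S}(b_i-\tfrac12)$ (in your notation) and tests against $h=g^d$, so your $\Psi$ is exactly the ``distinct indices'' part of $h$ up to normalisation. The difference is in how the impurity of the test function is handled. The paper does not try to show $h$ is nearly pure degree $d$; instead it computes $\|h\|_2$ via the Gaussian CLT for $g$, obtains $\|f^{\le d}\|_2^2 \ge 2\mu^2(cd^{-1}\log(1/\mu))^d$, and then invokes Theorem~\ref{thm:level-d for biglobal functions} at degree $d-1$ (applied to the $4$-biglobal function $f$) to upper bound $\|f^{\le d-1}\|_2^2$ and subtract. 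This is shorter but has the mildly self-referential feature of using the very inequality whose sharpness is being exhibited. Your argument is more self-contained and in fact yields a slightly better constant (the diagonal-term computation of $\|\Psi\|_2^2$ beats the Gaussian moment $(2d-1)!!$ by roughly $2^d$).

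Your step (iii) is indeed the only place requiring care. The conditional-expectation identity you wrote down is correct and gives $|\langle v_T,u\rangle|=O_d(\|u\|_2/n)$ for any $u$ in a single $(T\setminus\{i_0\})$-junta space $W_k$; to pass to all of $V_{\le d-1}$ within the $T$-juntas you need that any unit vector in $W_1+\cdots+W_d$ decomposes as $\sum_k u_k$ with $\sum_k\|u_k\|_2=O_d(1)$. This holds because, after removing the common intersections by inclusion--exclusion, the resulting pieces are pairwise nearly orthogonal (correlations $O(1/n)$, as in your step (ii)), so for $n$ large the frame constant is bounded by an absolute function of $d$. With that remark made explicit the argument is complete.
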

\begin{proof}
Write $g = 2|S|^{-1/2}\left(\sum_{i\in S,j\in [n/2]}x_{i\to j} - \frac{|S|}{2}\right).$ Then as $n$ tends to infinity the distribution of the value of $g$ on a uniformly random permutation converges to the distribution of a normal $X\sim N(0,1)$ random variable. Let $h= g^d.$ Then we have 
\[\|h\|_2^2 = \|g\|_{2d}^{2d} = \|X\|_{2d}^{2d}+o(1) = (2d-1)!! + o(1),\]
where the $o(1)$ is with respect to $n$ tending to infinity. 
On the other hand, we have 
$\langle f,h \rangle = \mu(A)|S|^{d/2}.$ Since $h$ is of degree $d$ we have by Cauchy--Schwarz 
\[
\|f^{\le d}\|_2 \ge \left\langle f^{\le d}, \frac{h}{\|h\|_2} \right\rangle = \left\langle f, \frac{h}{\|h\|_2} \right\rangle \ge \mu(A) |S|^{d/2}\frac{1}{\sqrt{(2d-1)!!}} + o(1). 
\]
It follows that there exists an absolute constant $c>0$ such that \[\|f^{\le d}\|_2^2\ge 2\mu^2 (cd^{-1}\log(1/\mu))^d.\]
Since $f$ is $4$-biglobal, we may apply Theorem \ref{thm:level-d for biglobal functions} to deduce that there exists an absolute constant $C>0$ for which we have \[\|f^{\le d-1}\|_2^2 \le \mu^2 (C(d-1)^{-1}\log(1/\mu))^{d-1}.\] Provided that $s$ is sufficiently large we therefore have 
\begin{align*}
\|f^{=d}\|_2^2 & = 
\|f^{\le d}\|_2^2 - \|f^{\le d-1}\|_2^2 \ge \mu^2 (cd^{-1}\log(1/\mu))^d.
\qedhere
\end{align*}
\end{proof}
\remove{
\begin{proof}[Sketch proof]
As $f$ lies in the permutation module $M^{(n-t,t)}$,
its level $d$ part $f^{=d}$ lies in the Specht module $V^{(n-d,d)}$.
We compute $f^{=d}$ as $\sum_t P_t f$, 
where $t$ ranges over $(n-d,d)$-tabloids, and $P_t \in \mb{Z}S_n$ 
is the projection corresponding to multiplication by
$n!^{-1} \dim(V^{(n-d,d)}) \sum_\sS \text{sign}(\sS) \sS$,
where the sum is over $\sS$ fixing the columns of $t$.
For each such $t$ we denote its second row by $D_t \in \tbinom{[n]}{d}$.
For simplicity in the calculations we consider only the terms
arising from $P_t$ applied to $\sS \in A$ with $D_t \sub \sS(S)$
(one can check that these dominate the other terms).
Thus we consider $\sim \tbinom{n/2}{d}$ choices of $t$, each contributing
\begin{align*}
& n!^{-1} \dim(V^{(n-d,d)})  \cdot \tbinom{n/2-d}{s-d} s!(n-s)! 1_{D_t \to D_t}, \text{ so } \\
& \| f^{=d} \|_2^2 \gtrsim \tbinom{n/2}{d} \tbinom{n}{d}^{-1}
  \left( n!^{-1} \tbinom{n}{d}  \tbinom{n/2-d}{s-d} s!(n-s)! \right)^2.
\end{align*}
From this one can deduce the bound stated in the lemma.
\end{proof}

We now deduce our level $d$ inequality (Theorem \ref{thm:  level-d inequality }) from our hypercontractive inequality
(Theorem \ref{thm:hypercontractivity for global functions in the symmetric group}).
In order to do that we have to find a way to claim that $f^{\le d}$
is global when $f$ is. Is is quite easy to show that $f^\le d$ is $(r,\gamma)$-global when $f$ is $(r,\gamma)$-global. However, for our applications We would like to show that the globalness of $f^{\le d}$ improves to $\gamma^2$ up to some logarithmic factors. Note that such an improvement is clear when $d=0$.  

We accomplish that by induction on $d$. The
main difficulty is transferring information abot the smallness of the
2-norm of $\left(f_{i\to x}\right)^{\le d-1}$ to the smallness of
the $2$-norm of restrictions of $f^{\le d}.$ Our idea is to replace
the restriction operator by some other operator that respects the degree
truncations $f\mapsto f^{\le d}$. We call the operator that we construct a derivative operator as it measures the change of $f$ after a minor change of the input and it sends degree $d$ functions to functions of degree $d-1$. A weird feature of our derivative operator is the fact that it depends on the degree. 

In order to define our derivative operator we introduce two preliminary operators. The averaging operator and the Laplacian. 

\subsection{The averaging operator}

The averaging operator corresponds to taking expectation after a resampling
of $\sigma\left(i\right).$ As $\sigma$ is a permutation resampling
$\sigma\left(i\right)$ corresponds to swapping the values of $\sigma$
on $i$ and random $j\ne i.$ In other words choosing a uniformly
random transposition $\left(ij\right)$ with $j\ne i$ and multiplying
by it from the right and obtaining $\sigma\left(ij\right)$ we will
also allow the possibility that $j=i$, where $\sigma\left(ij\right)=\sigma.$
\begin{defn}
We set 
\[
E_{i}\left[f\right]=\frac{n-1}{n}\mathbb{E}_{j\sim\left(\left[n\right]\setminus\left\{ i\right\} \right)}R_{\left(ji\right)}f+\frac{1}{n}f.
\]
 
\end{defn}

We now compute the effect of our averaging operator on monomials. 
\begin{lem}
Let 
\[
M=x_{i_{1}\to j_{1}}\cdots x_{i_{d}\to j_{d}}
\]
 be a monomial of degree $d$, and suppose that $i\notin\left\{ i_{1},\ldots,i_{d}\right\} $.
Then 
\[
E_{i}\left[M\right]=\frac{\left(n-d\right)}{n}M+\frac{1}{n}\sum_{k=1}^{d}M\frac{x_{i\to j_{k}}}{x_{i_{k}\to j_{k}}}.
\]
 
\end{lem}

\begin{lem}
Let 
\[
M=x_{i\to j}x_{i_{1}\to j_{1}}\cdots x_{i_{d-1}\to j_{d-1}}
\]
 be a monomial of degree $d$ containing an $x_{i\to j}$ variable.
Then 
\[
E_{i}\left[M\right]=\frac{1}{n}\sum_{k=1}^{d-1}\frac{x_{i\to j_{k}}x_{i_{k}\to j}}{x_{i\to j}x_{i_{k}\to j_{k}}}M+\frac{1}{n}\frac{M}{x_{i\to j}}.
\]
\end{lem}

\begin{proof}
The first sum corresponds to the case that $j\sim\left[n\setminus\left\{ i\right\} \right]$
was chosen inside $\left\{ i_{1},\ldots,i_{d-1}\right\} .$ The other
summand corresponds to the other case, where we have 
\begin{align*}
\frac{1}{n}M+\frac{1}{n}\sum_{i'\notin\left\{ i,i_{1},\ldots,i_{d}\right\} }R_{\left(ii'\right)}M & =\frac{1}{n}\sum_{i'\notin\left\{ i_{1},\ldots,i_{d}\right\} }x_{i'\to j}x_{i_{1}\to j_{1}}\cdots x_{i_{d}\to j_{d}}\\
 & =\left(\frac{1}{n}\sum_{i'\in\left[n\right]}x_{i'\to j}\right)x_{i_{1}\to j_{1}}\cdots x_{i_{d}\to j_{d}}\\
 & =\frac{1}{n}x_{i_{1}\to j_{1}}\cdots x_{i_{d}\to j_{d}}.
\end{align*}
\end{proof}

\subsection{The Laplacians}

As mentioned, our goal will be to define derivatives that send functions of degree
$d$ to functions of degree $d-1$ this will allow us to apply induction
on the degree. These will be defined as restrictions of the Laplacians,
which we define as follows. 
\begin{defn}
We set the laplacians on $V_{d}$ by 
\[
L_{i,d}\left[f\right]=\frac{n-d}{n}f-E_{i}\left[f\right].
\]
 We then set $D_{d;i,x}\left[f\right]=\left(L_{i,d}\left[f\right]\right)_{i\to x}.$ 
\end{defn}

\begin{lem}\label{lem:derivative decreases the degree}
If $f$ is of degree $\le d$, then $D_{d;i,x}\left[f\right]$ is
of degree $\le d-1.$
\end{lem}

\begin{proof}
The lemma follows by substituting each monomial and using linearity. 
\end{proof}

\subsection{Laplacians and globalness}

Our plan is to use the following chain of implications inductively each time with a different globalness parameter that will come out of the proof: 

$f$ is $\left(r,\gamma\right)$-global $\implies$ $\left(f_{i\to x}\right)$
is $\left(r,\gamma_1\right)$-global  $\implies\left(f_{i\to x}\right)^{\le d-1}$
is $\left(r,\gamma_2 \right)$-global $\implies$ $D_{d;i,x}f^{\le d}$
is $\left(r,\gamma_3\right)$-global $\implies$
$f^{\le d}$ is $\left(r,\max\left(\gamma_4,\|f^{\le d}\|_{2}\right)\right)$-global.
Finally we use hypercontractivity and H\"{o}lder applying the level $d$-inequality
we obtain that $\|f^{\le d}\|_{2}$ is actually $\left(r,\gamma_4\right)$
global. 

The first two steps are immediate from the definition and
an appropriate induction hypothesis. We now execute the third step. 
\begin{lem}
\label{lem: Global restrictions to global derivatives} Suppose that
for each $i,x\in\left[n\right]$ the function $\left(f_{i\to x}\right)^{\le d-1}$
is $\left(r,\gamma\right)$-global. Then $D_{d;i,x}\left(f^{\le d}\right)$
is $\left(r,2\gamma\right)$-global. 
\end{lem}

\begin{proof}
As $R_{(ij)}$ commutes with degree truncation and preserves globalness, we obtain  that  the function \[\left(\left(R_{(ij)}f\right)_{i\to x}\right)^{\le d-1}\]
is $\left(r,\gamma\right)$-global. We now note that by the triangle inequality, a weighted average of $(r,\gamma)$-global function is also $(r,\gamma)$-global. 

Since $R_{(ij)}$ and degree truncations commute, the function $\left(E_i[f]_{i\to x}\right)^{\le d-1}$ is a weighted average of the functions $\left((R_{((ij))}f)_{i\to x}\right)^{\le d-1}$, this shows that the function \[\left(\left(E_{i}f\right)_{i\to x}\right)^{\le d-1}\] is $(r,2\gamma)$-global, and therefore 
$\left((L_{i,d}[f])_{i\to x}\right)^{\le d-1}$ is $(r,2\gamma)$-global. This shows that $\left(D_{d;i,x}f\right)^{\le d-1}$
is $\left(r,2\gamma\right)$-global. Now $\left(D_{d;i,x}f\right)^{\le d-1}=D_{d;i,x}\left(f^{\le d}\right)$ by the following claim.
\begin{claim}
For each function $f$ we have \[\left (D_{d;i,x}f\right)^{\le d-1}=D_{d;i,x}\left(f^{\le d}\right)\]
\end{claim}
\begin{proof}
We use and truncation commute and restriction of a single
coordinates sends $V_{=k}$ to $V_{=k}+V_{=k-1}$. This shows that
that $D_{d;i,x}$ sends $V_{=k}$ to $V_{=k}+V_{=k-1}$ for all $k$
and when $k=d$ our notion of degree was  $V_{=d}$ to $V_{=d-1}$. Completing the proof of the claim
\end{proof}
The claim completes the proof of the lemma. 
\end{proof}
We now execute the fourth part of our plan. 
\begin{lem}
\label{lem:Global derivative to global functions} Suppose that $D_{d;i,x}\left(f^{\le d}\right)$
is $\left(r,\gamma\right)$-global for each $i,x.$ Suppose also that
$E_{i}\left[f^{\le d}\right]_{i\to x}$ is $\left(r,\gamma\right)$-global
for each $i,x$. Then $f^{\le d}$ is $\left(r,3\gamma\right)$-global.
\end{lem}

\begin{proof}
We have 
\[
\left(f^{\le d}\right)_{i\to x}=\frac{n}{n-d}\left(D_{d;i,x}f\right)+\frac{n}{n-d}E_{i}\left[f^{\le d}\right]_{i\to x}.
\]
 
\end{proof}
We are therefore reduced to showing that $E_{i}\left[f^{\le d}\right]_{i\to x}$
is $\left(r,\gamma\right)$-global if $f$ is. Let us say that $f$
is $\left(r,\gamma,\ell\right)$-global if $\|f_{I\to J}\|_{2}\le r^{\left|I\right|}\gamma$
whenever $\left|I\right|=\left|J\right|\le l$. 
\begin{lem}
Suppose that $f$ is $\left(r,\gamma,l\right)$-global. Let $i,x\in\left[n\right]$
and let $g=E_{i}\left[f\right]_{i\to x}.$ Then $g$ is $\left(r,\gamma,l\right)$-global. 
\end{lem}

\begin{proof}
This follows from Jensen. 
\end{proof}
\begin{lem}
Suppose that the derivative $D_{d;i,x}\left(f^{\le d}\right)$ is $\left(r,\gamma\right)$-global
for each $i,x.$ Let $\gamma'=\max\left(\gamma,\|f^{\le d}\|_{2}\right).$
Then the function $f^{\le d}$ is $\left(r,3\gamma'\right)$-global.
\end{lem}

\begin{proof}
We show by induction on $l$ that $f^{\le d}$ is $\left(r,3\gamma',l\right)$-global.
Suppose that it is true for $l-1.$ Then for each $i,x$ we have 
\[
\left(f^{\le d}\right)_{i\to x}=\frac{n}{n-d}\left(D_{d;i,x}f\right)+\frac{n}{n-d}E_{i}\left[f^{\le d}\right]_{i\to x}
\]
 and therefore $\left(f^{\le d}\right)_{i\to x}$ is $\left(r,3\gamma',l-1\right)$-global
by induction. As $\|f^{\le d}\|_{2}\le\gamma$ this completes the
proof that $f^{\le d}$ is $\left(r,3\gamma',l-1\right)$-global.
\end{proof}
\begin{thm}
\label{thm:the level-d inequality } There exists an absolute constant
$C>0$, such that the following holds. Suppose that $f\colon S_{n}\to\left\{ 0,1\right\} $
is $r$-global and write $\mathbb{E}\left[f\right]=\alpha$. Let 
\[
\gamma=\frac{C^{d}}{d^{d}}\alpha^{2}\log^{d}\left(1/\alpha\right),
\]
and let $d\le\log\left(1/\mathbb{\alpha}\right).$ Then $f^{\le d}$
is $\left(r,\gamma\right)$-global. 
\end{thm}

\begin{proof}
We prove it by induction on $d$. The previous steps show that $f^{\le d}$
is $\left(r,\gamma'\right)$-global, where $\gamma'=\max\left(\gamma,3\|f^{\le d}\|_{2}\right).$
We complete the proof by showing that $\gamma'=\gamma.$ Suppose on
the contrary that $\gamma=3\|f^{\le d}\|_{2}.$ Let 
\[
q=\frac{\log\left(\frac{1}{\mathbb{E}\left[f\right]}\right)}{d}.
\]
 Then we use H\"{o}lder and hypercontractivity to deduce that 
\begin{align*}
\|f^{\le d}\|_{2}^{2} & =\left\langle f^{\le d},f\right\rangle \\
 & \le\|f^{\le d}\|_{q}\|f\|_{\frac{1}{1-1/q}}\\
 & \le\left(C\sqrt{rq}\right)^{d}\|f^{\le d}\|_{2}\mathbb{E}\left[f\right].
\end{align*}
 The theorem follows by rearranging.
\end{proof}
}

\newpage

\part{Applications}
    
In this part of the paper we apply the theory developed in the previous part
to the applications mentioned in the introduction. 
The next section contains our results on approximate subgroups in $A_n$
(towards the Helfgott-Lindenstrauss nonabelian analogue 
of the Polynomial Freiman-Ruzsa Conjecture).
In Section \ref{sec:mixbog} we present our results
on general mixing and Bogolyubov in $A_n$;
these are applied in Section \ref{sec:diam}
to our tight bounds for the diameter problem.
In Section \ref{sec:prodmix} we present our results on product mixing,
which we apply to analogues of the Waring problem in Section \ref{sec:waring}
and Roth's Theorem in Section \ref{sec:roth}.
We conclude in Section \ref{sec:Sharpness} with an example
showing sharpness of our results on mixing.

\section{Towards Polynomial Freiman-Ruzsa in $A_n$} \label{sec:PFR}

In this section we prove our covering result for $K$-approximate subgroups 
$A$ in $S_n$ with $\mu(A) = e^{-O(n^{1-\eps})}$, showing that
they can be efficiently covered by cosets of a large subgroup, 
with parameters depending polynomially on $\mu(A)$ and $K$.

Our first lemma shows that any set has a suitable global restriction.

\begin{lem}\label{lem: the existence of a global restriction}
Suppose $A \sub S_n$ with $\mu(A) \ge r^{-\ell}$.
Then there is an $r$-global restriction $A_{I\to J}$ with $|I|=|J| \le \ell$
such that $\mu(A_{I\to J}) = \max_K \mu(A_{I \to K}) \ge \mu(A) r^{|I|}$.
\end{lem}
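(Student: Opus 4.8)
The plan is a greedy/extremal argument: pass to the ``densest'' restriction whose base set is as large as possible, and observe that globalness then comes for free. First I would dispose of the degenerate case: we may assume $r>1$, since $r\le 1$ together with $\mu(A)\ge r^{-\ell}$ forces $\mu(A)\ge 1$, so $A=S_n$ and $I=J=\es$ works. Then I would introduce the collection $\mc{C}$ of all pairs $(I,J)$, with $I,J\in[n]_s$ for some $s\ge 0$, satisfying \emph{both} $\mu(A_{I\to J})=\max_K\mu(A_{I\to K})$ and $\mu(A_{I\to J})\ge\mu(A)\,r^{|I|}$. One observes $(\es,\es)\in\mc{C}$, and that any $(I,J)\in\mc{C}$ has $r^{|I|}\le\mu(A_{I\to J})/\mu(A)\le 1/\mu(A)\le r^{\ell}$, hence $|I|\le\ell$; so $\mc{C}$ is nonempty and finite, and I would fix $(I,J)\in\mc{C}$ with $|I|$ maximal.

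The key step is to show $A_{I\to J}$ is $r$-global. Suppose not; writing $s=|I|$, unwinding the definition of globalness for the subset $A_{I\to J}$ of $U_{I\to J}$ yields some $d\ge 1$ and length-$d$ tuples $I'$ of distinct elements of $[n]\sm\{I_1,\dots,I_s\}$ and $K'$ of distinct elements of $[n]\sm\{J_1,\dots,J_s\}$ with $\mu\big((A_{I\to J})_{I'\to K'}\big)>r^{2d}\,\mu(A_{I\to J})$. Setting $I''=(I_1,\dots,I_s,I'_1,\dots,I'_d)$ and $J''=(J_1,\dots,J_s,K'_1,\dots,K'_d)$ in $[n]_{s+d}$, one has $U_{I''\to J''}=U_{I\to J}\cap U_{I'\to K'}$ and hence $(A_{I\to J})_{I'\to K'}=A_{I''\to J''}$ as subsets with their uniform measures, so $\mu(A_{I''\to J''})>r^{2d}\mu(A_{I\to J})\ge r^{2d}\mu(A)r^{s}=\mu(A)\,r^{|I''|}\,r^{d}>\mu(A)\,r^{|I''|}$. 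Finally, replacing $J''$ by a tuple $J^{\ast}$ attaining $\max_K\mu(A_{I''\to K})$ only increases the density, so $(I'',J^{\ast})\in\mc{C}$ with $|I''|=s+d>s$, contradicting maximality of $|I|$. Thus $A_{I\to J}$ is $r$-global, and by $(I,J)\in\mc{C}$ it satisfies $\mu(A_{I\to J})=\max_K\mu(A_{I\to K})\ge\mu(A)\,r^{|I|}$ with $|I|=|J|\le\ell$, as required.

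I do not expect a real obstacle here; it is a clean extremal argument. The one point that must be set up correctly is the bookkeeping of the density exponent: insisting only on the bound $\mu(A)\,r^{|I|}$ (rather than $\mu(A)\,r^{2|I|}$) is precisely what lets a failure of globalness --- which multiplies the density by more than $r^{2d}$ over $d$ fresh coordinates --- strictly beat the required factor $r^{d}$, so that the base tuple genuinely grows and the process terminates. One should also note that the side condition ``$J$ maximizes the density'' is costless, since it can always be restored by re-choosing the target tuple without touching the base tuple.
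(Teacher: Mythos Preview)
Your proof is correct and follows essentially the same approach as the paper's. The paper phrases it iteratively (``Otherwise, we can find a restriction \dots\ Repeating this process, we find the required restriction''), while you compactify this into a single extremal choice of $(I,J)\in\mc{C}$ with $|I|$ maximal; these are the same argument, and your version makes the termination bound $|I|\le\ell$ and the role of the density factor $r^{|I|}$ versus $r^{2d}$ more explicit.
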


\begin{proof}
If $A$ is $r$-global then we are done. 
Otherwise, we can find a restriction $I \to J$ with
$\mu(A_{I\to J}) = \max_K \mu(A_{I \to K}) \ge \mu(A) r^{|I|}$.
Repeating this process, we find the required restriction.
\end{proof}

Our next lemma is a growth result for global sets. 
We adopt a somewhat technical quantitative formulation 
so that we can cover all densities that are $e^{-O(n)}$,
but we also point out a more easily digested consequence:
if $A$ is $O(1)$-global with $e^{-c\sqrt{n}} < \aA < c$
then $\mu(A^2) \ge 1 - O(c)$.

\begin{lem}
\label{lem:growth of AB}
There are constants $c,C>0$ so that if $A \sub S_n$ is $r$-global with $r>1$
and $\mu(A) = \aA \in (e^{-cn/r^2},c)$ then $\mu(A^2) \ge 1/(1+c+4M \cdot 2^{2M})^2$,
where $M :=  C^2 r^4\log^2(1/\alpha)/n$.
 \end{lem}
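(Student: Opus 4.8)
The plan is to reduce the problem to a spectral estimate for the convolution operator $T_f$ with $f = 1_A/\mu(A)$. Note that $\|f\|_1 = 1$, $\|f\|_2^2 = \alpha^{-1}$, and that since $A$ is $r$-global (so $\mu(A_{I\to J}) \le r^{2|I|}\mu(A)$ for every restriction), $f$ is $r^2$-biglobal. First I would observe that $1_A * 1_A$ is supported on $A^2$ and has mean $\mu(A)^2$, so that Cauchy--Schwarz gives
\[
\mu(A)^2 = \mb{E}[\,1_A * 1_A \cdot 1_{A^2}\,] \le \|1_A*1_A\|_2\,\sqrt{\mu(A^2)} = \mu(A)^2\,\|f*f\|_2\,\sqrt{\mu(A^2)},
\]
hence $\mu(A^2) \ge \|f*f\|_2^{-2}$, and it suffices to prove $\|f*f\|_2 \le 1 + c + 4M\cdot 2^{2M}$. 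Since $f*1 = \mb{E}[f] = 1$ and $(f*f)^{=0} = \mb{E}[f]^2 = 1$, we have $f*f - 1 = T_f(f-1)$, this is orthogonal to the constants, and so $\|f*f\|_2 = (1 + \|T_f(f-1)\|_2^2)^{1/2} \le 1 + \|T_f(f-1)\|_2$; thus the goal becomes $\|T_f(f-1)\|_2 \le c + 4M\cdot 2^{2M}$.

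Next I would use that $T_f$ commutes with the right $S_n$-action, hence preserves the level decomposition, and that $f - 1$ has no level-$0$ component. Writing $D_0 := \lfloor \tfrac18\log(1/\alpha)\rfloor$ (which is at most $\min(\tfrac14\log(\|f\|_2/\|f\|_1), 10^{-5}n)$ and below $n/10$, since $\log(1/\alpha) < cn/r^2$ and $c$ is small), I split
\[
\|T_f(f-1)\|_2^2 = \sum_{d=1}^{D_0}\|T_f P_d f\|_2^2 + \|T_f P_{>D_0}f\|_2^2 .
\]
For the tail term I would use the second bound of Theorem \ref{thm:Eigenvalues of operators}, $\|T_f\|_{W_{>D_0}} \le \|f\|_2 (eD_0/n)^{D_0/2}$: since $D_0/n$ is tiny, $(eD_0/n)^{D_0}$ is at most $\alpha^4$ provided $c$ is a small enough absolute constant, so $\|T_f P_{>D_0}f\|_2^2 \le \|f\|_2^4 (eD_0/n)^{D_0} \le \alpha^2 \le c^2$.

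For the main sum ($1 \le d \le D_0$) the choice of $D_0$ makes both Theorem \ref{thm:Eigenvalues of operators} and the level $d$ inequality (Theorem \ref{thm:level-d for biglobal functions} together with Remark \ref{rem:Pd}) applicable to the $r^2$-biglobal function $f$, for which $\log(\|f\|_2/\|f\|_1) = \tfrac12\log(1/\alpha)$; these yield bounds of the shape $\|T_f\|_{W_{=d}}^2 \le (O(r^4)\log(1/\alpha)/n)^d$ and $\|P_d f\|_2^2 \le 4\,(O(r^4)\log(1/\alpha)/d)^d$. Multiplying, $\|T_f P_d f\|_2^2 \le 4\,(K/d)^d$ where $K$ is a constant multiple of $r^8\log^2(1/\alpha)/n$; having fixed the constant $C$ in $M = C^2 r^4\log^2(1/\alpha)/n$ appropriately, one notes that the series $\sum_{d\ge 1}(K/d)^d$ has terms increasing up to $d \approx K/e$ and then decaying geometrically, so it is controlled by $O(1)\cdot K\cdot 2^{2M}$. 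Reading this off gives $\|T_f(f-1)\|_2^2 \le c^2 + (4M\cdot 2^{2M})^2$, whence $\|T_f(f-1)\|_2 \le c + 4M\cdot 2^{2M}$, as required. The main obstacle is precisely this last estimate: one must track the globalness parameters of $f = 1_A/\mu(A)$ carefully as they pass through both the spectral bound and the level $d$ inequality, and sum the resulting series with the correct constant, in order to land on the stated form with $M$ a constant times $r^4\log^2(1/\alpha)/n$ — a naive combination of the two inputs produces a larger power of $r$, so this step needs either extra care or a sharper input adapted to indicator functions.
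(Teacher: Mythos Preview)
Your proposal follows essentially the same route as the paper: reduce via Cauchy--Schwarz to bounding $\|f*f\|_2$ for $f=1_A/\mu(A)$, decompose along the level spaces $W_{=d}$, apply the spectral bound (Theorem~\ref{thm:Eigenvalues of operators}) to $\|T_f\|_{W_{=d}}$ and the level~$d$ inequality (Theorem~\ref{thm:level-d for biglobal functions} with Remark~\ref{rem:Pd}) to $\|P_d f\|_2$, and control the tail with the second part of Theorem~\ref{thm:Eigenvalues of operators}. The only cosmetic difference is that you peel off the $d=0$ term first and use orthogonality of the $W_{=d}$, whereas the paper keeps $d=0$ in the sum and uses the triangle inequality; both are fine.

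Two minor remarks. First, your assertion that ``$f-1$ has no level-$0$ component'' is not quite right in $S_n$: the space $W_{=0}$ is spanned by $1$ and $\mathrm{sign}$, and $\langle f,\mathrm{sign}\rangle$ need not vanish. This is harmless (the sign contribution to $\|T_f(f-1)\|_2^2$ is $\langle f,\mathrm{sign}\rangle^4\le 1$ and is absorbed into the leading constant), but you should say so; the paper's proof glosses over the analogous point when it takes $X_0=1$.

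Second, the obstacle you flag about $r^8$ versus $r^4$ is genuine and not an artefact of your write-up. Since $A$ being $r$-global only makes $f=1_A/\mu(A)$ an $r^2$-biglobal function, feeding this into Theorems~\ref{thm:level-d for biglobal functions} and~\ref{thm:Eigenvalues of operators} yields $(O(r^4)\log(1/\alpha)/d)^{d/2}$ and $(O(r^4)\log(1/\alpha)/n)^{d/2}$ respectively, and hence $r^8$ in the product. The paper's proof invokes precisely the same two inputs and therefore appears to obtain the same power; the $r^4$ in the statement of the lemma looks like a minor imprecision rather than something you are missing. It has no effect on the applications in Section~\ref{sec:PFR}, where $r$ is a small power of $n$ and the discrepancy is absorbed by the choice of~$c$.
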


\begin{proof}
We write $f=g=\frac{1_{A}}{\mu(A)}$, $h=1_{A^2}$ and consider the inequality
\[ 1 = \left\langle f*g,h\right\rangle \le \|f*g\|_2 \|h\|_2 = \|T_f g\|_2 \sqrt{\mu(A^2)}.\]
We let $\ell=\lceil \tfrac18 \log(1/\aA) \rceil$ and estimate
\[\| T_f g\|_2 \le \| T_f \|_{W^{>\ell}} \| P_{>\ell} g \|_2 + \sum_{d=0}^{\ell} \| T_f \|_{W^{=d}} \| P_{=d} g \|_2.\]
By globalness we can apply 
Theorem \ref{thm:Eigenvalues of operators} (the spectral bound) to $T_f$ and 
Theorem \ref{thm:level-d for biglobal functions} (the level $d$ inequality) to $g$, 
obtaining an absolute constant $C$ with
\begin{align*}
& \sum_{d=0}^{\ell} \|T_f\|_{W^{=d}} \| P_{=d} g \|_2 
\le \sum_{d=0}^{\ell} X_d \text{ with } 
X_d := \left( \frac{C r^2 \log(1/\alpha)}{\sqrt{dn}} \right)^d = (M/d)^{d/2},  \\  
& \text{ and } \
 \| T_f \|_{W^{>\ell}} \| P_{>\ell} g \|_2 \le (e\ell/n)^{\ell/2} \|f\|_2 \|g\|_2 \le \aA^{\log(1/c)/2-2} \le c, 
\end{align*}
for $c$ sufficiently small. Noting that $X_{d+1} \le X_d/2$ for $d\ge 4M$
and $X_{d+1} \ge X_d$ whenever $M \ge 2d \ge 2$,
we deduce $\sum_{d=0}^{\ell} X_d \le 1 + 4M \cdot 2^{2M}$.

Thus $\mu(A^2) \ge \|T_f g\|_2^{-2} \ge 1/(1+c+4M \cdot 2^{2M})^2$.
\end{proof}

We conclude this section by proving our result on approximate subgroups in $S_n$. 

\begin{proof}[Proof of Theorem \ref{thm:approximate subgroups_intro}]
Let $A\subseteq S_n$ be a $K$-approximate group,
i.e.\  if $\text{id} \in A =A^{-1}$  and $A^2 \subseteq XA$ for some $X \sub G$ with $|X|=K$.
Suppose $\mu(A) \ge e^{-cn^{1-\eps}}$ and $\eps>1/\log n$.
We need to find  a subgroup $H$ with $\mu(H) \ge \mu(A)^{C/\eps}$
such that $A \sub ZH$ for some $Z \sub A$ with $|Z| \le K^C |A|/|H|$.
We may assume $\mu(A) \le c$ is sufficiently small. 
Set $r=n^{\eps/5}$. By Lemma \ref{lem: the existence of a global restriction} 
there is an $r$-global restriction $A_{I\to J}$ with
$\mu(A_{I\to J}) = \max_K \mu(A_{I \to K}) \ge \mu(A) r^{|I|}$.
We let $H := U_I$. As $\mu(A_{I\to J}) \le 1$ we have
$\mu(H) \ge n^{-|I|} = r^{-5|I|/\eps} \ge \mu(A)^{5/\eps}$.
 
The main step of the proof is the following claim.

\begin{claim}
$\mu(A_{I\to J})\ge K^{-5}$.
\end{claim}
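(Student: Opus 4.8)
The plan is to establish the Claim $\mu(A_{I\to J})\ge K^{-5}$ and then close Theorem \ref{thm:approximate subgroups_intro} by Ruzsa's covering lemma. Write $B:=A_{I\to J}$ and $\beta:=\mu(A_{I\to J})=|B|/|H|$, and fix $\sigma_0\in B$. Since $U_{I\to J}=\sigma_0 H$ is a left coset of $H=U_I$, the set $\hat B:=\sigma_0^{-1}B$ is a subset of the subgroup $H\cong S_{n-|I|}$ containing the identity, with $\mu_H(\hat B)=\beta$; because left translation inside this coset identifies each restriction of $B$ with a restriction of $A$, both $\hat B$ and $\hat B^{-1}$ are $r$-global in $S_{n-|I|}$, and $\beta\ge\mu(A)r^{|I|}\ge\mu(A)\ge e^{-cn^{1-\eps}}$. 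Granting the Claim, note $|AB^{-1}|\le|A^2|\le K|A|$ since $B,B^{-1}\sub A=A^{-1}$; so Ruzsa's covering lemma applied to $A$ and $B^{-1}$ yields $Y\sub A$ with $|Y|\le K|A|/|B|=(K/\beta)\,|A|/|H|\le K^6|A|/|H|$ and $A\sub YB^{-1}B\sub YU_I=YH$, which is exactly the conclusion of the theorem.

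First I would extract a combinatorial consequence of the densest-section property. As $A$ is a $K$-approximate group, $A^3\sub X^2A$, a union of at most $K^2$ left translates $yA$ of $A$. Each $yA$ meets $U_{I\to J}$ precisely in $y\cdot A_{I\to y^{-1}(J)}$, which has at most $|A_{I\to J}|=\beta|H|$ elements because $A_{I\to J}$ is the densest $U_I$-section of $A$; hence $|X^2A\cap U_{I\to J}|\le K^2\beta|H|$. On the other hand $BB^{-1}B\sub A^3\sub X^2A$ and also $BB^{-1}B\sub U_{I\to J}U_{J\to I}U_{I\to J}=U_{I\to J}$, so $|\hat B\hat B^{-1}\hat B|=|BB^{-1}B|\le K^2\beta|H|=K^2|\hat B|$. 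Since $\mathrm{id}\in\hat B$ this gives in particular $\mu_H(\hat B\hat B^{-1})\le K^2\beta$.

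Next I would run the analytic part inside $H\cong S_{n-|I|}$. Put $g:=1_{\hat B}*1_{\hat B^{-1}}$, so $\|g\|_1=\beta^2$ and $g$ is supported on $\hat B\hat B^{-1}$; Cauchy--Schwarz gives $\|g\|_2^2\ge\|g\|_1^2/\mu_H(\hat B\hat B^{-1})\ge\beta^4/(K^2\beta)=\beta^3/K^2$. For the matching upper bound write $g=\beta^2T_fh$ with $f=1_{\hat B}/\beta$, $h=1_{\hat B^{-1}}/\beta$, put $\ell=\lceil\tfrac18\log(1/\beta)\rceil$, split $\|T_fh\|_2\le\|T_f\|_{W^{>\ell}}\|P_{>\ell}h\|_2+\sum_{d\le\ell}\|T_f\|_{W^{=d}}\|P_{=d}h\|_2$, and bound the summands exactly as in the proof of Lemma \ref{lem:growth of AB}, applying the spectral estimate (Theorem \ref{thm:Eigenvalues of operators}) to $T_f$ and the level $d$ inequality (Theorem \ref{thm:level-d for biglobal functions}) to $h$; both apply since $\hat B,\hat B^{-1}$ are $r$-global in $S_{n-|I|}$ with $n-|I|\sim n$. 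This gives $\|g\|_2\le(1+c+4M2^{2M})\beta^2$ with $M=\Theta\!\big(r^4\log^2(1/\beta)/n\big)$, and comparing the two bounds yields the self-consistent inequality $\beta\ge 1/\big(K^2(1+c+4M2^{2M})^2\big)$.

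Finally I would unwind this. Writing $L=\log(1/\beta)$ and $\delta=\Theta(r^4/n)=\Theta(n^{4\eps/5-1})$, the inequality reads $L-\Theta(\delta L^2)\le 2\log K+O(1)$. If $\delta L\le\tfrac12$ this forces $L\le 4\log K+O(1)$, i.e.\ $\beta\ge K^{-5}$ once $K$ exceeds an absolute constant (the only case needing proof; for bounded $K$ the conclusion of the theorem is classical). Otherwise $\beta\le e^{-\Omega(1/\delta)}=e^{-\Omega(n^{1-4\eps/5})}$; since $n^{1-4\eps/5}=n^{\eps/5}\,n^{1-\eps}\ge e^{1/5}n^{1-\eps}$ (using $\eps>1/\log n$), choosing the absolute constant $c$ small enough makes this contradict $\beta\ge\mu(A)\ge e^{-cn^{1-\eps}}$, proving the Claim. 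The step I expect to be the main obstacle is the analytic one: one must keep the error factor $4M2^{2M}$ of size $K^{O(1)}$ across the entire density range $\beta\ge e^{-\Theta(n^{1-\eps})}$, which is precisely where the sharp (optimal noise rate) global hypercontractivity of this paper is needed in place of the earlier estimate of \cite{filmus2020hypercontractivity}, and it is this requirement that pins down the choice $r=n^{\eps/5}$.
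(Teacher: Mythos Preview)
Your argument is correct, and rests on the same analytic input as the paper (the spectral bound of Theorem \ref{thm:Eigenvalues of operators} together with the level-$d$ inequality of Theorem \ref{thm:level-d for biglobal functions}), but the combinatorial packaging is genuinely different. The paper translates $A_{I\to J}$ into $H$ as you do, then invokes Lemma \ref{lem:growth of AB} as a black box to obtain the growth estimate $\mu_H(B^2)\ge\beta^{0.1}$; it then introduces a set $Y$ of coset representatives for the non-empty sections $A_{I\to K}$, observes that the sets $yB^2$ lie in distinct cosets of $H$ and are contained in $A^5\sub X^4A$, and compares $|Y|\,\mu_H(B^2)\,|H|\le K^4|A|$ with $|A|\le|Y|\,\beta\,|H|$ to get $K^4\beta^{0.9}\ge1$. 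You instead use the densest-section property up front, intersecting $A^3\sub X^2A$ with the single coset $U_{I\to J}$ to bound $\mu_H(\hat B\hat B^{-1})\le K^2\beta$, and then pit the resulting Cauchy--Schwarz lower bound on $\|1_{\hat B}*1_{\hat B^{-1}}\|_2$ against the analytic upper bound, unwinding the self-referential inequality at the end.

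The paper's route is slightly more modular: the messy dependence of $M$ on $\beta$ is absorbed once into the clean statement $\mu_H(B^2)\ge\beta^{0.1}$, after which the counting is elementary and yields $\beta\ge K^{-40/9}$ directly for all $K\ge1$. Your route avoids the auxiliary representative set $Y$ and the passage to $A^5$, working entirely inside one coset with $A^3$; the price is the explicit bootstrap at the end and the (harmless) caveat for bounded $K$. Both completions of the theorem via Ruzsa covering coincide.
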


Indeed, assuming the claim, we can complete the proof 
with the following standard Ruzsa covering argument.
Let $A' = \{ \sS \in A: \sS(I)=J \}$ and consider $Z \sub A$ 
maximal subject to $\{ zA': z \in Z \}$ all disjoint.
Then $A \sub \{ zH: z \in Z \}$ by maximality,
and $|Z||A'| \le |A|^2 \le K|A|$, so $|Z| \le K^6 |A|/|H|$.

To prove the claim, fix any  $\sigma \in A_{I\to J}$ 
and consider $B := A_{I\to J}\sigma^{-1} \sub A^2$. 
Then $B \sub U_I \cong S_{[n]\sm I}$ is $r$-global with
$\mu(B_{I \to I}) = \mu(A_{I\to J}) \ge \mu(A)  \ge e^{-cn^{1-\eps}}$.

We will show that $\mu(B_{I \to I}^2) \ge \mu(A_{I \to J})^{0.1}$.
Indeed, this clearly holds if $\mu(B_{I \to I}) \ge \mu(A_{I \to J})^{0.1}$.
Otherwise, as we can assume $\mu(A_{I \to J})$ is small,
by Lemma \ref{lem:growth of AB} we have
\[\mu(B_{I \to I}^2) \ge \frac{1}{(1+c+4M \cdot 2^{2M})^2},\]
where \[M \le  C^2 r^4\log^2(1/\mu(A_{I \to J}))/n \le cC^2 \log(1/\mu(A_{I \to J})),\]
using $r=n^{\eps/5}$, so $\mu(B_{I \to I}^2) \ge  \mu(A_{I \to J})^{0.1}$ for $c$ small.

Next we fix a set $Y$ of representatives for all non-empty $A_{I \to K}$.
As $\mu(A_{I\to J}) = \max_K \mu(A_{I \to K})$ 
we have $\mu(A) \le |Y| \mu(A_{I\to J})\mu(H)$.

On the other hand,  $YB^2 \sub A^5 \sub X^4 A$, so 
\[ |Y|  \mu(A_{I \to J})^{0.1}  \mu(H) \le K^4 \mu(A)  \le K^4|Y| \mu(A_{I\to J})\mu(H). \] 
We deduce $K^4 \mu(A_{I\to J})^{0.9} \ge 1$, which implies the claim, and so the theorem.  
\end{proof}

\section{Mixing and Bogolyubov} \label{sec:mixbog}

In this section we prove our results on mixing times for global sets
and our Bogolyubov lemma for the symmetric group.
We start by stating a Bogolyubov lemma for global sets and showing that 
it implies our general Bogolyubov result stated in the introduction (Theorem \ref{thm:Bogolyubov_intro}).
Recalling the example $S_{n/2}\times S_{n/2}$ mentioned earlier,
we cannot get analogous results for global sets below density $2^{-n}$,
as the Cayley graph may not even be connected.

\begin{thm}\label{cor:global Bogolyubov}
There is $c>0$ so that if $M<2\log n$ is an integer and $A\sub A_n$ is $r$-global
with $\mu(A) \ge e^{-cr^{-2}n^{1-1/M}}$ then $|A^M|\ge 0.99|A_n|$ and $A^{2M}=A_n$. 
\end{thm}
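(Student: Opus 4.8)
The plan is to prove the mixing estimate $\|f^{*M}-1\|_2^2 \le 1/99$ for $f := 1_A/\mu(A)$ (viewed as a function on $A_n$ with $\mb{E}[f]=1$), from which the theorem follows immediately. Indeed $f^{*M}$ is supported on $A^M$ with $\mb{E}[f^{*M}]=1$, so Cauchy--Schwarz gives $1 = \langle f^{*M}, 1_{A^M}\rangle \le \|f^{*M}\|_2\,\mu(A^M)^{1/2}$, hence $\mu(A^M) \ge \|f^{*M}\|_2^{-2} = (1+\|f^{*M}-1\|_2^2)^{-1} \ge 0.99$, i.e.\ $|A^M|\ge 0.99|A_n|$; and then for any $\sigma\in A_n$ the two density-$\ge 0.99$ sets $A^M$ and $\sigma(A^M)^{-1}$ must meet, so $\sigma\in A^MA^M=A^{2M}$ and $A^{2M}=A_n$.

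I would first dispose of the case $\mu(A)>0.99$ (which includes $M=1$, since there the hypothesis forces $\mu(A)\ge e^{-c/r^2}>0.99$): writing $f^{*M}-1 = T_f^{M-1}(f-1)$ and using $\|T_f\|_{\mathrm{op}}\le\|f\|_1=1$ (Young's inequality), one already has $\|f^{*M}-1\|_2\le\|f-1\|_2=(\mu(A)^{-1}-1)^{1/2}<(1/99)^{1/2}$. So assume $\mu(A)\le 0.99$ and $M\ge 2$; then $L:=\log(1/\mu(A))\ge 1/100$, and since also $L\le cr^{-2}n^{1-1/M}\le cn$, the dimension $n$ is large (proportionally to $1/c$) — which I use freely for the tail estimates. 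Now $T_f$ commutes with the two-sided $S_n$-action, hence preserves each level space; with $P_d$ the projection onto $W_{=d}$ and $P_W$ the projection onto the remaining high-level space, orthogonality of the level decomposition gives $\|f^{*M}-1\|_2^2 = \sum_{1\le d\le n/10}\|P_d(f^{*M})\|_2^2 + \|P_W(f^{*M})\|_2^2$ with $P_d(f^{*M})=T_f^{M-1}(P_d f)$, hence $\|P_d(f^{*M})\|_2\le\|T_f\|_{W_{=d}}^{M-1}\|P_d f\|_2$.

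The core of the argument is the interplay of the two main estimates of the previous part applied to the $r$-biglobal function $f$. By Lemma~\ref{lem:Other KLM} together with $\dim V_\lambda\ge(n/ed)^d$ for $\lambda$ of level $d$ (see \cite{ellis2011intersecting}) one has $\|T_f\|_{W_{=d}}\le(ed/n)^{d/2}\|P_d f\|_2$, so $\|P_d(f^{*M})\|_2\le(ed/n)^{(M-1)d/2}\|P_d f\|_2^M$. For $1\le d\le\tfrac18\log(1/\mu(A))$ the level $d$ inequality (Theorem~\ref{thm:level-d for biglobal functions}, via Remark~\ref{rem:Pd}) gives $\|P_d f\|_2\lesssim(Cr^2L/d)^{d/2}$; substituting $L\le cr^{-2}n^{1-1/M}$ and using $(1-1/M)M=M-1$, the powers of $r$ cancel and — crucially — so do the powers of $n$:
\[
\|P_d(f^{*M})\|_2 \ \lesssim\ \Bigl(\tfrac{Cr^2L}{d}\Bigr)^{Md/2}\Bigl(\tfrac{ed}{n}\Bigr)^{(M-1)d/2}
\ \le\ \Bigl(\tfrac{Cc\,n^{1-1/M}}{d}\Bigr)^{Md/2}\Bigl(\tfrac{ed}{n}\Bigr)^{(M-1)d/2}
\ =\ \Bigl(\tfrac{(Cc)^M e^{M-1}}{d}\Bigr)^{d/2}.
\]
For $c$ a sufficiently small absolute constant the right side is at most $(Cc)^{d/2}d^{-d/2}$, so $\sum_{1\le d\le L/8}\|P_d(f^{*M})\|_2^2=O(c)$. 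For the higher levels (and for $W$) the level $d$ inequality is unavailable, but there I would use only the crude bound $\|P_d f\|_2\le\|f\|_2=\mu(A)^{-1/2}$ together with the rapid decay of the $(ed/n)^{d/2}$-type factors (using $L\le cn$ and $n$ large, so that this decay overwhelms $\mu(A)^{-1/2}$ when $M\ge 2$), making these contributions negligible. Altogether $\|f^{*M}-1\|_2^2\le 1/99$.

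The main obstacle is exactly this balance of exponents: the spectral bound alone gains only $\sim(L/n)^{(M-1)d/2}$ per level, which is useless once $L$ is comparable to $n$, and it is the level $d$ inequality that restores the compensating $\sim n^{(M-1)d/(2M)}$ — the two matching precisely at the threshold $L\asymp n^{1-1/M}$ of the hypothesis. Beyond this the work is bookkeeping: arranging a single absolute $c$ that simultaneously makes the low-level sum small, keeps the ranges of validity of the two ingredients from the previous part compatible, and controls the high-level tail uniformly over all integers $M<2\log n$. I do not expect any further conceptual difficulty.
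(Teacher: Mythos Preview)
Your proposal is correct and follows essentially the same route as the paper: the paper deduces the theorem from Theorem~\ref{thm:mixing time} (the $L^2$-mixing estimate $\|f^{*M}-f_{=0}^{*M}\|_2<\eps$) via exactly your Cauchy--Schwarz and pigeonhole steps, and the proof of Theorem~\ref{thm:mixing time} is precisely your level-by-level decomposition, bounding the low levels by combining Theorem~\ref{thm:Eigenvalues of operators} (equivalently Lemma~\ref{lem:Other KLM} plus the dimension bound) with the level $d$ inequality so that the powers of $n$ cancel at the threshold $L\asymp n^{1-1/M}$, and handling the tail with the crude $\|P_{>\ell}f\|_2\le\|f\|_2$ bound. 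The only cosmetic difference is that the paper states the mixing estimate in $L^2(S_n)$ (so $f_{=0}^{*M}=1+\mathrm{sign}$ after lifting from $A_n$) before specialising, whereas you work directly on $A_n$; this is harmless.
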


Now we show that this implies our general Bogolyubov lemma,
which finds a polynomially dense subgroup inside a constant power of our set.

\begin{proof}[Proof of Theorem \ref{thm:Bogolyubov_intro}]
Consider $A \sub A_n$ with $A=A^{-1}$,  $\mu(A) \ge e^{-cn^{1-1/M}}$ and $M < \log n$.
We apply Lemma \ref{lem: the existence of a global restriction} with $r=n^{1/(5M)}$
to find an $r$-global restriction  $A_{I\to J}$ with 
$\mu(A_{I\to J}) \ge \mu(A) r^{|I|} \ge e^{-cr^{-2}(n-|I|)^{1-1/(2M)}}$.
We fix any $\sS \in A_{I \to J}$ and consider $B = A_{I \to J} \sS^{-1} \sub (A^2)_{I \to I}$,
which is an $r$-global subset of $U_I \cong A_{[n] \sm I}$.
By Theorem \ref{cor:global Bogolyubov} we have 
$B^{4M} = A_{[n] \sm I}$, so $U_I \sub A^{8M}$,
where 
\begin{align*}
    \mu(U_I) & \ge n^{-|I|} = r^{-5M|I|} \ge \mu(A)^{5M}. \qedhere
\end{align*}
\end{proof}

We now consider $L^2$-mixing times for general functions on $S_n$.
We state the main result of the section and show that it implies  Theorem \ref{cor:global Bogolyubov}
and our mixing time result for $A_n$ stated in the introduction (Theorem \ref{thm:mixing time_intro}).

\begin{thm}\label{thm:mixing time}
For each $\eps>0$ there exists $c>0$ so that
if $M<2\log n$ is an integer and $f\in L^2(S_n)$ is $r$-biglobal
with $\|f\|_1=1$ and $\|f\|_2 \le e^{cr^{-2}n^{1-1/M}}$
then $\|f^{*M}-f_{=0}^{*M}\|_2 < \eps$.  
\end{thm}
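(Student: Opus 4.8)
The plan is to leverage the fact that the convolution operator $T_f\colon g\mapsto f*g$ commutes with the right regular action of $S_n$, hence preserves each level space $W_{=d}$ (as well as $W$ and $W_{>\ell}$), so that $P_d(f^{*M})=T_f^{\,M-1}(P_df)$ for every $d$. Since $W_{=0}=\mathrm{span}\{1,\mathrm{sign}\}$ is $T_f$-invariant with $T_f$ acting there by the scalars $\mathbb{E}[f]$ and $\langle f,\mathrm{sign}\rangle$, one also has $P_0(f^{*M})=f_{=0}^{*M}$ with $f_{=0}:=P_0f$ (this is why the statement must subtract the full level-$0$ part, not merely the mean: e.g.\ $\mathrm{sign}$ is $1$-biglobal and $\mathrm{sign}^{*M}=\mathrm{sign}$). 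Thus $f^{*M}-f_{=0}^{*M}$ has trivial level-$0$ component and, writing $L:=\log\|f\|_2=\log(\|f\|_2/\|f\|_1)$ and $\ell:=\big\lfloor\tfrac14 L\big\rfloor$,
\[
\|f^{*M}-f_{=0}^{*M}\|_2\le\sum_{d=1}^{\ell}\|T_f\|_{W_{=d}}^{\,M-1}\|P_df\|_2+\|T_f\|_{W_{>\ell}}^{\,M-1}\|f\|_2 .
\]
Here $\ell\le 10^{-5}n$ because $L\le cn^{1-1/M}\le cn$, so the cited estimates all apply for $1\le d\le\ell$.

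The heart of the proof is the middle range. For $1\le d\le\ell$ I feed in both sharp inputs: Theorem~\ref{thm:Eigenvalues of operators} gives $\|T_f\|_{W_{=d}}\le(10^7r^2L/n)^{d/2}$ (using $\|f\|_1=1$), and Theorem~\ref{thm:level-d for biglobal functions} with Remark~\ref{rem:Pd} gives $\|P_df\|_2\le 2(C r^2L)^{d/2}$ (absorbing the harmless additive constant in $\log(e\|f\|_2/\|f\|_1)$, which we may do as one can assume $L\ge\tfrac12$). Multiplying $M-1$ copies of the first estimate by one copy of the second, the $d$-th summand is at most $2\,(10^7)^{(M-1)d/2}C^{d/2}(r^2L)^{Md/2}n^{-(M-1)d/2}$, and the hypothesis $L\le cr^{-2}n^{1-1/M}$ gives $r^2L\le c\,n^{1-1/M}$, so
\[
(r^2L)^{Md/2}n^{-(M-1)d/2}\le c^{Md/2}\,n^{(M-1)d/2-(M-1)d/2}=c^{Md/2} ;
\]
the powers of $n$ cancel \emph{exactly}. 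Hence the $d$-th summand is at most $2\big((10^7)^{M}C\,c^{M}\big)^{d/2}=2\big((\kappa c)^{M}\big)^{d/2}$ for an absolute $\kappa$ (here $\kappa^M$ dominates $(10^7)^MC$). Choosing the \emph{absolute} constant $c$ small enough makes the common ratio $(\kappa c)^{M/2}\le\kappa c$ at most $\tfrac14$ for \emph{every} integer $M\ge2$; summing the geometric series over $d\ge1$ and then shrinking $c$ in terms of $\eps$ bounds this part of the sum by $\eps/2$.

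For the tail I use the unconditional estimate $\|T_f\|_{W_{>\ell}}\le\|f\|_2(e\ell/n)^{\ell/2}$ from Theorem~\ref{thm:Eigenvalues of operators} together with $\|P_{>\ell}f\|_2\le\|f\|_2$. Since $\ell\asymp L$ while $\ell/n$ is minuscule — $L\le cn^{1-1/M}$ and $M<2\log n$ force $n$ to be (super)exponential in $M$, so $n^{-1/M}=e^{-\log n/M}$ is small — the factor $(e\ell/n)^{\ell/2}$ beats $e^L=\|f\|_2$ by an arbitrarily wide margin once $c$ is small, whence $\|T_f\|_{W_{>\ell}}^{M-1}\|f\|_2<\eps/2$ provided $L$ exceeds an absolute constant $L_0$. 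In the degenerate case $L\le L_0$ (so $\ell=0$ and the middle range is empty) one instead runs the whole sum over $1\le d\le n/10$ using the ``furthermore'' clause $\|f^{=d}\|_2^2\le\|f\|_1^2(Cr^2\log(e\|f\|_2/\|f\|_1))^d$ of Theorem~\ref{thm:level-d for biglobal functions} together with $\dim V_\lambda\ge(n/ed)^d$ and Lemma~\ref{lem:Other KLM}: high levels have enormous dimension so $T_f$ is drastically contractive there, and for the $O(1)$ low levels one exploits that $\|f\|_2\approx\|f\|_1$ forces $\|f-f_{=0}\|_2=o(1)$, while monotonicity of $\|f^{*m}-f_{=0}^{*m}\|_2$ in $m$ (from $L^2$-contractivity of $g\mapsto f*g$, since $\|f\|_1=1$) transfers the $m=2$ estimate to all $m\le M$.

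The step I expect to be the main obstacle is the middle range: getting the geometric ratio bounded away from $1$ \emph{uniformly in $M$}. This is precisely where the sharpness of \emph{both} cited inequalities is indispensable — with the non-sharp predecessors one is left with an uncancelled factor $n^{\Theta(d/M)}$ that no choice of parameters can absorb, which is exactly what pins the threshold $e^{cr^{-2}n^{1-1/M}}$ as essentially optimal — and it forces one to check that the absolute constants of Theorems~\ref{thm:Eigenvalues of operators} and~\ref{thm:level-d for biglobal functions} enter only through a single $M$-th power, so that one honest absolute $c$ (later shrunk in terms of $\eps$) works for all $M<2\log n$. Given the theorem, the corollaries are routine: for Theorem~\ref{cor:global Bogolyubov} take $f=1_A/\mu(A)$, deduce $f^{*M}>0$ pointwise and hence $\mu(A^M)\ge1-\eps^2$, and obtain $A^{2M}=A_n$ from one further convolution; Theorem~\ref{thm:mixing time_intro} is the specialisation to $f\in L^2(A_n)$ with the appropriate identification.
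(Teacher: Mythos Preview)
Your proof is essentially identical to the paper's: the same decomposition $\|f^{*M}-f_{=0}^{*M}\|_2\le\sum_{d=1}^\ell\|T_f\|_{W_{=d}}^{M-1}\|P_df\|_2+\|T_f\|_{W_{>\ell}}^{M-1}\|P_{>\ell}f\|_2$, the same inputs (Theorem~\ref{thm:Eigenvalues of operators} for $\|T_f\|_{W_{=d}}$ and Theorem~\ref{thm:level-d for biglobal functions} with Remark~\ref{rem:Pd} for $\|P_df\|_2$), and the same cancellation of the power of $n$ in the middle range. The paper takes $\ell=\tfrac18\log\|f\|_2$ rather than your $\lfloor\tfrac14 L\rfloor$, and writes the $d$-th summand directly as $(Cr^2n^{-1}\log\|f\|_2)^{(M-1)d/2}(Cr^2\log\|f\|_2)^{d/2}<\eps\cdot 4^{-d}$, but this is the same computation.

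One small correction to your degenerate-case discussion: the claim that ``$\|f\|_2\approx\|f\|_1$ forces $\|f-f_{=0}\|_2=o(1)$'' is false as stated (any $\pm1$-valued function has $\|f\|_2=\|f\|_1=1$ yet can have $\|f-f_{=0}\|_2$ close to $1$). Your monotonicity observation together with the dimension bound is what actually rescues this regime for $M\ge2$: from Lemma~\ref{lem:Other KLM} one has $\|T_f\|_{W_{=d}}\le\|P_df\|_2(ed/n)^{d/2}$, so $\|P_d(f^{*2})\|_2\le\|P_df\|_2^2(ed/n)^{d/2}\le\|f\|_2^2(ed/n)^{d/2}$, and summing over $d\ge1$ gives $O(e^{2L_0}/\sqrt{n})$. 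The paper simply does not isolate this case.
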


\begin{proof}[Proof of Theorem \ref{cor:global Bogolyubov}]
Writing $f=1_A/\mu(A)$, applying Theorem \ref{thm:mixing time}
with $\eps = 0.01$ and $c>0$ sufficiently small, we have 
\[\mu(A_n \sm A^M) \le \|f^{*M}-f_{=0}^{*M}\|_1 \le \|f^{*M}-f_{=0}^{*M}\|_2 < 0.01,\]
i.e.\ $\mu(A^M) > 0.99$. Now for each $x \in A_n$ we have
$\mu(xA^{-M}) = \mu(A^M) = 0.99$, so $A^M \cap xA^{-M} \ne \es$, i.e.\ $x\in A^{2M}$.
\end{proof}

\begin{proof}[Proof of Theorem \ref{thm:mixing time_intro}]
For $f \in L^2(A_n)$  we define $f^\ua \in  L^2(S_n)$
by $f^\ua(\sS) = 2f(\sS)$ for $\sS \in A_n$ or $f^\ua(\sS)=0$ otherwise.
We note that $(f^\ua)^{=0} = \|f\|_1 (1+\mathrm{sign})$ and $(f^{*M})^\ua = (f^\ua)^{*M}$.
Now let $f \in L^2(A_n)$ be $r$-biglobal with $\|f\|_1=1$ and $\|f\|_2 \le e^{cr^{-2}n^{1-1/M}}$.
Then $f^\ua \in L^2(S_n)$ is $2r$-biglobal with $\|f^\ua\|_1=\|f\|_1$ and $\|f^\ua\|_2 = \sqrt{2} \|f\|_2$.
Now $\|f^{*M} - 1\|_2 \le \|(f^\ua)^{*M} - 1 -\mathrm{sign}\|_2 <\epsilon$ by Theorem \ref{thm:mixing time}.
\end{proof}

It remains to prove our general mixing time result.

\begin{proof}[Proof of Theorem \ref{thm:mixing time}]
Suppose $f\in L^2(S_n)$ is $r$-biglobal
with $\|f\|_1=1$ and $\|f\|_2 \le e^{cr^{-2}n^{1-1/M}}$, 
with $0 < c < c(\eps)$ small.
Let $\ell =\tfrac18 \log\|f\|_2$. As $T_f$ preserves each $W_{=d}$ we have 
$\| P_{=d}(f^{*M}) \|_2 =    \| T_f^{(M-1)} P_{=d}(f) \|_2 \le \| T_f \|_{W_{=d}}^{M-1} \|P_{=d} f\|_2$, so
\begin{align*} 
  \|f^{*M}-f_{=0}^{*M}\|_2 & \le \| P_{>\ell}(f^{*M}) \|_2 + \sum_{d=1}^\ell \| P_{=d}(f^{*M}) \|_2  \\
& \le  \| T_f \|_{W_{>\ell}}^{M-1} \| P_{>\ell}(f) \|_2 + \sum_{d=1}^\ell   \| T_f \|_{W_{=d}}^{M-1} \|P_{=d} f\|_2. 
\end{align*}
By globalness we can apply Theorems \ref{thm:level-d for biglobal functions} 
and  \ref{thm:Eigenvalues of operators}; for $d \le \ell$ we have
\[  \| T_f \|_{W_{=d}}^{M-1} \|P_{=d} f\|_2 \le (Cr^2 n^{-1} \log \|f\|_2)^{(M-1)d/2} \cdot (Cr^2\log \|f\|_2)^{d/2} < \eps \cdot 4^{-d} \]
as $C$ is a constant and $c<c_0(\eps)$, 
and as  $\ell =\tfrac18 \log\|f\|_2$ and $e\ell/n < c$ also
\[  \| T_f \|_{W_{>\ell}}^{M-1} \| P_{>\ell}(f) \|_2  \le \|f\|_2^M (e\ell/n)^{(M-1)\ell/2} < \eps/4. \]
Combining these inequalities we deduce the theorem.
\end{proof}

\section{The diameter of Cayley graphs} \label{sec:diam}

Recall that the diameter  of $\text{Cay}(A_n,A)$ with $A^{-1} = A \sub A_n$,
also known as the covering number $\mathrm{cn}(A)$ of $A$, 
is the minimal $m$ for which $A^m=A_n.$ 
In this section we will prove Theorem \ref{thm:diameter bound},
which gives an essentially sharp bound on $\mathrm{cn}(A)$ when $\mu(A) < e^{-O(n)}$.
Our strategy will be to apply our Bogolyubov result to find a large subgroup $U_I$ in a power of $A$,
then follow an iterative process that finds larger subgroups until we hit $A_n$.

Our first lemma is the following estimate on the diameter
of the Schreier graph $\mathrm{Sch}(A_n,A,[n]_{\ell})$,
i.e.\ the graph on the set $[n]_\ell$ of $\ell$-tuples in $[n]^\ell$ with distinct coordinates 
where for each $I \in [n]_\ell$ and $\sS \in A$ there is an edge $\{I,\sS(I)\}$.

\begin{lem}\label{schreier}
There are $C=C_{\ref{schreier}}$, $c=c_{\ref{schreier}}$
so that if $A\sub A_n$ is $r$-global with $r>1$
and $\mu(A) \ge e^{-cr^{-2}n^{1-\zeta}}$ 
then the diameter of the Schreier graph $\mathrm{Sch}(A_n, A, X_{\ell})$ 
is at most $C \left\lceil\frac{\ell\log r}{ \zeta \log n}\right\rceil.$
\end{lem}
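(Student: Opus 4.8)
The plan is to read the diameter of $\mathrm{Sch}(A_n,A,[n]_\ell)$ as a mixing rate for the random walk that $A$ induces on $[n]_\ell$, using globalness to obtain a strong estimate after a \emph{single} step and then the spectral estimate Theorem~\ref{thm:Eigenvalues of operators} to mix the rest of the way. Note first that $\mathrm{dist}(I,J)\le m$ in this Schreier graph is the same as $A^m$ meeting the $\ell$-umvirate $U_{I\to J}$. Fix $I_0\in[n]_\ell$ and put $f=1_A/\mu(A)$. The map $\pi\colon A_n\to[n]_\ell$, $\sigma\mapsto\sigma(I_0)$, is a uniform $S_n$-equivariant covering, so $\pi^{*}$ embeds $L^{2}([n]_\ell)$ isometrically into $L^{2}(A_n)$ and carries the Schreier walk operator $\mathcal S$ on $L^{2}([n]_\ell)$ to the convolution operator $T_f$ (i.e.\ $T_f\pi^{*}=\pi^{*}\mathcal S$). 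Hence $\|\mathcal S\|_{W_{=d}}\le\|T_f\|_{W_{=d}}$ for all $d$, and since $L^{2}([n]_\ell)\cong M^{(n-\ell,1^\ell)}$ involves only irreducibles of level at most $\ell$, the walk lives on levels $d\le\ell$ (we may assume $\ell\le 10^{-5}n$, which the spectral bounds below require and which holds in the situations where the lemma is applied).

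\textbf{The main estimate.} Let $h_1\in L^{2}([n]_\ell)$ be the density of $\mathcal S\delta_{I_0}$; a direct computation gives $h_1(J)=\mu(A_{I_0\to J})/\mu(A)$, so $r$-globalness yields $\|h_1\|_\infty\le r^{2\ell}$ and therefore $\|h_1-1\|_2^2=\|h_1\|_2^2-1\le r^{2\ell}$. Set $\lambda:=\|\mathcal S\|_{W_{\ge1}}$. For levels $d\le\tfrac18\log(1/\mu(A))$, Theorem~\ref{thm:Eigenvalues of operators} together with the hypothesis $\log(1/\mu(A))\le cr^{-2}n^{1-\zeta}$ gives $\|T_f\|_{W_{=d}}\le(5\cdot10^{6}c\,n^{-\zeta})^{d/2}\le n^{-\zeta/2}$ once $c$ is small; for the remaining levels $d\le\ell$ one uses $\dim V_\lambda\ge(n/(ed))^{d}$ (equivalently the second bound of Theorem~\ref{thm:Eigenvalues of operators}, which rests on Lemma~\ref{lem:Other KLM}), so that the relation $\log(1/\mu(A))<8d$ forces $\|T_f\|_{W_{=d}}^{2}<(e^{9}d/n)^{d}\le n^{-\zeta/2}$ for $n$ large. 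Thus $\lambda\le n^{-\zeta/4}$. Since $\mathcal S$ preserves the orthogonal level decomposition, fixes constants, and $h_1-1\perp 1$, we get $\|h_{1+k}-1\|_2=\|\mathcal S^{k}(h_1-1)\|_2\le\lambda^{k}r^{\ell}$, which is $<1$ as soon as $k>\ell\log r/\log(1/\lambda)$; hence $k=\lceil 4\ell\log r/(\zeta\log n)\rceil+1$ suffices.

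\textbf{Conclusion.} Put $m=1+k$. For any starting point, $\|h_m\|_2^2=1+\|h_m-1\|_2^2<2$, so by Cauchy--Schwarz the radius-$m$ ball covers more than half of $[n]_\ell$; two such balls therefore intersect, and any $I,J$ are joined by a Schreier walk of length at most $2m$. Thus the diameter is at most $2m=O(\lceil\ell\log r/(\zeta\log n)\rceil)$, which is the assertion (with an absolute constant $C$ coming from the small $c$ and the constant factors above).

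\textbf{Main obstacle.} The only place needing genuine care is the estimate $\lambda\le n^{-\zeta/4}$: the sharp part of Theorem~\ref{thm:Eigenvalues of operators} controls only levels up to $\tfrac18\log(1/\mu(A))$, so for the higher levels $d\le\ell$ one must fall back on the crude dimension bound and verify that the loss it entails is still absorbed by the hypothesis on $\mu(A)$ — this is exactly where $c$ must be taken small (and $n$ large). Everything else — identifying $\mathcal S$ with $T_f$, the level structure of $M^{(n-\ell,1^\ell)}$, and the one-step globalness estimate $\|h_1-1\|_2\le r^{\ell}$ (which is what replaces $\log n$ by $\log r$ in the final bound) — is routine.
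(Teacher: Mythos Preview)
Your proof is correct and follows essentially the same route as the paper: both arguments identify the Schreier walk with $T_f$ via the quotient map, invoke Theorem~\ref{thm:Eigenvalues of operators} for the spectral gap, use $r$-globalness for the one-step estimate, and then conclude by expansion. The only cosmetic difference is that the paper phrases the one-step bound as $\mu(N(I))\ge r^{-\ell}$ and then quotes set expansion, whereas you track the $L^2$ density $h_1$ and iterate; these are equivalent packagings of the same expansion argument. Your treatment of the spectral bound is in fact more explicit than the paper's, which simply asserts $\lambda_2\le n^{-\zeta}$: you correctly split into the regime $d\le\tfrac18\log(1/\mu(A))$ (handled by the first clause of Theorem~\ref{thm:Eigenvalues of operators}) and the regime $\tfrac18\log(1/\mu(A))<d\le\ell$ (handled by the dimension bound), which is the honest justification the paper leaves implicit.
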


\begin{proof}
Note that $\mathrm{Sch}(A_n,A,[n]_{\ell})$ is a quotient of $\mathrm{Cay}(A_n,A)$,
where each $I \in [n]_\ell$ is the projection of $\{ \sS \in A_n: \sS([\ell]) = I \}$.
Thus the second eigenvalue $\lL_2$ of $\mathrm{Sch}(A_n,A,[n]_{\ell})$ is bounded by that of $\mathrm{Cay}(A_n,A)$,
so $\lL_2 \le n^{-\zeta}$ by Theorem \ref{thm:Eigenvalues of operators}.
Now we fix any $I,I'$ in $[n]_\ell$ and bound the distance from $I$ to $I'$ in $\mathrm{Sch}(A_n,A,[n]_{\ell})$.
Writing $N(I)$ for the neighbourhood of $I$, by globalness we have
\[\mu(A) = \sum_{J \in N(I)} \mu(A_{I \to J}) \mu(U_I) \le \mu(N(I)) r^\ell \mu(A),\]
so $\mu(N(I)) \ge r^{-\ell}$. Now by expansion, 
the distance from $I'$ to $N(I)$ is at most $O( \log(\mu(N(I)))/\log(\lL_2) )$.
\end{proof}

We use the previous lemma to prove our next lemma,
which will be used iteratively to construct larger subgroups.

\begin{lem}\label{lem: eating I}
Suppose $A\sub A_n$ is symmetric and generating,
with $\mu(A_{I \to I}) \ge e^{-c_{\ref{schreier}}(n-|I|)^{1-\eps}}$ 
and $A_{I \to I}$ is $n^{\eps/5}$-global, with $|I|<n/10$.
Let $\es \ne I' \sub I$. Then there exists
$U_{I''} \sub A^{2C_{\ref{schreier}}|I|} U_{I'} A^{2C_{\ref{schreier}}|I|}$ with $|I''|<|I'|$.
\end{lem}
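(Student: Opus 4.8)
The plan is to peel a single point off $I'$. Fix any $i_{0}\in I'$ and put $I'':=I'\sm\{i_{0}\}$, so $|I''|=|I'|-1$ and $I''\sub I$; it then suffices to show $U_{I''}\sub A^{2C_{\ref{schreier}}|I|}U_{I'}A^{2C_{\ref{schreier}}|I|}$.

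First I would feed the hypotheses into the earlier results. Viewing $A_{I\to I}$ as a subset of $U_{I}\cong A_{[n]\sm I}$, a group on a ground set of size $n-|I|>\tfrac{9}{10}n$: since $|I|<n/10$ it is still $(n-|I|)^{\eps/4}$-global there, and $\mu(A_{I\to I})\ge e^{-c_{\ref{schreier}}(n-|I|)^{1-\eps}}$ fits the hypothesis of Lemma~\ref{schreier} with, say, $\zeta=\eps/2$. Hence the Schreier graph $\mathrm{Sch}(A_{[n]\sm I},A_{I\to I},([n]\sm I)_{\ell})$ has diameter at most $C_{\ref{schreier}}|I|$ for every $\ell\le|I|$; equivalently $(A_{I\to I})^{C_{\ref{schreier}}|I|}$ contains an element carrying any prescribed $\le|I|$-tuple of $[n]\sm I$ to any other. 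I would also apply Theorem~\ref{cor:global Bogolyubov} to $A_{I\to I}$ inside $A_{[n]\sm I}$ (with parameter $\lceil 2/\eps\rceil$) to get $(A_{I\to I})^{O(1/\eps)}=A_{[n]\sm I}$, i.e.\ $U_{I}\sub A^{O(1/\eps)}$; in the range where the lemma is not vacuous this is $\sub A^{C_{\ref{schreier}}|I|}$ after enlarging $C_{\ref{schreier}}$, the complementary range ($1/\eps$ not $O(|I|)$, which forces $\mu(A)$ to be large) being disposed of directly.

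Next I would reduce to a transversal statement. The group $U_{I''}$ acts on $[n]\sm I''$ with the orbit of $i_{0}$ equal to $\{i_{0}\}\cup([n]\sm I')$ and with stabiliser of $i_{0}$ equal to $U_{I'}$, so $U_{I''}=\bigcup_{b}t_{b}U_{I'}$ for any transversal with $t_{b}\in U_{I''}$, $t_{b}(i_{0})=b$ and $t_{i_{0}}=\mathrm{id}$. Since $\alpha U_{I'}=t_{b}U_{I'}$ as soon as $\alpha$ fixes $I''$ pointwise and sends $i_{0}$ to $b$, it suffices to produce, for each $b\in[n]\sm I'$, an element $\alpha_{b}\in A^{2C_{\ref{schreier}}|I|}$ fixing $I''$ pointwise with $\alpha_{b}(i_{0})=b$; then $U_{I''}\sub A^{2C_{\ref{schreier}}|I|}U_{I'}\sub A^{2C_{\ref{schreier}}|I|}U_{I'}A^{2C_{\ref{schreier}}|I|}$. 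To build such an $\alpha_{b}$ I would proceed by ``move out, route, move back'': use a bounded word $w\in A^{C_{\ref{schreier}}|I|}$ carrying the tuple $(I'',i_{0},b)$ to a tuple supported inside $[n]\sm I$, then apply an element $h\in(A_{I\to I})^{C_{\ref{schreier}}|I|}$ fixing $w(I'')$ and $w(b)$ pointwise and sending $w(i_{0})$ to $w(b)$ (which exists by the tuple-Schreier bound above, applied to the $\le|I|$-tuple $(w(I''),w(i_{0}),w(b))$ in $[n]\sm I$), and finally apply $w^{-1}$; the composite $\alpha_{b}=w^{-1}hw$ fixes $I''$ pointwise, satisfies $\alpha_{b}(i_{0})=b$, and lies in $A^{O(|I|)}$.

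The main obstacle is the first half of this step: moving $i_{0}\in I$ (and the other coordinates of $I'$) out of $I$ by a word of length $O(|I|)$. Any such word must leave $A_{I\to I}$, so it is not furnished by the global set, and one has only the crude facts that $A$ generates $A_{n}$ and that $U_{I}$ lies in a bounded power of $A$. Converting these into an $O(|I|)$ bound --- essentially, that the Schreier graph of the \emph{full} set $A$ on $\le|I|$-tuples has every vertex within $O(|I|)$ of the ``disjoint from $I$'' region, using that $U_{I}$ can repair the $[n]\sm I$-part after each excursion --- and at the same time controlling the permutation induced on $I'$ so that it is absorbed by the final $w^{-1}$ and the allowed right $U_{I'}$-factor, is where the real work lies and where the constant $2C_{\ref{schreier}}|I|$ in the statement originates.
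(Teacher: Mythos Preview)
Your plan correctly isolates the obstacle --- moving points of $I'$ out of $I$ using only $O(|I|)$ letters from $A$ --- but does not resolve it, and the tool you propose cannot help: Theorem~\ref{cor:global Bogolyubov} only gives $U_I\sub A^{O(1/\eps)}$, and elements of $U_I$ fix $I$ pointwise, so they never move $i_0$ out of $I$. Your move-out step asks for control of the Schreier graph of $A$ on tuples of size $\approx|I|$, but the hypotheses give globalness only for $A_{I\to I}$, not for $A$; about $A$ globally you know nothing beyond connectivity. (There is also a small slip: $h$ cannot both fix $w(b)$ and send $w(i_0)\ne w(b)$ to $w(b)$; simply drop the requirement that $h$ fix $w(b)$.)

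The paper avoids this difficulty by not fixing $I''$ in advance and by moving only a \emph{single} point out of $I$. Fix $x\in I'$; since $A$ generates $A_n$, the Schreier graph $\mathrm{Sch}(A_n,A,[n])$ on single points is connected, and a shortest path from $x$ to $[n]\sm I$ has length at most $|I|$ (its vertices are distinct and all but the last lie in $I$). Thus some $\sigma\in A^{|I|}$ has $\sigma(x)\notin I$. Set $J:=\sigma(I')$ and $I'':=J\cap I$, so $|I''|<|I'|$ and $U_J=\sigma U_{I'}\sigma^{-1}$. The portion $J\sm I$ already lies in $[n]\sm I$, and Lemma~\ref{schreier} applied to $A_{I\to I}\sub A_{[n]\sm I}$ with $\zeta=\eps/2$ lets you move that tuple freely: for any $J_1,J_2\in([n]\sm I)_{|J\sm I|}$ there are $\sigma_1,\sigma_2\in(A_{I\to I})^{C_{\ref{schreier}}|J\sm I|}$ with $\sigma_i(J\sm I)=J_i$, whence $\sigma_1 U_J\sigma_2^{-1}=U_{I''J_2\to I''J_1}$, and the union over $J_1,J_2$ is $U_{I''}$. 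The two key departures from your plan are: (i) $I''$ is whatever $\sigma$ happens to leave inside $I$, not a prechosen $I'\sm\{i_0\}$; (ii) one uses independent factors $\sigma_1,\sigma_2$ on the two sides rather than a conjugation $w^{-1}(\cdot)w$, and these only have to move the part of $J$ that is already outside $I$, so the global set $A_{I\to I}$ suffices.
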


\begin{proof}
Fix any $x \in I'$. By connectivity of the Schreier graph $\mathrm{Sch}(A_n,A,[n])$
we can find $\sS \in A^{|I|}$ with $\sS(x) \in [n] \sm I$. 
Let $J = \sS(I')$ and note that $J \sm I \ne \es$ and $U_J = \sS U_{I'} \sS^{-1}$.
We fix any ordering on $J \sm I$ to make it a tuple. 
For any $J_1,J_2 \in ([n] \sm I)_{|J \sm I|}$,
we apply Lemma \ref{schreier} to $A_{I \to I} \sub A_{[n] \sm I}$ with $\zeta=\eps/2$,
noting that $\frac{\log (n^{\eps/5})}{\zeta \log (n-|I|)} < 1$,
obtaining $\sS_1,\sS_2 \in (A_{I \to I})^{C_{\ref{schreier}}|J \sm I|}$
with $\sS_1(J \sm I)=J_1$, $\sS_1(J \sm I)=J_2$.
We note that $\sS_1 U_J \sS_2^{-1} = U_{I'' J_2 \to I'' J_1}$, where $I'' := J \cap I$.
Taking the union over $J_1,J_2$, we obtain 
$U_{I''} \sub A^{|I| + C_{\ref{schreier}}|J \sm I|} U_J A^{|I| + C_{\ref{schreier}}|J \sm I|}$.
\end{proof}

We now combine the above lemmas and our Bogolyubov result to prove the diameter bound.

\begin{proof}[Proof of Theorem \ref{thm:diameter bound}]
Let $A\subseteq A_n$ with $A=A^{-1}$ and $\mu(A) \ge e^{-cn^{1-1/M}}$,
where $M = 1/\eps < 1/\log n$ and we may assume $M$ is an integer. 
By Theorem \ref{thm:Bogolyubov_intro} and its proof
we find $U_I \sub A^{8M}$ with $\mu(U_I) \ge n^{-|I|} \ge \mu(A)^{5M}$,
and also $A_1 \sub A^2$ such that $B = (A_1)_{I \to I}$
is $n^{\eps/5}$-global in $U_I \cong A_{[n] \sm I}$ with $\mu(B) \ge \mu(A)$.
As $n^{-|I|} \ge \mu(A)^{5M}$ we have $|I| \le 5M\log_n(1/\mu(A)))$.
Now we apply Lemma \ref{lem: eating I} repeatedly
to construct a sequence $I = I_0, I_1, \dots, I_t = \es$,
where at step $i$ we have $I' = I_i$ and construct $I'' = I_{i+1}$.
We deduce $A^{8M + C|I|^2} = A_n$ for some absolute constant $C$.
\end{proof}

\section{Product mixing} \label{sec:prodmix}

In this section we prove the following stronger variant 
of Theorem \ref{thm:The global mixing property_Intro}.  

\begin{thm}\label{thm:product mixing}
There are absolute constants $c,n_0>0$ so that if $n>n_0$
and $f,g,h \in L^2(S_n)$ are $r$-biglobal with $r>1$ and
$\frac{\|f\|_2}{\|f\|_1},\frac{\|g\|_2}{\|g\|_1}, \frac{\|h\|_2}{\|h\|_1} \le e^{cn^{1/3}r^{-2}}$ then 
\[\left|\langle f*g, h \rangle -  \mathbb{E}[f]\mathbb{E}[g]\mathbb{E}[h] -
\langle f, \mathrm{sign} \rangle \cdot \langle g, \mathrm{sign}\rangle \cdot \langle h, \mathrm{sign}\rangle \right| \le  0.01\|f\|_1\|g\|_1\|h\|_1.\]
\end{thm}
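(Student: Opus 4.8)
\textbf{Proof proposal for Theorem \ref{thm:product mixing}.}

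The plan is to expand $\langle f*g,h\rangle$ using the orthogonal level decomposition $L^2(S_n) = W \oplus \bigoplus_{d=0}^{n/10} W_{=d}$, exploiting that $T_f$ preserves each $W_{=d}$. Since convolution respects the decomposition, $\langle f*g, h\rangle = \sum_d \langle T_f (P_{=d} g), P_{=d} h\rangle + \langle T_f(P_W g), P_W h\rangle$. The level-$0$ term $W_{=0}$ is spanned by the constant function $1$ and the sign character $\mathrm{sign}$; its contribution is exactly $\mathbb{E}[f]\mathbb{E}[g]\mathbb{E}[h] + \langle f,\mathrm{sign}\rangle\langle g,\mathrm{sign}\rangle\langle h,\mathrm{sign}\rangle$ (using that $\mathrm{sign}$ is a character, so $T_f \mathrm{sign} = \langle f,\mathrm{sign}\rangle\,\mathrm{sign}$, and similarly for the identity). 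So the task reduces to bounding $\sum_{d\ge 1}|\langle T_f(P_{=d}g), P_{=d}h\rangle| + |\langle T_f(P_W g), P_W h\rangle|$ by $0.01\|f\|_1\|g\|_1\|h\|_1$.

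For each $d$ with $1 \le d \le \ell$, where I would set $\ell := \tfrac18\min(\log(\|f\|_2/\|f\|_1), \log(\|g\|_2/\|g\|_1), \log(\|h\|_2/\|h\|_1))$ or so, I would use Cauchy--Schwarz: $|\langle T_f(P_{=d}g),P_{=d}h\rangle| \le \|T_f\|_{W_{=d}}\,\|P_{=d}g\|_2\,\|P_{=d}h\|_2$. Now apply Theorem \ref{thm:Eigenvalues of operators} (the spectral bound) to $\|T_f\|_{W_{=d}}$ and Theorem \ref{thm:level-d for biglobal functions} (via Remark \ref{rem:Pd}) to $\|P_{=d}g\|_2$ and $\|P_{=d}h\|_2$. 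This gives a bound of the shape $\|f\|_1\|g\|_1\|h\|_1 \cdot \big(C r^6 n^{-1}\log(\cdots)\big)^{d/2} \cdot \big(\text{polylog}\big)$; the crucial point is that each factor of $\log(\|\cdot\|_2/\|\cdot\|_1) \le cn^{1/3}r^{-2}$, so after multiplying the three contributions one gets roughly $(C' c^{3} n^{-1} \cdot n)^{d/2}$-type expressions — wait, more carefully, the spectral bound contributes $n^{-d/2}$ while each level-$d$ inequality contributes $(\log)^{d/2}$, and with $\log \le cn^{1/3}r^{-2}$ the product over $g,h$ contributes at most $(cn^{1/3})^{d}$, so the $d$-th term is at most $(C'' c^{?} n^{-1+2/3})^{d/2} = (C'' c^{?} n^{-1/3})^{d/2}$, which is summable and small for $c$ small and $n$ large. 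For the high-level tail I would use the second bound $\|T_f\|_{W_{>\ell}} \le \|f\|_2 (e\ell/n)^{\ell/2}$ together with $\|P_{>\ell}g\|_2 \le \|g\|_2$, $\|P_{>\ell}h\|_2 \le \|h\|_2$; since $\ell \sim n^{1/3}$ and $\|f\|_2\|g\|_2\|h\|_2 \le e^{3cn^{1/3}r^{-2}}\|f\|_1\|g\|_1\|h\|_1$, the factor $(e\ell/n)^{\ell/2} = (O(n^{-2/3}))^{\Theta(n^{1/3})}$ crushes the exponential growth, so this term is negligible for $c$ small.

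The main obstacle I expect is getting the exponents to balance: one needs the $1/3$ in the stretched-exponential density threshold $e^{cn^{1/3}r^{-2}}$ to be \emph{exactly} compensated by the interplay between the $n^{-1}$ decay per level in the spectral norm and the fact that we have three biglobal functions (hence three level-$d$ factors each contributing a $\log$) but only one factor of $\|T_f\|_{W_{=d}}$. Schematically, the $d$-th term behaves like $n^{-d/2} \cdot (\log)^{3d/2}$, and with $\log \lesssim n^{1/3}$ this is $n^{-d/2}\cdot n^{d/2} = O(1)^d$ — so actually the threshold is tight precisely at exponent $1/3$, and one must track the absolute constants $C$ in Theorems \ref{thm:Eigenvalues of operators} and \ref{thm:level-d for biglobal functions} carefully, choosing $c = c(C)$ small enough that $C \cdot c^{3/2} < 1/2$, say, so the geometric series sums to something below $0.01$. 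A secondary technical point is handling the asymmetry of the bounds (the spectral bound on $\|T_f\|_{W_{=d}}$ requires $d \le \tfrac14\log(\|f\|_2/\|f\|_1)$, whereas for $g,h$ we need $d$ below the analogous thresholds) — this is why $\ell$ must be the minimum over all three ratios, and one should check the degenerate cases where some $\|\cdot\|_2/\|\cdot\|_1$ is close to $1$ (there the function is nearly constant and the corresponding level-$d$ weight is tiny, so the bound is easy).
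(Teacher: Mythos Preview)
Your proposal is correct and matches the paper's proof essentially line by line: expand $\langle f*g,h\rangle$ over the level decomposition, extract the $W_{=0}$ contribution as the main term, bound each $1\le d\le\ell$ via Cauchy--Schwarz combined with Theorem~\ref{thm:Eigenvalues of operators} on $\|T_f\|_{W_{=d}}$ and Theorem~\ref{thm:level-d for biglobal functions} on $\|P_d g\|_2,\|P_d h\|_2$ (giving the $(C^3c^3)^{d/2}$ balance you identified), and kill the tail with $\|T_f\|_{W_{>\ell}}\le\|f\|_2(e\ell/n)^{\ell/2}$. The only cosmetic difference is that the paper sets $\ell=\lceil\tfrac18\log\|f\|_2\rceil$ depending on $f$ alone---this works because it invokes the ``Furthermore'' clause of Theorem~\ref{thm:level-d for biglobal functions} for $g,h$, which holds for all $d$, so there is no need to take the minimum over the three ratios.
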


Next we show that Theorem \ref{thm:product mixing} implies Theorem \ref{thm:The global mixing property_Intro}. 

\begin{proof}[Proof of Theorem \ref{thm:The global mixing property_Intro}]
Recalling Definition \ref{def:prodmix},  we consider  $A,B,C\subseteq A_n$
that are all $100$-global of density at least $e^{-cn^{1/3}}$
and show that $(A,B,C)$ is $0.01$-mixing for products,
i.e.\  $\Pr[a\in A,b\in B,ab\in C] \in (0.99,1.01) \mu(A)\mu(B)\mu(C)$,
where $\mu(A) = |A|/|A_n| = 2|A|/n!$, etc and
$a,b\sim A_n$ are independent and uniform.

We consider the corresponding characteristic functions on $S_n$ given by
$f=\frac{n!1_A}{|A|}$, $g= \frac{n!1_B}{|B|}$ and $h= \frac{n!1_C}{|C|}$,
which are $200$-global, with $\|f\|_1=\|g\|_1=\|h\|_1=1$ and $\|f\|_2 =  \sqrt{n!/|A|}$, etc.
Also, $\langle f, \mathrm{sign} \rangle = \langle f,1 \rangle =  \mb{E}f = 1$ etc, and 
\begin{align*}
& \Pr_{a,b\sim S_n}[a\in A, b\in B,ab\in C] = \langle f*g, h \rangle |A||B||C|/n!^3, \text{ so } \\
& \Pr_{a,b\sim A_n}[a\in A, b\in B,ab\in C] = \langle f*g, h \rangle \mu(A)\mu(B)\mu(C)/2.
\end{align*}
By Theorem \ref{thm:product mixing} we have $|\langle f*g, h\rangle -2|<0.01$,
which gives the required estimate for $\Pr[a\in A,b\in B,ab\in C]$.
 \end{proof}
 
To complete the proof of Theorem \ref{thm:The global mixing property_Intro} we prove Theorem \ref{thm:product mixing}.

\begin{proof}[Proof of Theorem \ref{thm:product mixing}]
 Without loss of generality we have $\|f\|_1=\|g\|_1=\|h\|_1=1$. % and $\|f\|_2\ge \|g\|_2\ge \|h\|_2.$
Our goal is to estimate $\langle f*g,h\rangle = \langle T_f g,h \rangle$. 

We let $\ell=\lceil \tfrac18 \log\|f\|_2 \rceil$ and use $T_f$-invariance of each $W_{=d}$ to expand as
\[\langle f*g,h\rangle =\sum_{d=0}^{\ell}\langle T_f P_d g,P_d h\rangle + \langle T_f P_{>\ell}g P_{>\ell}h\rangle.\]

Next we consider the main term in the expansion, which comes from $d=0$.
We have $P_0(g)=\mathbb{E}[g] + \langle g,\mathrm{sign}\rangle \mathrm{sign}$ and similarly for $P_0h$.
As $f*1=\mathbb{E}[f]$ and 
$f*\mathrm{sign}(x) = \mathbb{E}_y[f(y)\mathrm{sign}(xy^{-1})] = \langle f, \mathrm{sign}\rangle \cdot \mathrm{sign}$
we obtain
 \[
\langle T_f P_0g,P_0h\rangle =\mathbb{E}[f]\mathbb{E}[g]\mathbb{E}[h] +
\langle f, \mathrm{sign} \rangle \cdot \langle g, \mathrm{sign}\rangle \cdot \langle h, \mathrm{sign}\rangle.
\]

Next we bound the terms with $d\le \ell$. By Cauchy--Schwarz we obtain 
\[\left|\langle T_f P_d g, P_d h\rangle\right| \le \|T_f P_dg\|_2\|P_dh\|_2 \le \|T_f\|_{W_{=d}}\|g^{=d}\|_2\|h^{=d}\|_2.\]
By biglobalness of $f,g,h$ we can apply 
Theorem \ref{thm:Eigenvalues of operators} (the spectral bound) to $T_f$ 
and Theorem \ref{thm:level-d for biglobal functions} (the level $d$ inequality)
to $g$ and $h$, obtaining
\[
\left|\langle T_f P_dg, P_d h\rangle \right| 
\le ( n^{-1} (Cr^2 \cdot  cn^{1/3}r^{-2})^3 )^{d/2}
=  (cC^3)^{d/2} < 0.001^d,
\]
choosing $c$ sufficiently small given the absolute constant $C$.
Similarly, for $d>\ell$ we apply Theorem \ref{thm:Eigenvalues of operators} to obtain
\[|\langle T_f P_{>\ell} g, P_{>\ell} h \rangle| 
\le \|T_f\|_{W^{>\ell}}\|g\|_2\|h\|_2
\le \|f\|_2 \|g\|_2\|h\|_2 (e\ell/n)^{\ell/2} < 0.001^\ell,\]
as $n>n_0$ is large and $\ell \le \log \|f\|_2 \le cn^{1/3}$.
As $|\langle f*g,h\rangle -  \langle T_f P_0g,P_0h\rangle |$
is bounded by the sum of these error terms,
which is at most $0.01$, the proof is complete.
\end{proof}

\section{Waring} \label{sec:waring}

Here we apply product mixing to the Waring problem in $A_n$. 
We start by stating a deep result of Larsen and Shalev \cite[Theorem 8.1]{larsen2009word}
on word maps in $A_n$.

\begin{thm}[\cite{larsen2009word}]
For any non-trivial words $w_1,\dots,w_k$ and $\eps>0$ there is $N$ so that 
$| w_1(A_n) \cap \dots \cap w_k(A_n)| > n^{-29/9-\eps} |A_n|$ for all $n \ge N$.
\end{thm}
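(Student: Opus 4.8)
The plan is to follow the approach of \cite{larsen2009word}, whose two engines are the reduction of an arbitrary word map to a power map and the sharp character bounds for $S_n$ of \cite{larsen2008characters}. First I would reduce to power words. Since $w(A_n)$ depends only on the $\mathrm{Aut}(F_d)$-orbit of $w$ (an automorphism of the free group induces a bijection of $A_n^d=\mathrm{Hom}(F_d,A_n)$), and since a free group has unique roots, one may write $w_i=v_i^{m_i}$ with $v_i$ primitive, Nielsen-reduce each $v_i$, and use $w_i(A_n)=\{g^{m_i}:g\in v_i(A_n)\}$; so it suffices to control $v_i(A_n)$ for primitive $v_i$. If some generator has exponent sum $\pm1$ in $v_i$, then substituting the identity for the other generators collapses $v_i$ to that generator, so $v_i(A_n)=A_n$ and $w_i(A_n)=P_{m_i}(A_n)$, where $P_m(G):=\{g^m:g\in G\}$. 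If the abelianisation of $v_i$ is nonzero with no exponent sum $\pm1$, then after transforming the exponent-sum vector to $(g_i,0,\dots,0)$ with $g_i=\gcd\ge2$ and making the same substitution, $v_i(A_n)\supseteq P_{g_i}(A_n)$, hence $w_i(A_n)\supseteq P_{g_im_i}(A_n)$. A standard generating-function computation over cycle types gives $\mu(P_m(A_n))\ge n^{-1-o(1)}$ for every fixed $m$, so these cases are comfortably handled, and the genuine case is $v_i\in[F_{d_i},F_{d_i}]$, a commutator word.

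For a commutator word $v_i$ I would pass to its word measure $\mu_{v_i}$ on $A_n$ (the law of $v_i$ at a uniform random tuple, a conjugation-invariant probability measure) and expand it in irreducible characters, $\mu_{v_i}(\sigma)=|A_n|^{-1}\sum_{\chi\in\mathrm{Irr}(A_n)}c_{v_i}(\chi)\chi(\sigma)$ with $c_{v_i}(1)=1$; here $|c_{v_i}(\chi)|$ is controlled in terms of the Nielsen-reduced combinatorics of $v_i$ (for $v_i=[x,y]$ one has exactly $c_{v_i}(\chi)=\chi(1)^{-1}$). Feeding in the Larsen--Shalev bound $|\chi(\sigma)|\le\chi(1)^{1-\alpha(\sigma)}$ for $1\ne\chi\in\mathrm{Irr}(S_n)$ — with $\alpha(\sigma)\in(0,1]$ an explicit increasing measure of how ``regular'' the cycle type of $\sigma$ is (close to $1$ when $\sigma$ has no large collection of short cycles, e.g.\ a bounded number of fixed points, and largest cycle not too close to $n$) — together with the zeta-type estimate $\sum_{1\ne\chi\in\mathrm{Irr}(A_n)}\chi(1)^{-s}=o(1)$ for every fixed $s>0$, one concludes $\mu_{v_i}(\sigma)>0$, hence $\sigma\in v_i(A_n)$, for every $\sigma$ in a conjugation-invariant ``regular'' set $R_{v_i}$, once $n$ is large relative to $v_i$. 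The regularity demanded of $R_{v_i}$ is dictated by how fast $|c_{v_i}(\chi)|$ decays (equivalently, how far $\mu_{v_i}$ may be from uniform), and this is where the particular exponent will ultimately come from.

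It then remains to intersect. Each $w_i(A_n)$ is a normal subset of $A_n$ containing $\{g^{m_i}:g\in R_{v_i}\}$ in the commutator case and $P_{N_i}(A_n)$ otherwise, so by normality it suffices to exhibit a large family $\mc{C}$ of conjugacy classes of $A_n$ such that each $\tau\in\mc{C}$ simultaneously, for every $i$, has an $m_i$-th root lying in $R_{v_i}$ (respectively is an $N_i$-th power). I would take for $\mc{C}$ the classes whose cycle type consists of a bounded number of long cycles of pairwise coprime lengths, each coprime to $N:=\mathrm{lcm}_i(m_iN_i)$, of the parity making $\tau$ even, together with a short controlled tail on few points; for such $\tau$, a suitable power $\tau^r$ is a root of any prescribed order dividing $N$ and has the same — maximally regular — cycle type, so choosing the number and lengths of the cycles and the size of the tail to meet every regularity threshold $R_{v_i}$ puts $\mc{C}\subseteq\bigcap_i w_i(A_n)$. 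One also checks that $\tau$ lands in $A_n$ and absorbs the factor $2$ from $S_n$-classes that split in $A_n$. Summing $|\mc{C}|$ over the admissible cycle types and optimising the parameters then yields $|\bigcap_i w_i(A_n)|\ge|\mc{C}|>n^{-29/9-\eps}|A_n|$ for all $n\ge N(w_1,\dots,w_k,\eps)$.

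The main obstacle is precisely this last optimisation. The regularity one can force on the roots is constrained simultaneously by the strength of the Larsen--Shalev character bound at a given cycle type and by the decay rate of $|c_{v_i}(\chi)|$ for the worst possible commutator word, and this must be balanced against the number of admissible cycle types (equivalently, the measure of $\mc{C}$); tracking the trade-off carefully is what produces the constant $29/9$, and it is also what forces the threshold $N$ to depend on the entire collection $w_1,\dots,w_k$. The character bound itself is the deep theorem of \cite{larsen2008characters}, proved by induction on Young diagrams via the Murnaghan--Nakayama rule, and I would invoke it — together with the qualitative word-map results of \cite{shalev2009word} underlying the commutator-word case — rather than reprove it.
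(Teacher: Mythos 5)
This theorem is not proved in the paper at all: it is imported verbatim as a black box from Larsen--Shalev--Tiep \cite{larsen2009word} (their Theorem 8.1), and the paper's own contribution in the Waring section begins only afterwards (globalness of normal sets, Theorem \ref{thm:normalglobal}). So the only proof to compare yours against is the original one, and there your sketch has a genuine gap at its central step rather than being a faithful outline of it.

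The gap is the ``commutator case''. You propose to expand the word measure as $\mu_{v}(\sigma)=|A_n|^{-1}\sum_{\chi\in\mathrm{Irr}(A_n)}c_{v}(\chi)\chi(\sigma)$ and to control $|c_{v}(\chi)|$ ``in terms of the Nielsen-reduced combinatorics of $v$'' for an arbitrary $v$ in the commutator subgroup. No such control exists (and it was certainly not available to \cite{larsen2009word}): Frobenius-type formulas give the Fourier coefficients only for very special words --- $c(\chi)=\chi(1)^{-1}$ for the genuine commutator $[x,y]$, and integral indicator-type coefficients for power words --- and ``$v\in[F_d,F_d]$'' is much weaker than ``$v$ is a commutator''. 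Bounding the Fourier coefficients of $\mu_v$ uniformly over all $\chi\in\mathrm{Irr}(A_n)$ for a general word is a hard problem that the Larsen--Shalev(--Tiep) argument deliberately avoids; the later word-measure bounds of Hanany--Puder (in terms of primitivity rank) postdate that work and are not of the uniform strength your step requires. In the published proof, the containment of suitably regular elements in $w(A_n)$ is obtained not by Fourier analysis of $\mu_w$ but by showing that $w(\mathrm{SL}_2(q))$ contains elements of order $(q\pm1)/2$ for large $q$, embedding $\mathrm{PSL}_2(q)\hookrightarrow A_{q+1}$ via the action on the projective line (so that such elements become permutations with very few cycles), and patching such subgroups across a partition of $[n]$ to realise the desired cycle types; the sharp character bounds of \cite{larsen2008characters} enter afterwards, for the covering/counting step. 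Moreover, the quantitative heart of the statement --- the exponent $29/9$ --- is exactly the optimisation you defer as ``the main obstacle'', so even granting your containment step the bound is not derived. If your intention is, like the paper's, simply to quote \cite{larsen2009word}, you should do that explicitly and drop the sketch; as written, the proposed mechanism would not go through.
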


Given this theorem, and the observation that images of word maps are normal sets,
to prove our Waring result (Theorem \ref{thm:waring}) it suffices to show that
polynomially dense normal sets are sufficiently global. In the following result,
we show that this holds even for much sparser normal sets.
To deduce Theorem \ref{thm:waring}, we note that if $A$ has 
density at least $n^{-1/4}$ in $w_1(A_n) \cap \dots \cap w_k(A_n)$
then it is $n^{1/7}$-global by Theorem \ref{thm:normalglobal}, 
and then applying Theorem \ref{thm:product mixing}
to $(A,A,xA^{-1})$ for any $x \in A_n$ we deduce $A^3 = A_n$.

\begin{thm} \label{thm:normalglobal}
Let $n$ be large and $A \sub A_n$ be a normal set with $\mu(A) > e^{-n^{0.01}}$. 
Then $A$ is $n^{0.01}$-global.
\end{thm}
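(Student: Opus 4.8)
\medskip

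\noindent\textbf{Proof proposal.} The plan is to bound restrictions to umvirates directly. Fix $d$ and $I,J\in[n]_d$, write $U:=U_{I\to J}$, and let $\mu_U$, $\mu_{S_n}$ denote uniform measure on $U$ and on $S_n$. Since $\|1_A\|_2^2=\mu_{S_n}(A)$ and $\|(1_A)_{I\to J}\|_2^2=\mu_U(A)$, globalness with parameter $n^{0.01}$ is exactly the assertion $\mu_U(A)\le n^{0.02d}\mu_{S_n}(A)$. If $d$ exceeds a suitable absolute-constant multiple of $n^{0.01}/\log n$ then $n^{0.02d}\ge e^{n^{0.01}}\ge 1/\mu_{S_n}(A)$ (using $\mu(A)>e^{-n^{0.01}}$), so the bound is trivial because $\mu_U(A)\le 1$; hence we may assume $d<K$, where $K:=\lceil n^{0.01}\rceil$.

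The first step is to discard a negligible part of $A$. Let $G\subseteq S_n$ be the normal set of permutations in which no cycle length occurs $K$ or more times; a standard first-moment count gives $\mu_{S_n}(S_n\setminus G)\le\sum_{i\ge 1}\tfrac{1}{i^{K}K!}\le 2/K!\ll e^{-n^{0.01}}$. Moreover $S_n\setminus G$ is also sparse inside $U$: contracting each maximal ``chain'' $a_0\to a_1\to\cdots$ of the partial permutation $\pi\colon i_k\mapsto j_k$ to a single point identifies $U$ with $S_{n-d}$ (the restriction to one umvirate of the coupling used in the first part of the paper), and under this identification the cycle-length multiplicities of $\sigma\in U$ exceed those of the corresponding $\tau\in S_{n-d}$ by at most $2d$ (at most $d$ forced cycles of $\pi$, plus at most $d$ cycles of $\tau$ meeting a contracted chain). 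Hence $\mu_U(S_n\setminus G)\le 2/(K-2d)!\ll e^{-n^{0.01}}$, which is negligible compared with $n^{0.02d}\mu_{S_n}(A)$. So it suffices to bound $\mu_U(A\cap G)$, and since $A\cap G$ is a union of conjugacy classes this will follow from a per-class bound which we then sum over the classes in $A\cap G$.

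The crux is the estimate $\mu_U(C)\le\tfrac12 n^{0.02d}\mu_{S_n}(C)$ for every conjugacy class $C\subseteq G$; equivalently, for $C$ of cycle type $\lambda$, that $n^{\underline d}\,\Pr_{\sigma\sim C}[\sigma\circ I=J]\le\tfrac12 n^{0.02d}$. This is a character-ratio-flavoured computation. Decompose $\pi$ into its $c$ forced cycles, of lengths $\ell_1,\dots,\ell_c$ with $L:=\sum_a\ell_a\le d$ and $c\le d$, together with its open chains (carrying the remaining $d-L$ edges). A random $\sigma\in C$ lies in $U$ iff it contains all $c$ forced cycles as cycles and routes the chains. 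Counting via $z_\lambda$, the probability that $\sigma$ contains the prescribed forced cycles is at most $\big(\prod_a \ell_a\,m_{\ell_a}(\lambda)\big)\big/n^{\underline L}$, and, conditionally on that, each further chain edge costs $O(1/n)$ by a short direct count (the conditional law of $\sigma$ on the remaining coordinates is close enough to uniform, the only bias being the bounded chance of prematurely closing a short cycle, which is excluded here). This yields $\Pr_{\sigma\sim C}[\sigma\circ I=J]\le C_0^{\,d}\,n^{-d}\prod_a\ell_a\,m_{\ell_a}(C)$ for an absolute constant $C_0$. For $C\subseteq G$ we have $m_{\ell_a}(C)<K$ and $\ell_a\le d$, so $n^{\underline d}\,\Pr_{\sigma\sim C}[\sigma\circ I=J]\le(C_0dK)^{d}\le\tfrac12 n^{0.02d}$, using $dK\le 102\,n^{0.02}/\log n$ (from the assumption on $d$) and that $\log n$ is large relative to $C_0$.

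Putting these together, $\mu_U(A)=\mu_U(A\cap G)+\mu_U(A\setminus G)\le\tfrac12 n^{0.02d}\mu_{S_n}(A)+\tfrac12 n^{0.02d}\mu_{S_n}(A)= n^{0.02d}\mu_{S_n}(A)$, as required. (Split $A_n$-conjugacy classes only perturb these estimates by bounded factors, which are absorbed into the constants.) I expect the per-class probability estimate in the third paragraph --- the bound on $\Pr_{\sigma\sim C}[\sigma\circ I=J]$ via the cycle/chain decomposition of $\pi$, with the $\prod_a \ell_a m_{\ell_a}(C)$ dependence --- to be the main technical obstacle; the reductions and the fixed-point/cycle-multiplicity tail bounds around it are routine.
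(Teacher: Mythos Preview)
Your argument is correct and follows essentially the same high-level strategy as the paper: handle large $d$ trivially using the density lower bound, discard the conjugacy classes with an abnormally large cycle-length multiplicity via the Poisson-type tail bound (the paper cites the same Larsen--Shalev lemma), and for the remaining ``good'' classes bound the restriction density class by class.

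The one substantive difference is in how the chain edges are handled. You compute $\Pr_{\sigma\sim C}[\sigma\circ I=J]$ directly by first placing the forced cycles via the centraliser formula and then arguing that each chain edge costs $O(1/n)$ conditionally. The paper instead proves a short reduction lemma (Lemma~\ref{lem:simplify}): for any normal set $A$, deleting one chain edge $x\to y$ (with $x\notin J$ or $y\notin I$) from the restriction can only increase $\mu(A_{I\to J})$ by a factor at most $\tfrac{n-d}{n-2d}\le 2$, proved in one line by the bijection $\sigma\mapsto (yy')\sigma(yy')$. Iterating this reduces to the case where $J$ is a permutation of $I$, after which only the centraliser computation remains. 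This conjugation trick is exactly the statement you are proving about chain edges, but it avoids the mixture-of-cycle-types bookkeeping you flag as the main obstacle; normality does the work directly. Conversely, your approach makes the per-class structure more explicit and uses a simpler ``good'' criterion ($m_i(C)<K$ uniformly in $i$) than the paper's exponential one ($n_i(C)\le f^i$) together with its dyadic decomposition; either suffices here.
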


The idea of the proof is that $A$ can be decomposed into conjugacy classes,
each of which is either sufficiently global or insignificant in measure.
To analyse restrictions of conjugacy classes, it will be helpful to observe
that any restriction $A_{I \to J}$ can be simplified without significant increase in measure
to some $A_{I' \to J'}$ such that $J'$ is a permutation of $I'$.
This is encapsulated in the following lemma.

\begin{lem} \label{lem:simplify}
Suppose $A \sub A_n$ is a normal set. 
Let $I \to J$ be a restriction with $I,J \in [n]_d$, $d<n/2$. 
Suppose $I' \to J'$ is obtained from $I \to J$ by deleting some $x \to y$
such that either $x \notin J$ or $y \notin I$.
Then $\mu(A_{I \to J}) \le \frac{n-d}{n-2d} \mu(A_{I' \to J'})$. 
\end{lem}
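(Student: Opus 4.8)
The plan is to pass to counting measure. Since $U_{I\to J}$ has exactly $(n-d)!$ elements and $U_{I'\to J'}$ has $(n-d+1)!$, the asserted inequality is equivalent to $|A\cap U_{I'\to J'}|\ge\frac{(n-d+1)(n-2d)}{n-d}\,|A\cap U_{I\to J}|$, and since $(n-d+1)(n-2d)<(n-d)(n-2d+1)$ it suffices to prove the slightly cleaner bound $|A\cap U_{I'\to J'}|\ge (n-2d+1)\,|A\cap U_{I\to J}|$. First I would reduce to the case $x\notin J$: if instead only $y\notin I$ holds, apply the $x\notin J$ case to the normal set $A^{-1}$, the restriction $J\to I$, and the deletion of $y\to x$ (now $y$ is the new domain element and it lies outside the image $I$), using that inversion gives a bijection $A\cap U_{I\to J}\to A^{-1}\cap U_{J\to I}$, so $\mu((A^{-1})_{J\to I})=\mu(A_{I\to J})$ and similarly for the primed restrictions. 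So assume $x\notin J$; note that $y\in J$ (it is the image of $x$ under the deleted partial permutation) and $x\ne y$. (Throughout, ``normal'' is used as invariance under $S_n$-conjugacy, as for images of word maps.)

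Next I would use the partition $U_{I'\to J'}=\bigsqcup_{v\in[n]\setminus J'}U_{I\to(J',v)}$, where $(J',v)$ denotes $J'$ with $v$ placed in the coordinate of the deleted pair; thus $|A\cap U_{I'\to J'}|=\sum_{v\in[n]\setminus J'}|A\cap U_{I\to(J',v)}|$, the summand at $v=y$ being exactly $|A\cap U_{I\to J}|$. I then split on whether $y\in I$. Suppose $y\notin I$. Then $y$ lies outside $F:=I'\cup J'\cup\{x\}$ (not in $I'\subseteq I$, not in $J'$ since the entries of $J$ are distinct, and $\ne x$). For each $v\in[n]\setminus F$ choose $\tau\in S_n$ fixing $F$ pointwise with $\tau(y)=v$; conjugation by $\tau$ fixes the constraint on $I'$ (as $\tau$ fixes $I'\cup J'$ pointwise) and moves the value at $x$ from $y$ to $v$, so by normality it is a bijection $A\cap U_{I\to J}\to A\cap U_{I\to(J',v)}$. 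Hence each such summand equals $|A\cap U_{I\to J}|$, and since $J'\subseteq F$ and $|F|\le 2(d-1)+1=2d-1$ we get $|A\cap U_{I'\to J'}|\ge (n-|F|)\,|A\cap U_{I\to J}|\ge (n-2d+1)\,|A\cap U_{I\to J}|$.

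Now suppose $y\in I$. Then $y\in I\cap J$, so $|I\cup J|\le 2d-1$. For each $z\in[n]\setminus(I\cup J)$ the map $\sigma\mapsto (x\,z)\,\sigma\,(x\,z)$ sends $A\cap U_{I\to J}$ into $A\cap U_{I'\to J'}$: the transposition $(x\,z)$ fixes $I'$ pointwise and fixes $J'$ pointwise (since $x,z\notin J\supseteq J'$), so the constraint on $I'$ is preserved, and $A$ is preserved by conjugation; the map is injective, and its image consists of permutations $\sigma_z$ with $\sigma_z(z)=(x\,z)(\sigma(x))=(x\,z)(y)=y$, so the images for distinct $z$ are pairwise disjoint. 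Therefore $|A\cap U_{I'\to J'}|\ge (n-|I\cup J|)\,|A\cap U_{I\to J}|\ge (n-2d+1)\,|A\cap U_{I\to J}|$. In either case $|A\cap U_{I'\to J'}|\ge (n-2d+1)\,|A\cap U_{I\to J}|$, which gives $\mu(A_{I\to J})\le\frac{n-d+1}{n-2d+1}\mu(A_{I'\to J'})\le\frac{n-d}{n-2d}\mu(A_{I'\to J'})$, the last step being the elementary inequality $(n-d+1)(n-2d)\le(n-d)(n-2d+1)$.

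I expect the case $y\in I$ to be the main obstacle. The tempting shortcut — proving the term-by-term comparison $|A\cap U_{I\to(J',v)}|\le|A\cap U_{I\to J}|$ for the ``fresh'' slots $v$ and summing — is actually false in general (for $A$ a single conjugacy class containing a short cycle one can arrange that a fresh slot is empty while the $v=y$ slot is not), which is exactly why one must use the transposition-injection here rather than a conjugation identity, and why one has to be careful to extract the factor $n-2d+1$, not merely $n-2d$, for the bound to close.
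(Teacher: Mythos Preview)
Your proof is correct. The approach is essentially the paper's --- partition $U_{I'\to J'}$ by the value at $x$ and use $S_n$-conjugation to show many slots match $|A\cap U_{I\to J}|$ --- but with one avoidable complication. You reduce via inversion to $x\notin J$ and are then forced into the case split on whether $y\in I$, handling the $y\in I$ subcase with the transposition-injection trick. The paper instead reduces (by the same inversion symmetry) to $y\notin I$, after which a \emph{single} conjugation by the transposition $(y\,y')$ for $y'\in[n]\setminus(I\cup J)$ establishes $|A\cap U_{I'x\to J'y'}|=|A\cap U_{I\to J}|$ uniformly; crucially this works whether or not $x\in J$, since one only needs $y,y'\notin I$ (to fix $I$) and $y,y'\notin J'$ (to fix $J'$), and $y\notin J'$ is automatic. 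So your ``main obstacle'' dissolves entirely under the other reduction --- your Case~1 argument alone, with the wlog swapped to $y\notin I$, is the whole proof.
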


\begin{proof}
Without loss of generality $y \notin I$.
For any $y' \in [n[ \sm (I \cup J)$,
as $A$ is normal, we have a bijection 
between $A_{I'x \to J'y}$ and $A_{I'x \to Jy'}$,
matching each $\sS$ such that $\sS(I'x)=J'y$
to $\sS' = (y y') \sS (y y')$ with $\sS'(I'x)=J'y'$.

Thus $\mu(A_{I' \to J'}) = \mb{E}_{y'} \mu(A_{I'x \to J'y'})
\ge \frac{n-2d}{n-d} \mu(A_{I \to J})$.
\end{proof}

Next we relate globalness of conjugacy classes to their cycle decomposition.
We write $n_i(C)$ for the number of $i$-cycles in a conjugacy class $C$.

\begin{lem} \label{lem:conjglobal1}
Any conjugacy class $C$ with $n_i(C) \le f^i$ for all $i \in \mb{N}$ is $(2f,n/4)$-global.
\end{lem}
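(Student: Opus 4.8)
The plan is to unwind the definition directly. Interpreting ``$(2f,n/4)$-global'' as ``$(2f,\|1_C\|_2,n/4)$-global'' (the reading that makes the statement non-trivial, since conjugacy classes have very concentrated restrictions of size close to $n$), what must be shown is that $\|(1_C)_{I\to J}\|_2 \le (2f)^t\|1_C\|_2$ for every $t\le n/4$ and every $I,J\in[n]_t$. Since $C$ is a single conjugacy class, and $\|(1_C)_{I\to J}\|_2^2 = \mu(C\cap U_{I\to J})$ (density inside $U_{I\to J}$) while $\|1_C\|_2^2=\mu(C)$, this amounts to the estimate
\[
\mu(C\cap U_{I\to J}) \le (2f)^{2t}\,\mu(C)\qquad\text{for all } t\le n/4,\ I,J\in[n]_t.
\]
I would first record that $f\ge 1$: any $\sigma\in C$ has a cycle of some length $i_0$, so $1\le n_{i_0}(C)\le f^{i_0}$.

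The first step is to reduce to restrictions whose target tuple is a permutation of its (common) underlying set. A conjugacy class is a normal set, and all tuple lengths that occur along the way are at most $t\le n/4<n/2$, so Lemma \ref{lem:simplify} applies: we repeatedly delete a pair $x\to y$ for which $x$ is not among the coordinates of the current target tuple, or $y$ is not among the coordinates of the current domain tuple, stopping only when these coordinate sets coincide. A deletion from a restriction of current size $s\le n/4$ costs a factor at most $\frac{n-s}{n-2s}\le 2$. After at most $t$ steps we reach a restriction $K\to\pi(K)$, where $K$ is a $t'$-set with $t'\le t$ and $\pi$ is a permutation of the set $K$, and we have $\mu(C\cap U_{I\to J})\le 2^{\,t-t'}\,\mu(C\cap U_{K\to\pi(K)})$.

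Next I would evaluate $\mu(C\cap U_{K\to\pi(K)})$ exactly. A permutation $\sigma$ with $\sigma|_K=\pi$ leaves $K$ invariant, its cycles contained in $K$ are precisely those of $\pi$, and $\sigma|_{[n]\sm K}$ ranges over the conjugacy class of $S_{[n]\sm K}$ whose cycle type is that of $C$ with the cycle type of $\pi$ removed (so $C\cap U_{K\to\pi(K)}=\es$ unless the cycle type of $\pi$ fits inside that of $C$, in which case there is nothing to prove). Comparing the sizes of these two conjugacy classes yields
\[
\frac{\mu(C\cap U_{K\to\pi(K)})}{\mu(C)} = \prod_i i^{\,n_i(\pi)}\,\frac{n_i(C)!}{(n_i(C)-n_i(\pi))!} \le \prod_i (i\,n_i(C))^{n_i(\pi)} \le \prod_i (2f)^{\,i\,n_i(\pi)} = (2f)^{t'},
\]
where the two inequalities use $\frac{m!}{(m-k)!}\le m^{k}$, then $n_i(C)\le f^i$ together with the elementary bound $i\le 2^i$, and finally $\sum_i i\,n_i(\pi)=|K|=t'$. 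Combining with the reduction step and $2\le 2f$ gives $\mu(C\cap U_{I\to J})\le 2^{\,t-t'}(2f)^{t'}\mu(C)\le (2f)^t\mu(C)\le (2f)^{2t}\mu(C)$, which is exactly the bound required.

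The only real difficulty I anticipate is organisational rather than conceptual: executing the reduction cleanly (specifying which pairs may be deleted, tracking the per-step loss, and verifying the size constraint $s<n/2$ needed to invoke Lemma \ref{lem:simplify}), together with care in the exact enumeration of $C\cap U_{K\to\pi(K)}$ and in particular the ``fits inside'' condition on cycle types. Once those are in place, the remaining estimates are elementary.
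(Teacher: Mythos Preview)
Your proof is correct and follows essentially the same route as the paper: reduce via Lemma~\ref{lem:simplify} (with the factor $\frac{n-s}{n-2s}\le 2$ for $s\le n/4$) to restrictions $K\to\pi(K)$, then use the centraliser formula together with $n_i(C)\le f^i$ and $i\le 2^i$ to bound the density ratio by $(2f)^{t'}$. The paper organises the first step as an induction on $d$ rather than an iterated deletion, but the content is identical; you even obtain the same sharper bound $\mu(C_{I\to J})\le (2f)^t\mu(C)$ before weakening to $(2f)^{2t}$.
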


\begin{proof}
We show by induction on $d \le n/4$ that 
$\mu(C_{I \to J}) \le (2f)^d \mu(C)$ for all $I,J$ in $[n]_d$.
If $J$ is not a permutation of $I$ then by Lemma \ref{lem:simplify}
we have $\mu(A_{I \to J}) \le 2 \mu(A_{I' \to J'})$ for some
$I' \to J'$ is obtained from $I \to J$ by deleting some $x \to y$,
so we are done by induction.

If $J$ is a permutation of $I$ then $C_{I \to J}$ is either empty
or obtained by deleting some cycles from $C$.
The required bound now follows by repeatedly applying 
the following consequence of the centraliser theorem: if $C'$ is obtained 
by deleting an $i$-cycle from $C$ then $\mu(C) = in_i(C) \mu(C')$.
Indeed, denoting the number of $i$-cycles thus deleted by $a_i$ for each $i$,
as $\sum ia_i \le d$ we obtain $\mu(C) \le \prod_i (if^i)^{a_i} \le (2f)^d$. 
\end{proof}

Now we estimate the measure of conjugacy classes
according to a dyadic decomposition by globalness.

\begin{lem} \label{lem:conjglobal2}
For $r \in 2\mb{N}$ let $X_r$ be the union of all conjugacy classes in $A_n$
that are $(2r,n/4)$-global but not $(r,n/4)$-global.
Then $\mu(X_r) \le (8/r)^{r/2}$.
\end{lem}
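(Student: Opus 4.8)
The plan is to combine the contrapositive of Lemma~\ref{lem:conjglobal1} with the classical first-moment estimate for the number of short cycles in a uniformly random permutation. Write $s := r/2 \in \mb{N}$. If a conjugacy class $C$ contributes to $X_r$ then in particular $C$ is not $(r,n/4)$-global, so by Lemma~\ref{lem:conjglobal1} applied with $f = s$ there must be some $i$ with $n_i(C) > s^i$; as $s^i$ is an integer, this means $n_i(\sS) \ge s^i+1$ for every $\sS \in C$, where $n_i(\sS)$ counts the $i$-cycles of $\sS$. Hence $X_r \sub \bigcup_{i\ge 1} Z_i$ with $Z_i := \{\sS \in A_n : n_i(\sS) \ge s^i+1\}$, and it suffices to bound $\sum_i \mu(Z_i)$.

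Next I would estimate each $\mu(Z_i)$. The key input is the standard bound $|\{\sS \in S_n : n_i(\sS) \ge k\}| \le n!/(k!\,i^k)$, which follows by computing $\sum_{\sS \in S_n}\binom{n_i(\sS)}{k}$ (the number of pairs consisting of a permutation together with a choice of $k$ of its $i$-cycles) exactly as $n!/(k!\,i^k)$, using that the number of ways to place $k$ disjoint $i$-cycles on $[n]$ is $n!/(k!\,(n-ik)!\,i^k)$ and each extends to $(n-ik)!$ permutations, and then using $\binom{m}{k}\ge 1_{m\ge k}$. Since $|A_n| = n!/2$, this gives $\mu(Z_i) \le 2/(k_i!\,i^{k_i})$ with $k_i := s^i+1$, and so $\mu(X_r) \le \sum_{i\ge 1} \frac{2}{(s^i+1)!\,i^{s^i+1}}$.

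Finally I would sum this series, the point being that the $i=1$ term $\frac{2}{(s+1)!}$ dominates. When $s\ge 2$, the doubly-exponential growth of $s^i$ makes $\sum_{i\ge 2}\frac{2}{(s^i+1)!\,i^{s^i+1}}$ already smaller than $\frac{2}{(s+1)!}$ (crudely: for $i\ge 2$ one has $s^i+1\ge s^2+1$, and $(s^2+1)!\ge 2(s+1)!$), so $\mu(X_r)\le \frac{4}{(s+1)!}$, and the remaining inequality $\frac{4}{(s+1)!}\le (4/s)^s = (8/r)^{r/2}$ is a routine Stirling estimate — checked by hand for $s\in\{2,3\}$ and deduced for $s\ge 4$ from $(s+1)!\ge (s/e)^s$ together with $(4/e)^s\ge 4$. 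The degenerate case $s=1$ (that is, $r=2$) is handled separately and trivially: there $k_i = 2$ for all $i$, so $\mu(X_2)\le \sum_{i\ge 1}\frac{2}{2\,i^2} = \pi^2/6 < 4 = (8/2)^{1}$.

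The only real obstacle here is bookkeeping rather than conceptual: one must take care to pass from $S_n$ to $A_n$ (costing just a factor of $2$), and to track the constants through the series carefully enough that the closing numerical inequality $\frac{4}{(s+1)!}\le (4/s)^s$ genuinely holds for every even $r$, including the small values of $r$ where the claimed bound is only barely nontrivial.
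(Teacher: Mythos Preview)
Your proposal is correct and follows essentially the same approach as the paper: apply the contrapositive of Lemma~\ref{lem:conjglobal1} to force $n_i(C)>(r/2)^i$ for some $i$, then use the first-moment bound $\mb{P}_{\sS\sim S_n}(n_i(\sS)\ge k)\le 1/(k!\,i^k)$ (which the paper cites from Larsen--Shalev) and sum the resulting series. Your version is slightly more careful about the $A_n$ versus $S_n$ factor of $2$ and the integer rounding $s^i+1$, and you supply a self-contained proof of the cycle-count estimate rather than citing it.
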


\begin{proof}
For each conjugacy class $C \sub X_r$,
by Lemma \ref{lem:conjglobal1} we have
some $i \in \mb{N}$ with $n_i(C) > (r/2)^i$.
By \cite[Lemma 6.1]{larsen2008characters},
we have $\mb{P}_{\sS \sim S_n}( n_i(\sS) \ge f ) \le 1/(f! i^f)$,
so $\mu(X_r) \le \sum_{i \ge 1} (r/2)^i!^{-1} i^{-(r/2)^i} \le (8/r)^{r/2}$.
\end{proof}

We conclude this section by deducing the globalness of normal sets,
thus completing the proof of our Waring result.

\begin{proof}[Proof of Theorem  \ref{thm:normalglobal}]
Let $A \sub A_n$ be a normal set with $\mu(A) > e^{-n^{0.01}}$. 
Consider any restriction $I \to J$ with $I,J$ in $[n]_d$.
If $d>n^{0.01}$ we have the trivial estimate
$\mu(A_{I \to J}) \le 1 \le e^d \mu(A)$.
Otherwise, writing $k_0 = 0.01\log_2 n - 2$,
we note that $A' := A \sm \bigcup_{k \ge k_0} X_{2^k}$ is $\tfrac12 n^{0.01}$-global, 
so by Lemma  \ref{lem:conjglobal2} we estimate
\begin{align*}
\mu(A_{I \to J}) - (\tfrac12 n^{0.01})^d \mu(A')
 & \le \sum_{k \ge k_0} 2^{k+1} \mu(X_{2^k}) 
 \le  \sum_{k \ge k_0} 2^{k+1} (2^{3-k})^{2^{k-1}}.
\end{align*}
We deduce $\mu(A_{I \to J}) \le n^{0.01d} \mu(A)$, as required.
\end{proof}

\section{Roth} \label{sec:roth}

Now we apply product mixing to prove our variant of Roth's Theorem in $S_n$ (Theorem \ref{thm: Our variant of roth_intro}).
We note that if $A \sub A_n$ has no 3AP then the squaring map ${\sf square} := (x \mapsto x^2)$ is injective on $A$. 
Thus Theorem \ref{thm: Our variant of roth_intro} is an immediate consequence 
of Theorem \ref{thm:product mixing} and the following lemma, to be proved in this section.

\begin{lem}\label{lem:roth}
Let $A \sub A_n$ with $\mu(A) \ge n^{-c\log n}$ and ${\sf square}$ injective on $A$,
 with $c>0$ small and $n \in \mb{N}$ large. Then we can find $B,C \sub A$ and restrictions
$B_{I\to J, J\to K}$, $B_{J\to K, K\to L}$, ${\sf square}(C)_{I\to K,J\to L}$ with $|I| < \sqrt{n}$
that are all $n^{1/8}$-global with density at least $e^{-n^{0.01}}$.
\end{lem}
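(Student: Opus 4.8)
The plan is a density-increment argument, whose one genuinely clean input is that ${\sf square}$ is injective on $A$: this makes ${\sf square}$ a bijection from $A$ onto its image $D:={\sf square}(A)$, so $\mu(D)=\mu(A)\ge n^{-c\log n}$, and in particular Lemma~\ref{lem: the existence of a global restriction} and the density bookkeeping below apply to $D$ exactly as to $A$. The first reduction is purely notational: writing concatenated tuples, $B_{I\to J,\,J\to K}$ is the restriction of $B$ to the umvirate $U_{(I,J)\to(J,K)}$, $B_{J\to K,\,K\to L}$ its restriction to $U_{(J,K)\to(K,L)}$, and ${\sf square}(C)_{I\to K,\,J\to L}$ the restriction of ${\sf square}(C)$ to $U_{(I,J)\to(K,L)}$. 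Thus $I,J,K,L$ necessarily share a common size $d$, and the three umvirates are arranged along the chain $I\to J\to K\to L$; this is exactly the overlap pattern that forces $b\in U_{(I,J)\to(J,K)}$ and $a\in U_{(J,K)\to(K,L)}$ to have $ab\in U_{(I,J)\to(K,L)}$, which is what lets the later application of Theorem~\ref{thm:product mixing} count 3APs.

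To produce the data I would run an increment whose state is a chain $I\to J\to K\to L$ of equal-length tuples (initially empty) together with subsets $B,C\sub A$: $B$ a restriction of $A$ along the chain pattern (so $B=A_{I\to J}$, whence $B_{I\to J,\,J\to K}=A_{(I,J)\to(J,K)}$ and $B_{J\to K,\,K\to L}=A_{(I,J,K)\to(J,K,L)}$), and $C={\sf square}^{-1}$ of a restriction of $D$ along the $(I,J)\to(K,L)$ pattern. I maintain that the three candidate sets $B_{I\to J,\,J\to K}$, $B_{J\to K,\,K\to L}$ and ${\sf square}(C)_{I\to K,\,J\to L}$ all have density at least $e^{-n^{0.01}}$, which holds at the start since $\mu(A)\ge n^{-c\log n}\ge e^{-n^{0.01}}$. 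If all three are already $n^{1/8}$-global, stop. Otherwise one of them, say $X$, is not $n^{1/8}$-global, so there is a single coordinate whose imposition multiplies the density of $X$ by a definite factor (more than $n^{1/4}$); I append this coordinate, together with one coordinate to each of the remaining three tuples chosen to keep the chain consistent and the densities as large as possible, so the chain length grows by one. Provided each step's gain on $X$ can be arranged to dominate its incidental losses on the other two sets (the delicate point, see below), the potential $\mu(B_{I\to J,\,J\to K})\,\mu(B_{J\to K,\,K\to L})\,\mu({\sf square}(C)_{I\to K,\,J\to L})$ only increases, so the number of steps is at most $\log_{n^{1/8}}\!\big((n^{-c\log n})^{-3}\big)=O(\log n)$. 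Hence the process terminates with $d=|I|=O(\log n)<\sqrt n$; the total density loss is $n^{O(\log n)}=e^{O(\log^2 n)}$, dwarfed by $e^{-n^{0.01}}$, so the output sets are $n^{1/8}$-global of density at least $e^{-n^{0.01}}$, and by construction they are precisely $B_{I\to J,\,J\to K}$, $B_{J\to K,\,K\to L}$, ${\sf square}(C)_{I\to K,\,J\to L}$ (using ${\sf square}(C)=$ the chosen restriction of $D$, via injectivity of ${\sf square}$).

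The main obstacle is the three-way coordination inside the increment step. Globalising one of the three sets forces an enlargement of the chain, and this enlargement must (i) preserve the rigid geometry $I\to J\to K\to L$ — so the newly imposed link $\sigma(i)=j$, $\sigma(j)=k$, $\sigma(k)=l$ must be simultaneously consistent with all three umvirate patterns, which it is in spirit because $\sigma(i)=j$ and $\sigma(j)=k$ already force $\sigma^2(i)=k$ — and (ii) not collapse the density of the other two sets below the target. Balancing the guaranteed gain on the failing set against these incidental losses, while respecting the constraint that all four tuples have equal length at every stage, is the technical heart of the proof; it is the reason the increment must track the whole chain at once rather than globalising the three restrictions separately, and everything else (the arithmetic above and matching the output to the sets named in the statement) is routine.
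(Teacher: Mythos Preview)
Your framing is correct: the three umvirates are arranged so that $ab\in U_{(I,J)\to(K,L)}$ whenever $a\in U_{(J,K)\to(K,L)}$ and $b\in U_{(I,J)\to(J,K)}$, and injectivity of ${\sf square}$ lets you pass freely between $C$ and ${\sf square}(C)$. But the coordination step you flag as ``the technical heart'' is a genuine gap, not a routine detail. Concretely, suppose $X_1=A_{(I,J)\to(J,K)}$ fails $n^{1/8}$-globalness at a single coordinate $p\to q$, so imposing $\sigma(p)=q$ boosts $\mu(X_1)$ by $\ge n^{1/4}$. If you add $p$ to $J$ and $q$ to $K$ (and then pick $i,l$ by averaging), the set $X_2=A_{(I,J,K)\to(J,K,L)}$ is forced to absorb the \emph{specific} restriction $\sigma(p)=q$, over which you have no freedom at all: this can drop $\mu(X_2)$ by a factor as large as $n$, swamping the $n^{1/4}$ gain on $X_1$. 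Averaging over $i,l$ does nothing for this loss, and there is no reason the witnessing coordinate for $X_1$ should be benign for $X_2$ or for $D_{(I,J)\to(K,L)}$. So the potential $\mu(X_1)\mu(X_2)\mu(X_3)$ need not increase, and your step-count bound collapses.

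The paper sidesteps this by decoupling rather than coordinating. In Step~1 it runs a more aggressive increment (allowing the restriction to grow by up to $2|J|+r$ at once for gain $n^{r/20}$, together with deletions of those $\sigma$ whose future $I$- or $L$-fibre is too sparse) so that at termination $B_{J\to K}$ is \emph{strongly} global: every further restriction of size up to $\sim 2|J|$ has essentially the same density. This means that \emph{any} later choice of $I$ and $L$ automatically makes both $B_{I\to J,J\to K}$ and $B_{J\to K,K\to L}$ global and dense, so in Step~2 one can simply average to pick $I$ and then $L$, defining $C$ along the way. There is no simultaneous increment on three sets.

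There is also a second idea your sketch misses. You treat ${\sf square}(C)_{I\to K,J\to L}$ as just a restriction of $D={\sf square}(A)$ and propose to globalise it in parallel, but the paper never runs an increment on $D$. Instead it verifies globalness of ${\sf square}(C)_{I\to K,J\to L}$ \emph{after} $I,J,K,L,C$ are fixed: writing $C':={\sf square}(C)_{I\to K,J\to L}$, injectivity gives $|C'_{M\to N}|=\sum_S |C^*_{M\to S,S\to N}|$ (the preimages under squaring), and bounding this sum uses both the strong globalness from Step~1 and a geometric fact arranged in Step~2, namely that the part of $L$ outside $I\cup J$ is disjoint from $I$. Without that disjointness the sum over $S$ has too many terms; this is why $L$ cannot be chosen independently of $I$ and why your parallel-increment scheme, which treats $D$ symmetrically with $A$, would not produce the needed structure even if the coordination problem were solved.
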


There will be three steps in the proof.
\begin{itemize}
\item
Step 1 will find $B \sub A$ with some restriction $B_{J \to K}$ that is strongly global,
in that it remains global even under further restrictions of comparable size,
so that $B_{I\to J, J\to K}$, $B_{J\to K, K\to L}$ chosen later will be automatically global.
\item
Step 2 will use the strong globalness from Step 1 and averaging to find
a dense restriction $C_{I\to J, J\to K,K \to L}$ with suitable $I$ and $L$.
\item
Step 3 will show that ${\sf square}(C)_{I\to K,J\to L}$ is global with good density.
By construction this also holds for $B_{I\to J, J\to K}$ and $B_{J\to K, K\to L}$,
so this will complete the proof.
\end{itemize}

\begin{proof}
Let $A \sub A_n$ with $\mu(A) \ge n^{-c\log n}$ and ${\sf square}$ injective on $A$,
 with $c>0$ small and $n \in \mb{N}$ large.

\emph{Step 1.}
We find $B_{J \to K}$ with $B \sub A$ by the following iterative process. 
Let $B^0 = A$ and $J_0=K_0=\es$.
At each stage $i \ge 0$, we check whether 
$B^i_{J_i \to K_i}$ has a restriction $B^i_{J_{i+1} \to K_{i+1}}$ 
with $\mu(B^i_{J_{i+1} \to K_{i+1}}) \ge 2n^{r_i/20} \mu(B^i_{J_i \to K_i}) $
and $|J_{i+1}|\le 3|J_i| + r_i$ for some $r_i \in \mb{N}$.
If not, we terminate with $B=B^i$, $J=J_i$ and $K=K_i$;
if so, we fix such a restriction, and proceed to stage $i+1$
with $B^{i+1}$ obtained from $B^i$ by deleting
(i) all $\sS$ such that $\sS(J_{i+1})=K_{i+1}$ and $\sS^{-1}(J_{i+1})=I$ 
with $\mu(B^i_{I \to J_{i+1}, J_{i+1} \to K_{i+1}}) 
< \tfrac14 \mu(B^i_{J_{i+1}\to K_{i+1}})$, and
(ii) all $\sS$ such that $\sS(J_{i+1})=K_{i+1}$ and $\sS(K_{i+1})=L$ 
with $\mu(B^i_{J_{i+1} \to K_{i+1}, K_{i+1} \to L}) 
< \tfrac14 \mu(B^i_{J_{i+1}\to K_{i+1}})$.
We note that 
$\mu(B^{i+1}_{J_{i+1} \to K_{i+1}})
 \ge  \mu(B^i_{J_{i+1} \to K_{i+1}})
  \ge n^{r_i/20} \mu(B^i_{J_i \to K_i})$.
This process terminates at some $i=i^*$
with $\sum_{i=1}^{i^*} r_i \le 20c\log n$,
and so $|J| \le n^{50c}$.

\emph{Step 2.}
By averaging, we can find $I$ so that
$\mu(B_{I \to J,J \to K}) \ge \mu(B_{J \to K})$.
Let $B'$ be obtained from $B$ by deleting all $\sS$
with $\sS(I)=J$, $\sS(J)=K$ and $\sS(K') \cap I \ne \es$,
identifying tuples with their underlying sets
and writing $K' := K \sm (I \cup J)$, 
By Step 1, $B_{I \to J,J \to K}$ is $n^{1/8}$-global,
so \[\mu(B'_{I \to J, J \to K}) \ge  
\mu(B_{I \to J, J \to K}) - n^{-7/8} |K'||I| \mu(B_{I \to J,J \to K})
\ge \tfrac12 \mu(B_{I \to J, J \to K}).\]
By averaging, we can find $L$ so that
$C := \{ \sS \in B': \sS(I) = J, \sS(J)=K, \sS(K)=L \}$
has $\mu(C_{I\to J,J\to K,K\to L}) \ge \mu(B'_{I \to J, J \to K})$.
Letting $L' \sub L$ correspond to $K'$, clearly $I \cap L' = \es$.

\emph{Step 3.}
Clearly $B_{I \to J, J \to K}$ and $B_{J \to K, K \to L}$
are non-empty, so by Step 1 they are $n^{1/8}$-global with density
at least  $\tfrac14 \mu(B_{J\to K}) \ge \tfrac14 \mu(A) \ge e^{-n^{0.01}}$.
It remains to show that this also holds for $C' := {\sf square}(C)_{I\to K,J\to L}$.
As ${\sf square}$ is injective on $C$ and $|J| \le n^{20c}$ with $c$ small we have
\begin{align*} 
\mu(C')  & = \mu(C_{I\to J, J\to K, K\to L}) \frac{ |U_{I\to J,J\to K,K\to L}| }{ |U_{I\to K, J\to L}| } \\
& \ge \frac{1}{4}\mu(C)n^{-|J|} \ge e^{-n^{0.01}}. 
\end{align*}
Now we consider globalness of $C'$.
Fix any restriction $M \to N$ of $C'$
with $M \sub [n] \sm (I \cup J)$ and $N \sub [n] \sm (K \cup L)$.
We can assume $|M| \le n^{0.01}$, otherwise we are done by the trivial bound
$\mu(C'_{M \to N}) \le 1 \le \mu(C') e^{|M|}$.

As ${\sf square}$ is injective on $C$, for any restriction $M \to N$ of $C'$
with $M \sub [n] \sm (I \cup J)$ and $N \sub [n] \sm (K \cup L)$  we have
\[ |C'_{M \to N}| = \sum_S |C^*_{M \to S, S \to N}|,
\text{ where } C^* := C_{I \to J,J \to K,K \to L}, \text{ so } \]
\begin{equation} \label{eq:C'}
\frac{\mu(C'_{M \to N})}{\mu(C')} = \sum_S  \frac{\mu(C^*_{M \to S, S \to N})}{\mu(C^*)}
   \frac{ |U_{I\to K, J\to L}| }{ |U_{I\to K, J\to L, M \to N}| }
   \frac{ |U_{I\to J,J\to K,K\to L,M \to S, S \to N}| }{ |U_{I\to J,J\to K,K\to L}| }. 
\end{equation}
   
By the strong globalness from Step 1, 
each $\mu(C^*_{M \to S, S \to N}) \le n^{|M|/9} \mu(C^*)$.
We also have
$\frac{ |U_{I\to K, J\to L}| }{ |U_{I\to K, J\to L, M \to N}| } = \frac{(n-|I \cup J|)!}{(n-|I \cup J|-|M|)!}$.

Next we consider any coordinate restriction $x \to y$ in $M \to N$ 
and for each $S$ in the sum above, we let $s \in S$ be such that 
$x \to s$ is in $M \to S$ and $s \to y$ is in $S \to N$. We claim that 
if both $x \in K$ and $y \in J$ then there is no choice of $s$ 
such that $U_{I\to J,J\to K,K\to L,M \to S, S \to N} \ne \es$.
Indeed, as $M \to N$ is a restriction of $C' := {\sf square}(C)_{I\to K,J\to L}$,
we have $M \cap (I \cup J) = \es$ and $N \cap (K \cup L)=\es$,
so $x \to S$ must be in $K' \to L'$ and $s \to y$ in $I \to J$,
but this contradicts $I \cap L'=\es$, so the claim holds.

We can assume this does not occur for any $x \to y$ in $M \to N$,
so we can write $M \to N$ as $M_0 M_1 M_2 \to N_0 N_1 N_2$,
where $M_1 \sub K$, $M_0 M_2 \cap K = \es$,
$N_2 \sub J$ and $N_0 N_1 \cap J = \es$.
Writing $S = S_0 S_1 S_2$ accordingly, we see that
when $U_{I\to J,J\to K,K\to L,M \to S, S \to N} \ne \es$
we have $S_1 \sub L$ determined by $K \to L$ 
and $S_2 \sub I$ determined by $I \to J$.
Thus $U_{I\to J,J\to K,K\to L,M \to S, S \to N}$
is obtained from $U_{I\to J,J\to K,K\to L}$
by imposing the additional restrictions
$M_0 \to S_0$, $S_0 \to N_0$,
$M_2 \to S_2$ and $S_1 \to N_1$.

Considering these additional restrictions,
we note that $M_2 \to S_2$ cannot overlap
$M_0 \to S_0$ as $M_0 \cap M_2 = \es$,
and that $M_0 M_2 \to S_0 S_2$ cannot overlap
$S_1 \to N_1$ as $S_1 \sub K$ is disjoint from $M_0M_2$.
We write $S_0 \to N_0$ as $S'_0 S''_0 \to N'_0 N''_0$,
where $S''_0 \to N''_0$ is contained in $M_0 M_2 S_1 \to S_0 S_2 N_1$
and $S'_0 \to N'_0$ is disjoint from $M_0 M_2 S_1 \to S_0 S_2 N_1$.

Returning to \eqref{eq:C'}, 
we have $\le |M|!$ choices of $S''_0$,
and writing $x = |S'_0| \le |M|$, 
we have $<n^x$ choices of $S'_0$, so
\begin{align*}
\frac{\mu(C'_{M \to N})}{\mu(C')} 
& \le \sum_x |M|! n^x \cdot n^{|M|/9} \mu(C^*) 
  \cdot  \frac{(n-|I \cup J|)!(n-|I \cup J \cup K|-|M|-x)!}{
    (n-|I \cup J|-|M|)!(n-|I \cup J \cup K|)!} \\
& \le n^{|M|/8}  \mu(C^*), 
\end{align*}
as $|K| < n^{50c}$ with $c$ small and $|M| \le n^{0.01}$.
Thus $C'$ is $n^{1/8}$-global.
\end{proof}

\section{Sharpness for mixing}\label{sec:Sharpness}

In this final section we analyse an example that will establish sharpness of our results on mixing.
We start by presenting the example, which is simple to describe, although its analysis will be somewhat involved.

\begin{ex}
Let $n$ be even, $k=n/2$ and $\ell = (1+\rho)n/4 \in [n]$ with $\rho \in (-1,1)$.
Let $A = \{ \sS \in S_n: |\sS([k]) \cap [k]| = \ell\}$ and $f=1_A/\mu(A)$.
\end{ex}

We start by estimating the measure of $A$.

\begin{lem}
We have $n^{-1} \log \mu(A) = - \tfrac12 \rho^2 + O(\rho^4) + O(\tfrac{\log n}{n})$.
\end{lem}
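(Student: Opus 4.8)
The plan is to recognise $\mu(A)$ as a hypergeometric probability and then apply Stirling's approximation. First I would observe that for $\sS\sim S_n$ uniform, the image $\sS([k])$ is a uniformly random $k$-subset of $[n]$, since for each $k$-set $T$ there are exactly $k!(n-k)!$ permutations with $\sS([k])=T$. Hence $|\sS([k])\cap[k]|$ is hypergeometrically distributed, and with $k=n/2$, $n-k=n/2$, $\ell=(1+\rho)n/4$ and $k-\ell=(1-\rho)n/4$ we get
\[
\mu(A)=\frac{\binom{k}{\ell}\binom{n-k}{k-\ell}}{\binom{n}{k}}
=\frac{\binom{n/2}{(1+\rho)n/4}\,\binom{n/2}{(1-\rho)n/4}}{\binom{n}{n/2}}.
\]

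Next I would take logarithms and invoke the estimate $\log\binom{m}{j}=m\,H(j/m)+O(\log m)$, valid uniformly for $0\le j\le m$, where $H(x)=-x\log x-(1-x)\log(1-x)$ is the (natural-log) binary entropy; this is a routine consequence of $\log m!=m\log m-m+O(\log m)$. Applying it to all three binomial coefficients, and using $H(1/2)=\log 2$ together with the symmetry $H\big(\tfrac{1+\rho}{2}\big)=H\big(\tfrac{1-\rho}{2}\big)$, gives
\[
\log\mu(A)=n\,H\!\left(\tfrac{1+\rho}{2}\right)-n\log 2+O(\log n).
\]

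Finally I would Taylor-expand $H$ about $1/2$: one computes $H'(x)=\log\frac{1-x}{x}$ and $H''(x)=-\frac{1}{x(1-x)}$, so $H'(1/2)=0$ and $H''(1/2)=-4$, while $H(\tfrac12+t)=H(\tfrac12-t)$ forces every odd-order term to vanish. Hence $H\big(\tfrac{1+\rho}{2}\big)=\log 2-\tfrac12\rho^2+O(\rho^4)$ uniformly for $\rho\in(-1,1)$ (for $|\rho|$ bounded away from $1$ this is Taylor's theorem with a bounded fourth derivative; for $|\rho|$ near $1$ the left-hand side is $O(1)=O(\rho^4)$). Substituting into the displayed identity and dividing by $n$ yields $n^{-1}\log\mu(A)=-\tfrac12\rho^2+O(\rho^4)+O\!\big(\tfrac{\log n}{n}\big)$, as claimed.

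The argument is essentially a routine computation; the only points requiring a little care are that the Stirling error term $O(\log m)$ is \emph{uniform} in $j$ (so it does not degrade when $\rho$ is close to $\pm 1$ and some binomial coefficients become small), and that the remainder in the entropy expansion is genuinely $O(\rho^4)$ over all of $(-1,1)$, not merely near $\rho=0$. Neither of these is a real obstacle.
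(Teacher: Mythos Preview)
Your proof is correct and essentially the same as the paper's: both compute $\mu(A)=\dfrac{k!^4}{n!\,\ell!^2(k-\ell)!^2}$ (your hypergeometric expression reduces to this since $n-k=k$), apply Stirling, and Taylor-expand about $\rho=0$. The only difference is cosmetic: you package the Stirling step via the binary-entropy identity $\log\binom{m}{j}=mH(j/m)+O(\log m)$, whereas the paper writes out the resulting $x\log x$ terms explicitly before expanding.
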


\begin{proof}
We have $\mu(A) = \frac{k!^4}{n! \ell!^2 (k-\ell)!^2}$, so by Stirling's formula
\[n^{-1} \log \mu(A) = 4 \cdot \tfrac12 \log \tfrac12 
- 2 \cdot  \tfrac{1+\rho}{4} \log  \tfrac{1+\rho}{4}
- 2 \cdot  \tfrac{1-\rho}{4} \log  \tfrac{1-\rho}{4}
+ O(\tfrac{\log n}{n}).\]
The lemma follows by considering the Taylor expansion.
\end{proof}

Next we consider globalness of $A$.

\begin{lem}
$A$ is $4$-global.
\end{lem}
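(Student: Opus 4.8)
The plan is to verify the definition of globalness directly. Since $\|1_A\|_2^2=\mu(A)$ and $\|(1_A)_{I\to J}\|_2^2=\mu(A_{I\to J})$ (the density of $A$ inside the umvirate $U_{I\to J}$), the set $A$ is $4$-global exactly when $\mu(A_{I\to J})\le 16^{d}\mu(A)$ for every $d$ and every $I,J\in[n]_d$, and I would prove this bound by splitting on the size of $d$.

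For $d\ge n/4$ the bound is immediate from the trivial estimates $\mu(A_{I\to J})\le 1$ and $\mu(A)=\binom{k}{\ell}^{2}/\binom{n}{k}\ge\binom{n}{k}^{-1}\ge 2^{-n}$, since then $16^{d}\mu(A)\ge 16^{n/4}2^{-n}=1$. For $d<n/4$ I would peel the $d$ coordinate constraints of $I\to J$ off one at a time. Conditioning on $\sS(I')=J'$ makes the remaining values of $\sS$ a uniform bijection, so for any restriction $I'\to J'$ the quantity $\mu(A_{I'\to J'})$ equals a hypergeometric probability $\Pr[\mathrm{Hyp}(m,q,p)=s]$, where $m=n-|I'|$, $p=k-|I'\cap[k]|$, $q=k-|J'\cap[k]|$, and $s$ is $\ell$ minus the number of ``staying'' pairs of $I'\to J'$ (those with $i'_j\in[k]$ and $j'_j\in[k]$). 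Deleting one pair $x\to y$ from $I\to J$ then gives $\mu(A_{I\to J})=t\cdot\mu(A_{I'\to J'})$ for an explicit ratio of binomial coefficients $t$ depending only on whether $x\in[k]$ and whether $y\in[k]$: namely $t=\tfrac{s}{q}\cdot\tfrac{m}{p}$, or $\tfrac{(p-s)m}{(m-q)p}$, or its transpose $\tfrac{(q-s)m}{(m-p)q}$, or $\tfrac{(m-p-q+s)m}{(m-q)(m-p)}$. Since $d<n/4\le k$ forces $m,p,q,m-p,m-q$ all to exceed $n/4$, and since whenever $A_{I'\to J'}\ne\es$ the hypergeometric feasibility bounds $0\le s\le\min(p,q)$ and $m-p-q+s\ge 0$ hold, each of these multipliers is at most $4$ (using $s/q\le 1$ together with $m/p\le 4$, and $m/(m-q)\le 4$, $m/(m-p)\le 4$). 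Telescoping over the $d$ peeling steps — and using the trivial bound if $A_{I\to J}=\es$ — yields $\mu(A_{I\to J})\le 4^{d}\mu(A)\le 16^{d}\mu(A)$, as required.

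The only genuine computation is the bounded-multiplier step in the second regime: identifying the four cases of the single-coordinate binomial-ratio formula and checking the elementary inequalities that bound each $t$ by $4$. I do not anticipate any real obstacle; the remaining content is just bookkeeping about how $|I\cap[k]|$, $|J\cap[k]|$ and the number of staying pairs change when a coordinate pair is removed, together with the observation that $\mu(A_{I\to J})\ne 0$ forces all intermediate restrictions to be nonempty and hence feasible.
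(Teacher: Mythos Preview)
Your proof is correct, but it takes a different route from the paper's. The paper observes that the map $\sS\mapsto\sS([k])$ is a $k!^2$-fold covering of the slice $\binom{[n]}{k}$, under which $A$ covers the family $\mc{G}=\{B:|B\cap[k]|=\ell\}$ and the umvirate $U_{I\to J}$ covers $\binom{[n]}{k}_{S,\ov{T}}$, the $k$-sets containing $S$ (the part of $J$ coming from $I\cap[k]$) and avoiding $T=J\setminus S$. This gives at once $\mu(A_{I\to J})=|\mc{G}_{S,\ov{T}}|/|\binom{[n]}{k}_{S,\ov{T}}|$, and then the single crude inequality $|\mc{G}_{S,\ov{T}}|\le|\mc{G}|$ together with $\binom{n}{k}/\binom{n-d}{k-|S|}\le 4^{d}$ finishes the job uniformly in $d$. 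Your argument instead writes $\mu(A_{I\to J})$ as a hypergeometric probability and telescopes through $d$ single-coordinate multipliers, each bounded by $4$ via a four-case check, with large $d$ handled separately by the trivial bound. The paper's version is cleaner in that the inequality $|\mc{G}_{S,\ov{T}}|\le|\mc{G}|$ swallows all four of your multiplier cases in one stroke and requires no regime split; your version trades this conceptual step for an entirely elementary computation that never leaves $S_n$.
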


\begin{proof}
We consider the covering map from $S_n$ to the slice $\tbinom{[n]}{k}$,
defined by  $\sS \mapsto \sS([k])$, which is a $k!(n-k)!$-fold covering.
Restricting to $A$ gives a $k!(n-k)!$-fold covering of
$\mc{G} := \{ B \in \tbinom{[n]}{k}: |B \cap [k]| = \ell \}$,
so we have $\mu(A) = \mu(\mc{G})$.

To describe the measures of restrictions we introduce some notation:
for any $S, T \sub [n]$ and $\mc{F} \sub \tbinom{[n]}{k}$
we write $\mc{F}_{S,\ov{T}}$ for the set of $B \in \mc{F}$
with $S \sub B$ and $B \cap T = \es$. Now let $I \to J$ be any restriction,
let $S \sub J$ correspond to $I \cap [k]$ and let $T = J \sm S$.
Considering the restriction of the above covering map 
from $U_{I,J}$ to  $\tbinom{[n]}{k}_{S,\ov{T}}$, and the further restriction 
from $A \cap U_{I,J}$ to $\mc{G}_{S,\ov{T}}$, we see that 
\begin{align*}
\mu(A_{I \to J}) & = \mu(\mc{G}_{S,\ov{T}})
\le \frac{|\mc{G}|}{|\tbinom{[n]}{k}_{S,\ov{T}}|}
\le \mu(\mc{G}) \frac{\tbinom{n}{k}}{\tbinom{n-|T|}{k-|S|}}
\le \mu(\mc{A}) 4^{|S|+|T|}. \qedhere
\end{align*}
\end{proof}

\subsection{Product mixing}

Now we consider product mixing. For any $\sS \in A$ we let $N$ be the number of $\tT \in A$
such that $\tau \sS \in A$. Clearly $N$ is independent of $\sS$. By somewhat tedious direct
calculation (we omit the details) one can show
\[ N = \sum_a N_a \ \text { where } 
  N_a = \frac{k!^2\ell!^2(k-\ell)!^2}{a!(\ell-a)!^3(k-2\ell+a)!^3(3\ell-k-a)!}. \]
Here $N_a$ counts those $\tau$ with $|\tau(\sS([k]) \cap [k]) \cap [k]|=a$.

Write $a  = \aA n$ and consider
\[ N_a/N_{a-1} = \frac{(\ell-a+1)^3 (3\ell-k-a+1)}{a(k-2\ell+a)^3} = \bB/\gG + O(1/n), \]
where $\bB = \bB(\aA,\rho) = (1+\rho - 4\aA)^3  ( 1 + 3\rho - 4\aA)$
and $\gG = \gG(\aA,\rho) = 4\aA ( 4\aA - 2\rho )^3$.
We have $\bB - \gG =  (1+3\rho-8\aA) ( (1+\rho)^3 - 8\aA(1+3\rho-4\aA) )$,
from which it is easy to see that $N_a$ is unimodal,
with a maximum at $a$ within $\pm O(1)$
of $\aA n$ where $\aA = \tfrac{1+3\rho}{8}$. 

As $\langle f*f, f \rangle =  \frac{N}{\mu(A)^2 n!}$,
by Taylor expansion we estimate
\[ n^{-1} \log \langle f*f, f \rangle = \rho^3 + O(\rho^4) + O(\tfrac{\log n}{n}). \]
In particular, we see that if $\rho = -n^{-1/3+c}$ with $c>0$
then $\langle f*f, f \rangle = e^{-\TT(n^{3c})}$ 
and $\mu(A) = e^{-\TT(n^{1/3+2c})}$,
so the exponent $1/3$ is optimal in Theorem \ref{thm:The global mixing property_Intro}.

\subsection{Linearisation}

Calculations as above for product mixing
seem prohibitively technical for general mixing,
but fortunately they can be avoided by linearisation.

We recall some theory for linear functions on $S_n$ 
developed by Keevash, Lifshitz and Minzer~\cite{keevash2022largest}. 
As before, we define the dictators $x_{i \to j}$ by  $x_{i\to j}(\sS) = 1_{\sS(i) =j}$.
Any function $\varphi$ that is a linear combination of dictators is called \emph{linear};
if also $\mb{E}\varphi=0$ we say $\varphi$ is \emph{purely linear}. 
Any purely linear function $\varphi$ has a \emph{canonical form},
which is its unique expression $\varphi = \sum_{i,j} a_{ij}x_{i\to j}$ 
satisfying $\sum_i a_{ij} = 0$ for all $j$ and $\sum_{j}a_{ij} = 0$ for all $i$. 
We write $M_{\varphi} = (a_{ij})_{i,j}$ for the matrix of these coefficients.

\begin{lem} $ $ \label{lem:lin}
\begin{enumerate}
\item Let $\varphi = \sum a_{ij}x_{i\to j}$ be in canonical form
and $\varphi' = \sum b_{ij}x_{i\to j}$ be any linear function.
Then $ \langle \varphi, \varphi'\rangle = \tfrac{1}{n-1} \sum_{i,j} a_{ij} b_{ij}$.
(See \cite[Lemma 3.1]{keevash2022largest}).
\item
Let $g\in L^2(S_n)$ and let $(a_{ij}) = M_{g^{=1}}$. 
Then $a_{ij}  = \frac{n-1}{n}(\mb{E}[g_{i\to j}] - \mb{E}[g])$.
(See \cite[Lemma 3.2]{keevash2022largest}).
\item
If $\varphi, \varphi'$ are linear 
then $M_{\varphi*\varphi'} =\tfrac{1}{n-1} M_{\varphi} M_{\varphi'}$.
(See \cite[Lemma 3.4]{keevash2022largest}).
\end{enumerate}
\end{lem}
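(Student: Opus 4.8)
For part (1), the plan is to compute the Gram matrix of the dictators directly: $\langle x_{i\to j},x_{k\to l}\rangle = \mb{E}_\sigma[1_{\sigma(i)=j}\,1_{\sigma(k)=l}]$ equals $\tfrac1n$ when $(i,j)=(k,l)$, equals $\tfrac1{n(n-1)}$ when $i\ne k$ and $j\ne l$, and vanishes otherwise (since a permutation cannot send two inputs to the same value, nor one input to two values). Expanding $\langle\varphi,\varphi'\rangle=\sum_{i,j,k,l}a_{ij}b_{kl}\langle x_{i\to j},x_{k\to l}\rangle$ and splitting off the diagonal, we are left with $\tfrac1n\sum_{i,j}a_{ij}b_{ij}+\tfrac1{n(n-1)}\sum_{i\ne k,\,j\ne l}a_{ij}b_{kl}$. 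Writing the second sum as $\sum_{i,j}a_{ij}\bigl(\sum_{k,l}b_{kl}-\sum_l b_{il}-\sum_k b_{kj}+b_{ij}\bigr)$ and using the canonical identities $\sum_i a_{ij}=0$, $\sum_j a_{ij}=0$ to kill the first three terms, this collapses to $\sum_{i,j}a_{ij}b_{ij}$ (which also shows the right-hand side is independent of the chosen representation of the linear function $\varphi'$). Hence $\langle\varphi,\varphi'\rangle=\bigl(\tfrac1n+\tfrac1{n(n-1)}\bigr)\sum_{i,j}a_{ij}b_{ij}=\tfrac1{n-1}\sum_{i,j}a_{ij}b_{ij}$.

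For part (2), since each $x_{i\to j}\in V_{\le 1}$ and $g^{=1}=g^{\le 1}-\mb{E}[g]$, we have $\langle g^{=1},x_{i\to j}\rangle=\langle g,x_{i\to j}\rangle-\mb{E}[g]\,\mb{E}[x_{i\to j}]=\tfrac1n\mb{E}[g_{i\to j}]-\tfrac1n\mb{E}[g]$, using $\langle g,x_{i\to j}\rangle=\mb{P}(\sigma(i)=j)\,\mb{E}[g\mid\sigma(i)=j]=\tfrac1n\mb{E}[g_{i\to j}]$. On the other hand, applying part (1) with $\varphi=g^{=1}$ in canonical form and $\varphi'=x_{i\to j}$ gives $\langle g^{=1},x_{i\to j}\rangle=\tfrac1{n-1}a_{ij}$. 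Equating the two expressions yields $a_{ij}=\tfrac{n-1}{n}\bigl(\mb{E}[g_{i\to j}]-\mb{E}[g]\bigr)$; one checks this matrix does satisfy the canonical constraints, since $\sum_i 1_{\sigma(i)=j}=\sum_j 1_{\sigma(i)=j}=1$ forces $\sum_i\mb{E}[g_{i\to j}]=\sum_j\mb{E}[g_{i\to j}]=n\mb{E}[g]$.

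For part (3), first note that a convolution of linear functions is linear — either from the direct identity $x_{a\to b}*x_{c\to d}=\tfrac1n 1_{a=d}\,x_{c\to b}+\tfrac1{n(n-1)}1_{a\ne d}(1-x_{c\to b})$, or because $T_f$ preserves $V_{\le 1}$ — and that $\mb{E}[\varphi*\varphi']=\mb{E}[\varphi]\,\mb{E}[\varphi']$. Since $(\varphi,\varphi')\mapsto M_{\varphi*\varphi'}$ and $(\varphi,\varphi')\mapsto M_\varphi M_{\varphi'}$ are both bilinear and unaffected by adding constants to $\varphi$ or $\varphi'$, it suffices to verify the identity on the spanning family $\varphi=x_{a\to b}^{=1}$, $\varphi'=x_{c\to d}^{=1}$. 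Here the key input is the clean rank-one description coming from part (2): $M_{x_{a\to b}^{=1}}$ has $(i,j)$-entry $(\delta_{ia}-n^{-1})(\delta_{jb}-n^{-1})$, i.e.\ it is $(e_a-n^{-1}\mathbf 1)(e_b-n^{-1}\mathbf 1)^{\mathrm T}$; with this, the right-hand matrix product is immediate, while the left-hand side is read off from the displayed formula for $x_{a\to b}*x_{c\to d}$ (applying part (2) to its purely linear part). Alternatively one can skip the basis reduction and expand $\langle\varphi*\varphi',x_{i\to j}\rangle$ directly: the substitution $\pi=\tau^{-1}\sigma$ turns the inner expectations into $\langle\varphi',x_{i\to\tau^{-1}(j)}\rangle$ and then $\langle\varphi,x_{m\to j}\rangle$, so two applications of part (1) reassemble the answer. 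I expect the main point of care in part (3) to be exactly this index bookkeeping, so as to land on the matrix product in the stated order; parts (1) and (2) are routine once the dictator inner products are in hand.
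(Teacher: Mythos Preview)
The paper does not give its own proof of this lemma: each part is simply cited from \cite{keevash2022largest}, so there is no argument here against which to benchmark your approach. Your proofs of parts (1) and (2) are correct and are exactly the standard ones; the Gram-matrix computation and the collapsing via the canonical row/column constraints are the right moves, and your derivation of $a_{ij}$ by pairing $g^{=1}$ against a single dictator is clean.

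For part (3), your strategy is fine, but be aware that when you actually carry it out (either via the rank-one formula for $M_{x_{a\to b}^{=1}}$ or via your substitution $\pi=\tau^{-1}\sigma$), the computation yields
\[
\langle \varphi*\varphi',x_{i\to j}\rangle
=\frac{1}{(n-1)^2}\sum_m (M_{\varphi'})_{im}(M_{\varphi})_{mj},
\]
hence $M_{\varphi*\varphi'}=\tfrac{1}{n-1}M_{\varphi'}M_{\varphi}$, with the factors in the \emph{opposite} order from the statement. This is harmless for the paper's purposes (the only use, in Lemma~\ref{lem:f1}, involves iterated convolutions of a single function, so the matrices commute), but your closing remark about ``index bookkeeping'' is well placed: do the calculation through to the end rather than declaring the product ``immediate,'' and record which order actually appears.
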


Next we compute the linearisation $f^{=1}$ of $f=1_A/\mu(A)$ as above.
We write $M = \left(\begin{array}{cc} J & -J\\ -J & J \end{array}\right)$,
where $J$ is the $k \times k$ all-1 matrix.
We normalise by letting $\varphi$ be the linear function 
with $M_\varphi = \frac{\sqrt{n-1}}{n} M$,
so that $\|\varphi\|_2 = 1$ by  Lemma \ref{lem:lin}.1 and 
$\|\varphi\|_4 = O(1)$ by \cite[Lemma 3.5 and Theorem 2.6]{keevash2022largest}.

\begin{lem} \label{lem:f1}
For any $m \in \mb{N}$ we have 
$M_{(f^{*m})^{=1}} = \rho^m \sqrt{n-1} M_\varphi = \rho^m \tfrac{n-1}{n} M$.
\end{lem}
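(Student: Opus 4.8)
The plan is to first identify $f^{=1}$ explicitly, and then exploit the fact that convolution is block-diagonal with respect to the isotypic decomposition of $L^2(S_n)$ in order to handle all $m$ at once.

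\emph{Step 1: the case $m=1$.} By Lemma~\ref{lem:lin}.2 the canonical-form matrix $(a_{ij}) := M_{f^{=1}}$ satisfies $a_{ij} = \tfrac{n-1}{n}\bigl(\mb{E}[f_{i\to j}] - 1\bigr)$, with $\mb{E}[f_{i\to j}] = n\,\mb{P}_\sigma[\sigma\in A,\ \sigma(i)=j]/\mu(A)$. I would observe that $A = \{\sigma : |\sigma([k])\cap[k]| = \ell\}$ is invariant under conjugation by $S_{[k]}\times S_{[n]\sm[k]}$ and by some permutation interchanging $[k]$ with $[n]\sm[k]$; since conjugation by $w$ carries the umvirate $U_{i\to j}$ to $U_{w(i)\to w(j)}$, this forces $\mb{E}[f_{i\to j}]$ to equal a single value $p$ whenever $i,j$ lie on the same side of $[n]=[k]\sqcup([n]\sm[k])$ and a single value $q$ whenever they lie on opposite sides. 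Summing over all $j$ and using $\sum_j\mb{E}[f_{i\to j}] = n$ gives $p+q = 2$; summing $\mb{E}[f_{i\to j}]$ over $j\in[k]$ for fixed $i\in[k]$ and recognizing $\tfrac1k\,\mb{E}\bigl[1_A\,|\sigma([k])\cap[k]|\bigr] = \tfrac{\ell}{k}\mu(A)$ yields, after substituting $k=n/2$ and $\ell=(1+\rho)n/4$, that $p = 4\ell/n = 1+\rho$ and hence $q = 1-\rho$. Therefore $a_{ij} = \pm\tfrac{n-1}{n}\rho$ with exactly the sign pattern of $M$, i.e. $M_{f^{=1}} = \tfrac{n-1}{n}\rho\,M = \rho\sqrt{n-1}\,M_\varphi$, which is the case $m=1$.

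\emph{Step 2: general $m$.} The crucial structural input is that $(f^{*m})^{=1} = (f^{=1})^{*m}$. This holds because convolution is, up to a fixed scalar, multiplication in the group algebra $\mb{R}[S_n] = \bigoplus_{\lambda\vdash n} V_{=\lambda}$, hence block-diagonal: $g^{=\lambda}*h^{=\mu} = 0$ for $\lambda\neq\mu$ and $(g*h)^{=\lambda} = g^{=\lambda}*h^{=\lambda}$ for each $\lambda$. Since the only $\lambda\vdash n$ with $\lambda_1 = n-1$ is $(n-1,1)$, we have $V_{=1} = V_{=(n-1,1)}$, a single isotypic component, so taking $\lambda=(n-1,1)$ gives $(g*h)^{=1} = g^{=1}*h^{=1}$, and iterating gives the claim. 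Now $f^{=1}$ is linear and convolutions of linear functions are linear (Lemma~\ref{lem:lin}.3), so applying Lemma~\ref{lem:lin}.3 $m-1$ times gives
\[ M_{(f^{*m})^{=1}} \;=\; \frac{1}{(n-1)^{m-1}}\,M_{f^{=1}}^{\,m} \;=\; \frac{1}{(n-1)^{m-1}}\Bigl(\tfrac{n-1}{n}\rho\Bigr)^{m} M^{m}. \]
From $J^2 = kJ$ and $k=n/2$ one computes $M^2 = 2kM = nM$, so $M^m = n^{m-1}M$; substituting collapses the displayed expression to $\rho^m\,\tfrac{n-1}{n}\,M = \rho^m\sqrt{n-1}\,M_\varphi$, as required.

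\emph{Main obstacle.} The only step that needs genuine justification rather than bookkeeping is the identity $(f^{*m})^{=1} = (f^{=1})^{*m}$, i.e. that the degree-one projection commutes with convolution. This rests on the block structure of convolution — equivalently, the fact already used in the paper (via Schur's lemma) that convolution operators preserve each isotypic component — together with the observation that $V_{=1}$ is itself a single such component, namely $V_{=(n-1,1)}$. The evaluation $p = 1+\rho$ in Step 1 and the matrix identity $M^2 = nM$ in Step 2 are routine.
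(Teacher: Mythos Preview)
Your proof is correct and follows essentially the same route as the paper: compute $M_{f^{=1}}$ via Lemma~\ref{lem:lin}.2 for $m=1$, then pass to general $m$ using Lemma~\ref{lem:lin}.3 together with $M^m=n^{m-1}M$. The only differences are cosmetic --- you use symmetry of $A$ under $S_{[k]}\times S_{[n]\sm[k]}$ and the swap to reduce to two values $p,q$ rather than computing the three cases directly, and you make explicit the block-diagonal fact $(f^{*m})^{=1}=(f^{=1})^{*m}$ that the paper leaves implicit when invoking Lemma~\ref{lem:lin}.3.
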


\begin{proof}
We use Lemma \ref{lem:lin}.2, noting that $\mb{E}f=1$
and $\mb{E}[f_{i \to j}] = n\mb{P}_{\sS \sim A} [\sS(i)=j]$,
which equals $ \frac{n\ell}{k^2}$ if $i,j$ are both in $[k]$,
or $\frac{n(n-2k+\ell)}{(n-k)^2}$ if $i,j$ are both not in $[k]$,
or $\frac{n(k-\ell)}{k(n-k)}$ otherwise.
Substituting $k=n/2$ and $\ell = (1+\rho)n/4$ proves the lemma for $m=1$.
General $m$ follows by Lemma \ref{lem:lin}.3, using $M^m = n^{m-1} M$.
\end{proof}

\subsection{Mixing}

Now we analyse the general mixing properties of $f=1_A/\mu(A)$ as above.
The following bound for $L^2$ mixing
shows sharpness of Theorem \ref{thm:mixing time};
the proof is immediate from 
$\| f^{*m} - (f^{*m})^{=0} \|_2 \ge \| (f^{*m})^{=1} \|_2 =  \rho^m \sqrt{n-1} $, 
using the explicit formula for the linearisation computed in Lemma \ref{lem:f1}.

\begin{lem}
If $\rho = \OO(n^{-1/2m})$ with $m$ fixed and $n$ large
then $\|f\|_2 = e^{\OO(n^{1-1/m})}$
and $\| f^{*m} - (f^{*m})^{=0} \|_2 = \OO(1)$.
\end{lem}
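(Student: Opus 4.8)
The plan is to read both conclusions directly off identities already established in this section, exploiting that $f=1_A/\mu(A)$ makes $\|f\|_2$ and the level-$1$ weight of $f^{*m}$ completely explicit. Write the hypothesis as $\rho\ge c_1 n^{-1/2m}$ for a fixed $c_1>0$. For the first conclusion I will use $\|f\|_2^2=1/\mu(A)$ together with the estimate for $\mu(A)$ above; for the second I will use orthogonality of the degree decomposition to discard all levels except level $1$, where Lemma \ref{lem:f1} supplies an exact formula.

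\emph{First conclusion.} Since $f=1_A/\mu(A)$ we have $\|f\|_2^2=\mu(A)^{-2}\mb{E}[1_A]=\mu(A)^{-1}$. From $\mu(A)=\tfrac{k!^4}{n!\,\ell!^2(k-\ell)!^2}$ and Stirling, $\log(1/\mu(A))=n\big(\tfrac{1+\rho}{2}\log(1+\rho)+\tfrac{1-\rho}{2}\log(1-\rho)\big)+O(\log n)$; the bracketed function vanishes to second order at $\rho=0$ and has second derivative $\tfrac{1}{1-\rho^2}\ge 1$ on $[0,1)$, hence is $\ge\rho^2/2$ there. Thus $\log\|f\|_2\ge\tfrac{n}{4}\rho^2-O(\log n)$. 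Since $\rho\ge c_1 n^{-1/2m}$ we get $n\rho^2\ge c_1^2 n^{1-1/m}$, which for $m\ge 2$ dominates the $O(\log n)$ term and yields $\|f\|_2\ge e^{\Omega(n^{1-1/m})}$; for $m=1$ the claim $\|f\|_2=e^{\Omega(1)}$ is immediate (the hypergeometric pmf peaks at $O(n^{-1/2})$, so $\mu(A)=O(n^{-1/2})$ and $\|f\|_2\ge\Omega(n^{1/4})$).

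\emph{Second conclusion.} The degree decomposition $L^2(S_n)=\bigoplus_d V_{=d}$ is orthogonal and $(f^{*m})^{=0}$ is exactly the projection of $f^{*m}$ onto $V_{=0}$, so $f^{*m}-(f^{*m})^{=0}=\sum_{d\ge1}(f^{*m})^{=d}$ and
\[
\|f^{*m}-(f^{*m})^{=0}\|_2^2=\sum_{d\ge1}\|(f^{*m})^{=d}\|_2^2\ge\|(f^{*m})^{=1}\|_2^2.
\]
Now $(f^{*m})^{=1}$ is purely linear, and by Lemma \ref{lem:f1} its canonical matrix is $M_{(f^{*m})^{=1}}=\rho^m\tfrac{n-1}{n}M$, with $M$ the $n\times n$ matrix defined above, all $n^2$ of whose entries are $\pm1$. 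Applying Lemma \ref{lem:lin}(1) with $\varphi=\varphi'=(f^{*m})^{=1}$ gives $\|(f^{*m})^{=1}\|_2^2=\tfrac1{n-1}\sum_{i,j}\big(M_{(f^{*m})^{=1}}\big)_{ij}^2=\tfrac1{n-1}\rho^{2m}\tfrac{(n-1)^2}{n^2}n^2=\rho^{2m}(n-1)$, i.e.\ $\|(f^{*m})^{=1}\|_2=\rho^m\sqrt{n-1}$. Since $\rho^m\ge c_1^m n^{-1/2}$ this is $\ge c_1^m\sqrt{(n-1)/n}\ge c_1^m/\sqrt2$ for $n\ge2$, which is $\Omega(1)$ as $c_1,m$ are fixed. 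Combining the two parts proves the lemma.

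There is no serious obstacle, since both parts follow at once from results already in hand (the formula for $\mu(A)$ and Lemma \ref{lem:f1}); the only points needing a little care are obtaining a lower bound on $\log(1/\mu(A))$ valid over the whole range $\rho=\Omega(n^{-1/2m})$ rather than only for $\rho=o(1)$, which the convexity of the entropy-type function handles, and keeping in mind that both conclusions are \emph{lower} bounds, which is what is needed to read the lemma as sharpness of Theorem \ref{thm:mixing time}: an $O(1)$-biglobal $f$ with $\|f\|_2$ just past the mixing threshold for which the $L^2$ distance to equilibrium remains bounded below.
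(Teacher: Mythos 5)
Your proof is correct and follows essentially the same route as the paper: the $\|f\|_2$ bound from the explicit formula for $\mu(A)$ via Stirling, and the non-mixing bound by dropping to the level-one part and reading $\|(f^{*m})^{=1}\|_2=|\rho|^m\sqrt{n-1}$ off Lemma \ref{lem:f1} (your extra verification via Lemma \ref{lem:lin}(1) is just the normalisation $\|\varphi\|_2=1$ made explicit). The only cosmetic point is that $\rho$ may be negative in the example, but since the entropy-type function and the level-one norm depend only on $|\rho|$, nothing changes.
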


For $L^1$ mixing we also need the following estimate for $f^{=2}$.

\begin{lem} \label{lem:f2}
There is an absolute constant $C$ so that for $m \le \tfrac{1}{40} \rho^2 n$
we have $\|(f^{*m})^{=2}\|_2\le (C\rho^2)^m n$. 
\end{lem}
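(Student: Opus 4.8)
**Proof plan for Lemma \ref{lem:f2} (the bound on $\|(f^{*m})^{=2}\|_2$).**

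The plan is to exploit the fact that $f = 1_A/\mu(A)$ is $4$-global with a known density, so that the level $d$ inequality (Theorem \ref{thm:level-d for biglobal functions}) and the spectral bound (Theorem \ref{thm:Eigenvalues of operators}) both apply with $r = 4$. Since $\mu(A) = e^{-\Theta(\rho^2 n)}$ (by the first lemma, $n^{-1}\log\mu(A) = -\tfrac12\rho^2 + O(\rho^4)$), we have $\log(\|f\|_2/\|f\|_1) = \tfrac12\log(1/\mu(A)) = \Theta(\rho^2 n)$. First I would record that $(f^{*m})^{=2} = T_f^{m-1}(f^{=2})$ because $T_f$ preserves each $W_{=d}$ (equivalently each $V_{=d}$), hence preserves $V_{=2}$ as it preserves $V_{\le 2}$ and $V_{\le 1}$. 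Therefore
\[
\|(f^{*m})^{=2}\|_2 = \|T_f^{m-1}(f^{=2})\|_2 \le \|T_f\|_{W_{=2}}^{m-1}\,\|f^{=2}\|_2.
\]

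Next I would bound the two factors. For $\|f^{=2}\|_2$: apply Theorem \ref{thm:level-d for biglobal functions} with $d=2$, $r=4$, $\gG_1 = \|f\|_1 = 1$, $\gG_2 = \|f\|_2 = \mu(A)^{-1/2}$ — this is legitimate since $2 = d \le \tfrac14\log(\gG_2/\gG_1) = \tfrac14\log(1/\mu(A))$ holds for $\rho^2 n$ large, and $2 \le 10^{-5}n$ — giving $\|f^{=2}\|_2^2 \le (10^6 \cdot 16 \cdot \tfrac12 \cdot \tfrac12\log(1/\mu(A)))^2 = O(\rho^4 n^2)$, i.e.\ $\|f^{=2}\|_2 = O(\rho^2 n)$. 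For the spectral norm: apply Theorem \ref{thm:Eigenvalues of operators} with $d=2$, $r=4$, which (again under $2 \le \tfrac14\log(\|f\|_2/\|f\|_1)$ and $2 \le 10^{-5}n$) gives
\[
\|T_f\|_{W_{=2}} \le \|f\|_1 \bigl(10^7 \cdot 16 \cdot n^{-1}\log(\|f\|_2/\|f\|_1)\bigr)^{1} = O(\rho^2) + O(\rho^4) = O(\rho^2),
\]
using $\log(\|f\|_2/\|f\|_1) = \tfrac12\log(1/\mu(A)) = O(\rho^2 n)$.

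Combining, $\|(f^{*m})^{=2}\|_2 \le (C_0\rho^2)^{m-1}\cdot C_1\rho^2 n$ for absolute constants $C_0, C_1$. To absorb the extra factor $\rho^2 n / (C_0\rho^2)$ and the discrepancy between $m-1$ and $m$ into a clean bound $(C\rho^2)^m n$, I would enlarge $C$ suitably: writing $(C_0\rho^2)^{m-1} C_1\rho^2 n = (C_0\rho^2)^m n \cdot (C_1/C_0)$, it suffices to take $C \ge C_0 \max(1, C_1/C_0)^{1/m}$, and since $m \ge 1$ the constant $C = \max(C_0, C_1)$ works. The hypothesis $m \le \tfrac{1}{40}\rho^2 n$ is not actually needed for this argument as written — it would only be needed if one wanted the bound to be genuinely decreasing or if $\rho^2 n$ were not assumed large; I would simply note that for $\rho^2 n$ bounded the statement is vacuous or trivial, and for $\rho^2 n$ large (which $m \le \tfrac{1}{40}\rho^2 n$ with $m \ge 1$ forces) the above applies.

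The main obstacle is checking the side conditions of Theorems \ref{thm:level-d for biglobal functions} and \ref{thm:Eigenvalues of operators} — specifically that $d = 2 \le \tfrac14\log(1/\mu(A))$, which requires $\log(1/\mu(A)) \ge 8$, i.e.\ $\rho^2 n \gtrsim 16$; this is where I invoke that $m \le \tfrac1{40}\rho^2 n$ and $m \ge 1$ force $\rho^2 n \ge 40$, comfortably enough. Everything else is bookkeeping of absolute constants.
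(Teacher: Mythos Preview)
Your proposal is correct and is essentially identical to the paper's own proof: the paper likewise writes $\|(f^{*m})^{=2}\|_2 \le \|T_f\|_{V_{=2}}^{m-1}\|f^{=2}\|_2$, bounds $\|f^{=2}\|_2 = O(\rho^2 n)$ via Theorem~\ref{thm:level-d for biglobal functions} and $\|T_f\|_{V_{=2}} = O(\rho^2)$ via Theorem~\ref{thm:Eigenvalues of operators}. Your additional care in verifying the side conditions (using $m\le \tfrac{1}{40}\rho^2 n$ to force $\rho^2 n \ge 40$) is welcome, as the paper leaves this implicit.
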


\begin{proof}
Recall that $f$ is $2$-global with $\|f\|_1=1$ and $\|f\|_2 = e^{\Theta(n\rho^2)}$.
By the level $2$ inequality (Theorem \ref{thm:level-d for biglobal functions}) 
we have $\|f^{=2}\|_2\le O(\rho^2 n)$. 
By Theorem  \ref{thm:Eigenvalues of operators} we deduce $\|T_{f}\|_{V_{=2}} \le O(\rho^2)$.
The lemma follows as $\|(f^{*m})^{=2}\|_2 \le \|T_{f}\|_{V_{=2}}^{m-1} \|f^{=2}\|_2$.
\end{proof}

We conclude by showing that  Theorem \ref{thm:mixing time} is also sharp for $L^1$ mixing.

\begin{thm}\label{thm:Sharpness}
For any $\eps>0$ and  $\rho = \OO(n^{-1/2m})$ with $m$ fixed and $n > n(m,\eps)$ large,
we have $\|f\|_2 = e^{\OO(n^{1-1/m})}$ and
the $(L_1,\epsilon)$-mixing time of $f$ is larger than $m$.
\end{thm}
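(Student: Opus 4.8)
The first assertion is immediate from $\|f\|_2=\mu(A)^{-1/2}$ and the measure estimate above, which give $\log\|f\|_2=\Theta(n\rho^2)=\Omega(n^{1-1/m})$ once $\rho=\Omega(n^{-1/2m})$. (We read ``$\rho=\Omega(n^{-1/2m})$'' as $\rho\ge C_0n^{-1/2m}$ for a constant $C_0=C_0(m,\epsilon)$ at our disposal, with $\rho$ also below a small absolute constant, so that $m\le\tfrac1{40}\rho^2n$ for $n$ large.) For the mixing statement the plan is to exhibit one bounded observable on $S_n$ that, for a suitable event $E_M$, has $\nu_{f^{*M}}(E_M)$ close to $1$ while the uniform measure of $E_M$ is close to $0$, simultaneously for every $M\le m$ (with the closeness governed by $C_0$); this forces $\|f^{*M}-1\|_1\ge\epsilon$ for all $M\le m$, hence that the $(L_1,\epsilon)$-mixing time of $f$ exceeds $m$.

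The observable is $Y(\sigma):=|\sigma([k])\cap[k]|$; note $A=\{Y=\ell\}$ with $\ell=\tfrac n4+\tfrac{\rho n}4$, $\mathbb{E}_{\mathrm{unif}}Y=\tfrac n4$, and $Y-\tfrac n4=\tfrac{n}{4\sqrt{n-1}}\varphi$ for the normalised linear function $\varphi$ defined above (Lemma~\ref{lem:lin}(2)). \textbf{Mean under $\nu_{f^{*M}}$:} by Lemma~\ref{lem:f1}, $(f^{*M})^{=1}=\rho^M\sqrt{n-1}\,\varphi$, so $\mathbb{E}_{\nu_{f^{*M}}}Y=\tfrac n4+\langle(f^{*M})^{=1},Y-\tfrac n4\rangle=\tfrac n4+s_M$ with $s_M:=\tfrac{n\rho^M}4$; since $\rho\le1$ and $M\le m$ we have $s_M\ge s_m\ge\tfrac{C_0^m}{4}\sqrt n$, with $C_0^m$ arbitrarily large. \textbf{Concentration under $\nu_{f^{*M}}$:} realise $\nu_{f^{*M}}$ as the law of $\sigma_1\cdots\sigma_M$ with $\sigma_i$ i.i.d.\ uniform on $A$, condition on $\sigma_2,\dots,\sigma_M$, and put $B:=\sigma_2\cdots\sigma_M([k])$; given $|B\cap[k]|=b$, the value $Y=|\sigma_1(B)\cap[k]|$ splits as the sum of the two \emph{independent} hypergeometric counts $|\sigma_1(B\cap[k])\cap[k]|$ and $|\sigma_1(B\setminus[k])\cap[k]|$ (governed by $\sigma_1|_{[k]}$ and $\sigma_1|_{[k]^c}$), with total conditional mean exactly $\tfrac n4+\rho(b-\tfrac n4)$ and each $O(\sqrt n)$-subgaussian. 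Hence $Y$ obeys $\mathbb{E}[\,Y_j-\tfrac n4\mid Y_{j-1}\,]=\rho(Y_{j-1}-\tfrac n4)$ with $O(\sqrt n)$-subgaussian martingale increments, and unrolling the contraction $\rho<1$ gives $Y-\tfrac n4=s_M+\eta$ with $\Pr_{\nu_{f^{*M}}}[|\eta|>t\sqrt n]\le Ce^{-ct^2}$ for absolute $c,C$. On the uniform side $Y$ is a sum of $k$ negatively associated $\{0,1\}$-variables, so $\Pr_{\mathrm{unif}}[|Y-\tfrac n4|>t\sqrt n]\le2e^{-t^2}$ by Hoeffding. Taking $E_M:=\{Y>\tfrac n4+\tfrac{s_M}2\}$, both $\Pr_{\mathrm{unif}}[E_M]$ and $1-\Pr_{\nu_{f^{*M}}}[E_M]$ are at most $Ce^{-c(s_M/\sqrt n)^2}\le Ce^{-cC_0^{2m}/16}=:\delta$, and choosing $C_0=C_0(m,\epsilon)$ large makes $\delta\le\tfrac{2-\epsilon}{4}$; then for every $M\le m$,
\[
\|f^{*M}-1\|_1\ \ge\ 2\bigl(\Pr_{\nu_{f^{*M}}}[E_M]-\Pr_{\mathrm{unif}}[E_M]\bigr)\ \ge\ 2(1-2\delta)\ \ge\ \epsilon .
\]

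The main obstacle is the concentration estimate. A purely spectral route would bound $\mathrm{Var}_{\nu_{f^{*M}}}(Y)$ through $(Y-\tfrac n4)^2=\tfrac{n^2}{16(n-1)}\varphi^2$, the decomposition $\varphi^2=1+(\varphi^2)^{=1}+(\varphi^2)^{=2}$ (using $\|\varphi^2\|_2=\|\varphi\|_4^2=O(1)$) and the level-$2$ estimate $\|(f^{*M})^{=2}\|_2\le(C\rho^2)^Mn$ of Lemma~\ref{lem:f2}; this is amply sharp when $M$ is near $m$, where $(f^{*M})^{=2}$ is $O_m(1)$ in $L^2$, but the Cauchy--Schwarz step costs a factor $C^M$, so for general $M$ it only yields $\mathrm{Var}_{\nu_{f^{*M}}}(Y)=O(n)+O(\rho^Mn^{3/2})+O(C^Ms_M^2)$, whose last term is of the same order $s_M^2$ as the signal and defeats a Chebyshev test. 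The direct hypergeometric recursion above sidesteps this, delivering the sharp $\mathrm{Var}_{\nu_{f^{*M}}}(Y)=O(n)$ with an \emph{absolute} constant, uniformly in $M$; it is this uniformity, together with the freedom to enlarge $C_0$ in terms of $m$ and $\epsilon$, that gives the conclusion for every $\epsilon<2$ (the range $\epsilon\ge2$ being vacuous as $\|\cdot\|_1\le2$).
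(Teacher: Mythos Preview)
Your argument is correct, and it takes a genuinely different route from the paper's. The paper works with the normalised linear statistic $\varphi$ and controls its second moment under $\nu_{f^{*m}}$ via the level-$2$ spectral estimate (Lemma~\ref{lem:f2}), then applies Paley--Zygmund to separate $\nu_{f^{*m}}$ from the uniform measure; this yields $d_{TV}(\nu_{f^{*m}},\mu)\ge c(m)$ for some constant depending on $m$ but not obviously close to $1$. You instead exploit the explicit combinatorial structure of the example: the key observation that, conditionally on $\sigma_1([k])$, the restrictions $\sigma_1|_{[k]}$ and $\sigma_1|_{[k]^c}$ are independent uniform bijections makes the one-step evolution of $Y_j-\tfrac{n}{4}$ an autoregressive process with contraction factor~$\rho$ and $O(\sqrt{n})$-subgaussian innovations, giving sharp subgaussian concentration with absolute constants. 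This is more elementary (it bypasses Lemma~\ref{lem:f2} and the level-$d$ machinery entirely) and strictly stronger in its conclusion, since by enlarging the implicit constant $C_0$ you can push the total variation distance arbitrarily close to~$1$, covering every $\epsilon<2$ rather than only $\epsilon<c(m)$.

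Two small remarks. First, you need not verify $\|f^{*M}-1\|_1\ge\epsilon$ for every $M\le m$: since $f\ge 0$ with $\|f\|_1=1$ and $f*1=1$, Young's inequality gives $\|f^{*(M+1)}-1\|_1=\|f*(f^{*M}-1)\|_1\le\|f^{*M}-1\|_1$, so it suffices to treat $M=m$ (the paper implicitly uses this). Second, in your aside on the spectral route, the $O(\rho^M n^{3/2})$ term coming from $\langle (f^{*M})^{=1},\varphi^2\rangle$ actually vanishes: with $k=n/2$ the uniform distribution of $Y$ is symmetric about $n/4$, so $\mathbb{E}[\varphi^3]=0$ and hence $\langle\varphi,(\varphi^2)^{=1}\rangle=0$. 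Your main point about the $C^M$ loss from Cauchy--Schwarz remains valid, however.
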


\begin{proof}
We will show that $\varphi$ is a statistic that distinguishes
the probability distribution $\nu_{f^{*m}}$ from the uniform measure $\mu$.
As $\varphi$ is purely linear, we have
$\nu_{f^{*m}}(\varphi) = \langle f^{*m}, \varphi \rangle = \| (f^{*m})^{=1} \|_2 =  \rho^m \sqrt{n-1}$
by Lemma \ref{lem:f1}. Also, by Cauchy--Schwarz and Lemma \ref{lem:f2} we have
\[ \nu_{f^{*m}}(\varphi^2)  =
 \langle f^{*m} , \varphi^2 \rangle = \langle \left( f^{*m}  \right)^{\le 2} , \varphi^2 \rangle
   \le \|\left(f^{*m}\right)^{\le 2}\|_2 \|\varphi^2\|_2 \le (C\rho^2)^m n,\] 
as $\| \varphi \|_4 = O(1)$. By the Paley--Zygmund inequality,  if $\sS \sim \nu_{f^{*m}}$
then $\varphi(\sigma)=\Omega(\rho^{-1})$ with probability $\TT(1)$.
On the other hand, under the uniform measure we have
$\mu(\varphi^2) = \| \varphi \|_2^2 = 1$,
so by Markov's inequality $\varphi(\sigma)=\Omega(\rho^{-1})$ 
with probability $O(\rho) = o(1)$. Thus 
$d_{TV}(\nu_{f^{*m}},\mu) = \frac{1}{2}\| f^{*m} - 1 \|_1 = \TT(1)$.
\end{proof}

\bibliographystyle{plain}
\bibliography{refs}

\end{document}